\documentclass[11pt,a4paper]{amsart}
\pdfoutput=1
\usepackage[centering, margin=1in]{geometry}
\usepackage{accents,amsmath,amsfonts,amsthm,amssymb,enumitem,graphicx,mathrsfs,mathtools,pgfplots}
\usepackage[stretch=10]{microtype}
\usepackage[unicode]{hyperref}
\usepackage{xcolor}
\usepackage[normalem]{ulem}
\definecolor{burntorange}{rgb}{0.8, 0.33, 0.0}
\definecolor{dark-red}{rgb}{0.4,0.15,0.15}
\definecolor{dark-blue}{rgb}{0.15,0.15,0.4}
\definecolor{darkpowderblue}{rgb}{0.0, 0.2, 0.6}
\definecolor{darkspringgreen}{rgb}{0.09, 0.45, 0.27}
\definecolor{medium-blue}{rgb}{0,0,0.5}
\hypersetup{
	pdftitle={\texorpdfstring{$L^p$}{Lp}-Norm Bounds for Automorphic Forms via Spectral Reciprocity},
	pdfauthor={Peter Humphries and Rizwanur Khan},
	pdfnewwindow=true,
	colorlinks, linkcolor={dark-red},
	citecolor={dark-blue}, urlcolor={medium-blue}
}
\allowdisplaybreaks
\newcommand{\BB}{\mathcal{B}}
\newcommand{\C}{\mathbb{C}}
\newcommand{\CC}{\mathcal{C}}
\newcommand{\dee}{\partial}
\newcommand{\e}{\varepsilon}
\newcommand{\Gscr}{\mathscr{G}}

\newcommand{\Hb}{\mathbb{H}}
\newcommand{\HH}{\mathcal{H}}
\newcommand{\hol}{\mathrm{hol}}
\newcommand{\JJ}{\mathcal{J}}
\newcommand{\Kscr}{\mathscr{K}}
\newcommand{\Lscr}{\mathscr{L}}
\newcommand{\N}{\mathbb{N}}
\newcommand{\Nscr}{\mathscr{N}}
\newcommand{\Q}{\mathbb{Q}}
\newcommand{\R}{\mathbb{R}}
\newcommand{\spec}{\mathrm{spec}}
\newcommand{\T}{\mathbb{T}}
\newcommand{\Z}{\mathbb{Z}}

\DeclareMathOperator{\ad}{ad}
\DeclareMathOperator{\GL}{GL}
\DeclareMathOperator{\Ogp}{O}
\DeclareMathOperator{\PGL}{PGL}
\DeclareMathOperator{\PSL}{PSL}
\DeclareMathOperator*{\Res}{Res}
\DeclareMathOperator{\sech}{sech}
\DeclareMathOperator{\SL}{SL}
\DeclareMathOperator{\sym}{sym}
\DeclareMathOperator{\Zgp}{Z}
\numberwithin{equation}{section}
\newtheorem{theorem}[equation]{Theorem}
\newtheorem{conjecture}[equation]{Conjecture}
\newtheorem{corollary}[equation]{Corollary}
\newtheorem{lemma}[equation]{Lemma}
\newtheorem{proposition}[equation]{Proposition}
\theoremstyle{remark}
\newtheorem{remark}[equation]{Remark}
\begin{document}

\title{$L^p$-Norm Bounds for Automorphic Forms via Spectral Reciprocity}

\author{Peter Humphries}

\address{Department of Mathematics, University of Virginia, Charlottesville, VA 22904, USA}

\email{\href{mailto:pclhumphries@gmail.com}{pclhumphries@gmail.com}}

\urladdr{\href{https://sites.google.com/view/peterhumphries/}{https://sites.google.com/view/peterhumphries/}}

\author{Rizwanur Khan}

\address{Department of Mathematical Sciences, University of Texas at Dallas, Richardson, TX 75080, USA}

\email{\href{mailto:rizwanur.khan@utdallas.edu}{rizwanur.khan@utdallas.edu}}

\urladdr{\href{https://profiles.utdallas.edu/rizwanur.khan}{https://profiles.utdallas.edu/rizwanur.khan}}

\subjclass[2020]{11F12 (primary); 11F66, 11M41, 58J51, 81Q50 (secondary)}

\thanks{The first author was supported by the National Science Foundation grant DMS-2302079 and the Simons Foundation (award 965056). The second author was supported by the National Science Foundation grants DMS-2001183/DMS-2344044 and DMS-2140604/DMS-2341239 and the Simons Foundation (award 630985).}

\begin{abstract}
Let $g$ be a Hecke--Maa\ss{} cusp form on the modular surface $\SL_2(\Z) \backslash \Hb$, namely an $L^2$-normalised nonconstant Laplacian eigenfunction on $\SL_2(\Z) \backslash \Hb$ that is additionally a joint eigenfunction of every Hecke operator. We prove the $L^4$-norm bound $\|g\|_4 \ll_{\e} \lambda_g^{3/304 + \e}$, where $\lambda_g$ denotes the Laplacian eigenvalue of $g$, which improves upon Sogge's $L^4$-norm bound $\|g\|_4 \ll \lambda_g^{1/16}$ for Laplacian eigenfunctions on a compact Riemann surface by more than a six-fold power-saving. Interpolating with the sup-norm bound $\|g\|_{\infty} \ll_{\e} \lambda_g^{5/24 + \e}$ due to Iwaniec and Sarnak, this yields $L^p$-norm bounds for Hecke--Maa\ss{} cusp forms that are power-saving improvements on Sogge's bounds for all $p > 2$. Our paper marks the first improvement of Sogge's result on the modular surface. Furthermore, these methods yield for compact arithmetic surfaces the best $L^4$-norm bound to date.

Via the Watson--Ichino triple product formula, bounds for the $L^4$-norm of $g$ are reduced to bounds for certain mixed moments of $L$-functions. We bound these using two forms of spectral reciprocity: identities between two different moments of central values of $L$-functions. The first is a form of $\GL_3 \times \GL_2 \leftrightsquigarrow \GL_4 \times \GL_1$ spectral reciprocity, which relates a $\GL_2$ moment of $\GL_3 \times \GL_2$ Rankin--Selberg $L$-functions to a $\GL_1$ moment of $\GL_4 \times \GL_1$ Rankin--Selberg $L$-functions; this can be seen as a cuspidal analogue of Motohashi's formula relating the fourth moment of the Riemann zeta function to the third moment of central values of Hecke $L$-functions. The second is a form of $\GL_4 \times \GL_2 \leftrightsquigarrow \GL_4 \times \GL_2$ spectral reciprocity, which is a cuspidal analogue of a formula of Kuznetsov for the fourth moment of central values of Hecke $L$-functions.
\end{abstract}

\maketitle

\section{Introduction}

\subsection{\texorpdfstring{$L^p$}{Lp}-Norm Bounds for Hecke--Maa\ss{} Cusp Forms}

A fundamental problem in analysis is understanding the distribution of mass of Laplacian eigenfunctions via bounds for their $L^p$-norms in terms of the size of their Laplacian eigenvalue. We study this problem for \emph{arithmetic} Laplacian eigenfunctions on the modular surface $\Gamma \backslash \Hb$, where $\Hb \coloneqq \{z = x + iy \in \C : y > 0\}$ is the upper half-plane upon which the modular group $\Gamma \coloneqq \SL_2(\Z)$ acts via M\"{o}bius transformations.

Let $g$ be a Hecke--Maa\ss{} cusp form on $\Gamma \backslash \Hb$, namely a nonconstant Laplacian eigenfunction lying in the discrete spectrum of the Laplacian on $\Gamma \backslash \Hb$ that is additionally a joint eigenfunction of every Hecke operator\footnote{Since the Laplacian commutes with each Hecke operator, every nonconstant Laplacian eigenfunction on $\Gamma \backslash \Hb$ is a linear combination of Hecke--Maa\ss{} cusp forms. Moreover, this Hecke assumption ought to be automatic since the discrete spectrum of the Laplacian on $\Gamma \backslash \Hb$ is expected to be simple \cite{Ste94}.}. Thus $\Delta g = \lambda_g g$, where $\Delta \coloneqq -y^2 (\frac{\dee^2}{\dee x^2} + \frac{\dee^2}{\dee y^2})$ denotes the Laplace--Beltrami operator on $\Gamma \backslash \Hb$ and $\lambda_g \in (0,\infty)$ is the Laplacian eigenvalue of $g$. We scale $g$ to be $L^2$-normalised with respect to the probability Haar measure $\frac{3}{\pi} \frac{dx \, dy}{y^2}$ on $\Gamma \backslash \Hb$. The main result of this paper is the following bound for the $L^4$-norm of a Hecke--Maa\ss{} cusp form $g$ on $\Gamma \backslash \Hb$ in terms of $\lambda_g$.

\begin{theorem}
\label{thm:L4}
Let $g$ be a Hecke--Maa\ss{} cusp form on $\Gamma \backslash \Hb$ of Laplacian eigenvalue $\lambda_g$. Then
\[\|g\|_4 \coloneqq \left(\int_{\Gamma \backslash \Hb} |g(z)|^4 \, \frac{3}{\pi} \frac{dx \, dy}{y^2}\right)^{\frac{1}{4}} \ll_{\e} \lambda_g^{\frac{3}{304} + \e}.\]
\end{theorem}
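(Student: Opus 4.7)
The plan is to begin with the spectral decomposition of $|g|^2 \in L^2(\Gamma \backslash \Hb)$: writing $|g|^2 = 1 + \sum_{f} \langle |g|^2, f\rangle f + (\text{Eisenstein part})$ and invoking Parseval yields
\[
\|g\|_4^4 = 1 + \sum_{f} |\langle |g|^2, f\rangle|^2 + \frac{1}{4\pi}\int_{-\infty}^{\infty} |\langle |g|^2, E(\cdot, \tfrac12 + it)\rangle|^2 \, dt,
\]
with $f$ running over an orthonormal basis of Hecke--Maa\ss{} cusp forms. By the Watson--Ichino triple product formula, $|\langle |g|^2, f\rangle|^2$ is proportional to $\Kscr_\infty(t_g,t_f)\,L(\tfrac12, \ad g \times f) L(\tfrac12, f) / L(1, \ad f)$, with an analogous factorisation on the Eisenstein side involving $|L(\tfrac12 + it, \ad g)|^2 |\zeta(\tfrac12+it)|^2$. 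The archimedean kernel $\Kscr_\infty$ essentially localises the spectral sum to $|t_f|\leq 2 t_g$ where $t_g \asymp \sqrt{\lambda_g}$, so Theorem~\ref{thm:L4} reduces to obtaining a power-saving bound on the $\GL_2$ moment
\[
M \coloneqq \sum_{|t_f|\leq 2 t_g} \Kscr_\infty(t_g, t_f)\, \frac{L(\tfrac12, \ad g \times f) L(\tfrac12, f)}{L(1, \ad f)}
\]
together with its Eisenstein analogue.

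My next move is to apply the first spectral reciprocity $\GL_3 \times \GL_2 \leftrightsquigarrow \GL_4 \times \GL_1$ with the fixed $\GL_3$ datum $\ad g$. This rewrites $M$ as a $\GL_1$ (continuous) moment of $\GL_4 \times \GL_1$ Rankin--Selberg $L$-values, which after the factorisation $L(s, g \times \bar{g}) = \zeta(s) L(s, \ad g)$ takes the shape of a weighted second moment of the degree-$4$ object $L(\tfrac12 + it, \ad g)\zeta(\tfrac12+it)$---the cuspidal analogue of Motohashi's formula. Bounding this dual integral by convexity essentially recovers Sogge's exponent; the improvement comes from invoking the second reciprocity $\GL_4 \times \GL_2 \leftrightsquigarrow \GL_4 \times \GL_2$, a cuspidal analogue of Kuznetsov's fourth-moment identity for $|\zeta(\tfrac12+it)|^4$, which extracts additional cancellation on its own dual side. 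Balancing the savings from each reciprocity against the effective length $\asymp t_g$ of the initial sum then produces the exponent $3/304$.

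The hardest part will be controlling the \emph{transition region} $|t_f| \in [2t_g - O(t_g^{1/3}),\, 2t_g]$, where $\Kscr_\infty$ behaves like neither a smooth bump nor an exponentially decaying tail: here the Bessel-- and Kuznetsov--type integral transforms governing the two reciprocities must be analysed with oscillatory-integral care so that the dual kernels remain well localised, since any looseness destroys the power saving. Secondary challenges include handling the degenerate (polar) terms produced by each reciprocity identity, treating the Eisenstein contribution---which, having only $\zeta$-factors, leads on the dual side to a genuine $|\zeta(\tfrac12+it)|^4$-type fourth moment that must be estimated in the spirit of Iwaniec's bound---and verifying that the various archimedean gamma-factor manipulations preserve the arithmetic savings throughout the two-step reduction.
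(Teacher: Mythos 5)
Your first step (Parseval, Watson--Ichino, reduction to a mixed moment of $L$-functions) matches the paper, but the architecture of how the two reciprocities are deployed is not how the argument actually works, and as written the reduction would stall.

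The central structural error is that you apply $\GL_3 \times \GL_2 \leftrightsquigarrow \GL_4 \times \GL_1$ reciprocity directly to the full moment $M$. That moment carries the product $L(\tfrac12, f)\,L(\tfrac12, \ad g \otimes f)$, a degree-$8$ ($\GL_4 \times \GL_2$) object; the $\GL_3 \times \GL_2$ reciprocity formula (Theorem~\ref{thm:3x2reciprocity}) is an identity for the first moment of $L(\tfrac12, \ad g \otimes f)$ \emph{alone}, without the $\GL_1 \times \GL_2$ factor. The paper first applies H\"{o}lder's inequality to peel off the $L(\tfrac12,f)$ factor (using a twelfth-moment bound of Jutila/Heath-Brown for its own average), and only then feeds the bare first moment of $L(\tfrac12,\ad g \otimes f)$ into the $\GL_3 \times \GL_2$ reciprocity, landing in a $\GL_1$ moment of $L(\tfrac12+it,\ad g)\zeta(\tfrac12-it)$ that is estimated by Montgomery--Vaughan. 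Without this H\"{o}lder step the reciprocity identity simply does not apply. Relatedly, your plan of then ``invoking the second reciprocity'' on the dual side cannot be executed as described: the dual of the $\GL_3 \times \GL_2$ identity is a continuous $\GL_1$ integral, which lacks the $\GL_2$ spectral-sum structure that $\GL_4 \times \GL_2 \leftrightsquigarrow \GL_4 \times \GL_2$ reciprocity needs as input. The two reciprocities are not chained; they are applied to \emph{different dyadic portions} of the spectral parameter $t_f$ (and $\GL_4 \times \GL_2$ reciprocity is used as a \emph{range-swapping} device that maps a dyadic scale $T$ to the complementary scale $\approx t_g/T$, reducing one portion of the short initial range to another already handled by different tools).

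Two further ingredients are missing entirely. First, the bulk range $t_g^{1-\alpha} \leq t_f \leq 2t_g - t_g^{1-\alpha}$ --- which carries the overwhelming majority of the mass in $H(t)$ --- is not treated by either reciprocity; the paper handles it by deconditionalising the Buttcane--Khan argument \cite{BuK17}, and without this piece no power saving over Sogge is possible at all. Second, the paper's proof leans essentially on the nonnegativity of $L(\tfrac12, \ad g \otimes f)$ (Lapid) to drop entire side-terms after choosing one-sided test functions; this is precisely what makes it indispensable to have the reciprocity identities as \emph{exact} equalities rather than approximate-functional-equation analogues, and it is invisible in your description. Finally, your identification of the transition region $|t_f| \in [2t_g - O(t_g^{1/3}), 2t_g]$ is too narrow --- the delicate short transition range in the paper extends out to $2t_g - t_g^{1-\alpha}$, and the short initial range $t_f \leq t_g^{1-\alpha}$ is an equally serious obstacle that your sketch does not engage with.
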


Interpolating between the $L^2$-norm normalisation $\|g\|_2 = 1$ and the $L^{\infty}$-norm bound $\|g\|_{\infty} \ll_{\e} \lambda_g^{5/24 + \e}$ of Iwaniec and Sarnak \cite[Theorem 0.1]{IS95} via the log-convexity of $L^p$-norms, we deduce the following $L^p$-norm bounds for a Hecke--Maa\ss{} cusp form $g$.

\begin{corollary}
\label{cor:Lp}
Let $g$ be a Hecke--Maa\ss{} cusp form on $\Gamma \backslash \Hb$ of Laplacian eigenvalue $\lambda_g$. Then for $p \in [2,\infty]$, we have that $\|g\|_p \ll_{\e} \lambda_g^{\delta(p) + \e}$, where
\begin{equation}
\label{eqn:delta(p)}
\delta(p) = \begin{dcases*}
\frac{3}{152} - \frac{3}{76p} & for $2 \leq p \leq 4$,	\\
\frac{5}{24} - \frac{181}{228p} & for $4 \leq p \leq \infty$.
\end{dcases*}
\end{equation}
\end{corollary}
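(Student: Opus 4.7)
The plan is to deduce Corollary~\ref{cor:Lp} as a direct consequence of the log-convexity of $L^p$-norms (equivalently, H\"older's inequality). Concretely, if $2 \leq p_0 < p_1 \leq \infty$ and $\frac{1}{p} = \frac{1-t}{p_0} + \frac{t}{p_1}$ with $t \in [0,1]$, then
\[
\|g\|_p \leq \|g\|_{p_0}^{1-t} \|g\|_{p_1}^{t}.
\]
I would apply this with the three anchor points $p_0 = 2$, $p = 4$, $p = \infty$, using the normalisation $\|g\|_2 = 1$, the bound $\|g\|_4 \ll_\e \lambda_g^{3/304 + \e}$ from Theorem~\ref{thm:L4}, and the Iwaniec--Sarnak sup-norm bound $\|g\|_\infty \ll_\e \lambda_g^{5/24 + \e}$.

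For $p \in [2,4]$, I would interpolate between $p_0 = 2$ and $p_1 = 4$. Solving $\frac{1}{p} = \frac{1-t}{2} + \frac{t}{4}$ gives $t = 2 - \frac{4}{p}$, so
\[
\|g\|_p \leq \|g\|_2^{1-t} \|g\|_4^{t} \ll_\e \lambda_g^{(2 - 4/p) \cdot 3/304 + \e} = \lambda_g^{3/152 - 3/(76 p) + \e},
\]
matching the first case of \eqref{eqn:delta(p)}. For $p \in [4,\infty]$, I would interpolate between $p_0 = 4$ and $p_1 = \infty$. Solving $\frac{1}{p} = \frac{1-t}{4}$ gives $1 - t = \frac{4}{p}$, so
\[
\|g\|_p \leq \|g\|_4^{4/p} \|g\|_\infty^{1 - 4/p} \ll_\e \lambda_g^{(4/p) \cdot 3/304 + (1 - 4/p) \cdot 5/24 + \e}.
\]
A short arithmetic check shows $\frac{4}{p} \cdot \frac{3}{304} + \left(1 - \frac{4}{p}\right) \cdot \frac{5}{24} = \frac{5}{24} + \frac{1}{p}\left(\frac{3}{76} - \frac{5}{6}\right) = \frac{5}{24} - \frac{181}{228 p}$, recovering the second case of \eqref{eqn:delta(p)}.

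There is no real obstacle here; the deduction is purely formal once Theorem~\ref{thm:L4} and the Iwaniec--Sarnak bound are in hand. The only thing worth verifying is that the two cases of $\delta(p)$ agree at $p = 4$ (both give $\frac{3}{304}$) and that $\delta(\infty) = \frac{5}{24}$ and $\delta(2) = 0$, which are immediate from the formula. In particular, the exponent $\delta(p)$ is a continuous, piecewise-linear function of $\frac{1}{p}$, as is forced by log-convexity of $p \mapsto \log \|g\|_p$.
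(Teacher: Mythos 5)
Your proposal is correct and is precisely the argument the paper intends: log-convexity of $L^p$-norms applied to the anchors $\|g\|_2 = 1$, $\|g\|_4 \ll_\e \lambda_g^{3/304+\e}$ from Theorem~\ref{thm:L4}, and $\|g\|_\infty \ll_\e \lambda_g^{5/24+\e}$ from Iwaniec--Sarnak. The arithmetic in both ranges checks out, including the matching value $\delta(4) = 3/304$ from either formula.
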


The method of proof of \hyperref[thm:L4]{Theorem \ref*{thm:L4}} is quite general and applies to Hecke--Maa\ss{} cusp forms on arithmetic surfaces other than the modular surface, leading to the following result.

\begin{theorem}
\label{thm:L4modified}
Let $q$ be squarefree and fixed and let $\Gamma'$ either be the Hecke congruence subgroup $\Gamma_0(q)$ or the congruence subgroup $\Gamma^D$ corresponding to the norm one units of a maximal order of an indefinite quaternion division algebra $D$ over $\Q$ of discriminant $q$. Let $g$ be a Hecke--Maa\ss{} newform on $\Gamma' \backslash \Hb$ of Laplacian eigenvalue $\lambda_g$. Then
\[\|g\|_4 \ll_{\e} \lambda_g^{\frac{3}{304} + \e}.\]
\end{theorem}

We sketch in \hyperref[sect:proofmodifiedsketch]{Section \ref*{sect:proofmodifiedsketch}} how the method of proof of \hyperref[thm:L4]{Theorem \ref*{thm:L4}} extends to yield \hyperref[thm:L4modified]{Theorem \ref*{thm:L4modified}}.

After this paper was written, Ki announced a proof of the essentially sharp upper bound $\|g\|_4 \ll_{\e} \lambda_g^{\e}$ for a Hecke--Maa\ss{} cusp form $g$ on $\Gamma \backslash \Hb$ \cite[Theorem 2]{Ki23}. The proof is via completely different methods: instead of relating $\|g\|_4^4$ to moments of $L$-functions, as we do, Ki uses the Fourier--Whittaker expansion of $g$ over a Siegel set. This method would potentially extend to Hecke--Maa\ss{} newforms on arithmetic surfaces other than the modular surface for which there exists a Fourier--Whittaker expansion. However, no such Fourier--Whittaker expansion exists for Hecke--Maa\ss{} newforms on a compact congruence arithmetic surface arising from a quaternion division algebra; nonetheless, our method remains valid in this setting.

We sketch the method of proof of \hyperref[thm:L4]{Theorem \ref*{thm:L4}} in \hyperref[sect:proofmethod]{Section \ref*{sect:proofmethod}}: broadly speaking, we relate $\| g\|_4^4$ to a mixed moment of central values of $L$-functions via Parseval's identity and the Watson--Ichino triple product formula and then proceed to bound this moment. To experts, it may come as no surprise that current conventional machinery in the analytic theory of automorphic forms (approximate functional equations, the Kuznetsov and Petersson formul\ae{}, the Vorono\u{\i} summation formul\ae{}, spectral large sieve inequalities, etc.) leads to \emph{some} nontrivial $L^4$-norm bound for Hecke--Maa\ss{} cusp forms. In this paper, we do not simply push such methods to their limit. The novelty of our method is the development and implementation, for the first time, of spectral reciprocity identities for moments of $L$-functions in the context of the $L^4$-norm problem. This opens up new avenues of approach that would otherwise be completely unavailable if one were working with approximate functional equations and other standard techniques. Moreover, as we discuss in \hyperref[sect:spectralreciprocityformulae]{Section \ref*{sect:spectralreciprocityformulae}}, the usage of these spectral reciprocity formul\ae{} \emph{cannot} be substituted with the method of approximate functional equations without majorly weakening the result.

\subsection{Related Results}

\subsubsection{$L^p$-Norm Bounds for Laplacian Eigenfunctions}

\hyperref[thm:L4]{Theorem \ref*{thm:L4}} and \hyperref[cor:Lp]{Corollary \ref*{cor:Lp}} fall under the umbrella of a large swathe of results concerning $L^p$-norm bounds for Laplacian eigenfunctions on manifolds. The fundamental result in this area is due to Sogge \cite{Sog88}, who has shown for $p \in [2,\infty]$ the $L^p$-norm bounds $\|g\|_p \ll \lambda_g^{\delta(n,p)}$, where
\begin{equation}
\label{eqn:delta(n,p)}
\delta(n,p) = \begin{dcases*}
\frac{n - 1}{8} - \frac{n - 1}{4p} & for $2 \leq p \leq \frac{2(n + 1)}{n - 1}$,	\\
\frac{n - 1}{4} - \frac{n}{2p} & for $\frac{2(n + 1)}{n - 1} \leq p \leq \infty$,
\end{dcases*}
\end{equation}
for an $L^2$-normalised Laplacian eigenfunction $g$ with Laplacian eigenvalue $\lambda_g$ on a compact $n$-dimensional Riemannian manifold $M$. These bounds are sharp on the $n$-sphere $S^n$ and should be thought of as the \emph{convexity bounds} for $L^p$-norms; thus \hyperref[cor:Lp]{Corollary \ref*{cor:Lp}} should be viewed as giving \emph{subconvex} $L^p$-norm bounds on $\Gamma \backslash \Hb$ for all $p > 2$.

\begin{figure}[h]
\centering
\begin{tikzpicture}[
declare function={
	d1(\x)= and(\x >= 0, \x < 1/6) * (1/4-\x) +
	and(\x > 1/6, \x < 1/2) * (1/8-\x/4)
	;
},
declare function={
d2(\x)= and(\x >= 0, \x < 1/4) * (5/24-181*\x/228) +
and(\x > 1/4, \x < 1/2) * (3/152-3*\x/76)
;
}
]
\begin{axis}[
legend pos=north east,
axis x line*=middle, axis y line*=middle,
hide obscured x ticks=false,
y=25cm,
x=25cm,
ymin=0, ymax=1/4, ytick={3/304,1/12,5/24,1/4}, yticklabels={
$\frac{3}{304}$, $\frac{1}{12}$, $\frac{5}{24}$, $\frac{1}{4}$
},
xmin=0, xmax=1/2, xtick={0,1/6,1/4,1/2}, xticklabels={
$0$, $\frac{1}{6}$, $\frac{1}{4}$, $\frac{1}{2}$
},
domain=0:1/2,samples=301,
]
\addplot [darkpowderblue,very thick,dash pattern={on 6pt off 2pt on 3pt off 2pt}] {d2(x)};
\addlegendentry{$\delta(1/p)$}
\addplot [darkspringgreen,very thick] {d1(x)};
\addlegendentry{$\delta(2,1/p)$}
\end{axis}
\end{tikzpicture}
\caption{A comparison of the exponent $\delta(p)$ given by \eqref{eqn:delta(p)} to Sogge's exponent $\delta(2,p)$ given by \eqref{eqn:delta(n,p)}.}
\label{fig:Soggediagram}
\end{figure}
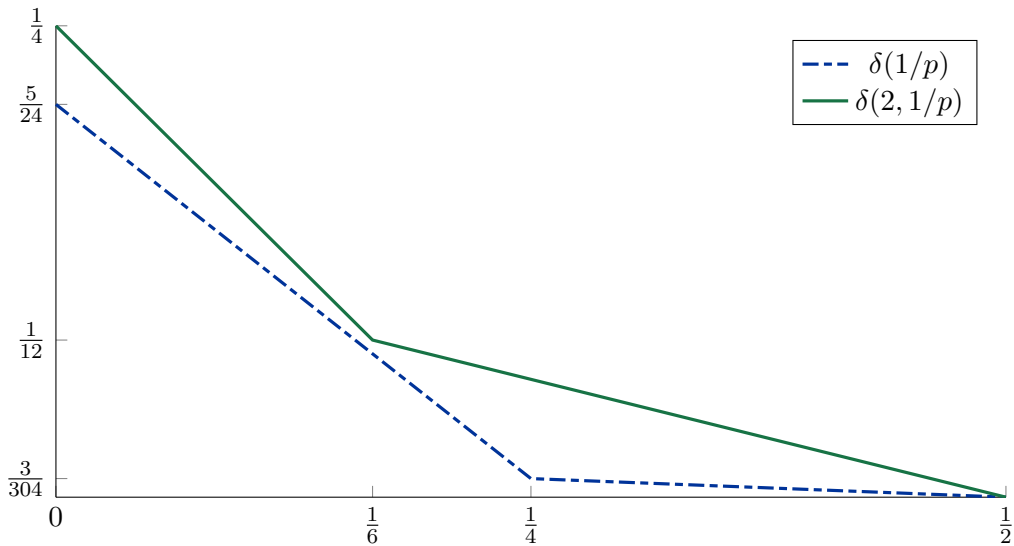

Logarithmic improvements to Sogge's $L^p$-norm bounds have been shown under various geometric assumptions on the underlying manifold, such as nonpositive sectional curvature \cite{BS18,BS19,CG23,HT15}. Furthermore, power-saving improvements to Sogge's bounds are known for certain manifolds: Zygmund \cite{Zyg74} proved that $\|g\|_p \ll 1$ for $2 \leq p \leq 4$ with $M = \T^2$, while for $M = \T^n$, Bourgain and Demeter \cite{BD15} (cf.~\cite[Theorem 13.12]{Dem20}) proved more generally the improved bounds $\|g\|_p \ll_{\e} \lambda_g^{\delta'(n,p) + \e}$ with $\delta'(n,p) = 0$ for $2 \leq p \leq \frac{2(n + 1)}{n - 1}$ and $n \geq 3$ and $\delta'(n,p) = \frac{n - 2}{4} - \frac{n}{2p}$ for $p \geq \frac{2(n - 1)}{n - 3}$ and $n \geq 4$. Finally, Marshall proved power-saving improvements to Sogge's bounds for Laplacian eigenfunctions on certain compact locally symmetric spaces that are additionally eigenfunctions of the full ring of invariant differential operators \cite[Theorem 1.1]{Mar16b}.

\subsubsection{The Iwaniec--Sarnak Conjecture}

For \emph{negatively curved surfaces}, Iwaniec and Sarnak have conjectured that Sogge's $L^p$-norm bounds fall well shy of the truth.

\begin{conjecture}[Iwaniec--Sarnak {\cite[Conjecture 4]{Sar03}}]
\label{conj:IS}
Let $M$ be a negatively curved surface and let $K \subseteq M$ be compact. Then for all $p \in [2,\infty]$,
\[\|g|_K\|_p \ll_{K,\e} \lambda_g^{\e}.\]
\end{conjecture}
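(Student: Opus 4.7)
The plan is to reduce \hyperref[conj:IS]{Conjecture \ref*{conj:IS}} to its $L^\infty$ case and then to attack the sup-norm by geometric--spectral methods. First, since $g$ is $L^2$-normalised on $M$, the restriction to $K$ satisfies $\|g|_K\|_2 \leq 1$; by the log-convexity of $L^p$-norms on the compact set $K$, any bound of the shape $\|g|_K\|_\infty \ll_{K,\e} \lambda_g^{\e}$ interpolates with this trivial $L^2$ bound to yield $\|g|_K\|_p \ll_{K,\e} \lambda_g^{\e}$ uniformly in $p \in [2,\infty]$. Thus the whole content of the conjecture sits in the sup-norm statement on compacta.

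For the sup-norm, I would start from H\"ormander's pointwise estimate on the on-diagonal spectral projector, which already yields $\|g\|_\infty \ll \lambda_g^{1/4}$ on any compact surface, and then sharpen it using the negative curvature hypothesis. Representing the smoothed spectral projector at $z_0 \in K$ via a Hadamard parametrix for the wave kernel, one writes it as a sum of oscillatory integrals indexed by preimages of $z_0$ in the universal cover. Negative curvature makes the geodesic flow Anosov, so the number of preimages in a ball of radius $T$ grows only exponentially in $T$ with rate equal to the volume entropy; this geometric sparsity, combined with cancellation from the oscillatory phases, should produce a genuine saving provided one chooses the time window $T$ in the projector in a $\lambda_g$-dependent way.

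The hard part, and the reason this remains open, is that every rigorous implementation of the above strategy (e.g.\ \cite{BS18,BS19,CG20,HT15}) has so far yielded only \emph{logarithmic} improvements over H\"ormander's bound, which is incomparably weaker than $\lambda_g^{\e}$. In the arithmetic case one gains access to Hecke operators and the amplification method of Iwaniec--Sarnak; these already push the sup-norm exponent down to $5/24$, and together with \hyperref[thm:L4]{Theorem \ref*{thm:L4}} give the bounds \eqref{eqn:delta(p)} for all $p \in [2,\infty]$, but pushing further to $\lambda_g^{\e}$ would seem to demand Lindel\"of-quality subconvexity for the high-rank Rankin--Selberg $L$-functions that appear in the Watson--Ichino formula. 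For a general negatively curved surface there is no comparable arithmetic input, so my ``proof proposal'' really amounts to a description of what would have to be achieved rather than a complete strategy: the genuine content of the conjecture lies beyond current semiclassical technology and will, I expect, require a genuinely new idea.
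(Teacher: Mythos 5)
The statement you were asked to ``prove'' is \hyperref[conj:IS]{Conjecture \ref*{conj:IS}}, which is an open conjecture stated in the paper without proof; the paper does not claim to prove it, and indeed \hyperref[thm:L4]{Theorem \ref*{thm:L4}} and \hyperref[cor:Lp]{Corollary \ref*{cor:Lp}} are advertised as partial progress towards it, with \eqref{eqn:delta(p)} still far from the conjectured exponent $\e$. So there is no proof in the paper to compare yours against, and you have correctly recognised this: your ``proposal'' is explicitly a description of obstructions rather than an argument.

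Within what you do assert, the reduction is sound. Since $(K,\mu|_K)$ is a finite measure space, the interpolation inequality $\|g|_K\|_p \leq \|g|_K\|_2^{2/p} \|g|_K\|_\infty^{1-2/p}$ together with $\|g|_K\|_2 \leq \|g\|_2 = 1$ does reduce the conjecture for all $p \in [2,\infty]$ to the $p = \infty$ case, exactly as the paper itself deduces \hyperref[cor:Lp]{Corollary \ref*{cor:Lp}} from \hyperref[thm:L4]{Theorem \ref*{thm:L4}} and the Iwaniec--Sarnak sup-norm bound. Your account of the semiclassical state of the art (H\"{o}rmander's $\lambda_g^{1/4}$ barrier, logarithmic improvements under nonpositive curvature, and the arithmetic amplification route of \cite{IS95}) is accurate and matches the discussion surrounding \hyperref[conj:IS]{Conjecture \ref*{conj:IS}} in the paper. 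But none of this constitutes a proof: the essential gap is precisely the sup-norm bound $\|g|_K\|_\infty \ll_{K,\e} \lambda_g^\e$ on a general negatively curved surface, which you correctly identify as beyond both current semiclassical technology and, even in the arithmetic case, beyond what amplification or the $L$-function reductions of this paper deliver. In short, your write-up is an honest and accurate survey, not a proof, and it should be labelled as such.
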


This conjecture is quite strong: if $M$ is a compact arithmetic hyperbolic surface arising from a quaternion division algebra over $\Q$, then the bound $\|g\|_{\infty} \ll_{\e} \lambda_g^{\e}$ implies the generalised Lindel\"{o}f hypothesis for certain $L$-functions, since Hecke--Maa\ss{} cusp forms evaluated at distinguished points are essentially equal to central values of $L$-functions by Waldspurger's formula \cite{Wal85}. If true, \hyperref[conj:IS]{Conjecture \ref*{conj:IS}} is essentially sharp \cite[Theorem 1]{Mil10}; moreover, the assumption that $M$ be a \emph{surface} is necessary, since there are compact Riemannian manifolds with negative sectional curvature of dimension greater than $2$ for which there exist subsequences of Laplacian eigenfunctions whose $L^{\infty}$-norms exhibit power growth \cite[Theorem 1]{Mil11}.

Towards this conjecture, the only direct unconditional progress that has been made up until now is for Hecke--Maa\ss{} cusp forms on arithmetic hyperbolic surfaces, for which Iwaniec and Sarnak have proven the sup-norm bound $\|g\|_{\infty} \ll_{\e} \lambda_g^{5/24 + \e}$ \cite[Theorem 0.1]{IS95}, while Marshall has proven the $L^4$-norm bound $\|g\|_4 \ll \lambda_g^{3/56}$ \cite[Corollary 1.2]{Mar16a} for compact arithmetic hyperbolic surfaces arising from quaternion division algebras\footnote{Marshall's $L^4$-norm bound is a corollary of bounds he attains for the $L^2$-norm of geodesic restrictions of Hecke--Maa\ss{} cusp forms, as work of Blair and Sogge \cite[Theorem 1.1]{BS17} (building on earlier work of Bourgain \cite{Bou09}) relates such geodesic restriction bounds to $L^4$-norm bounds.}. The $L^4$-norm bound with exponent $\delta(4) = 3/304$ obtained in \hyperref[thm:L4]{Theorem \ref*{thm:L4}} for $\Gamma \backslash \Hb$ and in \hyperref[thm:L4modified]{Theorem \ref*{thm:L4modified}} for $\Gamma_0(q) \backslash \Hb$ and $\Gamma^D \backslash \Hb$ gives more than a six-fold improvement on the exponent $\delta(2,4) = 1/16$ of Sogge's $L^4$-norm bound for compact surfaces and more than a five-fold improvement on the exponent $3/56$ of Marshall's $L^4$-norm bound for compact arithmetic hyperbolic surfaces arising from quaternion division algebras.

\subsubsection{Conditional Improvements}
\label{sect:improvementsimprovementsubsect}

We highlight that the strengths of \hyperref[thm:L4]{Theorems \ref*{thm:L4}} and \ref{thm:L4modified} lie in the fact that these are \emph{unconditional} power-saving improvements upon Sogge's bound, as these bounds can be greatly strengthened \emph{conditionally}. Watson observed that under the assumption of the generalised Lindel\"{o}f hypothesis for $\GL_3 \times \GL_2$ Rankin--Selberg $L$-functions and $\GL_2$ standard $L$-functions, we have the almost sharp upper bound $\|g\|_4 \ll_{\e} \lambda_g^{\e}$ \cite[Corollary 2]{Wat08}\footnote{Sarnak \cite[Theorem 3]{Sar03} announced a proof, joint with Watson, of the almost sharp upper bound $\|g\|_4 \ll_{\e} \lambda_g^{\e}$ for Hecke--Maa\ss{} cusp forms $g$ on $\Gamma \backslash \Hb$ conditional only on the generalised Ramanujan conjecture (see additionally \cite[p.~62]{Wat08}, and compare \hyperref[fig:Soggediagram]{Figure \ref*{fig:Soggediagram}} to \cite[Figure 6]{Sar03}). However, this claim has subsequently been withdrawn (cf.~\cite[Remark 3.3]{Hum18}).}. Buttcane and the second author improved this to the \emph{asymptotic formula} $\|g\|_4^4 \sim 3$ under the same assumption \cite[Theorem 1.1]{BuK17}, in accordance with the random wave conjecture.

Asymptotics for the $L^4$-norm are known unconditionally for certain distinguished \emph{sparse} (in particular, density zero) subsequences of automorphic forms with additional arithmetic structure, namely for \emph{dihedral} Hecke--Maa\ss{} cusp forms \cite[Theorem 1.9]{HK20} and (in a regularised form) for \emph{Eisenstein series} \cite[Theorem 1.1]{DK20}. We explain why these strong results are possible unconditionally yet \hyperref[thm:L4]{Theorems \ref*{thm:L4}} and \ref{thm:L4modified} fall short of such an asymptotic formula in \hyperref[rem:Eisensteindihedral]{Remark \ref*{rem:Eisensteindihedral}}. In \hyperref[sect:GLH]{Sections \ref*{sect:GLH}} and \ref{sect:fifthmoment}, we discuss how \hyperref[thm:L4]{Theorems \ref*{thm:L4}} and \ref{thm:L4modified} may be improved under various conditional assumptions.

\subsubsection{Weight-Aspect and Level-Aspect Generalisations}

There are natural generalisations of these $L^p$-norm bounds to more general families of automorphic forms rather than just Hecke--Maa\ss{} cusp forms on the modular surface $\Gamma \backslash \Hb$. In particular, one can instead study $L^p$-norm bounds for \emph{holomorphic modular forms} (so that the Laplacian is replaced by the weight $k$ Laplacian with $k$ varying); moreover, one can investigate $L^p$-norm bounds in the \emph{level aspect} (so that the underlying orbifold $\Gamma_0(q) \backslash \Hb$ is varying); finally, one can study this in \emph{both} aspects simultaneously. We direct the reader to \cite{HS20,Sah17,Xia07} and the references therein for results on $L^{\infty}$-norm bounds, \cite{Blo13,BKY13,BuK15,DK20,Hum18,HK20,Kha14,KhSt24,Liu15,Luo14,Spi03} for results on $L^4$-norm bounds, and \cite{Mar16c} for $L^p$-norm bounds.

Notably, Blomer, the second author, and Young have proven that a holomorphic Hecke cusp form $G$ of weight $k \in 2\N$ satisfies $\|y^{k/2} G\|_4 \ll_{\e} k^{1/12 + \e}$ \cite[Theorem 1.1]{BKY13}; as $k^2$ is the analogue of the Laplacian eigenvalue $\lambda_g$, this should be thought of as a Weyl-strength subconvex improvement upon the \emph{convexity bound} $\|y^{k/2} G\|_4 \ll_{\e} k^{1/8 + \e}$. Under the assumption of the generalised Riemann hypothesis for $\GL_3 \times \GL_2$ Rankin--Selberg $L$-functions and $\GL_2$ standard $L$-functions, Zenz has shown the sharp upper bound $\|y^{k/2} G\|_4 \ll 1$ \cite[Theorem 1.1]{Zen23}, while it is conjectured that the \emph{asymptotic formula} $\|y^{k/2} G\|_4^4 \sim 2$ holds \cite[Conjecture 1.2]{BKY13}.

\subsection{Method of Proof}
\label{sect:proofmethod}

\subsubsection{Reduction to Mixed Moments of $L$-Functions}

The initial manoeuvres of our proof of \hyperref[thm:L4]{Theorem \ref*{thm:L4}} follow a well-trodden path pioneered by Sarnak \cite[p.~461]{Sar03}: we spectrally expand $\|g\|_4^4$ and apply the Watson--Ichino triple product formula. More precisely, we write $\|g\|_4^4 = \langle |g|^2,|g|^2\rangle$ and spectrally expand this via Parseval's theorem for $L^2(\Gamma \backslash \Hb)$. The resulting spectral expansion involves a sum over Hecke--Maa\ss{} cusp forms $f$ of terms of the form $|\langle |g|^2,f\rangle|^2$ and an integral over $t \in \R$ of terms of the form $|\langle |g|^2, E(\cdot,1/2 + it)\rangle|^2$, where $E(z,s)$ denotes the real analytic Eisenstein series. We then invoke the Watson--Ichino triple product formula, namely \cite[Theorem 1.1]{Ich08} and \cite[Theorem 3]{Wat08}, to relate these triple products to $L$-functions. As explicated in \cite[Section 2]{BuK17}, we arrive at the identity
\begin{multline}
\label{eqn:L4toLfunctions}
\int_{\Gamma \backslash \Hb} |g(z)|^4 \, \frac{3}{\pi} \frac{dx \, dy}{y^2} = 1 + \sum_{f \in \BB_0} \frac{L\left(\frac{1}{2},f\right) L\left(\frac{1}{2},\ad g \otimes f\right)}{L(1,\ad f) L(1,\ad g)^2} H(t_f)	\\
+ \frac{1}{2\pi} \int_{-\infty}^{\infty} \left|\frac{\zeta\left(\frac{1}{2} + it\right) L\left(\frac{1}{2} + it,\ad g\right)}{\zeta(1 + 2it) L(1,\ad g)}\right|^2 H(t) \, dt,
\end{multline}
where $\BB_0$ denotes an orthonormal basis of Hecke--Maa\ss{} cusp forms on $\Gamma \backslash \Hb$ and
\begin{equation}
\label{eqn:Htdefeq}
H(t) \coloneqq \frac{\pi^2}{24} \frac{\prod_{\pm_1} \Gamma\left(\frac{1}{4} \pm_1 \frac{it}{2}\right)^2 \prod_{\pm_2,\pm_3} \Gamma\left(\frac{1}{4} \pm_2 \frac{it}{2} \pm_3 it_g\right)}{\prod_{\pm_1} \Gamma\left(\frac{1}{2} \pm_1 it\right) \prod_{\pm_2} \Gamma\left(\frac{1}{2} \pm_2 it_g\right)^2}.
\end{equation}
Here $t_g \in [0,\infty)$ denotes the spectral parameter of $g$, so that $\lambda_g = \frac{1}{4} + t_g^2$. This process reduces the problem to bounding the mixed moment of $L$-functions appearing in \eqref{eqn:L4toLfunctions}.

\subsubsection{Bounds for Mixed Moments of $L$-Functions}

To proceed further, we break the sum over $f \in \BB_0$ and the integral over $t \in \R$ in \eqref{eqn:L4toLfunctions} into different ranges based on the size of the spectral parameter $t_f \in [0,\infty)$ of $f$ and on the size of $|t| \in [0,\infty)$ relative to $t_g$. As in \cite{BuK17,DK20,Hum18,HK20}, we fix $\alpha > 0$ and consider the following four ranges:
\begin{enumerate}[leftmargin=*,label=\textup{(\arabic*)}]
\item the short initial range $[0,t_g^{1 - \alpha}]$;
\item the bulk range $[t_g^{1 - \alpha},2t_g - t_g^{1 - \alpha}]$;
\item the short transition range $[2t_g - t_g^{1 - \alpha},2t_g]$;
\item the tail range $[2t_g,\infty)$.
\end{enumerate}
\hyperref[thm:L4]{Theorem \ref*{thm:L4}} is an immediate consequence of the spectral expansion \eqref{eqn:L4toLfunctions} together with the following collection of estimates for these four ranges, which constitute the main content of this paper.

\begin{proposition}
\label{prop:fourranges}
Let $g$ be a Hecke--Maa\ss{} cusp form on $\Gamma \backslash \Hb$ with spectral parameter $t_g$, let $H(t)$ be as in \eqref{eqn:Htdefeq}, and fix $\alpha \in (0,\frac{1}{100})$.
\begin{enumerate}[leftmargin=*,label=\textup{(\arabic*)}]
\item\label{item:initial} For the short initial range $[0,t_g^{1 - \alpha}]$, we have the bounds
\begin{equation}
\label{eqn:initial}
\sum_{\substack{f \in \BB_0 \\ t_f \leq t_g^{1 - \alpha}}} \frac{L\left(\frac{1}{2},f\right) L\left(\frac{1}{2},\ad g \otimes f\right)}{L(1,\ad f) L(1,\ad g)^2} H(t_f) + \frac{1}{2\pi} \int\limits_{|t| \leq t_g^{1 - \alpha}} \left|\frac{\zeta\left(\frac{1}{2} + it\right) L\left(\frac{1}{2} + it,\ad g\right)}{\zeta(1 + 2it) L(1,\ad g)}\right|^2 H(t) \, dt \ll_{\e} t_g^{\frac{3}{38} + \e}.
\end{equation}
\item\label{item:bulk} For the bulk range $[t_g^{1 - \alpha},2t_g - t_g^{1 - \alpha}]$, there exists a continuous function $c(\alpha)$ of $\alpha$ satisfying $\lim_{\alpha \to 0} c(\alpha) = 0$ such that
\begin{multline}
\label{eqn:bulk}
\sum_{\substack{f \in \BB_0 \\ t_g^{1 - \alpha} \leq t_f \leq 2t_g - t_g^{1 - \alpha}}} \frac{L\left(\frac{1}{2},f\right) L\left(\frac{1}{2},\ad g \otimes f\right)}{L(1,\ad f) L(1,\ad g)^2} H(t_f)	\\
+ \frac{1}{2\pi} \int\limits_{t_g^{1 - \alpha} \leq |t| \leq 2t_g - t_g^{1 - \alpha}} \left|\frac{\zeta\left(\frac{1}{2} + it\right) L\left(\frac{1}{2} + it,\ad g\right)}{\zeta(1 + 2it) L(1,\ad g)}\right|^2 H(t) \, dt \ll_{\e} t_g^{c(\alpha) + \e}.
\end{multline}
\item\label{item:transition} For the short transition range $[2t_g - t_g^{1 - \alpha},2t_g]$, we have that
\begin{multline}
\label{eqn:transition}
\sum_{\substack{f \in \BB_0 \\ 2t_g - t_g^{1 - \alpha} \leq t_f \leq 2t_g}} \frac{L\left(\frac{1}{2},f\right) L\left(\frac{1}{2},\ad g \otimes f\right)}{L(1,\ad f) L(1,\ad g)^2} H(t_f)	\\
+ \frac{1}{2\pi} \int\limits_{2t_g - t_g^{1 - \alpha} \leq |t| \leq 2t_g} \left|\frac{\zeta\left(\frac{1}{2} + it\right) L\left(\frac{1}{2} + it,\ad g\right)}{\zeta(1 + 2it) L(1,\ad g)}\right|^2 H(t) \, dt \ll_{\e} t_g^{\frac{3}{38} + \e}.
\end{multline}
\item\label{item:tail} For the tail range $[2t_g,\infty)$, we have that
\begin{equation}
\label{eqn:tail}
\sum_{\substack{f \in \BB_0 \\ t_f \geq 2t_g}} \frac{L\left(\frac{1}{2},f\right) L\left(\frac{1}{2},\ad g \otimes f\right)}{L(1,\ad f) L(1,\ad g)^2} H(t_f) + \frac{1}{2\pi} \int\limits_{|t| \geq 2t_g} \left|\frac{\zeta\left(\frac{1}{2} + it\right) L\left(\frac{1}{2} + it,\ad g\right)}{\zeta(1 + 2it) L(1,\ad g)}\right|^2 H(t) \, dt \ll_{\e} t_g^{\e}.
\end{equation}
\end{enumerate}
\end{proposition}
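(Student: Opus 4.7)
The plan is to analyse each of the four spectral ranges separately, beginning with a Stirling analysis of the archimedean weight $H(t)$ defined by \eqref{eqn:Htdefeq}. The key features are that $H(t)$ decays exponentially like $e^{-\pi(|t|/2 - t_g)}$ in the tail region $|t| \geq 2t_g$ (owing to the unbalanced product of gamma factors $\Gamma(1/4 \pm_2 it/2 \pm_3 it_g)$), behaves polynomially of order roughly $t_g^{-1} |t|^{-1} (2t_g - |t|)^{-1/2}$ in the bulk $t_g^{1-\alpha} \leq |t| \leq 2t_g - t_g^{1-\alpha}$, and is of order $t_g^{-1}$ on the initial range $|t| \leq t_g^{1-\alpha}$. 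With these size estimates in hand, different tools are deployed in each range.

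The tail range \eqref{eqn:tail} is essentially immediate: the exponential decay of $H(t)$ absorbs any polynomial growth from the convexity bounds for each of $L(1/2,f)$, $L(1/2,\ad g \otimes f)$, $\zeta(1/2+it)$, $L(1/2+it,\ad g)$, together with Weyl's law for the spectral count, leaving a bound of size $t_g^{\e}$. For the short initial and transition ranges \eqref{eqn:initial} and \eqref{eqn:transition}, the shortness of the $t$-interval (length $t_g^{1-\alpha}$) permits a Cauchy--Schwarz split that separates $L(1/2,f)$ from $L(1/2,\ad g \otimes f)$; the second moment of $L(1/2,f)$ is handled by the standard Kuznetsov formula, while the second moment of $L(1/2,\ad g \otimes f)$ is controlled through the second form of spectral reciprocity described in the abstract, namely the cuspidal analogue of Kuznetsov's fourth moment identity relating a $\GL_4 \times \GL_2$ moment to itself. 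Balancing the losses incurred in these two inputs against the short length $t_g^{1-\alpha}$ is what pins the exponent at $3/38$, after which the bound is clean.

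The bulk range \eqref{eqn:bulk} is the heart of the argument and relies on the first form of spectral reciprocity, the cuspidal $\GL_3 \times \GL_2 \leftrightsquigarrow \GL_4 \times \GL_1$ Motohashi-type identity. Inserting a smooth spectral localiser matching $H(t_f)$, the $\GL_2$ moment $\sum_f L(1/2,f) L(1/2,\ad g \otimes f) / L(1,\ad f)$ is transformed into a $\GL_1$ moment (an integral over the critical line) of $\GL_4 \times \GL_1$ Rankin--Selberg $L$-functions whose $\GL_4$ object is constructed from $\ad g$. The transformed integral is bounded via subconvex bounds for these dual $L$-functions, while the continuous Eisenstein contribution to \eqref{eqn:bulk} is handled analogously through a Mellin--Barnes representation of $H(t)$. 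The slack $c(\alpha)$ absorbs the smoothing losses at the spectral endpoints $|t| \sim t_g^{1-\alpha}$ and $|t| \sim 2t_g - t_g^{1-\alpha}$, and vanishes as $\alpha \to 0$.

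The main obstacle is setting up the $\GL_3 \times \GL_2 \leftrightsquigarrow \GL_4 \times \GL_1$ reciprocity in a form directly applicable to the bulk. This requires (i) a careful Mellin--Barnes or contour-shift derivation of the identity on the full spectral line (rather than merely as an identity of meromorphic functions), (ii) stationary-phase analysis of the archimedean integral transforms governing the weight $H(t_f)$ as it passes across reciprocity, to ensure the dual weight is concentrated where the $\GL_4 \times \GL_1$ $L$-values admit useful subconvex bounds, and (iii) control of polar contributions from $\zeta$-type factors on the dual side. Once these technical hurdles are cleared and combined with the ingredients from the short ranges and the trivial tail bound, summing the four contributions through \eqref{eqn:L4toLfunctions} yields $\|g\|_4^4 \ll_\e t_g^{3/38 + \e}$, which is \hyperref[thm:L4]{Theorem \ref*{thm:L4}}.
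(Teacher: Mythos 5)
Your proposal misattributes the roles of the two ranges and the two reciprocity formulas, and the resulting architecture would not produce the stated exponents. First, the bulk range \eqref{eqn:bulk} is not the heart of the argument and is not handled via the $\GL_3 \times \GL_2 \leftrightsquigarrow \GL_4 \times \GL_1$ Motohashi-type identity; it is dealt with by modifying the earlier \emph{unconditional} analysis of Buttcane and Khan \cite{BuK17}, which already achieves an asymptotic in the bulk under GLH and needs only trivial bounds in several places to be made unconditional. No spectral reciprocity (beyond whatever is implicit in \cite{BuK17}) is invoked there, and the bulk is explicitly the least delicate part. The genuinely new work is concentrated in the short initial and short transition ranges, which you instead describe as handled by a routine Cauchy--Schwarz split.

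Second, your description of the short-range analysis misses the mechanism entirely. The $\GL_3 \times \GL_2 \leftrightsquigarrow \GL_4 \times \GL_1$ reciprocity is used (via H\"{o}lder, not Cauchy--Schwarz) to bound the \emph{first} moment $\sum_f L(1/2,\ad g \otimes f)/L(1,\ad f)$ over dyadic intervals $T \leq t_f \leq 2T$ in a middle portion $t_g^{1/2} \leq T \leq t_g^{16/19}$ of the initial range, with the dual side being a short integral of $L(1/2+it,\ad g)\zeta(1/2-it)$ bounded by Montgomery--Vaughan. The $\GL_4 \times \GL_2 \leftrightsquigarrow \GL_4 \times \GL_2$ reciprocity is then used not to bound a second moment of $L(1/2,\ad g \otimes f)$, but to \emph{transfer} the estimate from one dyadic range of $T$ to another — specifically reducing $t_g^{16/19} \leq T \leq t_g^{1-\alpha}$ to $T \leq t_g^{3/19}$ (where Cauchy--Schwarz plus the spectral large sieve is used directly), and reducing $t_g^{3/19} \leq T \leq t_g^{1/2}$ to the H\"{o}lder/$\GL_3 \times \GL_2$ range. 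The worst cases occur at $T \asymp t_g^{3/19}$ and $T \asymp t_g^{16/19}$, which is where $3/38$ comes from; nothing in your description reproduces this. Moreover the short transition range exhibits conductor-dropping and cannot be treated dyadically: the paper works on intervals of length $U = t_g - T/2 + 1$, uses Ivi\'{c}'s third moment bound for $L(1/2,f)$, and for small $U$ uses an explicit approximate-functional-equation-plus-Kuznetsov argument (including a stationary phase lemma) rather than any reciprocity formula. The hybrid size/support analysis of the transforms $\HH_{\mu_F}$ and $\widetilde h^{\pm}$, which is where the real technical difficulty lies because $T \ll t_g$, is also absent from your outline.
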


It is instructive at this point to consider which bounds are attainable for these four ranges under the assumption of the generalised Lindel\"{o}f hypothesis. Stirling's formula implies that the function $H(t)$ given by \eqref{eqn:Htdefeq} satisfies the asymptotic formula
\begin{multline}
\label{eqn:Htasymp}
H(t) = \frac{\pi^3}{3} \frac{1}{(1 + |t|)(1 + |2t_g + t|)^{1/2} (1 + |2t_g - t|)^{1/2}} e^{-\pi \Omega(t,t_g)}	\\
\times \left(1 + O\left(\frac{1}{1 + |t|} + \frac{1}{1 + |2t_g + t|} + \frac{1}{1 + |2t_g - t|}\right)\right),
\end{multline}
where
\[\Omega(t,t_g) \coloneqq \begin{dcases*}
0 & if $|t| \leq 2t_g$,	\\
|t| - 2t_g & if $|t| \geq 2t_g$.
\end{dcases*}\]
Thus the generalised Lindel\"{o}f hypothesis and the Weyl law combine to give the conditional bounds $O_{\e}(t_g^{-\alpha + \e})$ for the short initial range, $O_{\e}(t_g^{\e})$ for the bulk range, $O_{\e}(t_g^{-\alpha/2 + \e})$ for the short transition range, and $O_{\e}(t_g^{-1/2 + \e})$ for the tail range. Buttcane and the second author \cite{BuK17} showed that with further effort, one can obtain the conditional \emph{asymptotic formula} $2 + o(1)$ for the bulk range.

Without appealing to the generalised Lindel\"{o}f hypothesis, such strong bounds are no longer easily attained. Nonetheless, we shall show that the bound \eqref{eqn:tail} for the tail range is readily achieved due to the fact that $H(t)$ decays exponentially for $|t| \geq 2t_g$; moreover, the requisite bound \eqref{eqn:bulk} for the bulk range can be deduced with a modicum of effort from earlier work of Buttcane and the second author \cite{BuK17}. The bounds \eqref{eqn:initial} and \eqref{eqn:transition} for the short initial and short transition ranges, however, are far from immediate and require several new ideas.

A standard approach to bound the mixed moments of $L$-functions in these ranges would be to apply the Cauchy--Schwarz inequality to separate the $L$-functions, write these $L$-functions in terms of Dirichlet polynomials via the approximate functional equation, and apply the spectral large sieve. Unfortunately, this only yields the bounds $O(t_g^{1/2})$ for the short initial and short transition ranges, which merely recovers Sogge's $L^4$-norm bound. To improve upon these weaker bounds, we prove new forms of \emph{spectral reciprocity}: identities between two different moments of central values of $L$-functions. We apply these with a mix of other ideas, as we describe below.

\subsection{Spectral Reciprocity Formul\ae{}}
\label{sect:spectralreciprocityformulae}

\subsubsection{$\GL_3 \times \GL_2 \leftrightsquigarrow \GL_4 \times \GL_1$ Spectral Reciprocity}
\label{sect:GL3xGL2intro}

By an appropriate application of H\"{o}lder's inequality, we are led to the problem of determining bounds for the moments of $L$-functions
\begin{equation}
\label{eqn:firstmomentdyadic}
\sum_{f \in \BB_0} \frac{L\left(\frac{1}{2},\ad g \otimes f\right)}{L(1,\ad f)} h(t_f) + \frac{1}{2\pi} 
\int_{-\infty}^{\infty} \left|\frac{L\left(\frac{1}{2} + it,\ad g\right)}{\zeta(1 + 2it)}\right|^2 h(t) \, dt
\end{equation}
with $h(t)$ an appropriately chosen test function, such as a smooth approximation of the indicator function of a dyadic interval $[-2T,-T] \cup [T,2T]$.

The analogous moment with $g$ replaced by an Eisenstein series is
\begin{equation}
\label{eqn:Mformula}
\sum_{f \in \BB_0} \frac{L\left(\frac{1}{2},f\right)^3}{L(1,\ad f)} h(t_f) + \frac{1}{2\pi} \int_{-\infty}^{\infty} \left|\frac{\zeta\left(\frac{1}{2} + it\right)^3}{\zeta(1 + 2it)}\right|^2 h(t) \, dt.
\end{equation}
Via work of Motohashi \cite[Theorem 4.2]{Mot97}, given a sufficiently well-behaved test function $h$, there exists a corresponding transform $\HH$\footnote{Motohashi's formulation of this identity involves \emph{first} specifying $\HH$ and \emph{then} determining $h$ as a transform involving $\HH$. This process can be reversed; see, for example, work of Motohashi \cite{Mot99} and of Nelson \cite{Nel19b}.} such that there is an exact \emph{equality} between the moment \eqref{eqn:Mformula} and the sum of a main term dependent on $h$ together with a \emph{dual} moment
\[\int_{-\infty}^{\infty} \left|\zeta\left(\frac{1}{2} + it\right)\right|^4 \HH(t) \, dt.\]
For example, if one chooses $h(t)$ in \eqref{eqn:Mformula} to localise to an interval of the form $[-T - U,-T] \cup [T,T + U]$ with $1 \leq U \leq T$, then with some effort one can show that $\HH(t)$ is essentially localised to $|t| \leq T/U$, where it is of size $\approx U$.

In \hyperref[thm:3x2reciprocity]{Theorem \ref*{thm:3x2reciprocity}}, we prove a cuspidal analogue of Motohashi's formula, namely that given a sufficiently well-behaved test function $h$, there exists a corresponding transform $\HH$ (given as an explicit integral transform in \eqref{eqn:HHmuFt}), such that the moment \eqref{eqn:firstmomentdyadic} is \emph{exactly} equal to the sum of a main term dependent on $h$ together with a dual moment
\begin{equation}
\label{eqn:3x2dualmoment}
\int_{-\infty}^{\infty} L\left(\frac{1}{2} + it,\ad g\right) \zeta\left(\frac{1}{2} - it\right) \HH(t) \, dt.
\end{equation}

\subsubsection{$\GL_4 \times \GL_2 \leftrightsquigarrow \GL_4 \times \GL_2$ Spectral Reciprocity}
\label{sect:GL4xGL2intro}

We additionally prove a new form of spectral reciprocity for the mixed moment
\begin{equation}
\label{eqn:mixedmoment}
\sum_{f \in \BB_0} \frac{L\left(\frac{1}{2},f\right) L\left(\frac{1}{2},\ad g \otimes f\right)}{L(1,\ad f)} h(t_f) + \frac{1}{2\pi} \int_{-\infty}^{\infty} \left|\frac{\zeta\left(\frac{1}{2} + it\right) L\left(\frac{1}{2} + it,\ad g\right)}{\zeta(1 + 2it)}\right|^2 h(t) \, dt
\end{equation}
with $h(t)$ an appropriately chosen test function, such as a smooth approximation of the indicator function of a dyadic interval $[-2T,-T] \cup [T,2T]$. The analogous moment with $g$ replaced by an Eisenstein series is
\begin{equation}
\label{eqn:KMformula}
\sum_{f \in \BB_0} \frac{L\left(\frac{1}{2},f\right)^4}{L(1,\ad f)} h(t_f) + \frac{1}{2\pi} \int_{-\infty}^{\infty} \left|\frac{\zeta\left(\frac{1}{2} + it\right)^4}{\zeta(1 + 2it)}\right|^2 h(t) \, dt.
\end{equation}
Given a sufficiently well-behaved test function $h$, there exists a corresponding transform $\widetilde{h}$ such that there is an exact \emph{equality} between the moment \eqref{eqn:KMformula} and the sum of a main term dependent on $h$ together with a \emph{dual} moment
\[\sum_{f \in \BB_0} \frac{L\left(\frac{1}{2},f\right)^4}{L(1,\ad f)} \widetilde{h}(t_f) + \frac{1}{2\pi} \int_{-\infty}^{\infty} \left|\frac{\zeta\left(\frac{1}{2} + it\right)^4}{\zeta(1 + 2it)}\right|^2 \widetilde{h}(t) \, dt\]
as well as an additional dual moment of the same form involving a sum over holomorphic cusp forms. Such an identity is due to Kuznetsov (in an incomplete form in \cite{Kuz89} and completed in the unpublished preprint \cite{Kuz99}) and Motohashi \cite{Mot03}. If one chooses $h(t)$ in \eqref{eqn:KMformula} to localise to an interval of the form $[-T - U,-T] \cup [T,T + U]$ with $1 \leq U \leq T$, then with some effort one can show that $\widetilde{h}(t)$ is essentially localised to $|t| \leq T/U$, where it is of size $\approx U (1 + |t|)^{-1/2}$.

In \hyperref[thm:4x2reciprocity]{Theorem \ref*{thm:4x2reciprocity}}, we prove a cuspidal analogue of this formula of Kuznetsov, namely that given a sufficiently well-behaved test function $h$, there exists a corresponding transform $\widetilde{h}$ (given as an explicit integral transform in \eqref{eqn:tildehpmdefeq}) such that the mixed moment \eqref{eqn:mixedmoment} is equal to the sum of a main term dependent on $h$ together with a dual moment
\[\sum_{f \in \BB_0} \frac{L\left(\frac{1}{2},f\right) L\left(\frac{1}{2},\ad g \otimes f\right)}{L(1,\ad f)} \widetilde{h}(t_f) + \frac{1}{2\pi} \int_{-\infty}^{\infty} \left|\frac{\zeta\left(\frac{1}{2} + it\right) L\left(\frac{1}{2} + it,\ad g\right)}{\zeta(1 + 2it)}\right|^2 \widetilde{h}(t) \, dt\]
as well as an additional dual moment of the same form involving a sum over holomorphic cusp forms.

\subsubsection{Applications of Spectral Reciprocity for the Short Initial Range}

We apply these forms of spectral reciprocity to prove the bound \eqref{eqn:initial} for the short initial range. By a dyadic subdivision, we are led to bounding the mixed moment
\begin{equation}
\label{eqn:mixedmomentdyadic}
\sum_{\substack{f \in \BB_0 \\ T \leq t_f \leq 2T}} \frac{L\left(\frac{1}{2},f\right) L\left(\frac{1}{2},\ad g \otimes f\right)}{L(1,\ad f)} + \frac{1}{2\pi} \int\limits_{T \leq |t| \leq 2T} \left|\frac{\zeta\left(\frac{1}{2} + it\right) L\left(\frac{1}{2} + it,\ad g\right)}{\zeta(1 + 2it)}\right|^2 \, dt
\end{equation}
with $T \leq t_g^{1 - \alpha}$.

For $T \leq t_g^{3/19}$, so that the mixed moment \eqref{eqn:mixedmomentdyadic} is particularly short, we are unable to do better than simply applying the Cauchy--Schwarz inequality and the spectral large sieve. For $t_g^{1/2} \leq T \leq t_g^{16/19}$, on the other hand, we obtain improved bounds for \eqref{eqn:mixedmomentdyadic} via H\"{o}lder's inequality and an application of $\GL_3 \times \GL_2 \leftrightsquigarrow \GL_4 \times \GL_1$ spectral reciprocity to bound
\[\sum_{\substack{f \in \BB_0 \\ T \leq t_f \leq 2T}} \frac{L\left(\frac{1}{2},\ad g \otimes f\right)}{L(1,\ad f)} + \frac{1}{2\pi} \int\limits_{T \leq |t| \leq 2T} \left|\frac{L\left(\frac{1}{2} + it,\ad g\right)}{\zeta(1 + 2it)}\right|^2 \, dt,\]
which crucially relies on the nonnegativity of $L(1/2,\ad g \otimes f)$ \cite[Theorem 1.1]{Lap03}. After choosing our test function $h$ to approximate the indicator function of $[-2T,-T] \cup [T,2T]$, we are left with determining the behaviour of the corresponding transform $\HH$. This is no easy task: unlike most previous work involving similar spectral reciprocity formul\ae{} (such as \cite{LNQ23}), we require \emph{hybrid} bounds with explicit dependence not only on the dyadic parameter $T$ but additionally on the spectral parameter $t_g$, which is further complicated by the fact that $T < t_g$, so that our moment is short relative to the conductor of the $L$-function $L(1/2,\ad g \otimes f)$. Instead of being of size $\approx T$ and localised to $|t| \leq 1$, as is the case for $t_g$ \emph{bounded} relative to $T$, we find that $\HH(t)$ is essentially localised to $|t| \leq t_g^2 / T^2$, where it is of size $\approx T^2 / t_g$. We subsequently bound the dual mixed moment \eqref{eqn:3x2dualmoment} of $L(1/2 + it,\ad g) \zeta(1/2 - it)$ via the Cauchy--Schwarz inequality and the Montgomery--Vaughan mean value theorem for Dirichlet polynomials.

Finally, for the remaining ranges $t_g^{3/19} \leq T \leq t_g^{1/2}$ and $t_g^{16/19} \leq T \leq t_g^{1 - \alpha}$, we obtain strongest bounds for \eqref{eqn:mixedmomentdyadic} by using $\GL_4 \times \GL_2 \leftrightsquigarrow \GL_4 \times \GL_2$ spectral reciprocity. We once more choose $h$ to approximate the indicator function of $[-2T,-T] \cup [T,2T]$ and determine the behaviour of the corresponding transform $\widetilde{h}$. Again, this is a cumbersome task due to the hybrid bounds required, unlike other previous applications of spectral reciprocity (such as \cite{BLM19}). Instead of being of size $\approx T$ and localised to $|t| \leq 1$, as is the case for $t_g$ \emph{bounded} relative to $T$, we find that $\widetilde{h}(t)$ is essentially localised to $|t| \leq t_g / T$, where it is of size $\approx T^2 / t_g$. Thus we treat the range $t_g^{16/19} \leq T \leq t_g^{1 - \alpha}$ by using $\GL_4 \times \GL_2 \leftrightsquigarrow \GL_4 \times \GL_2$ spectral reciprocity to reduce the problem to the range $T \leq t_g^{3/19}$, which we previously treated by the spectral large sieve. Similarly, the range $t_g^{3/19} \leq T \leq t_g^{1/2}$ is treated by using $\GL_4 \times \GL_2 \leftrightsquigarrow \GL_4 \times \GL_2$ spectral reciprocity to reduce the problem to the range $t_g^{1/2} \leq T \leq t_g^{16/19}$, which we previously treated via $\GL_3 \times \GL_2 \leftrightsquigarrow \GL_4 \times \GL_1$ spectral reciprocity. Note that our treatment of this latter range is highly unusual and seemingly counter-intuitive, since we move from a relatively short moment to a \emph{longer} moment, and yet this process nonetheless yields improved bounds.

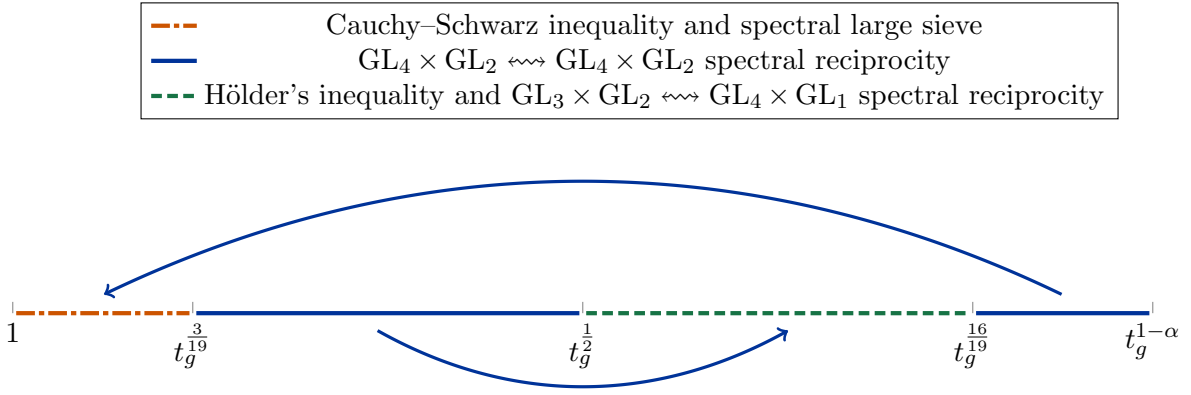
\begin{figure}[h]
\centering
\begin{tikzpicture}[
declare function={
	d1(\x)=0
	;
},
declare function={
	d2(\x)= sqrt(2-x*x+x)-1.43
	;
},
declare function={
d3(\x)= 0.52-sqrt((1/16)-x*x+x)
;
}
]
\begin{axis}[
legend pos=north east,
axis x line*=middle, axis y line*=middle,
separate axis lines,
hide obscured x ticks=false,
y axis line style={draw opacity=0},
x axis line style={draw opacity=0},
y=25cm,
x=15cm,
ymin=-1/16, ymax=11/64, ymajorticks=false,
xmin=0, xmax=1, xtick={0,3/19,1/2,16/19,1}, xticklabels={$1$,$t_g^{\frac{3}{19}}$, $t_g^{\frac{1}{2}}$, $t_g^{\frac{16}{19}}$, $t_g^{1 - \alpha}$},
domain=0:1,samples=301,
]
\addplot [ultra thick, color=burntorange, dash pattern={on 7pt off 1.5pt on 2pt off 1.5pt},domain=0.003:0.155] {d1(x)};
\addlegendentry{Cauchy--Schwarz inequality and spectral large sieve}
\addplot [ultra thick, color=darkpowderblue,domain=0.161:0.497] {d1(x)};
\addlegendentry{$\GL_4 \times \GL_2 \leftrightsquigarrow \GL_4 \times \GL_2$ spectral reciprocity}
\addplot [ultra thick, color=darkspringgreen, dash pattern={on 4pt off 2pt},domain=0.503:0.839] {d1(x)};
\addlegendentry{H\"{o}lder's inequality and $\GL_3 \times \GL_2 \leftrightsquigarrow \GL_4 \times \GL_1$ spectral reciprocity}
\addplot [ultra thick, color=darkpowderblue,domain=0.845:0.997] {d1(x)};
\addplot [<-, very thick, color=darkpowderblue, domain=0.08:0.92] {d2(x)};
\addplot [->, very thick, color=darkpowderblue, domain=0.32:0.68] {d3(x)};
\end{axis}
\end{tikzpicture}
\caption{The short initial range divided into portions based on the techniques used. The arrows indicate how the treatment of a portion via $\GL_4 \times \GL_2 \leftrightsquigarrow \GL_4 \times \GL_2$ spectral reciprocity reduces the problem to a different portion.}
\end{figure}

It is crucial to note that this last step \emph{cannot} be achieved without using $\GL_4 \times \GL_2 \leftrightsquigarrow \GL_4 \times \GL_2$ in its form as an \emph{exact} identity of two moments of $L$-functions. Indeed, by modifying the proof to use approximate functional equations, this spectral reciprocity formula may instead be proven in the form of an \emph{approximate} identity of moments of $L$-functions, where each moment involves Dirichlet polynomials in place of $L$-functions. Such an approximate identity, however, is insufficient for our applications. We make vital use of the nonnegativity of the $L$-function $L(1/2,\ad g \otimes f)$ appearing in the dual moment, for this allows us to use H\"{o}lder's inequality with exponents that lead to \emph{odd} moments of these central $L$-values. With Dirichlet polynomials, we no longer have nonnegativity, and so this avenue of approach is closed to us; in particular, we would only be able to prove strictly weaker $L^4$-norm bounds for Hecke--Maa\ss{} cusp forms via this alternative method using approximate functional equations.

\subsubsection{Applications of Spectral Reciprocity for the Short Transition Range}\label{sect:shorttransitionrange}

To prove the bound \eqref{eqn:transition} for the short transition range, we must work over shorter intervals, namely with mixed moments of the form
\begin{equation}
\label{eqn:mixedmomentshort}
\sum_{\substack{f \in \BB_0 \\ T - U \leq t_f \leq T + U}} \frac{L\left(\frac{1}{2},f\right) L\left(\frac{1}{2},\ad g \otimes f\right)}{L(1,\ad f)} + \frac{1}{2\pi} \int\limits_{T - U \leq |t| \leq T + U} \left|\frac{\zeta\left(\frac{1}{2} + it\right) L\left(\frac{1}{2} + it,\ad g\right)}{\zeta(1 + 2it)}\right|^2 \, dt
\end{equation}
with $2t_g - t_g^{1 - \alpha} \leq T \leq 2t_g$ and $U = t_g - \frac{T}{2} + 1$. This is the \emph{conductor-dropping} range: the conductor of $L(1/2,f) L(1/2,\ad g \otimes f)$ in this range is $\asymp t_g^6 (1 + 2t_g - t_f)^2$, which \emph{decreases} as $t_f$ approaches $2t_g$.

For the range $t_g^{1/3} \leq U \leq t_g^{1 - \alpha}$, we bound \eqref{eqn:mixedmomentshort} via $\GL_4 \times \GL_2 \leftrightsquigarrow \GL_4 \times \GL_2$ spectral reciprocity. We begin by choosing $h$ to approximate the indicator function of $[-T - U,-T + U] \cup [T - U,T + U]$ and determining the behaviour of the transform $\widetilde{h}$. Once more, the hybrid nature of the problem alters the behaviour of the transform: instead of being of size $U (1 + |t|)^{-1/2}$ and localised to $|t| \leq T / U$, as is the case for $t_g$ \emph{bounded} relative to $T$ and $U$, the transform $\widetilde{h}(t)$ is instead localised to the much shorter range $|t| \leq (T/U)^{1/2}$, where it is of larger size $\approx U$. Since $T \asymp t_g$, we have thereby reduced the problem back to the short initial range, and so we can simply appeal to our previously determined bounds for this range.

For the remaining range $1 \leq U \leq t_g^{1/3}$, we first apply H\"{o}lder's inequality, which leaves us with the problem of bounding
\begin{equation}
\label{eqn:firstmomentshort}
\sum_{\substack{f \in \BB_0 \\ T - U \leq t_f \leq T + U}} \frac{L\left(\frac{1}{2},\ad g \otimes f\right)}{L(1,\ad f)} + \frac{1}{2\pi} \int\limits_{T - U \leq |t| \leq T + U} \left|\frac{L\left(\frac{1}{2} + it,\ad g\right)}{\zeta(1 + 2it)}\right|^2 \, dt.
\end{equation}
While it is likely that we could bound this adequately using $\GL_3 \times \GL_2 \leftrightsquigarrow \GL_4 \times \GL_1$ spectral reciprocity, the short length of the moment (in particular, the fact that $U \leq T^{1/3}$) means that the analysis of the size and length of the transform $\HH$ becomes significantly more challenging. Instead, we bound \eqref{eqn:firstmomentshort} via a more classical approach using approximate functional equations and the Kuznetsov formula, which is more straightforward and yields sufficiently strong bounds for our purposes.

\section{Automorphic Preliminaries}

\subsection{Spectral Summation Formul\ae{}}

The central tools that we make use of are the Kuznetsov and Petersson formul\ae{}. The former involves sums of Hecke eigenvalues $\lambda_f(n)$ of Hecke--Maa\ss{} cusp forms $f$ on $\Gamma \backslash \Hb$ over an orthonormal basis $\BB_0$ of such cusp forms together with the root number $\epsilon_f \in \{1,-1\}$, as well as an integral involving the Hecke eigenvalues $\lambda(n,t) \coloneqq \sum_{ab = n} a^{it} b^{-it}$ of the real analytic Eisenstein series $E(z,1/2 + it)$. The latter involves sums of Hecke eigenvalues $\lambda_f(n)$ of holomorphic Hecke cusp forms $f$ on $\Gamma \backslash \Hb$ of weight $k_f \in 2\N$ over an orthonormal basis $\BB_{\hol}$ of such cusp forms. Both formul\ae{} express sums of Hecke eigenvalues in terms of sums of Kloosterman sums weighted by Bessel functions.

\begin{theorem}[{Kuznetsov formula \cite[Theorem 9.3]{Iwa02}}]
Let $\delta > 0$, and let $h^{\pm}(t)$ be a function that is even, holomorphic in the horizontal strip $|\Im(t)| \leq 1/2 + \delta$, and satisfies $h^{\pm}(t) \ll (1 + |t|)^{-2 - \delta}$. Then for $m,n \in \N$,
\begin{multline}
\label{eqn:Kuznetsovformula}
\sum_{f \in \BB_0} \epsilon_f^{\frac{1 \mp 1}{2}} \frac{\lambda_f(m) \lambda_f(n)}{L(1,\ad f)} h^{\pm}(t_f) + \frac{1}{2\pi} \int_{-\infty}^{\infty} \frac{\lambda(m,t) \lambda(n,t)}{\zeta(1 + 2it) \zeta(1 - 2it)} h^{\pm}(t) \, dt	\\
= \delta_{m, \pm n} \Nscr^{\pm} h^{\pm} + \sum_{c = 1}^{\infty} \frac{S(m,\pm n;c)}{c} (\Kscr^{\pm} h^{\pm})\left(\frac{\sqrt{mn}}{c}\right),
\end{multline}
where
\begin{gather}
\label{eqn:Kloostermandefeq}
S(m,n;c) \coloneqq \sum_{d \in (\Z/c\Z)^{\times}} e\left(\frac{md + n\overline{d}}{c}\right),	\\
\label{eqn:NscrpmKscrpmdefeq}
\Nscr^{\pm} h^{\pm} \coloneqq \int_{-\infty}^{\infty} h^{\pm}(r) \, d_{\spec}r, \qquad (\Kscr^{\pm} h^{\pm})(x) \coloneqq \int_{-\infty}^{\infty} \JJ_r^{\pm}(x) h^{\pm}(r) \, d_{\spec}r,	\\
\label{eqn:JJrpmdefeq}
\JJ_r^{+}(x) \coloneqq \frac{\pi i}{\sinh \pi r} \left(J_{2ir}(4\pi x) - J_{-2ir}(4\pi x)\right), \qquad \JJ_r^{-}(x) \coloneqq 4 \cosh \pi r K_{2ir}(4\pi x),	\\
\label{eqn:dspecdefeq}
d_{\spec} r \coloneqq \frac{1}{2\pi^2} r \tanh \pi r \, dr.
\end{gather}
\end{theorem}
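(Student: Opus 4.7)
The plan is to follow the standard Poincar\'{e}-series approach as in \cite{Iwa02}. For $m \in \N$ and $s \in \C$ with $\Re(s) > 1$, I would introduce the non-holomorphic Poincar\'{e} series
\[P_m(z,s) \coloneqq \sum_{\gamma \in \Gamma_{\infty} \backslash \Gamma} (\Im \gamma z)^s e(m \Re \gamma z),\]
where $\Gamma_\infty$ is the stabiliser of the cusp at infinity, and compute its $n$-th Fourier coefficient at infinity via Bruhat decomposition of $\Gamma_\infty \backslash \Gamma / \Gamma_\infty$. The identity double coset contributes a term $\delta_{m, \pm n}$ (after an $s$-dependent Gamma factor), while the cosets indexed by the lower-left entry $c \geq 1$ contribute Kloosterman sums $S(m,\pm n;c)/c$ weighted by explicit oscillatory integrals that evaluate to $J$-Bessel functions in the $n > 0$ case and $K$-Bessel functions in the $n < 0$ case — exactly matching the kernels $\JJ_r^\pm$ in \eqref{eqn:JJrpmdefeq} after an appropriate Mellin--Barnes manipulation.

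Next, I would compute $\langle P_m(\cdot,s_1), P_n(\cdot,s_2) \rangle$ in two ways. On the one hand, unfolding against the Fourier expansion of $P_n$ produces the geometric side: a Kloosterman/Bessel sum of the form appearing on the right of \eqref{eqn:Kuznetsovformula}. On the other hand, spectrally expanding by Parseval on $L^2(\Gamma \backslash \Hb)$ gives a sum over $\BB_0$ together with an Eisenstein integral, where each inner product $\langle P_m(\cdot,s), f \rangle$ unfolds to pick out $\lambda_f(m)/L(1,\ad f)^{1/2}$ times an explicit ratio of Gamma functions in $s$ and $t_f$ (coming from a Mellin transform of $y^{s-1/2} W_{it_f}(y)$). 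Equating these two expansions yields the identity \eqref{eqn:Kuznetsovformula}, but only for the two-parameter family of test functions
\[h_{s_1,s_2}(t) = \frac{\Gamma(s_1 - \frac{1}{2} + it)\Gamma(s_1 - \frac{1}{2} - it)\Gamma(s_2 - \frac{1}{2} + it)\Gamma(s_2 - \frac{1}{2} - it)}{\Gamma(s_1)\Gamma(s_2) \cdot (\text{normalisation})}.\]

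The final step is to promote the identity from these Poincar\'{e}-type test functions to arbitrary admissible $h^\pm$. The key tool is the Kontorovich--Lebedev inversion for the kernel $K_{2ir}$ and the Sears--Titchmarsh inversion for the combination $J_{2ir} - J_{-2ir}$, both of which show that the transforms $\Kscr^\pm$ in \eqref{eqn:NscrpmKscrpmdefeq} admit explicit inverses on a suitable class of weights $\phi^\pm$ on $(0,\infty)$. Concretely, for any $h^\pm$ satisfying the hypotheses, one expresses $h^\pm = \Kscr^\pm \phi^\pm$ for an appropriate $\phi^\pm$, multiplies both sides of the restricted identity by $\phi^\pm$, integrates against $d_{\spec}r$, and swaps the order of integration on both the spectral and the geometric side.

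The main obstacle is justifying this last swap together with the analytic continuation in $(s_1, s_2)$ down to the line $\Re(s_j) = 1/2$ where the Poincar\'{e} series is no longer absolutely convergent. One must verify absolute convergence of the geometric side uniformly in the relevant parameters (using the Weil bound for Kloosterman sums together with the decay of Bessel functions), control the cuspidal terms at exceptional eigenvalues (none exist for $\Gamma = \SL_2(\Z)$ by Selberg, so the strip $|\Im(t)| \leq 1/2 + \delta$ in the hypothesis is genuinely available), and handle the continuous spectrum contribution, where the product $\zeta(1+2it)\zeta(1-2it)$ in the denominator requires care near $t = 0$. All of these are routine but technical, and the holomorphy plus decay hypotheses on $h^\pm$ in the statement are precisely what make the contour shifts and Fubini applications legitimate.
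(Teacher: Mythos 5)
This theorem is cited to \cite[Theorem 9.3]{Iwa02} and not reproved in the paper, so the relevant comparison is against Iwaniec's argument. Your outline --- Fourier coefficients of $P_m(z,s)$ via Bruhat decomposition, unfolding $\langle P_m(\cdot,s_1),P_n(\cdot,s_2)\rangle$ for the Kloosterman/Bessel side, Parseval for the cuspidal and Eisenstein side with Whittaker--Mellin $\Gamma$-factors, then promotion from the two-parameter Poincar\'{e} test functions $h_{s_1,s_2}$ to arbitrary admissible $h^{\pm}$ --- is indeed the structure of \cite[Chapter 9]{Iwa02}, and the skeleton is sound.

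The weak point is the promotion step, and there you have two real imprecisions. First, a bookkeeping error: $\Kscr^{\pm}$ as defined in \eqref{eqn:NscrpmKscrpmdefeq} sends a function of the spectral parameter $r \in \R$ to a function of $x \in (0,\infty)$, so the equation ``$h^{\pm} = \Kscr^{\pm}\phi^{\pm}$'' with $\phi^{\pm}$ a weight on $(0,\infty)$ does not type-check; what you want is $h^{\pm} = \Lscr^{\pm}\phi^{\pm}$ with $\Lscr^{\pm}$ the geometric-to-spectral transform in \eqref{eqn:Lscrdefeq}, equivalently $\phi^{\pm} = \Kscr^{\pm}h^{\pm}$, and the subsequent integration you describe must then be over the $(s_1,s_2)$-parameters, not against $d_{\spec}r$. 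Second, and more substantively, the Sears--Titchmarsh theorem does \emph{not} furnish a two-sided inverse for $\Kscr^{+}$ on functions on $(0,\infty)$: the completeness relation for a nice $H$ is $H = \Kscr^{+}\Lscr^{+}H + \Kscr^{\hol}\Lscr^{\hol}H$ (this is precisely how the paper uses it when constructing the second triple \eqref{eqn:secondtriple}, and precisely why the Kloosterman summation formula \eqref{eqn:Kloostermanformula} acquires a sum over $\BB_{\hol}$ in the same-sign case). The same-sign Bessel transform $\Kscr^{+}$ has a nontrivial holomorphic deficit; only the left-inversion $\Lscr^{\pm}\Kscr^{\pm} = \mathrm{Id}$ is available, and even that requires the hypotheses of evenness, decay, and holomorphy in $|\Im t| \leq 1/2 + \delta$ to justify. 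In the Kuznetsov (spectral-to-geometric) direction one never needs surjectivity of $\Kscr^{+}$, so your argument can be repaired, but as written the claim that ``the transforms $\Kscr^{\pm}$ admit explicit inverses'' conflates the easy $-$ (Kontorovich--Lebedev) situation with the subtler $+$ one. A minor aside: the absence of exceptional eigenvalues for $\SL_2(\Z)$ is a consequence of Huxley's and others' numerical/rigidity arguments rather than Selberg's $3/16$ bound, though the strip hypothesis on $h^{\pm}$ is there to control the Kloosterman tail and is not contingent on this.
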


Here $J_{\alpha}(x)$ denotes the Bessel function of the first kind and $K_{\alpha}(x)$ denotes the modified Bessel function of the second kind.

\begin{theorem}[{Petersson formula \cite[Theorem 9.6]{Iwa02}}]
Let $h^{\hol} : 2\N \to \C$ be a sequence satisfying $h^{\hol}(k) \ll k^{-2 - \delta}$ for some $\delta > 0$. Then for $m,n \in \N$,
\begin{equation}
\label{eqn:Peterssonformula}
\sum_{f \in \BB_{\hol}} \frac{\lambda_f(m) \lambda_f(n)}{L(1,\ad f)} h^{\hol}(k_f) = \delta_{m,n} \Nscr^{\hol} h^{\hol} + \sum_{c = 1}^{\infty} \frac{S(m,n;c)}{c} (\Kscr^{\hol} h^{\hol})\left(\frac{\sqrt{mn}}{c}\right).
\end{equation}
Here
\begin{gather}
\label{eqn:Nscrholdefeq}
\Nscr^{\hol} h^{\hol} \coloneqq \sum_{\substack{k = 2 \\ k \equiv 0 \hspace{-.25cm} \pmod{2}}}^{\infty} \frac{k - 1}{2 \pi^2} h^{\hol}(k),	\\
\label{eqn:KscrholJJkholdefeq}
(\Kscr^{\hol} h^{\hol})(x) \coloneqq \sum_{\substack{k = 2 \\ k \equiv 0 \hspace{-.25cm} \pmod{2}}}^{\infty} \frac{k - 1}{2 \pi^2} \JJ_k^{\hol}(x) h^{\hol}(k), \qquad \JJ_k^{\hol}(x) \coloneqq 2\pi i^{-k} J_{k - 1}(4\pi x).
\end{gather}
\end{theorem}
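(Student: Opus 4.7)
\emph{Plan.} The approach is to establish the fixed-weight Petersson formula for each $k \in 2\N$ and then sum against the test sequence $h^{\hol}(k)$. For a fixed even integer $k \geq 2$, I would consider the classical holomorphic Poincar\'e series
\[P_m(z,k) \coloneqq \sum_{\gamma \in \Gamma_\infty \backslash \Gamma} j(\gamma,z)^{-k} e(m \gamma z),\]
where $\Gamma_\infty$ is the stabiliser of the cusp at infinity and $j(\gamma,z) = cz + d$ is the usual automorphy factor. Unfolding the Petersson inner product $\langle P_m(\cdot,k), f\rangle$ against a weight-$k$ cusp form $f$ expresses it in terms of the $m$-th Fourier coefficient of $f$; converting between the Fourier coefficient, the Hecke eigenvalue $\lambda_f(m)$, the Petersson norm $\|f\|^2$, and the symmetric square $L$-value $L(1,\ad f)$ then yields a clean expression for $\langle P_m(\cdot,k), f \rangle$ in terms of $\lambda_f(m)/L(1,\ad f)$.

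\emph{Key computation.} Next, I would compute the Fourier expansion of $P_m(z,k)$ directly via the Bruhat decomposition $\Gamma = \Gamma_\infty \sqcup \bigsqcup_{c \geq 1} \bigsqcup_{d \in (\Z/c\Z)^\times} \Gamma_\infty \gamma_{c,d} \Gamma_\infty$. The identity coset contributes a Kronecker delta $\delta_{m,n}$ to the $n$-th Fourier coefficient, while the big-cell cosets yield Kloosterman sums $S(m,n;c)$ paired with a standard oscillatory integral that evaluates, via the integral representation of $J_{k-1}$, to $2\pi i^{-k} J_{k-1}(4\pi\sqrt{mn}/c)$ up to an explicit power of $c$ and $\sqrt{mn}$. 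Pairing these two computations through Parseval in the finite-dimensional space of weight-$k$ holomorphic cusp forms on $\Gamma$ produces the fixed-weight Petersson identity
\[\sum_{\substack{f \in \BB_{\hol} \\ k_f = k}} \frac{\lambda_f(m) \lambda_f(n)}{L(1,\ad f)} = \frac{k-1}{2\pi^2} \delta_{m,n} + \frac{k-1}{2\pi^2} \sum_{c=1}^\infty \frac{S(m,n;c)}{c} \, 2\pi i^{-k} J_{k-1}\!\left(\frac{4\pi\sqrt{mn}}{c}\right).\]

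\emph{Assembly and main obstacle.} To obtain \eqref{eqn:Peterssonformula}, I would multiply through by $h^{\hol}(k)$ and sum over $k \in 2\N$ with $k \geq 2$. The spectral side matches the left-hand side of \eqref{eqn:Peterssonformula} verbatim; on the geometric side, the diagonal term produces $\Nscr^{\hol} h^{\hol}$ as defined in \eqref{eqn:Nscrholdefeq}, while the Kloosterman term folds into $(\Kscr^{\hol} h^{\hol})(\sqrt{mn}/c)$ from \eqref{eqn:KscrholJJkholdefeq} via the definition $\JJ_k^{\hol}(x) = 2\pi i^{-k} J_{k-1}(4\pi x)$. The delicate step is justifying the interchange of the sums over $k$ and over $c$: the hypothesis $h^{\hol}(k) \ll k^{-2-\delta}$ is precisely what is needed here, since combining it with the uniform bound $J_{k-1}(x) \ll \min(x^{k-1}/\Gamma(k),\, 1)$ and the Weil bound $|S(m,n;c)| \ll c^{1/2 + \e}(m,n,c)^{1/2}$ yields absolute convergence of the resulting double sum and thus legitimises Fubini.
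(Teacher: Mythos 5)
The paper does not prove this statement; it cites it directly from Iwaniec's \emph{Spectral Methods of Automorphic Forms} \cite[Theorem 9.6]{Iwa02}, and your sketch faithfully reproduces the standard Poincar\'e-series argument given there. Your outline is correct: the fixed-weight identity via unfolding, the Bruhat decomposition producing $\delta_{m,n}$ and the Kloosterman-plus-Bessel term, the normalisation bridging $\|f\|^2$ and $L(1,\ad f)$ that yields the factor $\frac{k-1}{2\pi^2}$, and the final Fubini step legitimised by $h^{\hol}(k) \ll k^{-2-\delta}$ together with the Weil bound and the small-argument decay of $J_{k-1}$ are exactly the ingredients in the cited proof.
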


We also use the Kuznetsov and Petersson formul\ae{} \emph{in reverse}, namely the spectral decomposition of sums of Kloosterman sums.

\begin{theorem}[{Kloosterman summation formula \cite[Theorem 16.5]{IK04}}]
Let $H \in C^3((0,\infty))$ be a function satisfying $x^j \frac{d^j}{dx^j} H(x) \ll \min\{x^{1/2 + \delta},x^{-1-\delta}\}$ for $j \in \{0,1,2,3\}$ and for some $\delta > 0$. Then for $m,n \in \N$,
\begin{multline}
\label{eqn:Kloostermanformula}
\sum_{f \in \BB_0} \epsilon_f^{\frac{1 \mp 1}{2}} \frac{\lambda_f(m) \lambda_f(n)}{L(1,\ad f)} (\Lscr^{\pm} H)(t_f) + \frac{1}{2\pi} \int_{-\infty}^{\infty} \frac{\lambda(m,t) \lambda(n,t)}{\zeta(1 + 2it) \zeta(1 - 2it)} (\Lscr^{\pm} H)(t) \, dt	\\
+ \delta_{\pm,+} \sum_{f \in \BB_{\hol}} \frac{\lambda_f(m) \lambda_f(n)}{L(1,\ad f)} (\Lscr^{\hol} H)(k_f)	\\
= \sum_{c = 1}^{\infty} \frac{S(m,\pm n;c)}{c} H\left(\frac{\sqrt{mn}}{c}\right),
\end{multline}
where
\begin{equation}
\label{eqn:Lscrdefeq}
(\Lscr^{\pm} H)(t) \coloneqq \int_{0}^{\infty} \JJ_t^{\pm}(x) H(x) \, \frac{dx}{x}, \qquad (\Lscr^{\hol} H)(k) \coloneqq \int_{0}^{\infty} \JJ_k^{\hol}(x) H(x) \, \frac{dx}{x}.
\end{equation}
\end{theorem}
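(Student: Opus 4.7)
The plan is to show this formula is dual to the Kuznetsov and Petersson formul\ae{} via Bessel inversion. Define the test functions
\[h^{\pm}(r) \coloneqq (\Lscr^{\pm} H)(r), \qquad h^{\hol}(k) \coloneqq (\Lscr^{\hol} H)(k)\]
by the formul\ae{} \eqref{eqn:Lscrdefeq}. The first step is to verify that $h^{\pm}$ is even, extends holomorphically to a strip $|\Im(r)| \leq 1/2 + \delta'$ with $h^{\pm}(r) \ll (1 + |r|)^{-2 - \delta'}$, and that $h^{\hol}(k) \ll k^{-2 - \delta'}$, for some $\delta' \in (0,\delta)$. This is standard: three-fold integration by parts in the defining integrals together with the Bessel differential equation converts each $x$-derivative of $\JJ_r^{\pm}(x)$ or $\JJ_k^{\hol}(x)$ into algebraic gains of order $r^2$ or $k^2$; the decay hypotheses on $H$ and its derivatives then supply the required bounds. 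Holomorphy in $r$ across the real axis follows from the power-series expansions of $J_{\pm 2ir}(4\pi x)$ and $K_{2ir}(4\pi x)$ in $x$.

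Next, apply the Kuznetsov formula \eqref{eqn:Kuznetsovformula} with test function $h^{\pm}$; in the $+$ case, additionally apply the Petersson formula \eqref{eqn:Peterssonformula} with $h^{\hol}$ and add the two identities. The resulting left-hand side is exactly the spectral side of \eqref{eqn:Kloostermanformula}. The resulting right-hand side equals
\[\delta_{m,\pm n} \Nscr^{\pm} h^{\pm} + \delta_{\pm,+} \delta_{m,n} \Nscr^{\hol} h^{\hol} + \sum_{c = 1}^{\infty} \frac{S(m,\pm n;c)}{c} K\left(\frac{\sqrt{mn}}{c}\right),\]
where $K(x) \coloneqq (\Kscr^{\pm} h^{\pm})(x) + \delta_{\pm,+} (\Kscr^{\hol} h^{\hol})(x)$. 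It thus remains to show $K(x) = H(x)$ for $x > 0$ and that the diagonal contribution vanishes.

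Both tasks reduce to the completeness of the Bessel kernels $\{\JJ_r^{\pm}\}$ (together with $\{\JJ_k^{\hol}\}$ in the $+$ case) on a suitable function space on $(0,\infty)$; concretely, the Kontorovich--Lebedev inversion theorem handles the $-$ case and the Sears--Titchmarsh inversion handles the $+$ case. For $H$ in the class specified, these give $K(x) = H(x)$ for all $x > 0$. The diagonal contribution vanishes trivially in the $-$ case since $\delta_{m,-n} = 0$ for $m,n \in \N$. In the $+$ case, when $m = n$, one must show $\Nscr^{+} h^{+} + \Nscr^{\hol} h^{\hol} = 0$; after swapping orders of integration/summation this reduces to the dual completeness relation
\[\int_{-\infty}^{\infty} \JJ_r^{+}(x) \, d_{\spec} r + \sum_{\substack{k = 2 \\ k \equiv 0 \hspace{-.25cm} \pmod{2}}}^{\infty} \frac{k - 1}{2\pi^2} \JJ_k^{\hol}(x) = 0 \quad \text{for } x > 0,\]
paired with the hypothesis $H(x) \ll x^{1/2 + \delta}$ near the origin to justify the interchange.

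The main obstacle is the rigorous invocation of Bessel inversion and the vanishing of the diagonal contribution on the given class of test functions, as the classical Kontorovich--Lebedev and Sears--Titchmarsh statements are usually formulated on more restrictive spaces. A clean route is to first establish the identity for $H$ of compact support in $(0,\infty)$, where all manipulations are absolutely convergent and the inversion theorems apply verbatim, and then pass to the limit by approximating a general $H$ by such functions, controlling tails using the uniform decay hypotheses on $H$ and its derivatives. Absolute convergence of the Kloosterman side throughout is secured by Weil's bound $|S(m,\pm n;c)| \ll_{\e} c^{1/2 + \e} (m,n,c)^{1/2}$ together with the estimate $H(\sqrt{mn}/c) \ll c^{-1/2 - \delta}$ for large $c$.
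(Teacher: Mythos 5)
The paper gives no proof of this result; it cites \cite[Theorem 16.5]{IK04} directly. Your plan --- substitute $h^{\pm} = \Lscr^{\pm}H$, $h^{\hol} = \Lscr^{\hol}H$ into the Kuznetsov and Petersson formul\ae{} and recover $H$ on the Kloosterman side via Kontorovich--Lebedev (in the $-$ case) and Sears--Titchmarsh (in the $+$ case) inversion --- is the standard route, so in spirit you are reproducing the cited argument. Two of your claimed verification steps, however, do not hold as stated.

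The holomorphy claim in your first step is too strong. Near $x = 0$ one has $\JJ_r^{\pm}(x) \asymp x^{-2|\Im r|}$ when $\Im r \neq 0$, so for $H$ satisfying only $H(x) \ll x^{1/2 + \delta}$ near the origin the integral defining $(\Lscr^{\pm}H)(r)$ converges precisely in the strip $|\Im r| < 1/4 + \delta/2$, which is narrower than the $|\Im r| \leq 1/2 + \delta$ required by the Kuznetsov formula as quoted. Your fallback of first establishing the identity for $H$ compactly supported in $(0,\infty)$ --- for which $\Lscr^{\pm}H$ is entire --- and then passing to the limit is thus not merely a ``clean route'' but the necessary one; the opening verification should be carried out only in that restricted class.

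The treatment of the diagonal in the $+$ case has a genuine gap. The interchange you invoke to reduce $\Nscr^{+}(\Lscr^{+}H) + \Nscr^{\hol}(\Lscr^{\hol}H) = 0$ to a pointwise ``dual completeness relation'' fails: for fixed $x > 0$ one has $\JJ_r^{+}(x) \asymp r^{-1/2}$ as $r \to \infty$ with phase of size $r\log r$, so $\JJ_r^{+}(x)\,d_{\spec}r$ has magnitude $\asymp r^{1/2}$ and the integral $\int_{-\infty}^{\infty} \JJ_r^{+}(x)\,d_{\spec}r$ diverges. Correspondingly, $\int_0^\infty |\JJ_r^{+}(x)| |H(x)| \frac{dx}{x} \asymp r^{-1/2}$, so the iterated integral of absolute values in $\Nscr^{+}(\Lscr^{+}H)$ diverges and Fubini is unavailable. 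The identity you need is certainly true, but its proof must treat the continuous integral over $r$ and the discrete sum over $k$ together --- for instance by passing to the Mellin transforms $\widehat{\JJ_r^{+}}(s)$ and $\widehat{\JJ_k^{\hol}}(s)$ and realising the holomorphic sum as residues cancelling the continuous contribution after a contour shift --- rather than as two separately convergent pointwise identities in $x$. As written, this step does not go through.
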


\subsection{Mellin Transforms}

We record the following Mellin transforms of the functions $\JJ_r^{\pm}$ and $\JJ_k^{\hol}$ as in \eqref{eqn:JJrpmdefeq} and \eqref{eqn:KscrholJJkholdefeq}.

\begin{lemma}[{\cite[(A.7)]{BLM19}, \cite[(3.13)]{BlK19b}}]
We have that
\begin{align}
\notag
\widehat{\JJ_r^{+}}(s) & = \frac{\pi i (2\pi)^{-s}}{2 \sinh \pi r} \left(\frac{\Gamma\left(\frac{s}{2} + ir\right)}{\Gamma\left(1 - \frac{s}{2} + ir\right)} - \frac{\Gamma\left(\frac{s}{2} - ir\right)}{\Gamma\left(1 - \frac{s}{2} - ir\right)}\right)	\\
\label{eqn:JJr+Mellin}
& = (2\pi)^{-s} \Gamma\left(\frac{s}{2} + ir\right) \Gamma\left(\frac{s}{2} - ir\right) \cos \frac{\pi s}{2},	\\
\notag
\widehat{\JJ_r^{-}}(s) & = \frac{\pi i (2\pi)^{-s}}{2 \tanh \pi r \cos \frac{\pi s}{2}} \left(\frac{\Gamma\left(\frac{s}{2} + ir\right)}{\Gamma\left(1 - \frac{s}{2} + ir\right)} - \frac{\Gamma\left(\frac{s}{2} - ir\right)}{\Gamma\left(1 - \frac{s}{2} - ir\right)}\right)	\\
\label{eqn:JJr-Mellin}
& = (2\pi)^{-s} \Gamma\left(\frac{s}{2} + ir\right) \Gamma\left(\frac{s}{2} - ir\right) \cosh \pi r,	\\
\notag
\widehat{\JJ_k^{\hol}}(s) & = \pi i^{-k} (2\pi)^{-s} \frac{\Gamma\left(\frac{s + k - 1}{2}\right)}{\Gamma\left(\frac{1 - s + k}{2}\right)}	\\
\label{eqn:JJkholMellin}
& = (2\pi)^{-s} \Gamma\left(\frac{s + k - 1}{2}\right) \Gamma\left(\frac{s - k + 1}{2}\right) \cos \frac{\pi s}{2}.
\end{align}
\end{lemma}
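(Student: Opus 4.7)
The plan is to derive each identity from the classical Mellin transforms of Bessel functions, namely
\[
\int_0^\infty J_\nu(u) u^{s-1}\, du = 2^{s-1} \frac{\Gamma\left(\frac{s+\nu}{2}\right)}{\Gamma\left(1 - \frac{s-\nu}{2}\right)}, \qquad \int_0^\infty K_\nu(u) u^{s-1}\, du = 2^{s-2} \Gamma\left(\frac{s+\nu}{2}\right) \Gamma\left(\frac{s-\nu}{2}\right),
\]
valid in appropriate vertical strips. First I would substitute $u = 4\pi x$, which extracts a prefactor $(2\pi)^{-s}$, and then insert the definitions \eqref{eqn:JJrpmdefeq} and \eqref{eqn:KscrholJJkholdefeq}. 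The first equality in each identity then falls out immediately: the difference $J_{2ir} - J_{-2ir}$ in $\JJ_r^+$ reproduces the stated difference of gamma quotients, the lone $K_{2ir}$ in $\JJ_r^-$ produces the product $\Gamma(\frac{s}{2}+ir)\Gamma(\frac{s}{2}-ir)$ directly (so for $\JJ_r^-$ the \emph{second} closed form is actually the one obtained first), and for $\JJ_k^{\hol}$ the algebraic simplification $1 - \frac{s-k+1}{2} = \frac{1-s+k}{2}$ gives the first equality on the nose.

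To convert between the two closed forms I would invoke Euler's reflection formula $\Gamma(z)\Gamma(1-z) = \pi/\sin \pi z$. For $\widehat{\JJ_r^\pm}(s)$, taking $z = \frac{s}{2} \mp ir$ rewrites each $\Gamma(1 - \frac{s}{2} \pm ir)^{-1}$ as $\Gamma(\frac{s}{2} \mp ir) \sin \pi(\frac{s}{2} \mp ir)/\pi$; the resulting difference of gamma quotients then equals $\Gamma(\frac{s}{2}+ir)\Gamma(\frac{s}{2}-ir)$ times $\frac{1}{\pi}(\sin \pi(\frac{s}{2}-ir) - \sin \pi(\frac{s}{2}+ir))$, which via the identity $\sin(A-B) - \sin(A+B) = -2 \cos A \sin B$ with $A = \pi s/2$ and $B = i\pi r$ collapses to $-\frac{2i}{\pi}\cos(\pi s/2) \sinh(\pi r)$. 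Combining this with the prefactors $\pi i/\sinh \pi r$ in $\JJ_r^+$ and $\pi i/(2 \tanh(\pi r) \cos(\pi s/2))$ in $\JJ_r^-$ converts each identity to its alternative form. For $\widehat{\JJ_k^{\hol}}(s)$ I would instead set $z = \frac{s-k+1}{2}$; the even-weight evaluation $\sin \pi(1-k)/2 = (-1)^{(k-2)/2}$ yields $\sin \pi(s-k+1)/2 = i^k \cos(\pi s/2)$, which absorbs precisely the $i^{-k}$ appearing in the prefactor of $\JJ_k^{\hol}$ and delivers the second form.

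The only genuine obstacle is careful bookkeeping of signs and phases, in particular the factor $i^k = (-1)^{k/2}$ produced by the parity of $k$ in the holomorphic case; once this is handled, the proof is a mechanical consequence of the standard Mellin-transform tables for $J_\nu$ and $K_\nu$ together with the reflection formula.
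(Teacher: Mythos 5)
Your derivation is correct and uses the standard route: the tabulated Mellin transforms of $J_\nu$ and $K_\nu$, the substitution $u=4\pi x$ (which produces the $(2\pi)^{-s}$ after absorbing the power of $2$), and Euler's reflection formula to pass between the two closed forms. The paper itself gives no proof — it merely cites \cite{BLM19} and \cite{BlK19b} — so there is no argument in the paper to compare against, but your computation is precisely the one those references carry out, and every step checks out: the prefactor bookkeeping $(4\pi)^{-s}2^{s-1}=\tfrac{1}{2}(2\pi)^{-s}$ and $(4\pi)^{-s}2^{s-2}=\tfrac{1}{4}(2\pi)^{-s}$ is right, the sine-difference identity $\sin(A-B)-\sin(A+B)=-2\cos A\sin B$ with $B=i\pi r$ produces $-2i\cos(\pi s/2)\sinh\pi r$ as you state, and the hyperbolic factors cancel the prefactors exactly as needed.

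One small slip: you write $\sin\bigl(\pi(1-k)/2\bigr)=(-1)^{(k-2)/2}$, but for even $k$ this evaluates to $\sin\bigl(\pi(1-k)/2\bigr)=\cos(\pi k/2)=(-1)^{k/2}=i^k$, not $(-1)^{(k-2)/2}=-(-1)^{k/2}$. Your subsequent conclusion $\sin\bigl(\pi(s-k+1)/2\bigr)=i^k\cos(\pi s/2)$ is nevertheless correct — it follows from $\cos\bigl(\pi(1-k)/2\bigr)=0$ together with the corrected value $\sin\bigl(\pi(1-k)/2\bigr)=i^k$ — so this is a typographical sign error in the auxiliary identity rather than a gap in the argument. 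Everything else, including the absorption of $i^{-k}$ against $i^k$ in the holomorphic case, is fine.
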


We additionally state bounds for these Mellin transforms and their residues.

\begin{corollary}[{\cite[Corollary A.27]{HK20}}]
The functions $\widehat{\JJ_r^{\pm}}(s)$ extend meromorphically to $\C$ with simple poles at $s = 2(\pm ir - \ell)$ for $\ell \in \N_0$, where $\N_0$ denotes the nonnegative integers. For $s = \sigma + i\tau \in \C$ in bounded vertical strips at least a bounded distance away from $\{2(\pm ir - \ell) : \ell \in \N_0\}$ and for $r \in \R$,
\begin{align}
\label{eqn:JJr+Mellinbound}
\widehat{\JJ_r^{+}}(s) & \ll_{\sigma} \left(\left(1 + \left|\tau + 2r\right|\right) \left(1 + \left|\tau - 2r\right|\right)\right)^{\frac{\sigma - 1}{2}} \times \begin{dcases*}
e^{-\frac{\pi}{2}(2|r| - |\tau|)} & if $|\tau| \leq 2|r|$,	\\
1 & if $|\tau| \geq 2|r|$,
\end{dcases*}	\\
\label{eqn:JJr-Mellinbound}
\widehat{\JJ_r^{-}}(s) & \ll_{\sigma} \left(\left(1 + \left|\tau + 2r\right|\right) \left(1 + \left|\tau - 2r\right|\right)\right)^{\frac{\sigma - 1}{2}} \times \begin{dcases*}
1 & if $|\tau| \leq 2|r|$,	\\
e^{-\frac{\pi}{2}(|\tau| - 2|r|)} & if $|\tau| \geq 2|r|$.
\end{dcases*}
\end{align}
Moreover,
\begin{equation}
\label{eqn:JJrpmMellinResbound}
\Res_{s = 2(\pm ir - \ell)} \widehat{\JJ_r^{+}}(s) = (-1)^{\ell} \Res_{s = 2(\pm ir - \ell)} \widehat{\JJ_r^{-}}(s) \ll_{\ell} (1 + |r|)^{-\ell - \frac{1}{2}}.
\end{equation}

The function $\widehat{\JJ_k^{\hol}}(s)$ extends meromorphically to $\C$ with simple poles at $s = 1 - k - 2\ell$ for $\ell \in \N_0$. For $s = \sigma + i\tau \in \C$ in bounded vertical strips, at least a bounded distance away from $\{1 - k - 2\ell : \ell \in \N_0\}$,
\begin{equation}
\label{eqn:JJkholMellinbound}
\widehat{\JJ_k^{\hol}}(s) \ll_{\sigma} (k + |\tau|)^{\sigma - 1}.
\end{equation}
Moreover,
\begin{equation}
\label{eqn:JJkholMellinResbound}
\Res_{s = 1 - k - 2\ell} \widehat{\JJ_k^{\hol}}(s) = \frac{(2\pi i)^{k + 2\ell}}{\Gamma(k + \ell) \Gamma(\ell + 1)}.
\end{equation}
\end{corollary}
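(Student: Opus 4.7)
The corollary consists of (i) meromorphic continuation and determination of poles, (ii) vertical-strip bounds, and (iii) residue computations, for the Mellin transforms evaluated explicitly in the preceding lemma. My plan is to deduce everything from Stirling's formula applied to those explicit formulae, tracking the exponential factors carefully.

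\emph{Meromorphic continuation and poles.} Starting from
\[ \widehat{\JJ_r^{\pm}}(s) = (2\pi)^{-s} \Gamma\!\left(\tfrac{s}{2}+ir\right) \Gamma\!\left(\tfrac{s}{2}-ir\right) F^{\pm}(s,r), \]
with $F^{+}(s,r) = \cos(\pi s/2)$ and $F^{-}(s,r) = \cosh(\pi r)$, meromorphic continuation is immediate, and the only possible poles come from the $\Gamma$-factors at $s = 2(\pm ir - \ell)$ for $\ell \in \N_{0}$. For real $r$ these locations are not integers, so the zeros of $\cos(\pi s/2)$ cannot cancel them. For $\widehat{\JJ_k^{\hol}}(s) = \pi i^{-k}(2\pi)^{-s}\Gamma(\tfrac{s+k-1}{2})/\Gamma(\tfrac{1-s+k}{2})$, the quotient exhibits poles only at $s = 1-k-2\ell$; the potential poles at $s = k-1-2\ell$ from the second representation are cancelled since $k$ is even, so these sit at odd integers and are killed by the zero of $\cos(\pi s/2)$ in the symmetric form.

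\emph{Bounds via Stirling.} Write $s = \sigma+i\tau$ and apply the uniform Stirling estimate $|\Gamma(\sigma+i\tau)| \asymp_{\sigma} (1+|\tau|)^{\sigma-1/2} e^{-\pi|\tau|/2}$ to each factor $\Gamma(s/2 \pm ir)$, whose imaginary parts are $(\tau \pm 2r)/2$. This yields
\[ |\Gamma(\tfrac{s}{2}+ir)\Gamma(\tfrac{s}{2}-ir)| \asymp_{\sigma} \bigl((1+|\tau+2r|)(1+|\tau-2r|)\bigr)^{(\sigma-1)/2} \exp\!\left(-\tfrac{\pi}{4}(|\tau+2r|+|\tau-2r|)\right). \]
The elementary identity $|\tau+2r| + |\tau - 2r| = 2\max\{|\tau|, 2|r|\}$ converts the exponential into $e^{-\pi\max\{|\tau|,2|r|\}/2}$. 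Combining with $|\cos(\pi s/2)| \asymp e^{\pi|\tau|/2}$ for $|\tau|$ large and distinguishing the two regimes $|\tau| \lessgtr 2|r|$ produces \eqref{eqn:JJr+Mellinbound}; combining instead with $\cosh(\pi r) \asymp e^{\pi|r|}$ produces \eqref{eqn:JJr-Mellinbound}. For the holomorphic transform, applying Stirling to the quotient $\Gamma(\tfrac{s+k-1}{2})/\Gamma(\tfrac{k-s+1}{2})$ in the form $\Gamma(z+a)/\Gamma(z+b) \asymp (k+|\tau|)^{a-b}$ (with $a-b = s-1$) gives \eqref{eqn:JJkholMellinbound} directly, the main feature being that the exponential contributions from numerator and denominator cancel since the real parts have the same size $\asymp k$.

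\emph{Residues.} The Jacobian $ds/d(s/2-ir) = 2$ gives $\Res_{s=2ir-2\ell}\Gamma(s/2-ir) = 2(-1)^\ell/\ell!$, so at the pole $s=-2\ell+2ir$,
\[ \Res_{s=-2\ell+2ir}\widehat{\JJ_r^{+}}(s) = \frac{2(-1)^\ell}{\ell!}(2\pi)^{2\ell-2ir}\Gamma(-\ell+2ir)\cos(\pi(ir-\ell)). \]
Using $\cos(\pi(ir-\ell)) = (-1)^\ell\cosh(\pi r)$ collapses this to a scalar multiple of $\cosh(\pi r)\Gamma(-\ell+2ir)$, and Stirling gives $|\cosh(\pi r)\Gamma(-\ell+2ir)| \asymp (1+|r|)^{-\ell-1/2}$ since the $e^{\pi|r|}$ from $\cosh$ cancels the $e^{-\pi|r|}$ from $|\Gamma(-\ell+2ir)|$. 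The computation for $\widehat{\JJ_r^{-}}(s)$ is identical apart from replacing $\cos(\pi(ir-\ell))$ by $\cosh(\pi r)$, which accounts for the factor $(-1)^\ell$ between the two residues. The pole $s = -2\ell - 2ir$ is handled symmetrically. For \eqref{eqn:JJkholMellinResbound}, the computation is algebraic: at $s = 1-k-2\ell$ one has $\Gamma(\tfrac{1-s+k}{2}) = \Gamma(k+\ell)$, $(2\pi)^{-s} = (2\pi)^{k+2\ell-1}$, and $\Gamma(\tfrac{s+k-1}{2}) = \Gamma(-\ell)$ contributes residue $2(-1)^\ell/\ell!$; simplifying with $i^{-k}(-1)^\ell = i^{k+2\ell}/i^{2k} = i^{k+2\ell}$ (valid since $k$ is even) delivers the stated form.

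No step is a substantial obstacle, but the bookkeeping in Step 2 — keeping track of which exponential factors from Stirling survive against $\cos(\pi s/2)$ versus $\cosh(\pi r)$, and noticing the useful identity $|\tau+2r|+|\tau-2r| = 2\max\{|\tau|,2|r|\}$ — is where care is required to land on exactly the stated regime-dependent shapes.
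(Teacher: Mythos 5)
Your proposal is correct in substance and is the expected Stirling-formula argument, which is presumably the same route taken in the cited source \cite[Corollary~A.27]{HK20}; the paper itself gives no proof. The computations in Steps (ii) and (iii) are accurate: the identity $|\tau+2r|+|\tau-2r|=2\max\{|\tau|,2|r|\}$ is the right engine for the regime split, the exponent $(\sigma-1)/2$ comes out correctly from the real part $\sigma/2$ of each gamma argument, the holomorphic bound follows from the cancellation of the $e^{-\pi|\tau|/4}$ factors in the gamma quotient, and the residue algebra (including the parity identity $i^{-k}(-1)^\ell = i^{k+2\ell}$ for $k$ even) all checks out.

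The one place your reasoning is incomplete is the determination of the pole set of $\widehat{\JJ_k^{\hol}}$. You argue that the candidate poles at $s=k-1-2\ell$ sit at odd integers and are therefore killed by the simple zero of $\cos(\pi s/2)$; but the retained poles at $s=1-k-2\ell$ are \emph{also} odd integers, so the same argument, read literally, would erase them too. The distinction is that for $k\ge 2$ and any $\ell\ge 0$, at $s=1-k-2\ell$ \emph{both} gamma factors $\Gamma(\tfrac{s+k-1}{2})$ and $\Gamma(\tfrac{s-k+1}{2})$ have simple poles (their arguments are $-\ell$ and $1-k-\ell\le -1$), so the zero of $\cos(\pi s/2)$ cancels only one of them and a simple pole survives; whereas at $s=k-1-2\ell$ with $\ell<k-1$ only the second gamma has a pole, which is cancelled outright. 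The cleanest way to see this, and the reason the lemma records the alternative form $\widehat{\JJ_k^{\hol}}(s)=\pi i^{-k}(2\pi)^{-s}\Gamma(\tfrac{s+k-1}{2})/\Gamma(\tfrac{1-s+k}{2})$, is that the ratio exhibits poles only from the numerator, at $s=1-k-2\ell$, and these are never cancelled by poles of the denominator (which lie at $s=1+k+2m$, in the right half-plane). You should replace the hand-wave with this observation; the rest of the proposal stands.
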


\subsection{Vorono\u{\i} Summation Formul\ae{}}

Along with the Kuznetsov and Petersson formul\ae{}, we also make use of the $\GL_3$ Vorono\u{\i} summation formula. This involves distinguished special functions. For these special functions, we have the following lemma, which is a straightforward consequence of the meromorphic continuation of the gamma function together with Stirling's formula.

\begin{lemma}
Let $\mu = (\mu_1,\mu_2,\mu_3) \in \C^3$ with $-1/2 < \Re(\mu_j) < 1/2$ and $\mu_1 + \mu_2 + \mu_3 = 0$, and for $s = \sigma + i\tau \in \C$, define
\begin{equation}
\label{eqn:Gscrmupm}
\Gscr_{\mu}^{\pm}(s) \coloneqq \frac{1}{2} \prod_{j = 1}^{3} G_0(s + \mu_j) \pm \frac{1}{2i} \prod_{j = 1}^{3} G_1(s + \mu_j),
\end{equation}
where
\[G_j(s) \coloneqq \frac{\Gamma_{\R}(s + j)}{\Gamma_{\R}(1 - s + j)} = 2(2\pi)^{-s} \Gamma(s) \times \begin{dcases*}
\cos \frac{\pi s}{2} & if $j = 0$,	\\
\sin \frac{\pi s}{2} & if $j = 1$,
\end{dcases*}\]
with $\Gamma_{\R}(s) \coloneqq \pi^{-s/2} \Gamma(s/2)$. Then $\Gscr_{\mu}^{\pm}(s)$ is meromorphic on $\C$ with simple poles at $s = -\mu_j - \ell$ for each $\ell \in \N_0$. Moreover, if $s$ is a bounded distance away from such a pole, we have that
\begin{equation}
\label{eqn:Gscrmubounds}
\Gscr_{\mu}^{\pm}(s) \ll_{\mu,\sigma} (1 + |\tau|)^{3\sigma - \frac{3}{2}}.
\end{equation}

Similarly, for $s \in \C$, define
\begin{equation}
\label{eqn:Gpm}
G^{\pm}(s) \coloneqq \frac{1}{2} G_0(s) \mp \frac{1}{2i} G_1(s) = (2\pi)^{-s} \Gamma(s) \exp\left(\pm \frac{i\pi s}{2}\right).
\end{equation}
Then $G^{\pm}(s)$ is meromorphic on $\C$ with simple poles at $s = -\ell$ with residue $(-1)^{\ell} i^{\pm \ell} (2\pi)^{\ell} /\ell!$ for each $\ell \in \N_0$. Moreover, if $s = \sigma + i\tau$ is a bounded distance away from such a pole, we have that
\begin{equation}
\label{eqn:Gpmbounds}
G^{\pm}(s) \ll_{\sigma} (1 + |\tau|)^{\sigma - \frac{1}{2}}.
\end{equation}
\end{lemma}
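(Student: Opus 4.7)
The plan is to reduce the lemma to two standard inputs: the reflection and duplication identities for $\Gamma(s)$, which convert the ratio definitions of $G_j(s)$ into explicit trigonometric forms, and Stirling's asymptotic, which then supplies the polynomial bounds after suitable exponential factors cancel.

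I would first verify the second equalities in the definition of $G_j(s)$. Using $\Gamma_{\R}(s) = \pi^{-s/2}\Gamma(s/2)$ together with the duplication formula $\Gamma(s/2)\Gamma((s+1)/2) = 2^{1-s}\sqrt{\pi}\,\Gamma(s)$ and the reflection formula $\Gamma(z)\Gamma(1-z) = \pi/\sin(\pi z)$, a short manipulation produces $G_0(s) = 2(2\pi)^{-s}\Gamma(s)\cos(\pi s/2)$ and $G_1(s) = 2(2\pi)^{-s}\Gamma(s)\sin(\pi s/2)$. Substituting into the definition of $G^{\pm}(s)$ and using $\cos\theta \pm i\sin\theta = e^{\pm i\theta}$ gives the closed form $G^{\pm}(s) = (2\pi)^{-s}\Gamma(s)e^{\pm i\pi s/2}$, which immediately inherits meromorphic continuation and simple poles at $s = -\ell$ from $\Gamma(s)$. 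The residue formula then follows by combining $\Res_{s=-\ell}\Gamma(s)=(-1)^{\ell}/\ell!$ with the value of $e^{\pm i\pi s/2}$ at $s=-\ell$. The bound \eqref{eqn:Gpmbounds} follows from Stirling, $|\Gamma(\sigma+i\tau)| \ll (1+|\tau|)^{\sigma-1/2} e^{-\pi|\tau|/2}$, together with $|e^{\pm i\pi s/2}| = e^{\mp\pi\tau/2}$: the two exponential factors combine to at worst $1$ in either half-plane, leaving the polynomial bound $(1+|\tau|)^{\sigma - 1/2}$.

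For $\Gscr_{\mu}^{\pm}(s)$, an analogous substitution together with the traceless condition $\sum_{j=1}^{3}\mu_j = 0$ and the identity $\pm 1/i = \mp i$ gives the closed form
\[
\Gscr_{\mu}^{\pm}(s) = 4(2\pi)^{-3s}\prod_{j=1}^{3}\Gamma(s+\mu_j) \cdot \biggl[\prod_{j=1}^{3}\cos\frac{\pi(s+\mu_j)}{2} \mp i\prod_{j=1}^{3}\sin\frac{\pi(s+\mu_j)}{2}\biggr].
\]
The bracketed trigonometric factor is entire, so meromorphic continuation is controlled by the Gamma product, and the poles sit at $s = -\mu_j - \ell$ for $\ell \in \N_0$. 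The strip condition $-1/2 < \Re(\mu_j) < 1/2$ forces any coincidence $\mu_i - \mu_j \in \Z$ to reduce to $\mu_i = \mu_j$ (since both the real and imaginary parts of $\mu_i - \mu_j$ must then vanish), so in the generic case where the $\mu_j$ are distinct the pole locations $\{-\mu_j - \ell\}$ are already distinct and hence simple. The estimate \eqref{eqn:Gscrmubounds} follows by applying Stirling to each Gamma factor and the trivial bounds $|\cos(\pi(s+\mu_j)/2)|, |\sin(\pi(s+\mu_j)/2)| \ll e^{\pi|\tau|/2}$ to the trigonometric factors: the three $e^{-\pi|\tau|/2}$'s arising from the Gamma functions exactly cancel the $e^{3\pi|\tau|/2}$ from the bracket, leaving the claimed growth $(1+|\tau|)^{3\sigma - 3/2}$.

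The main obstacle is less a mathematical difficulty than a careful bookkeeping exercise: one must take the right linear combinations to make the exponential factors cancel, since individually neither the Gamma functions nor the trigonometric prefactors give polynomial growth. Handling degenerate configurations of $\mu$, where some $\mu_j$ coincide and the Gamma product develops higher-order poles not killed by the bracket, would require a separate argument, but these are confined to a measure-zero set and do not affect the generic statement invoked in the sequel.
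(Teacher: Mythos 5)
Your approach is exactly the one the paper intends: the paper gives no proof beyond the remark that this lemma is ``a straightforward consequence of the meromorphic continuation of the gamma function together with Stirling's formula,'' and your route---rewrite each $G_j$ via the reflection and duplication formulas into the stated trigonometric closed form, combine, and then apply Stirling so that the $e^{-\pi|\tau|/2}$ factors from the Gamma functions cancel the exponential growth of the trigonometric prefactors---is precisely that. Your explicit closed form for $\Gscr_\mu^{\pm}(s)$, which uses $\sum_j \mu_j = 0$ to make the $(2\pi)^{-\mu_j}$ factors cancel, is correct, and the bookkeeping of exponential factors checks out in both cases.

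Two small observations. First, you note that the claim of \emph{simple} poles can fail for degenerate $\mu$ (e.g., $\mu_1 = \mu_2$), and that this is harmless for the sequel because the paper only invokes the lemma with $\mu_F = (2it_g, 0, -2it_g)$, whose entries are always distinct for a cusp form; this is an accurate reading and in fact exposes a mild imprecision in the lemma as stated, so it is good that you flagged it rather than glossing over it. Second, to fully justify ``simple poles at $s = -\mu_j - \ell$'' (as opposed to merely ``poles confined to this set''), one should also observe that the bracketed trigonometric factor does not vanish at these points: at $s_0 = -\mu_{j_0} - \ell_0$ with $\ell_0$ even, $\prod_j \sin$ vanishes while $\prod_j \cos$ does not (and vice versa for $\ell_0$ odd), with the non-vanishing of the remaining factors guaranteed by the strip condition $-1/2 < \Re(\mu_j) < 1/2$ exactly as in your integrality observation. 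Your proposal states the pole locations but does not carry out this last non-vanishing check; it is a one-line verification and does not change the structure of your argument.
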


We use the $\GL_3$ Vorono\u{\i} summation formula for Hecke--Maa\ss{} cusps forms for $\SL_3(\Z)$ (i.e.\ Hecke--Maa\ss{} cusp forms on $\Zgp(\R) \SL_3(\Z) \backslash \GL_3(\R) / \Ogp(3)$).

\begin{lemma}[{Vorono\u{\i} Summation Formula \cite[Section 4]{BlK19b}}]
Given a Hecke--Maa\ss{} cusp form $F$ for $\SL_3(\Z)$ with Hecke eigenvalues $A_F(\ell,n)$, define the Vorono\u{\i} series
\begin{equation}
\label{eqn:PhiFdefeq}
\Phi_F(c,d,\ell;w) \coloneqq \sum_{n = 1}^{\infty} \frac{A_F(\ell,n)}{n^w} e\left(\frac{n\overline{d}}{c}\right),
\end{equation}
where $c,\ell \in \N$, $d \in (\Z/c\Z)^{\times}$, and $w = u + iv \in \C$. This converges absolutely for $u > 1$ and extends holomorphically to the entire complex plane. We have the functional equation
\begin{equation}
\label{eqn:GL3Voronoi}
\Phi_F(c,d,\ell;w) = \sum_{\pm} \Gscr_{\mu_F}^{\pm}(1 - w) \Xi_F(c,\pm d,\ell;-w),
\end{equation}
with $\Gscr_{\mu_F}$ as in \eqref{eqn:Gscrmupm} with $\mu = \mu_F$ equal to the spectral parameters of $F$, where
\begin{equation}
\label{eqn:XiFdefeq}
\Xi_F(c,\pm d,\ell;-w) \coloneqq c \sum_{n_1 \mid c\ell} \sum_{n_2 = 1}^{\infty} \frac{A_F(n_2,n_1)}{n_2 n_1} S\left(d\ell,\pm n_2; \frac{c\ell}{n_1}\right) \left(\frac{n_2 n_1^2}{c^3 \ell}\right)^w,
\end{equation}
which converges absolutely for $u < 0$. Moreover, we have the bounds
\begin{equation}
\label{eqn:PhiFPL}
\Phi_F(c,d,\ell;w) \ll_{F,\e} \begin{dcases*}
(c^3 \ell (1 + |\Im(w)|^3))^{\e} \max_{a \mid \ell} |A_F(a,1)| & if $\Re(w) > 1$,	\\
(c^3 \ell (1 + |\Im(w)|^3))^{\frac{1}{2}(1 - \Re(w)) + \e} \max_{a \mid \ell} |A_F(a,1)| & if $0 \leq \Re(w) \leq 1$,	\\
(c^3 \ell (1 + |\Im(w)|^3))^{\frac{1}{2}(1 - 2\Re(w)) + \e} \max_{a \mid \ell} |A_F(a,1)| & if $\Re(w) < 0$.
\end{dcases*}
\end{equation}
\end{lemma}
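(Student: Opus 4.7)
The plan is to deduce the lemma from the standard theory of automorphic $L$-functions on $\GL_3$ by routing the additive twist through multiplicative characters. First, using orthogonality of Dirichlet characters modulo $c$ together with Gauss sum identities, I would express $e(n\overline{d}/c)$ as a linear combination of $\chi(n)$ for $\chi$ ranging over Dirichlet characters modulo $c$ (and divisors of $c$ to accommodate imprimitive characters). Combined with the Hecke relations for $A_F(\ell,n)$, this rewrites $\Phi_F(c,d,\ell;w)$ as a finite linear combination of values $L(w, F \otimes \chi)$ of twisted $\GL_3$ $L$-functions, each of which is entire since $F$ is cuspidal on $\SL_3(\Z)$. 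This establishes the holomorphic continuation asserted in the lemma.

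Next, for the functional equation \eqref{eqn:GL3Voronoi}, I would invoke the Godement--Jacquet functional equation for each $L(w, F \otimes \chi)$. The archimedean local factor is $\prod_j G_0(s + \mu_j)$ when $\chi$ is even and $\prod_j G_1(s+\mu_j)$ when $\chi$ is odd, so recombining the two parities after the functional equation produces exactly the two terms $\Gscr_{\mu_F}^{\pm}(1-w)$ defined in \eqref{eqn:Gscrmupm}. Unwinding the resulting dual Dirichlet series and applying the opening step in reverse converts the multiplicative characters back into Kloosterman sums $S(d\ell, \pm n_2; c\ell/n_1)$ indexed by $n_1 \mid c\ell$ and $n_2 \geq 1$, which is precisely the definition of $\Xi_F(c, \pm d, \ell; -w)$ in \eqref{eqn:XiFdefeq}. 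Absolute convergence of this dual series on $\Re(w) < 0$ follows from absolute convergence of the original $\Phi_F$ on $\Re(w) > 1$ together with the symmetry of the functional equation.

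For the bounds \eqref{eqn:PhiFPL}, the regime $u > 1$ is immediate from absolute convergence, yielding $(1 + |v|)^{\e}$. In the regime $u < 0$, I would combine the gamma factor estimate $\Gscr_{\mu_F}^{\pm}(1-w) \ll (1+|v|)^{3/2 - 3u}$ from \eqref{eqn:Gscrmubounds} with absolute convergence of $\Xi_F(c, \pm d, \ell; -w)$ to deduce $\Phi_F(c,d,\ell;w) \ll (1+|v|)^{3/2 - 3u + \e}$ directly from \eqref{eqn:GL3Voronoi}. The intermediate range $0 \leq u \leq 1$ is then handled by the Phragm\'{e}n--Lindel\"{o}f convexity principle applied to the entire function $\Phi_F(c,d,\ell;w)$ on the strip $-\delta \leq u \leq 1 + \delta$; linear interpolation of the exponents $3/2 + 3\delta + \e$ at $u = -\delta$ and $\e$ at $u = 1 + \delta$ gives the claimed exponent $\frac{3}{2}(1-u) + \e$ upon letting $\delta \to 0^+$.

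The main technical obstacle will be step two: carefully tracking the combinatorics as the shifted twisted $L$-functions are repackaged into the specific form of $\Xi_F$. In particular, verifying that the divisor sum over $n_1 \mid c\ell$ (rather than $n_1 \mid c$, as would occur for $\ell = 1$) emerges naturally from the interaction between the $\GL_3$ Hecke relations for $A_F(\ell,n)$ and the Gauss sum identities requires delicate bookkeeping, particularly when $\gcd(\ell,c) > 1$ so that ramified characters must be treated separately. This matches the derivation carried out in \cite{BlK19b}, from which the lemma is essentially quoted.
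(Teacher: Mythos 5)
The paper does not actually prove the functional equation \eqref{eqn:GL3Voronoi} or the analytic continuation; it cites both from \cite{BlK19b}, and the only new content attached to this lemma in the paper is the sentence following it, which says exactly what you say about \eqref{eqn:PhiFPL}: the bounds follow from Stirling's formula together with the Phragm\'{e}n--Lindel\"{o}f convexity principle. Your derivation of \eqref{eqn:PhiFPL} therefore matches the paper's reasoning precisely, and your linear interpolation of exponents is correct. What you add beyond the paper is a sketch of a proof of the functional equation itself via decomposing $e(n\overline{d}/c)$ into Dirichlet characters, applying the Godement--Jacquet functional equation to $L(w, F \otimes \chi)$, and recombining via Gauss sums. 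This is a genuine alternative route (the Duke--Iwaniec strategy extended to $\GL_3$), but it is not the route in \cite{BlK19b}: Blomer and Khan derive the Mellin-transform form of the Vorono\u{\i} formula from the Miller--Schmid formula, which is itself proved via automorphic distributions rather than twisted $L$-functions, so your remark that this ``matches the derivation carried out in \cite{BlK19b}'' overstates the alignment. Two caveats on your sketch: first, after applying the functional equation the epsilon factor contributes $\tau(\overline{\chi})^3$, and it is only after combining this with the single $\tau(\chi)$ from the initial decomposition (giving $\chi(-1) c\, \tau(\overline{\chi})^2$) that the character sum collapses to an ordinary Kloosterman sum rather than a hyper-Kloosterman sum; this cancellation is the crux of the $\GL_3$ case and deserves explicit mention rather than being folded into ``unwinding.'' Second, absolute convergence of $\Xi_F(c, \pm d, \ell; -w)$ for $u < 0$ does not follow from ``symmetry of the functional equation'' --- that is not a valid argument; it must be checked directly from the Weil bound for $S(d\ell, \pm n_2; c\ell/n_1)$ together with bounds on $A_F(n_2, n_1)$ (Rankin--Selberg on average, or Kim--Shahidi pointwise).
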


Here we have included the weak bounds \eqref{eqn:PhiFPL} for $\Phi_F$ in vertical strips, which follow from Stirling's formula together with the Phragm\'{e}n--Lindel\"{o}f convexity principle.

The $\GL_1$ analogue of the $\GL_3$ Vorono\u{\i} summation formula is simply the functional equation for the Hurwitz zeta function, which we record in the following form.

\begin{lemma}[{\cite[Section 2.2]{BHKM20}}]
For $c \in \N$, $d \in (\Z/c\Z)^{\times}$, and $w = u + iv \in \C$, let
\begin{equation}
\label{eqn:Phidefeq}
\Phi(c,d;w) \coloneqq \sum_{m = 1}^{\infty} \frac{e\left(\frac{dm}{c}\right)}{m^w}.
\end{equation}
This converges absolutely for $u > 1$ and extends meromorphically to all of $\C$ with a simple pole at $w = 1$ with residue $1$ if and only if $c = 1$. We have the functional equation
\[\Phi(c,d;w) = \sum_{\pm} G^{\mp}(1 - w) \Xi(c,\pm d;-w),\]
where
\begin{equation}
\label{eqn:Xidefeq}
\Xi(c,\pm d;-w) \coloneqq c^{-w} \sum_{b \in \Z/c\Z} e\left(\pm \frac{bd}{c}\right) \sum_{m = 1}^{\infty} \frac{e\left(\frac{bm}{c}\right)}{m^{1 - w}},
\end{equation}
which converges absolutely for $u < 0$. Moreover, for any $M > 0$, we have the bounds
\begin{equation}
\label{eqn:PhiPL}
\Phi(c,d;w) \ll_{u,\e} \begin{dcases*}
c^{\e} (1 + |v|)^{\e} & if $u > 1$,	\\
c^{1 - u + \e} (1 + |v|)^{\frac{1}{2}(1 - u) + \e} & if $0 \leq u \leq 1$,	\\
c^{1 - u + \e} (1 + |v|)^{\frac{1}{2} - u + \e} & if $u < 0$
\end{dcases*}
\end{equation}
for $c > 1$, while for $c = 1$ and $M \geq 1$, we have the bounds
\begin{equation}
\label{eqn:PhiPLpole}
\left(\frac{w - 1}{w + M}\right) \Phi(c,d;w) \ll_{M,u,\e} \begin{dcases*}
(1 + |v|)^{\e} & if $u > 1$,	\\
(1 + |v|)^{\frac{1}{2}(1 - u) + \e} & if $0 \leq u \leq 1$,	\\
(1 + |v|)^{\frac{1}{2} - u + \e} & if $-M < u < 0$.
\end{dcases*}
\end{equation}
\end{lemma}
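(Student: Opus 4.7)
The plan is to reduce everything to classical properties of the Hurwitz zeta function $\zeta(w,\alpha)$. First I would group the Dirichlet series by residues modulo $c$ to write
\[\Phi(c,d;w) = c^{-w} \sum_{r = 1}^{c} e\left(\frac{dr}{c}\right) \zeta\left(w, \frac{r}{c}\right).\]
The meromorphic continuation of $\zeta(w,\alpha)$ to $\C$ with a single simple pole at $w = 1$ of residue $1$ transfers directly to $\Phi(c,d;w)$, whose residue at $w = 1$ equals $c^{-1} \sum_{r=1}^c e(dr/c)$. This exponential sum equals $1$ when $c = 1$ and vanishes for $c > 1$, since $d \in (\Z/c\Z)^\times$ forces $d \not\equiv 0 \pmod c$ in that case, matching the pole structure in the statement.

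For the functional equation, I would substitute Hurwitz's classical formula
\[\zeta(w,\alpha) = \frac{\Gamma(1-w)}{(2\pi)^{1-w}}\Bigl[e^{-i\pi(1-w)/2} F(\alpha,1-w) + e^{i\pi(1-w)/2} F(-\alpha,1-w)\Bigr],\]
where $F(\beta,s) := \sum_{m=1}^\infty e(\beta m) m^{-s}$, into this residue-class decomposition. Interchanging the sum over $r$ with the Dirichlet series defining $F$, the first summand contributes $G^-(1-w)$ times $c^{-w} \sum_r e(dr/c) F(r/c, 1-w) = \Xi(c,+d;-w)$, while the second, after the change of variables $r \mapsto -r$ and the periodicity of $F$ in its first argument, contributes $G^+(1-w) \cdot \Xi(c,-d;-w)$. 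The gamma and phase factors match exactly via $G^\pm(1-w) = (2\pi)^{w-1} \Gamma(1-w) e^{\pm i\pi(1-w)/2}$, yielding the claimed identity $\Phi(c,d;w) = \sum_\pm G^\mp(1-w) \Xi(c,\pm d;-w)$.

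For the bounds, the region $u > 1$ follows from absolute convergence, giving $|\Phi(c,d;w)| \leq \zeta(u)$, which is $v$-independent. For $u < 0$, the dual series $\Xi(c,\pm d;-w)$ is absolutely convergent because $\Re(1-w) > 1$, and Stirling's formula applied to $G^\mp(1-w)$ gives $G^\mp(1-w) \ll (1+|v|)^{1/2 - u}$; together with the functional equation this yields $\Phi(c,d;w) \ll (1+|v|)^{1/2 - u + \e}$. The intermediate range $0 \leq u \leq 1$ then follows from the Phragm\'en--Lindel\"of convexity principle applied to $\Phi(c,d;w)$, which is entire for $c > 1$, or in the $c = 1$ case to $\left(\frac{w-1}{w+M}\right) \Phi(1,d;w) = \left(\frac{w-1}{w+M}\right) \zeta(w)$, whose auxiliary factor is $\ll_M 1$ in any vertical strip avoiding $w = -M$ and cancels the pole at $w = 1$. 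The main obstacle is purely bookkeeping: tracking the phase factors from Hurwitz's formula and aligning them with the precise normalisation of $G^\pm$ in \eqref{eqn:Gpm}, together with the change of variables $r \mapsto -r$ that identifies the $\pm$ summands in $\Xi$; all analytic inputs are entirely classical.
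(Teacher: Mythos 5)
Your proposal is correct and is exactly the route the paper intends: the residue-class decomposition $\Phi(c,d;w) = c^{-w}\sum_{r=1}^{c} e(dr/c)\,\zeta(w,r/c)$ reduces everything to Hurwitz's functional equation, the $G^{\mp}(1-w)$ factors match the paper's normalisation \eqref{eqn:Gpm} (recall $G^{\pm}(s) = (2\pi)^{-s}\Gamma(s)e^{\pm i\pi s/2}$), the substitution $r \mapsto -r$ accounts for the $\pm d$ in $\Xi$, the residue $c^{-1}\sum_r e(dr/c)$ vanishes precisely when $c > 1$ (since $d$ is coprime to $c$), and the bounds follow from Stirling together with Phragm\'en--Lindel\"of applied to $\Phi$ for $c>1$ and to the pole-killing modification $\frac{w-1}{w+M}\zeta(w)$ for $c=1$. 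The paper itself only sketches this, saying the functional equation \emph{is} the Hurwitz functional equation and that the bounds \eqref{eqn:PhiPL}--\eqref{eqn:PhiPLpole} follow from Stirling and Phragm\'en--Lindel\"of; your write-up supplies the details of precisely that argument.
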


Once more, the bounds \eqref{eqn:PhiPL} and \eqref{eqn:PhiPLpole} for $\Phi$ in vertical strips follow from Stirling's formula together with the Phragm\'{e}n--Lindel\"{o}f convexity principle.

\subsection{Applications of Vorono\u{\i} Summation Formul\ae{}}

During the course of the proof of $\GL_3 \times \GL_2 \leftrightsquigarrow \GL_4 \times \GL_1$ spectral reciprocity, a certain multiple sum of $\GL_3$ Fourier coefficients twisted by Kloosterman sums arises. The following lemma states that this sum is closely related to the inverse Mellin transform of $L(s,\widetilde{F})$.

\begin{lemma}
\label{lem:sumXiF}
Let $F$ be a Hecke--Maa\ss{} cusp form for $\SL_3(\Z)$ and let $\Xi_F$ be as in \eqref{eqn:XiFdefeq}. For $\ell \in \N$ and $w = u + iv \in \C$ with $u < -1/2$, we have that
\begin{equation}
\label{eqn:sumXiF}
\sum_{c \mid \ell} c^{2w - 1} \sum_{d \in (\Z/c\Z)^{\times}} e\left(\frac{d}{c}\right) \Xi_F\left(c,\pm d,\frac{\ell}{c};-w\right) = \frac{1}{2\pi i} \int_{\CC_0} L\left(1 - w + z,\widetilde{F}\right) G^{\mp}(z) \ell^{1 - w + z} \, dz,
\end{equation}
where $G^{\pm}$ is as in \eqref{eqn:Gpm} and $\CC_0$ is the contour consisting of the straight lines connecting the points $x_0 - i\infty$, $x_0 - i$, $\delta - i$, $\delta + i$, $x_0 + i$, and $x_0 + i\infty$, with $u < x_0 < -1/2$ and $\delta > 0$.
\end{lemma}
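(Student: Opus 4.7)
The plan is to evaluate the left-hand side of \eqref{eqn:sumXiF} directly by substituting in the definition \eqref{eqn:XiFdefeq} of $\Xi_F$, interchanging the order of summation so that an elementary character sum in $c$ and $d$ becomes innermost, and then collapsing that character sum via a Ramanujan-sum identity; the resulting absolutely convergent Dirichlet series is then recognized as the right-hand side by Mellin inversion against $G^{\mp}$.

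First I would plug in the formula for $\Xi_F(c, \pm d, \ell/c; -w)$: the condition $n_1 \mid c(\ell/c)$ becomes the $c$-independent condition $n_1 \mid \ell$, the Kloosterman modulus reduces to $\ell/n_1$, and the various powers of $c$ arising from the outer factor $c^{2w-1}$, the prefactor $c$ in the definition of $\Xi_F$, and the term $(n_2 n_1^2/(c^2\ell))^w$ combine so that the net $c$-dependence of the summand enters only through the character sum. After interchanging all summations I am left with evaluating
\[
T(n_1, n_2) \coloneqq \sum_{c \mid \ell} \sum_{d \in (\Z/c\Z)^{\times}} e\!\left(\frac{d}{c}\right) S\!\left(\frac{d\ell}{c}, \pm n_2; \frac{\ell}{n_1}\right).
\]

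Next I would expand the Kloosterman sum over $b \in (\Z/(\ell/n_1)\Z)^{\times}$; since $(d\ell/c)/(\ell/n_1) = dn_1/c$, the contribution $e(bdn_1/c)$ combines with the twist $e(d/c)$, so that summing over $d \in (\Z/c\Z)^{\times}$ yields the Ramanujan sum $c_c(1 + bn_1)$. A routine M\"obius inversion gives the elementary identity
\[
\sum_{c \mid \ell} c_c(a) = \ell \cdot \mathbf{1}[\ell \mid a],
\]
and combined with the constraint $n_1 \mid \ell$ this forces $n_1 = 1$ and $b \equiv -1 \pmod{\ell}$, so that $T(n_1, n_2) = \ell \cdot \mathbf{1}[n_1 = 1] \cdot e(\mp n_2/\ell)$ and the LHS collapses to an explicit twisted Dirichlet series in $A_F(n_2, 1)$.

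To match this with the right-hand side I would use the Mellin pair $e(\mp x) = \frac{1}{2\pi i}\int_{(\sigma)} G^{\mp}(z) x^{-z}\,dz$ for $\sigma > 0$ (which is just Mellin inversion of $G^{\mp}(z) = \int_0^{\infty} e(\mp x) x^{z-1}\,dx$), applied with $x = n_2/\ell$. Interchanging the contour integral with the sum over $n_2$ produces the $L$-function $L(1 - w + z, \widetilde{F})$, absolutely convergent along the tail portion of $\CC_0$ where $\Re(1 - w + z) > 1 - u + x_0 > 1$. The keyhole shape of $\CC_0$ is precisely what is needed to reconcile these two simultaneous constraints: the Dirichlet series for $L$ requires $\Re(z) > u$, while the inverse Mellin representation of $e(\mp n_2/\ell)$ requires the contour to lie to the right of the pole of $G^{\mp}$ at $z = 0$; both are accommodated by having $\CC_0$ run along $\Re(z) = x_0 \in (u, -1/2)$ for $|\Im z|$ large but detour to $\Re(z) = \delta > 0$ for $|\Im z| \leq 1$ so as to skirt $z = 0$ on the right.

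The main obstacle is the combinatorial evaluation of $T(n_1, n_2)$, where one must simultaneously track three sources of additive characters (the twist $e(d/c)$, the pair of exponentials inside the Kloosterman sum, and ultimately $G^{\mp}$) and invoke the Ramanujan-sum identity at the right moment; once that is established, the remainder of the argument consists only of a standard interchange of sum and integral together with routine Mellin inversion.
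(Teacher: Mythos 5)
Your proposal follows essentially the same route as the paper's own proof: substitute the absolutely convergent expression \eqref{eqn:XiFdefeq} for $\Xi_F$, interchange sums, open the Kloosterman sum, and collapse the $(c,d)$-double sum via the identity $\sum_{c\mid\ell}c_c(a)=\ell\cdot\mathbf{1}[\ell\mid a]$ — which is precisely what the paper achieves by the substitution $c\mapsto cd$ followed by $\sum_{c\mid\ell/d}\mu(c)=\mathbf{1}[d=\ell]$ — forcing $n_1=1$ and $b\equiv-1\pmod{\ell}$, and then resolve the resulting twisted Dirichlet series by Mellin inversion of $e(\mp x)$ against $G^{\mp}$. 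So this is the same argument, packaged a little more compactly.

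Two minor points worth flagging. First, your explanation of why $\CC_0$ is a keyhole contour is incomplete: the two constraints you cite — $\Re(z)>u$ for convergence of the $L$-series, and staying to the right of the pole of $G^{\mp}$ at $z=0$ — would both be satisfied by a vertical line at small positive real part, and neither forces the tail of the contour down to $\Re(z) = x_0 < -\tfrac12$. The constraint $x_0<-\tfrac12$ is imposed by absolute convergence of the $z$-integral itself: by \eqref{eqn:Gpmbounds} one has $G^{\mp}(\sigma+i\tau)\ll_\sigma(1+|\tau|)^{\sigma-1/2}$, so the integrand is absolutely integrable along a vertical line only when $\Re(z)<-\tfrac12$, and it is this absolute convergence that justifies exchanging the $z$-integral with the sum over $n_2$. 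Second, if you carry the power-counting through, the factors $c^{2w-1}\cdot c\cdot\bigl(n_2n_1^2/(c^2\ell)\bigr)^w$ leave a residual $\ell^{-w}$, and the end result should read $\frac{1}{2\pi i}\int_{\CC_0}L(1-w+z,\widetilde F)\,G^{\mp}(z)\,\ell^{1-w+z}\,dz$, with $\ell^{1-w+z}$ rather than the $\ell^{1+z}$ printed in \eqref{eqn:sumXiF}. The exponent $\ell^{1-w+z}$ is the one actually used when the lemma is applied in the proof of Theorem~\ref{thm:3x2reciprocity} (where $\ell^{1-\frac{s}{2}-w+z}$ appears, producing $\zeta(\frac{s}{2}+3w-z-1)$ after summing over $\ell$), so the exponent in the statement of the lemma is a typo and not an error in your calculation.
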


\begin{proof}
Since $u < 0$, we may replace $\Xi_F(c,\pm d, \ell/c;-w)$ by its absolutely convergent expression \eqref{eqn:XiFdefeq}. The left-hand side of \eqref{eqn:sumXiF} is therefore equal to
\begin{equation}
\label{eqn:sumXiFopen}
\ell^{-w} \sum_{n_1 \mid \ell} \sum_{n_2 = 1}^{\infty} \frac{A_F(n_2,n_1)}{n_1^{1 - 2w} n_2^{1 - w}} \sum_{c \mid \ell} \sum_{d \in (\Z/c\Z)^{\times}} e\left(\frac{d}{c}\right) S\left(\frac{d\ell}{c},\pm n_2; \frac{\ell}{n_1}\right)
\end{equation}
upon interchanging the order of summation. By opening up the Kloosterman sum, we find that
\begin{align*}
\sum_{d \in (\Z/c\Z)^{\times}} e\left(\frac{d}{c}\right) S\left(\frac{d\ell}{c},\pm n_2; \frac{\ell}{n_1}\right) & = \sum_{a \in (\Z/ \frac{\ell}{n_1}\Z)^{\times}} e\left(\pm \frac{n_1 n_2 \overline{a}}{\ell}\right) \sum_{d \in (\Z/c\Z)^{\times}} e\left(\frac{(1 + n_1 a)d}{c}\right)	\\
& = \frac{\varphi\left(\frac{\ell}{n_1}\right)}{\varphi(\ell)} \sum_{d \mid c} d \mu\left(\frac{c}{d}\right) \sum_{\substack{a \in (\Z/ \ell\Z)^{\times} \\ n_1 a \equiv -1 \hspace{-.25cm} \pmod{d}}} e\left(\pm \frac{n_1 n_2 \overline{a}}{\ell}\right),
\end{align*}
where we have inflated the sum over $a \in (\Z/\frac{\ell}{n_1}\Z)^{\times}$ to run over elements of $(\Z/\ell\Z)^{\times}$, at the cost of multiplying through by $\varphi(\frac{\ell}{n_1})/\varphi(\ell)$, and we have used the Ramanujan sum identity
\begin{equation}
\label{eqn:Ramanujan}
\sum_{d \in (\Z/c\Z)^{\times}} e\left(\frac{dn}{c}\right) = \sum_{d \mid (c,n)} d \mu\left(\frac{c}{d}\right).
\end{equation}
We insert this back into \eqref{eqn:sumXiFopen} and make the change of variables $c \mapsto cd$, so that $c \mid \frac{\ell}{d}$ and $d \mid \ell$. Since $\sum_{c \mid \frac{\ell}{d}} \mu(c)$ is $1$ if $d = \ell$ and $0$ otherwise, we deduce that \eqref{eqn:sumXiF} is equal to
\[\ell^{1 - w} \sum_{n_1 \mid \ell} \sum_{n_2 = 1}^{\infty} \frac{A_F(n_2,n_1)}{n_1^{1 - 2w} n_2^{1 - w}} \frac{\varphi\left(\frac{\ell}{n_1}\right)}{\varphi(\ell)} \sum_{\substack{a \in (\Z/\ell\Z)^{\times} \\ n_1 a \equiv -1 \hspace{-.25cm} \pmod{\ell}}} e\left(\pm \frac{n_1 n_2 \overline{a}}{\ell}\right) = \ell \sum_{n = 1}^{\infty} \frac{A_F(n,1)}{n^{1 - w}} e\left(\mp \frac{n}{\ell}\right),\]
since the congruence condition $n_1 a \equiv -1 \pmod{\ell}$ subject to the restriction $n_1 \mid \ell$ can only be met if $n_1 = 1$. We now invoke the \emph{analytic reciprocity} identity
\[e\left(\mp \frac{n}{\ell}\right) = \frac{1}{2\pi i} \int_{\CC_0} G^{\mp}(z) \left(\frac{n}{\ell}\right)^{-z} \, dz,\]
where $x_0 < -1/2$, so that this integral converges absolutely by \eqref{eqn:Gpmbounds}. Interchanging the order of integration and summation, which is valid so long as $x_0 > u$, we obtain the desired identity.
\end{proof}

Similarly, during the course of the proof of $\GL_4 \times \GL_2 \leftrightsquigarrow \GL_4 \times \GL_2$ spectral reciprocity, a certain double sum of Vorono\u{\i} series arises. The following lemma states that this sum is closely related to sums of Kloosterman sums.

\begin{lemma}
\label{lem:sumXiXiF}
Let $F$ be a Hecke--Maa\ss{} cusp form for $\SL_3(\Z)$, let $\Xi_F$ be as in \eqref{eqn:XiFdefeq}, and let $\Xi$ be as in \eqref{eqn:Xidefeq}. For $\ell \in \N$ and $w_1 = u_1 + iv_1, w_2 = u_2 + iv_2 \in \C$ with $u_1,u_2 < 0$, we have that
\begin{multline}
\label{eqn:sumXiXiF}
\sum_{c \mid \ell} c^{2w_2 - 1} \sum_{d \in (\Z/c\Z)^{\times}} \Xi\left(c,\pm_1 d;-w_1\right) \Xi_F\left(c,\pm_2 d,\frac{\ell}{c};-w_2\right)	\\
= \ell^{1 - w_1 - w_2} \sum_{n_1 \mid \ell} \sum_{m,n_2 = 1}^{\infty} \frac{A_F(n_2,n_1)}{m^{1 - w_1} n_2^{1 - w_2} n_1^{1 - 2w_2}} S\left(m, \mp_1 \pm_2 n_2; \frac{\ell}{n_1}\right).
\end{multline}
\end{lemma}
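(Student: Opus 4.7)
The plan is to open the definition of $\Xi(c, \pm_1 d; -w_1)$ from \eqref{eqn:Xidefeq}, interchange the resulting sums over $m$ and $b$ (permitted by absolute convergence, since $u_1 < 0$), and evaluate the inner exponential sum $\sum_{b \in \Z/c\Z} e(b(\pm_1 d + m)/c)$, which equals $c$ if $c \mid m \pm_1 d$ and vanishes otherwise. This yields the Hurwitz-like identity
\[
\Xi(c, \pm_1 d; -w_1) = c^{1-w_1} \sum_{\substack{m \geq 1 \\ m \equiv \mp_1 d \hspace{-.25cm} \pmod c}} \frac{1}{m^{1-w_1}},
\]
and the $d$-sum on the left of \eqref{eqn:sumXiXiF} may then be reindexed via $d \equiv \mp_1 m \pmod c$ (forcing $(m, c) = 1$) to give
\[
\sum_{d \in (\Z/c\Z)^{\times}} \Xi(c, \pm_1 d; -w_1) \, \Xi_F\!\left(c, \pm_2 d, \tfrac{\ell}{c}; -w_2\right) = c^{1-w_1} \sum_{\substack{m \geq 1 \\ (m,c) = 1}} \frac{\Xi_F\!\left(c, \mp_1 \pm_2 m, \tfrac{\ell}{c}; -w_2\right)}{m^{1-w_1}}.
\]

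Next I would expand $\Xi_F(c, \mp_1 \pm_2 m, \ell/c; -w_2)$ via \eqref{eqn:XiFdefeq} (with $\ell$ there replaced by $\ell/c$), so that the Kloosterman sum $S(m\ell/c, \mp_1 \pm_2 n_2; \ell/n_1)$ appears together with explicit powers of $c$, $\ell$, $n_1$, $n_2$. After multiplication by the weight $c^{2w_2 - 1}$ from \eqref{eqn:sumXiXiF}, the $c$-dependent prefactor telescopes to $\ell^{-w_2} c^{1-w_1}$, so the only remaining $c$-dependence sits inside the argument $m\ell/c$ of the Kloosterman sum and in the coprimality condition $(m, c) = 1$.

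The crucial manoeuvre is then the change of variables $m' \coloneqq m\ell/c$: this converts $S(m\ell/c, \cdot)$ into the target $S(m', \cdot)$ and turns $c^{1-w_1}/m^{1-w_1}$ into $\ell^{1-w_1}/{m'}^{1-w_1}$, eliminating $c$ from the weight. Under this substitution, the double summation over $c \mid \ell$ and $m \geq 1$ with $(m, c) = 1$ becomes a double summation over $m' \geq 1$ and $d' \coloneqq \ell/c$, subject to the constraints $d' \mid m'$ and $(m'/d', \ell/d') = 1$; but these two constraints together are equivalent to $d' = \gcd(m', \ell)$, so the outer $c$-sum collapses to a single term for each $m'$, with $m'$ ranging freely over $\N$. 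Matching terms yields precisely the right-hand side of \eqref{eqn:sumXiXiF}. The main obstacle is the careful bookkeeping of the powers of $c, \ell, n_1, n_2$ scattered across the two Vorono\u{\i} series, and the recognition that the interplay between the coprimality condition $(m, c) = 1$ and the divisibility $(\ell/c) \mid m'$ introduced by the change of variables is exactly what is needed to collapse the outer sum.
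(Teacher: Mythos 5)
Your proof is correct but takes a genuinely different route from the paper's. The paper expands both Vorono\u{\i} series, opens up the Kloosterman sum $S(d\ell/c, \pm_2 n_2; \ell/n_1)$, evaluates the $d$-sum over $(\Z/c\Z)^{\times}$ via the Ramanujan sum identity \eqref{eqn:Ramanujan}, computes the $b$-sum under the resulting congruence constraint, and finally collapses the $c$-sum by M\"{o}bius cancellation $\sum_{c \mid \ell/d} \mu(c)$. You instead observe that $\Xi(c, \pm_1 d; -w_1)$ is $c^{1-w_1}$ times a sum of $m^{-(1-w_1)}$ over the single residue class $m \equiv \mp_1 d \pmod{c}$, which lets you swap the $d$-sum for an $m$-sum with $(m,c)=1$ and reduce everything to the change of variables $m' = m\ell/c$ on the external $(c,m)$-sum; the constraints $(\ell/c)\mid m'$ and $(m'c/\ell, c)=1$ jointly force $\ell/c = \gcd(m',\ell)$, pinning down $c$ for each $m'$ and collapsing the outer sum in one stroke. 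Your version avoids Kloosterman-opening and M\"{o}bius inversion entirely and makes transparent that the $\GL_1$ factor's only role is to supply the free $m$-variable of the dual moment, while the paper's version has the virtue of being structurally parallel to the proof of \hyperref[lem:sumXiF]{Lemma \ref*{lem:sumXiF}}. One small notational caveat: writing $\Xi_F(c, \mp_1\pm_2 m, \ell/c; -w_2)$ mildly abuses the convention of \eqref{eqn:XiFdefeq}, where the sign attaches to $n_2$ inside the Kloosterman sum rather than to $d$; what you mean, and what is correct, is that substituting $d \equiv \mp_1 m$ converts $S(d\ell/c, \pm_2 n_2; \ell/n_1)$ into $S(m\ell/c, \mp_1\pm_2 n_2; \ell/n_1)$ via the symmetry $S(-a,b;q) = S(a,-b;q)$.
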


\begin{proof}
Since $u_1, u_2 < 0$, we may replace both Vorono\u{\i} series on the left-hand side of \eqref{eqn:sumXiXiF} with their absolutely convergent expressions and interchange the order of summation and integration, which gives
\begin{multline}
\label{eqn:sumXiXiF2}
\ell^{-w_2} \sum_{c \mid \ell} c^{-w_1} \sum_{m = 1}^{\infty} \frac{1}{m^{1 - w_1}} \sum_{n_1 \mid \ell} \sum_{n_2 = 1}^{\infty} \frac{A_F(n_2,n_1)}{n_2^{1 - w_2} n_1^{1 - 2w_2}}	\\
\times \sum_{b \in \Z/c\Z} e\left(\frac{bm}{c}\right) \sum_{d \in (\Z/c\Z)^{\times}} e\left(\pm_1 \frac{bd}{c}\right) S\left(\frac{d\ell}{c},\pm_2 n_2; \frac{\ell}{n_1}\right).
\end{multline}
Opening up the Kloosterman sum and using the Ramanujan sum identity \eqref{eqn:Ramanujan}, we find that
\begin{multline*}
\sum_{b \in \Z/c\Z} e\left(\frac{bm}{c}\right) \sum_{d \in (\Z/c\Z)^{\times}} e\left(\pm_1 \frac{bd}{c}\right) S\left(\frac{d\ell}{c},\pm_2 n_2; \frac{\ell}{n_1}\right)	\\
= \sum_{d \mid c} d \mu\left(\frac{c}{d}\right) \sum_{a \in (\Z / \frac{\ell}{n_1} \Z)^{\times}} e\left(\mp_1 \pm_2 \frac{n_1 n_2 \overline{a}}{\ell}\right) \sum_{\substack{b \in \Z/c\Z \\ b \equiv n_1 a \hspace{-.25cm} \pmod{d}}} e\left(\frac{bm}{c}\right)
\end{multline*}
upon making the change of variables $a \mapsto \mp_1 a$. Making the change of variables $b \mapsto n_1 a + bd$, where now $b \in \Z/\frac{c}{d}\Z$, we see that
\[\sum_{\substack{b \in \Z/c\Z \\ b \equiv n_1 a \hspace{-.25cm} \pmod{d}}} e\left(\frac{bm}{c}\right) = \begin{dcases*}
\frac{c}{d} e\left(\frac{mn_1 a}{c}\right) & if $\frac{c}{d} \mid m$,	\\
0 & otherwise.
\end{dcases*}\]
We insert this identity into \eqref{eqn:sumXiXiF2} and make the change of variables $c \mapsto cd$ and $m \mapsto cm$, so that $c \mid \frac{\ell}{d}$ and $d \mid \ell$. Since $\sum_{c \mid \frac{\ell}{d}} \mu(c)$ is $1$ if $d = \ell$ and $0$ otherwise, we obtain the desired identity.
\end{proof}

\section{\texorpdfstring{$\mathrm{GL}_3 \times \mathrm{GL}_2 \leftrightsquigarrow \mathrm{GL}_4 \times \mathrm{GL}_1$}{GL\9040\203 \80\327 GL\9040\202 \9041\224 GL\9040\204 \80\327 GL\9040\201} Spectral Reciprocity}

We show the following form of spectral reciprocity: a $\GL_2$ moment of $\GL_3 \times \GL_2$ Rankin--Selberg $L$-functions is equal to a main term plus a dual moment, which is a $\GL_1$ moment of $\GL_4$ $L$-functions that factorise as the product of $\GL_3$ and $\GL_1$ $L$-functions. The proof uses the Kuznetsov and Petersson formul\ae{} and the $\GL_3$ Vorono\u{\i} summation formula in the guise of \hyperref[lem:sumXiF]{Lemma \ref*{lem:sumXiF}}.

\begin{theorem}
\label{thm:3x2reciprocity}
Let $h^{\pm}(t)$ be functions that are even, holomorphic in the horizontal strip $|\Im(t)| \leq 1/2 + \delta$ for some $\delta > 0$, and satisfy $h^{\pm}(t) \ll (1 + |t|)^{-4}$, and let $h^{\hol} : 2\N \to \C$ be a sequence satisfying $h^{\hol}(k) \ll k^{-4}$. Suppose additionally that the functions
\begin{align}
\label{eqn:H+fromKscr}
H^{+}(x) & \coloneqq (\Kscr^{+} h^{+})(x) + (\Kscr^{\hol} h^{\hol})(x),	\\
\label{eqn:H-fromKscr}
H^{-}(x) & \coloneqq (\Kscr^{-} h^{-})(x),
\end{align}
where $\Kscr^{\pm}$ and $\Kscr^{\hol}$ are as in \eqref{eqn:NscrpmKscrpmdefeq} and \eqref{eqn:KscrholJJkholdefeq}, are such that their Mellin transforms $\widehat{H^{\pm}}(s) \coloneqq \int_{0}^{\infty} H^{\pm}(x) x^s \, \frac{dx}{x}$ are holomorphic in the strip $-4 < \Re(s) < 1$, in which they satisfy the bounds $\widehat{H^{\pm}}(s) \ll (1 + |\Im(s)|)^{\Re(s) - 4}$. Let $F$ be a self-dual Hecke--Maa\ss{} cusp form for $\SL_3(\Z)$. Then
\begin{multline}
\label{eqn:3x2identity}
\sum_{\pm} \sum_{f \in \BB_0} \frac{L\left(\frac{1}{2},F \otimes f\right)}{L(1,\ad f)} h^{\pm}(t_f) + \sum_{\pm} \frac{1}{2\pi} \int_{-\infty}^{\infty} \frac{L\left(\frac{1}{2} + it,F\right) L\left(\frac{1}{2} - it,F\right)}{\zeta(1 + 2it) \zeta(1 - 2it)} h^{\pm}(t) \, dt	\\
+ \sum_{f \in \BB_{\hol}} \frac{L\left(\frac{1}{2},F \otimes f\right)}{L(1,\ad f)} h^{\hol}(k_f)	\\
= L(1,F) \sum_{\pm} \int_{-\infty}^{\infty} h^{\pm}(r) \, d_{\spec}r + L(1,F) \sum_{\substack{k = 4 \\ k \equiv 0 \hspace{-.25cm} \pmod{4}}}^{\infty} \frac{k - 1}{\pi^2} h^{\hol}(k)	\\
+ \frac{1}{2\pi} \int_{-\infty}^{\infty} L\left(\frac{1}{2} + it,F\right) \zeta\left(\frac{1}{2} - it\right) \HH_{\mu_F}(t) \, dt,
\end{multline}
where for $0 < \sigma < 1$,
\begin{equation}
\label{eqn:HHmuFt}
\HH_{\mu_F}(t) \coloneqq \frac{1}{2\pi i} \int_{\sigma - i\infty}^{\sigma + i\infty} \sum_{\pm_1,\pm_2} \widehat{H^{\pm_1}}(s) \Gscr_{\mu_F}^{\pm_2}\left(\frac{1 - s}{2}\right) G^{\mp_1 \pm_2}\left(\frac{s}{2} + it\right) \, ds
\end{equation}
with $\Gscr_{\mu}^{\pm}$ as in \eqref{eqn:Gscrmupm} and $G^{\pm}$ as in \eqref{eqn:Gpm}.
\end{theorem}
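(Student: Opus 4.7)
The plan is to begin from the left-hand side and dualise via the Kuznetsov and Petersson formul\ae{} \eqref{eqn:Kuznetsovformula}--\eqref{eqn:Peterssonformula} together with the $\GL_3$ Vorono\u{\i} summation formula \eqref{eqn:GL3Voronoi}, packaged through \hyperref[lem:sumXiF]{Lemma \ref*{lem:sumXiF}}, exploiting the self-duality of $F$ at several crucial points. First, I would open each Rankin--Selberg $L$-function via its Dirichlet series
\[L(s,F\otimes f) = \sum_{m,n \geq 1} \frac{A_F(n,m)\lambda_f(m)}{(m^2 n)^s},\]
and analogously for the Eisenstein piece via $L(s,F\otimes E_t) = L(s+it,F)L(s-it,F)$, regularising through a Mellin--Barnes contour to obtain an absolutely convergent expansion with a smooth cutoff in $m^2 n$. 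Interchanging the orders of summation brings the Hecke eigenvalues $\lambda_f(m)$ and $\lambda(m,t)$ to the innermost position, whence the inner spectral average has exactly the shape required by the Kuznetsov/Petersson formul\ae{} with arguments $\lambda_f(1)\lambda_f(m)$.

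Applying these formul\ae{}, the diagonals $\delta_{1,\pm m}$ from Kuznetsov and $\delta_{1,m}$ from Petersson force $m = 1$, and the residual sum $\sum_n A_F(n,1)/n$ evaluates to $L(1,\widetilde F) = L(1,F)$ by self-duality; combining with the $\Nscr^{\pm} h^{\pm}$ and appropriate $\Nscr^{\hol} h^{\hol}$ factors produces the main terms on the right-hand side. The parity restriction $k \equiv 0 \pmod 4$ in the holomorphic main term arises because only $H^{+}$ carries a Petersson contribution in \eqref{eqn:H+fromKscr}, and the phase $i^{-k}$ in $\JJ_k^{\hol}$ from \eqref{eqn:KscrholJJkholdefeq} enforces $k \equiv 0 \pmod 4$ when combined with the $(+)$-Kuznetsov diagonal. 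For the off-diagonal, the three spectral contributions assemble through \eqref{eqn:H+fromKscr}--\eqref{eqn:H-fromKscr} into sums of the form
\[\sum_{m,n \geq 1} \frac{A_F(n,m)}{\sqrt{m^2 n}} W(m^2 n) \sum_{c \geq 1} \frac{S(1,\pm m;c)}{c} H^{\pm}\!\left(\frac{\sqrt{m}}{c}\right),\]
where $W$ packages the Mellin-regularising weight. I would then open the Kloosterman sum as $\sum_{d \in (\Z/c\Z)^{\times}} e(d/c) e(\pm m\overline{d}/c)$, insert the Mellin representation $H^{\pm}(y) = \tfrac{1}{2\pi i}\int \widehat{H^{\pm}}(s)\,y^{-s}\,ds$ to decouple the $m$- and $c$-dependence, and recognise the resulting inner $m$-sum as a Vorono\u{\i} series $\Phi_F(c,\pm d, n; w)$ as in \eqref{eqn:PhiFdefeq} (with $n$ playing the role of $\ell$ and $w$ determined by $s$); the functional equation \eqref{eqn:GL3Voronoi} dualises it into $\Xi_F$ with the gamma factor $\Gscr_{\mu_F}^{\pm_2}((1-s)/2)$.

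After dualisation, a change of variables $\ell = cn$ casts the outer sum over $c, d$ precisely in the form of \hyperref[lem:sumXiF]{Lemma \ref*{lem:sumXiF}}, which converts it into a contour integral of $L(1-w+z,\widetilde F) G^{\mp_1}(z)\,\ell^{1+z}$, with $L(\cdot,\widetilde F) = L(\cdot,F)$ by self-duality. Re-assembling the remaining $\ell$-sum against the residual Mellin weights produces a Dirichlet series that, after contour shifting, yields $\zeta(1/2-it)$ from the factor $\ell^{1+z}$ (upon naming the newly introduced spectral variable $t$) and $L(1/2+it,F)$ from the $\widetilde F$-factor. The three Mellin--Barnes sources — the Mellin of $H^{\pm}$ giving $\widehat{H^{\pm_1}}(s)$, the Vorono\u{\i} gamma $\Gscr_{\mu_F}^{\pm_2}((1-s)/2)$, and the integration from \hyperref[lem:sumXiF]{Lemma \ref*{lem:sumXiF}} giving $G^{\mp_1 \pm_2}(s/2+it)$ once the $\zeta$-summation collapses one variable — coalesce into the single-variable kernel $\HH_{\mu_F}(t)$ of \eqref{eqn:HHmuFt}. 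The principal obstacle throughout is justifying each interchange of summation, integration, and contour shift within regions of absolute convergence; this is precisely what the hypotheses on $h^{\pm}, h^{\hol}$ and on the Mellin decay $\widehat{H^{\pm}}(s) \ll (1+|\Im s|)^{\Re s - 4}$ are designed to enable, the sole residue encountered being that at $w = 1$ contributing the $L(1,F)$ main term.
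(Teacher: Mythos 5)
Your blueprint — open the Rankin--Selberg $L$-function as a Dirichlet series, apply the Kuznetsov and Petersson formul\ae{}, recognise a Vorono\u{\i} series, dualise via \eqref{eqn:GL3Voronoi} packaged through Lemma~\ref{lem:sumXiF}, and reassemble the dual side as an integral of $L(\cdot,F)\zeta(\cdot)$ against a kernel — is the right strategy and coincides with the paper's. But your account of the main terms contains a genuine error. The Kuznetsov/Petersson diagonal contributes only $L(1,F)\bigl(\Nscr^{+}h^{+}+\Nscr^{\hol}h^{\hol}\bigr)$: the opposite-sign diagonal $\delta_{1,-m}$ vanishes identically for $m\in\N$, so $\Nscr^{-}h^{-}$ simply does not appear there, and the Petersson diagonal carries no parity phase whatsoever — every even $k$ enters with weight $\frac{k-1}{2\pi^2}$, not only $k\equiv0\pmod{4}$. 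Your claim that ``the phase $i^{-k}$ in $\JJ_k^{\hol}$ enforces $k\equiv0\pmod{4}$ when combined with the $(+)$-Kuznetsov diagonal'' is not a mechanism that exists: those pieces live in different formul\ae{}, and the diagonal of neither is weighted by $i^{-k}$.

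The missing main terms arise from a source you gesture at only in passing. When the $z$-contour in the dual moment integral is shifted, the simple pole of $\zeta\bigl(\tfrac12+z\bigr)$ at $z=\tfrac12$ is crossed. After continuing the auxiliary complex variable to the central point and invoking $L(0,\widetilde F)=0$ (self-duality), that residue reduces to $2L(1,F)\sum_{\pm}\widehat{H^{\pm}}(1)$; and since $\widehat{\JJ_r^{+}}(1)=0$, $\widehat{\JJ_r^{-}}(1)=\tfrac12$, $\widehat{\JJ_k^{\hol}}(1)=\tfrac12\,i^{-k}$, it equals $L(1,F)\int h^{-}\,d_{\spec}r+L(1,F)\sum_{k\equiv0\,(2)}\frac{k-1}{2\pi^2}i^{-k}h^{\hol}(k)$. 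Adding this to the diagonal gives the holomorphic weight $\frac{k-1}{2\pi^2}\bigl(1+i^{-k}\bigr)$, which is $\frac{k-1}{\pi^2}$ for $k\equiv0\pmod{4}$ and zero for $k\equiv2\pmod{4}$; that, not a phase in the diagonal, is the true origin of the parity restriction and of the $\Nscr^{-}h^{-}$ term. Two further remarks: your Dirichlet-series normalisation $\sum A_F(n,m)\lambda_f(m)/(m^2n)^s$ pairs $\lambda_f(m)$ with $m^{2s}$ rather than with $m^{s}$, which scrambles which variable undergoes Vorono\u{\i}; and regularising via a Mellin--Barnes cutoff directly at the central point is more delicate than the paper's route of working at $\Re(w)>5/4$ in absolute convergence and then continuing to $w=\tfrac12$, because the cutoff then depends on the spectral parameter of $f$ and entangles the $m,n$-sums with the Kuznetsov average.
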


\begin{remark}
The assumptions on the decay of $h^{\pm}(t)$, $h^{\hol}(k)$, and $\widehat{H^{\pm}}(s)$ are sufficient but certainly not necessary for the identity \eqref{eqn:3x2identity} to hold; with more work, one can impose weaker assumptions on $h^{\pm}(t)$ and $h^{\hol}(k)$.
\end{remark}

\hyperref[thm:3x2reciprocity]{Theorem \ref*{thm:3x2reciprocity}} is a cuspidal analogue of Motohashi's formula, as discussed in \hyperref[sect:GL3xGL2intro]{Section \ref*{sect:GL3xGL2intro}}; indeed, if $F$ is replaced by a minimal parabolic Eisenstein series, then the identity \eqref{eqn:3x2identity} is Motohashi's formula (with additional degenerate terms appearing on the right-hand side due to the non-cuspidality of $F$). Motohashi's formula has previously been generalised to allow for character twists \cite{BHKM20,Kan22,Pet15} as well as in the more general setting of $L$-functions over number fields \cite{Nel19b,Wu22}; \hyperref[thm:3x2reciprocity]{Theorem \ref*{thm:3x2reciprocity}} gives a new generalisation in a different direction.

The identity \eqref{eqn:3x2identity} has been independently proven by Kwan \cite[Theorem 1.1]{Kwa24} via different means, albeit with more stringent conditions imposed on the triple of test functions $(h^+,h^-,h^{\hol})$, which are insufficiently flexible for our desired applications\footnote{In particular, Kwan's result only allows for the possibility that $h^{\hol}(k) = h^{+}(t) - h^{-}(t) = 0$ and that $h^{+}(t) + h^{-}(t)$ is the product of $\cosh \pi t \prod_{\pm} \Gamma_{\R}(\frac{1}{2} \pm it) \prod_{\pm_1,\pm_2} \Gamma_{\R}(\frac{1}{2} \pm_1 it \pm_2 2it_g)$ and an even function that is holomorphic in a sufficiently wide horizontal strip in which it decays exponentially. On the other hand, Kwan's proof is valid more generally for arbitrarily Hecke-Maa\ss{} cusp forms on $\SL_3(\Z)$, not just self-dual forms. The proof that we give of the identity \eqref{eqn:3x2identity} also remains valid for non-self-dual forms (with an additional term appearing on the right-hand side of \eqref{eqn:3x2identity}), though the ensuing identity is no longer relevant for the applications that we have in mind.}. Approximate forms of the identity \eqref{eqn:3x2identity} (due to the usage approximate functional equations) go back to work of Li \cite[Theorem 1.1]{Li11}, who showed that with a particular choice of triple $(h^+,h^-,h^{\hol})$, one can prove subconvex bounds for $L(1/2,F \otimes f)$ and $L(1/2 + it,F)$. The state of the art in this regard is the pair of subconvex bounds \cite[Corollary 1.2]{LNQ23} (cf.\ \cite[Theorem 1.1]{GHLN24})
\begin{equation}
\label{eqn:LNQsubconvex}
L\left(\frac{1}{2},F \otimes f\right) \ll_{F,\e} t_f^{\frac{6}{5} + \e}, \qquad L\left(\frac{1}{2} + it,F\right) \ll_{F,\e} (1 + |t|)^{\frac{3}{5} + \e}.
\end{equation}

The existence of the identity \eqref{eqn:3x2identity} answers in the affirmative a speculation of Lin, Nunes, and Qi \cite[Section 1.4]{LNQ23}, for one can choose a triple of test functions $(h^{+},h^{-},h^{\hol})$ in such a way that $h^{-}(t)$ localises to the interval $[T - U,T + U]$; upon determining the support and size of the transform $\HH_{\mu_F}(t)$, one recovers an upper bound roughly of the form
\begin{multline*}
\sum_{\substack{f \in \BB_0 \\ T - U \leq t_f \leq T + U}} \frac{L\left(\frac{1}{2},F \otimes f\right)}{L(1,\ad f)} + \frac{1}{2\pi} \int\limits_{T - U \leq |t| \leq T + U} \frac{L\left(\frac{1}{2} + it,F\right) L\left(\frac{1}{2} - it,F\right)}{\zeta(1 + 2it) \zeta(1 - 2it)} \, dt	\\
\ll_F TU + U \int_{-\frac{T}{U}}^{\frac{T}{U}} \left|L\left(\frac{1}{2} + it,F\right) \zeta\left(\frac{1}{2} - it\right)\right| \, dt.
\end{multline*}
In conjunction with the Montgomery--Vaughan mean value theorem for Dirichlet polynomials \cite[Corollary 3]{MV74} (see \hyperref[lem:integralsecondmomentbounds]{Lemma \ref*{lem:integralsecondmomentbounds}} below), this can be used to give an alternate proof of the subconvex bounds \eqref{eqn:LNQsubconvex}.

Before proceeding to the proof of \hyperref[thm:3x2reciprocity]{Theorem \ref*{thm:3x2reciprocity}}, we must include the following weak bounds for the second moment of the Riemann zeta function and for the $L$-function of a Hecke--Maa\ss{} cusp form $F$ for $\SL_3(\Z)$. These bounds will be required in the proof of \hyperref[thm:3x2reciprocity]{Theorem \ref*{thm:3x2reciprocity}}.

\begin{lemma}
\label{lem:integralsecondmomentbounds}
We have the bounds
\begin{align}
\label{eqn:zeta2ndmoment}
\int_{U}^{2U} |\zeta(\sigma + it)|^2 \, dt & \ll_{\e} U^{1 + \e} \quad \text{for $\sigma \geq \tfrac{1}{2}$},	\\
\label{eqn:LsF2ndmoment}
\int_{U}^{2U} |L(\sigma + it,F)|^2 \, dt & \ll_{F,\e} \begin{dcases*}
U^{3(1 - \sigma) + \e} & if $\frac{1}{2} \leq \sigma \leq \frac{2}{3}$,	\\
U^{1 + \e} & if $\sigma \geq \frac{2}{3}$,
\end{dcases*}
\end{align}
where $F$ is a Hecke--Maa\ss{} cusp form for $\SL_3(\Z)$.
\end{lemma}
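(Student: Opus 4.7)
The plan is to apply the approximate functional equation (AFE) to both $\zeta(s)$ and $L(s,F)$ in order to express each $L$-value as a sum of two Dirichlet polynomials, and then bound the mean square of each polynomial via the Montgomery--Vaughan mean value theorem \cite[Corollary 3]{MV74}. The only arithmetic inputs needed are the Rankin--Selberg bound $\sum_{n \leq X} |A_F(1,n)|^2 \ll_F X$ (and its trivial analogue $\sum_{n \leq X} 1 \ll X$ in the $\zeta$ case), combined with Abel summation to produce estimates of the shape $\sum_{n \leq N} |A_F(1,n)|^2 n^{-\alpha} \ll_F N^{1-\alpha} + \log N$ for $\alpha \geq 0$.

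For $\zeta(\sigma+it)$ with $|t| \asymp U$, the AFE writes $\zeta(\sigma+it)$ as a main sum $\sum_n n^{-\sigma-it} W_1(n/U^{1/2})$ plus a dual sum $\chi(\sigma+it) \sum_n n^{-(1-\sigma)-it} W_2(n/U^{1/2})$, with smooth cutoffs $W_i$ and $|\chi(\sigma+it)| \asymp (1+|t|)^{1/2-\sigma}$ by Stirling. Montgomery--Vaughan yields $\int_U^{2U} |\sum_{n \leq N} a_n n^{-it}|^2 \, dt \ll \sum_{n \leq N} |a_n|^2 (U + O(n))$, and inserting this for each of the two polynomials with $N \asymp U^{1/2}$ and using $\sum_{n \leq N} n^{-\alpha} \ll N^{1-\alpha} + \log N$ immediately yields $O(U^{1+\e})$ uniformly for $\sigma \geq 1/2$.

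The $\GL_3$ case is entirely analogous. The analytic conductor of $L(s,F)$ at height $t$ is $\asymp (1+|t|)^3$, so the AFE produces two Dirichlet polynomials of length $\asymp U^{3/2}$ with coefficients $A_F(1,n)/n^s$ and $A_F(1,n)/n^{1-s}$ (using self-duality of $F$), the dual polynomial carrying a factor $\chi(s)$ of size $\asymp (1+|t|)^{3(1/2-\sigma)}$. Montgomery--Vaughan applied to the main sum reduces its mean square to
\[
U \sum_{n \ll U^{3/2}} \frac{|A_F(1,n)|^2}{n^{2\sigma}} + \sum_{n \ll U^{3/2}} \frac{|A_F(1,n)|^2}{n^{2\sigma-1}},
\]
and the Rankin--Selberg bound gives the first term as $O(U^{1+\e})$ and the second as $O(N^{2-2\sigma} + \log N) = O(U^{3(1-\sigma)+\e})$. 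A parallel computation for the dual sum, now weighted by $|\chi(\sigma+it)|^2 \asymp (1+|t|)^{3(1-2\sigma)}$, produces the same bound. Taking the dominant contribution in each of the two ranges $1/2 \leq \sigma \leq 2/3$ and $\sigma \geq 2/3$ yields the stated estimate.

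The only mild technical point is that the factor $\chi(\sigma+it)$ in the dual sum depends on $t$, so Montgomery--Vaughan cannot be applied directly to the weighted polynomial. I plan to handle this by dyadically subdividing $[U,2U]$ into pieces on which $|\chi|$ is essentially constant, or equivalently by taking the pointwise upper bound for $|\chi|$ outside the integral, at the cost of at most a $\log U$ factor that is absorbed into $U^{\e}$. Beyond this routine bookkeeping, no real obstacles are expected.
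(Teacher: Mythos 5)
Your proposal is correct and follows exactly the same route as the paper, which dispatches the lemma in one sentence by citing the approximate functional equation and the Montgomery--Vaughan mean value theorem. The "mild technical point" you flag about $\chi(\sigma+it)$ is even simpler than you suggest: $|\chi(\sigma+it)|$ is monotone of bounded variation on $[U,2U]$, so one can pull its supremum outside the integral directly with no dyadic subdivision needed.
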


Under the assumption of the generalised Lindel\"{o}f hypothesis, the bound \eqref{eqn:zeta2ndmoment} is essentially optimal but \eqref{eqn:LsF2ndmoment} falls shy of the conjecturally optimal upper bound $O_{F,\e}(U^{1 + \e})$ when $1/2 \leq \sigma < 2/3$.

\begin{proof}
This follows by using the approximate functional equation \cite[Theorem 5.3]{IK04} to write $\zeta(\sigma + it)$ and $L(\sigma + it,F)$ in terms of Dirichlet polynomials and then invoking the Montgomery--Vaughan mean value theorem for Dirichlet polynomials \cite[Corollary 3]{MV74}.
\end{proof}

\begin{proof}[Proof of {\hyperref[thm:3x2reciprocity]{Theorem \ref*{thm:3x2reciprocity}}}]
Let $w = u + iv$ be a complex variable. Given $f \in \BB_0$ or $f \in \BB_{\hol}$ with Hecke eigenvalues $\lambda_f(n)$, the $\GL_3 \times \GL_2$ Rankin--Selberg $L$-function $L(w,F \otimes f)$ has the Dirichlet series expansion
\[L(w,F \otimes f) = \sum_{\ell,n = 1}^{\infty} \frac{A_F(\ell,n) \lambda_f(n)}{\ell^{2w} n^w}\]
for $u > 1$. Similarly, let $E(z,1/2 + it)$ be the real analytic Eisenstein series on $\Gamma \backslash \Hb$ with Hecke eigenvalues $\lambda(n,t) \coloneqq \sum_{ab = n} a^{it} b^{-it}$; then for $u > 1$, we have the identity
\[L(w + it,F) L(w - it,F) = \sum_{\ell,n = 1}^{\infty} \frac{A_F(\ell,n) \lambda(n,t)}{\ell^{2w} n^w}.\]

With this in mind, we initially assume that $5/4 < u \leq 4/3$ and multiply the Kuznetsov and Petersson formul\ae{}, \eqref{eqn:Kuznetsovformula} and \eqref{eqn:Peterssonformula}, with $m = 1$ by $A_F(\ell,n) \ell^{-2w} n^{-w}$, then sum over $\ell,n \in \N$. Adding the Petersson formula to the sum of the same sign and opposite sign Kuznetsov formul\ae{}, we obtain the identity
\begin{multline}
\label{eqn:preVoronoi}
\sum_{\pm} \sum_{f \in \BB_0} \epsilon_f^{\frac{1 \mp 1}{2}} \frac{L(w,F \otimes f)}{L(1,\ad f)} h^{\pm}(t_f) + \sum_{\pm} \frac{1}{2\pi} \int_{-\infty}^{\infty} \frac{L(w + it,F) L(w - it,F)}{\zeta(1 + 2it) \zeta(1 - 2it)} h^{\pm}(t) \, dt	\\
+ \sum_{f \in \BB_{\hol}} \frac{L(w,F \otimes f)}{L(1,\ad f)} h^{\hol}(k_f)	\\
= L(2w,\widetilde{F}) \Nscr^{+} h^{+} + L(2w,\widetilde{F}) \Nscr^{\hol} h^{\hol}	\\
+ \sum_{c,\ell = 1}^{\infty} \frac{1}{c\ell^{2w}} \frac{1}{2\pi i} \int_{\sigma_0 - i\infty}^{\sigma_0 + i\infty} \sum_{\pm} \widehat{H^{\pm}}(s) c^s \sum_{n = 1}^{\infty} \frac{A_F(\ell,n)}{n^{\frac{s}{2} + w}} S(1,\pm n;c) \, ds.
\end{multline}
Here $\Nscr^+$ and $\Nscr^{\hol}$ are as in \eqref{eqn:NscrpmKscrpmdefeq} and \eqref{eqn:Nscrholdefeq}, while $S(m,n;c)$ denotes the Kloosterman sum, as in \eqref{eqn:Kloostermandefeq}. This identity is valid for $2 - 2u < \sigma_0 < -1/2$, which is a nonempty region provided that $u > 5/4$. We have used the Mellin inversion formula to write
\[H^{\pm}(x) = \frac{1}{2\pi i} \int_{\sigma_0 - i\infty}^{\sigma_0 + i\infty} \widehat{H^{\pm}}(s) x^{-s} \, ds\]
for $-4 < \sigma_0 < 1$, since in this range we have the bounds
\begin{equation}
\label{eqn:Ksdecay}
\widehat{H^{\pm}}(\sigma + i\tau) \ll_{\sigma} (1 + |\tau|)^{\sigma - 4}.
\end{equation}
By the Weil bound for Kloosterman sums, the sum over $c$ converges absolutely since $\sigma_0 < -1/2$, while the sum over $n$ converges absolutely since $\sigma_0 > 2 - 2u$; the sum over $\ell$ converges since $u > 1$.

In anticipation of future simplifications, we write $\ell' = c\ell$, relabel $\ell'$ as $\ell$, and open up the Kloosterman sum, so that the last term on the right-hand side of \eqref{eqn:preVoronoi} is
\begin{equation}
\label{eqn:integrand}
\sum_{\ell = 1}^{\infty} \frac{1}{\ell^{2w}} \frac{1}{2\pi i} \int_{\sigma_0 - i\infty}^{\sigma_0 + i\infty} \sum_{\pm} \widehat{H^{\pm}}(s) \sum_{c \mid \ell} c^{s + 2w - 1} \sum_{d \in (\Z/c\Z)^{\times}} e\left(\frac{d}{c}\right) \Phi_F\left(c,\pm d,\frac{\ell}{c};\frac{s}{2} + w\right) \, ds,
\end{equation}
where the Vorono\u{\i} series $\Phi_F$ is as in \eqref{eqn:PhiFdefeq}.

The left-hand side of \eqref{eqn:preVoronoi} extends holomorphically to $u \geq 1/2$, since the convexity bound for $L(w,F \otimes f)$ and $L(w + it,F)$ together with the assumptions $h^{\pm}(r) \ll (1 + |r|)^{-4}$ and $h^{\hol}(k) \ll k^{-4}$ ensure that each term on the left-hand side is absolutely convergent for all $u \geq 1/2$. The holomorphic extension to $w = 1/2$ is precisely the left-hand side of \eqref{eqn:3x2identity}, since if $f \in \BB_0$, the root number of $L(w,F \otimes f)$ is $\epsilon_f$, and hence $L(1/2,F \otimes f) = 0$ when $\epsilon_f = -1$.

We shall show that the right-hand side of \eqref{eqn:preVoronoi} extends holomorphically to $w = 1/2$ and is equal to the right-hand side of \eqref{eqn:3x2identity}. To begin, we shift the contour of integration of \eqref{eqn:integrand} to $\Re(s) = \sigma_1$ with $-4 < \sigma_1 < -1 - 2u$, which is a nonempty region since $u < 3/2$; due to the bounds \eqref{eqn:PhiFPL} for $\Phi_F$ and \eqref{eqn:Ksdecay} for the Mellin transforms of $H^{\pm}$, the ensuing integral is absolutely convergent. We then use the Vorono\u{\i} summation formula \eqref{eqn:GL3Voronoi}. Via the identity \eqref{eqn:sumXiF}, we deduce that for $\Re(s) = \sigma_1$, the integrand in \eqref{eqn:integrand} is equal to
\[\sum_{\pm_1, \pm_2} \widehat{H^{\pm_1}}(s) \Gscr_{\mu_F}^{\pm_2}\left(1 - \frac{s}{2} - w\right) \frac{1}{2\pi i} \int_{\CC_0} L\left(1 - \frac{s}{2} - w + z,\widetilde{F}\right) G^{\mp_1 \pm_2}(z) \ell^{1 - \frac{s}{2} - w + z} \, dz\]
with $\CC_0$ the contour defined in \hyperref[lem:sumXiF]{Lemma \ref*{lem:sumXiF}} such that $\sigma_1/2 + u < x_0 < -1/2$ and $0 < \delta < \sigma_1/2 + 3u - 2$. With this replacing the integrand of \eqref{eqn:integrand} and with the contour of integration shifted to $\Re(s) = \sigma_1$, the resulting expression is absolutely convergent, and so we may interchange the order of summation and integration. Thus we see that \eqref{eqn:integrand} is equal to
\begin{multline*}
\frac{1}{2\pi i} \int_{\sigma_1 - i\infty}^{\sigma_1 + i\infty} \sum_{\pm_1,\pm_2} \widehat{H^{\pm_1}}(s) \Gscr_{\mu_F}^{\pm_2}\left(1 - \frac{s}{2} - w\right)	\\
\times \frac{1}{2\pi i} \int_{\CC_0} L\left(1 - \frac{s}{2} - w + z,\widetilde{F}\right) \zeta\left(\frac{s}{2} + 3w - z - 1\right) G^{\mp_1 \pm_2}(z) \, dz \, ds.
\end{multline*}

We are ensured absolute convergence of this double integral by the bounds \eqref{eqn:Ksdecay} for the Mellin transform of $H^{\pm}$, \eqref{eqn:Gscrmubounds} for $\Gscr_{\mu_F}^{\pm}(s)$, and \eqref{eqn:Gpmbounds} for $G^{\pm}(s)$. Thus we may make the change of variables $z \mapsto s/2 + 3w - z - 3/2$ and interchange the order of integration, yielding
\[\frac{1}{2\pi i} \int_{x_1 - i\infty}^{x_1 + i\infty} L\left(2w - z - \frac{1}{2},\widetilde{F}\right) \zeta\left(\frac{1}{2} + z\right) \HH_{\mu_F}(w,z) \, dz,\]
where
\begin{equation}
\label{eqn:HHmuFwz}
\HH_{\mu_F}(w,z) \coloneqq \frac{1}{2\pi i} \int_{\CC_1} \sum_{\pm_1,\pm_2} \widehat{H^{\pm_1}}(s) \Gscr_{\mu_F}^{\pm_2}\left(1 - \frac{s}{2} - w\right) G^{\mp_1 \pm_2}\left(\frac{s}{2} + 3w - z - \frac{3}{2}\right) \, ds.
\end{equation}
Here $x_1 = - x_0 + \sigma_1/2 + 3u - 3/2$, so that $1 < x_1 < 2u - 3/2$, and $\CC_1$ is the contour consisting of the straight lines connecting the points $\sigma_1 - i\infty$, $2z - 6w + 2x_0 + 3 - 2i$, $2z - 6w + 2\delta + 3 - 2i$, $2z - 6w + 2\delta + 3 + 2i$, $2z - 6w + 2x_0 + 3 + 2i$, and $\sigma_1 + i\infty$. Finally, we may straighten the inner contour of integration from $\CC_1$ to the vertical line $\Re(s) = \sigma_2$ with $2x_1 - 6u + 3 < \sigma_2 < 2 - 2u$.

We now begin the process of analytically continuing this expression to $w = 1/2$. Suppose that $w$ lies in a compact subset $K$ of the closed vertical strip $1/2 \leq \Re(w) \leq 4/3$. Then by Stirling's formula and \eqref{eqn:Ksdecay}, the integrand in \eqref{eqn:HHmuFwz} is meromorphic as a function of $s \in \C$ with simple poles at $s = 2z - 6w + 3 - 2\ell$ for $\ell \in \N_0$ with residues of size $O_{F,K}((1 + |y|)^{- x - 4 + \ell})$ for $z = x + iy$, while for $s = \sigma + i\tau$ a bounded distance away from such a pole, the integrand is
\[O_{F,\sigma,K}\left((1 + |\tau|)^{-\frac{1}{2}(\sigma + 6u + 5)} (1 + |\tau - 2y|)^{\frac{1}{2}(\sigma + 6u - 2x - 3)}\right).\]
Thus by shifting the contour of integration of the inner integral to the left to $\Re(s) = \sigma_3$ with $\sigma_3 = 2x_1 - 6u + 3 + \alpha$ for $6u - 2x_1 - 7 < \alpha < -8/3$, which picks up residues at $s = 2z - 6w + 3 - 2\ell$ for $\ell \in \N_0$ from the poles of $G^{\mp_1 \pm_2}(s/2 + 3w - z - 3/2)$, and breaking up the integral into the three ranges $|\tau| \leq |y|$, $|y| \leq |\tau| \leq 3|y|$, and $|\tau| \geq 3|y|$, we find that $\HH_{\mu_F}(w,z)$ is holomorphic as a function of $z \in \C$ and satisfies the bound
\begin{equation}
\label{eqn:innerintbound}
\HH_{\mu_F}(w,z) \ll_{F,K,\alpha} (1 + |y|)^{\frac{\alpha}{2}} + (1 + |y|)^{-x - \frac{\alpha}{2} - 4}.
\end{equation}

Next, we observe that by the Cauchy--Schwarz inequality and the bounds \eqref{eqn:zeta2ndmoment} and \eqref{eqn:LsF2ndmoment},
\begin{multline}
\label{eqn:MVTbound}
\int_{U}^{2U} \sum_{\pm} \left|L\left(2w - x \mp iy - \frac{1}{2},\widetilde{F}\right) \zeta\left(\frac{1}{2} + x \pm iy\right)\right| \, dy	\\
\ll_{F,K,\e} \begin{dcases*}
U^{1 + \e} & if $0 \leq x \leq 2u - \frac{7}{6}$,	\\
U^{\frac{11}{4} - 3u + \frac{3x}{2} + \e} & if $\max\left\{2u - \frac{7}{6},0\right\} \leq x \leq 2u - 1$.
\end{dcases*}
\end{multline}

Thus for $w \in K$, we may shift the outer contour to $\Re(z) = x_2$ with $x_2 = 2u - 1$, since the bounds \eqref{eqn:innerintbound} and \eqref{eqn:MVTbound} ensure that the resulting integral converges absolutely. For $u < 3/4$, this introduces an additional term
\[L\left(2w - 1,\widetilde{F}\right) \HH_{\mu_F}\left(w,\frac{1}{2}\right)\]
arising from the residue at $z = 1/2$ of the outer integral, since $\zeta(1/2 + z)$ has a simple pole at $z = 1/2$ with residue $1$. Shifting the contour of integration in \eqref{eqn:HHmuFwz} from $\Re(s) = \sigma_2$ with $1 - 2u < \sigma_2 < 2 - 2u$ to $\Re(s) = \sigma_4$ with $-4 < \sigma_4 < 2 - 6u$, which picks up residues at $s = 4 - 6w$ and $s = 2 - 6w$ from the poles of $G^{\mp_1 \pm_2}(s/2 + 3w - 2)$, we see that this additional term can be written as
\begin{multline*}
2 L(2 - 2w,F) \sum_{\pm} \widehat{H^{\pm}}(4 - 6w) - L\left(2w - 1,\widetilde{F}\right) \sum_{\pm_1,\pm_2} \frac{i^{\pm_1 \mp_2 1}}{\pi} \widehat{H^{\pm_1}}(2 - 6w) \Gscr_{\mu_F}^{\pm_2}(2w)	\\
+ L\left(2w - 1,\widetilde{F}\right) \frac{1}{2\pi i} \int_{\sigma_4 - i\infty}^{\sigma_4 + i\infty} \sum_{\pm_1,\pm_2} \widehat{H^{\pm_1}}(s) \Gscr_{\mu_F}^{\pm_2}\left(1 - \frac{s}{2} - w\right) G^{\mp_1 \pm_2}\left(\frac{s}{2} + 3w - 2\right) \, ds
\end{multline*}
via the functional equation $L(2 - 2w,F) = \sum_{\pm} \Gscr_{\mu_F}^{\pm}(2w - 1) L(2w - 1,\widetilde{F})$. All three of these terms extend holomorphically to $w = 1/2$; furthermore, the holomorphic extensions to $w = 1/2$ of the second and third terms vanish since $L(0,\widetilde{F}) = 0$ due to the self-duality of $F$, while the holomorphic extension to $w = 1/2$ of the first term is equal to
\[2 L(1,F) \sum_{\pm} \widehat{H^{\pm}}(1) = L(1,F) \int_{-\infty}^{\infty} h^{-}(r) \, d_{\spec}r + L(1,F) \sum_{\substack{k = 2 \\ k \equiv 0 \hspace{-.25cm} \pmod{2}}}^{\infty} \frac{k - 1}{2\pi^2} i^{-k} h^{\hol}(k).\]
Here we have used the fact that
\begin{align}
\label{eqn:widehatH+1}
\widehat{H^+}(1) & = \int_{-\infty}^{\infty} \widehat{\JJ_r^+}(1) h^+(r) \, d_{\spec}r + \sum_{\substack{k = 2 \\ k \equiv 0 \hspace{-.25cm} \pmod{2}}}^{\infty} \frac{k - 1}{2 \pi^2} \widehat{\JJ_k^{\hol}}(1) h^{\hol}(k),	\\
\label{eqn:widehatH-1}
\widehat{H^-}(1) & = \int_{-\infty}^{\infty} \widehat{\JJ_r^-}(1) h^-(r) \, d_{\spec}r
\end{align}
by \eqref{eqn:NscrpmKscrpmdefeq}, \eqref{eqn:KscrholJJkholdefeq}, \eqref{eqn:H+fromKscr}, and \eqref{eqn:H-fromKscr}, together with the fact that
\[\widehat{\JJ_r^+}(1) = 0, \qquad \widehat{\JJ_k^{\hol}}(1) = \frac{i^{-k}}{2}, \qquad \widehat{\JJ_r^-}(1) = \frac{1}{2}\]
from \eqref{eqn:JJr+Mellin}, \eqref{eqn:JJr-Mellin}, and \eqref{eqn:JJkholMellin}. Finally, the main term
\[\frac{1}{2\pi i} \int_{x_2 - i\infty}^{x_2 + i\infty} L\left(2w - z - \frac{1}{2},\widetilde{F}\right) \zeta\left(\frac{1}{2} + z\right) \HH_{\mu_F}(w,z) \, dz\]
extends holomorphically to $w = 1/2$, where it becomes
\[\frac{1}{2\pi} \int_{-\infty}^{\infty} L\left(\frac{1}{2} + it,\widetilde{F}\right) \zeta\left(\frac{1}{2} - it\right) \HH_{\mu_F}(t) \, dt\]
with $\HH_{\mu_F}(t)$ as in \eqref{eqn:HHmuFt} upon writing $z = -it$.
\end{proof}

\section{\texorpdfstring{$\mathrm{GL}_4 \times \mathrm{GL}_2 \leftrightsquigarrow \mathrm{GL}_4 \times \mathrm{GL}_2$}{GL\9040\204 \80\327 GL\9040\202 \9041\224 GL\9040\204 \80\327 GL\9040\202} Spectral Reciprocity}

We show the following form of spectral reciprocity: a $\GL_2$ moment of $\GL_4 \times \GL_2$ Rankin--Selberg $L$-functions is equal to a main term plus a dual moment, which is a $\GL_2$ moment of $\GL_4 \times \GL_2$ $L$-functions. The proof uses the Kuznetsov and Petersson formul\ae{} and the $\GL_3$ Vorono\u{\i} summation formula in the guise of \hyperref[lem:sumXiXiF]{Lemma \ref*{lem:sumXiXiF}}. The archetypal version of this form of spectral reciprocity is a reciprocity formula for the fourth moment of $L(1/2,f)$ due to Kuznetsov \cite{Kuz89,Kuz99}, though the initial proof was incomplete in parts and was subsequently completed by Motohashi \cite{Mot03}. With the goal of proving \hyperref[prop:fourranges]{Proposition \ref*{prop:fourranges}}, we prove a new form of $\GL_4 \times \GL_2 \leftrightsquigarrow \GL_4 \times \GL_2$ spectral reciprocity. In place of $L(1/2,f)^4$, our identity instead involves $L(1/2,f) L(1/2,F \otimes f)$, where $F$ is a self-dual Hecke--Maa\ss{} cusp form for $\SL_3(\Z)$.

\begin{theorem}
\label{thm:4x2reciprocity}
Let $h^{\pm}(t)$ be functions that are even, holomorphic in the horizontal strip $|\Im(t)| \leq 1/2 + \delta$ for some $\delta > 0$, and satisfy $h^{\pm}(t) \ll (1 + |t|)^{-5}$, and let $h^{\hol} : 2\N \to \C$ be a sequence satisfying $h^{\hol}(k) \ll k^{-5}$. Suppose additionally that the functions $H^{\pm}$ given by \eqref{eqn:H+fromKscr} and \eqref{eqn:H-fromKscr} are such that their Mellin transforms $\widehat{H^{\pm}}(s) \coloneqq \int_{0}^{\infty} H^{\pm}(x) x^s \, \frac{dx}{x}$ are holomorphic in the strip $-5 < \Re(s) < 1$, in which they satisfy the bounds $\widehat{H^{\pm}}(s) \ll (1 + |\Im(s)|)^{\Re(s) - 5}$. Let $F$ be a self-dual Hecke--Maa\ss{} cusp form for $\SL_3(\Z)$. Then
\begin{multline}
\label{eqn:4x2identity}
\sum_{\pm} \sum_{f \in \BB_0} \frac{L\left(\frac{1}{2},f\right) L\left(\frac{1}{2},F \otimes f\right)}{L(1,\ad f)} h^{\pm}(t_f)	\\
+ \sum_{\pm} \frac{1}{2\pi} \int_{-\infty}^{\infty} \frac{\zeta\left(\frac{1}{2} + it\right) \zeta\left(\frac{1}{2} - it\right) L\left(\frac{1}{2} + it,F\right) L\left(\frac{1}{2} - it,F\right)}{\zeta(1 + 2it) \zeta(1 - 2it)} h^{\pm}(t) \, dt	\\
+ \sum_{f \in \BB_{\hol}} \frac{L\left(\frac{1}{2},f\right) L\left(\frac{1}{2},F \otimes f\right)}{L(1,\ad f)} h^{\hol}(k_f)	\\
= \frac{L(1,F)^2}{\zeta(2)} \sum_{\pm} \int_{-\infty}^{\infty} h^{\pm}(r) \, d_{\spec}r + \frac{L(1,F)^2}{\zeta(2)} \sum_{\substack{k = 4 \\ k \equiv 0 \hspace{-.25cm} \pmod{4}}}^{\infty} \frac{k - 1}{\pi^2} h^{\hol}(k)	\\
+ \sum_{\pm} \sum_{f \in \BB_0} \frac{L\left(\frac{1}{2},f\right) L\left(\frac{1}{2},F \otimes f\right)}{L(1,\ad f)} \widetilde{h}^{\pm}(t_f)	\\
+ \sum_{\pm} \frac{1}{2\pi} \int_{-\infty}^{\infty} \frac{\zeta\left(\frac{1}{2} + it\right) \zeta\left(\frac{1}{2} - it\right) L\left(\frac{1}{2} + it,F\right) L\left(\frac{1}{2} - it,F\right)}{\zeta(1 + 2it) \zeta(1 - 2it)} \widetilde{h}^{\pm}(t) \, dt	\\
+ \sum_{f \in \BB_{\hol}} \frac{L\left(\frac{1}{2},f\right) L\left(\frac{1}{2},F \otimes f\right)}{L(1,\ad f)} \widetilde{h}^{\hol}(k_f),
\end{multline}
where for $0 < \sigma < 1$,
\begin{align}
\label{eqn:tildehpmdefeq}
\widetilde{h}^{\pm}(t) & = \frac{1}{2\pi i} \int_{\sigma - i\infty}^{\sigma + i\infty} \sum_{\pm_1,\pm_2} \widehat{H^{\pm_1}}(s) \widehat{\JJ_t^{\pm}}(s) G^{\pm_2}\left(\frac{1 - s}{2}\right) \Gscr_{\mu_F}^{\pm \pm_1 \pm_2}\left(\frac{1 - s}{2}\right) \, ds,	\\
\label{eqn:tildehholdefeq}
\widetilde{h}^{\hol}(k) & = \frac{1}{2\pi i} \int_{\sigma - i\infty}^{\sigma + i\infty} \sum_{\pm_1,\pm_2} \widehat{H^{\pm_1}}(s) \widehat{\JJ_k^{\hol}}(s) G^{\pm_2}\left(\frac{1 - s}{2}\right) \Gscr_{\mu_F}^{\pm_1 \pm_2}\left(\frac{1 - s}{2}\right) \, ds,
\end{align}
with $\widehat{\JJ_t^{\pm}}$ and $\widehat{\JJ_k^{\hol}}$ as in \eqref{eqn:JJr+Mellin}, \eqref{eqn:JJr-Mellin}, and \eqref{eqn:JJkholMellin}, $\Gscr_{\mu}^{\pm}$ as in \eqref{eqn:Gscrmupm}, and $G^{\pm}$ as in \eqref{eqn:Gpm}.
\end{theorem}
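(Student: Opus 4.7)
The approach mirrors the proof of \hyperref[thm:3x2reciprocity]{Theorem \ref*{thm:3x2reciprocity}}, with both the $\GL_1$ Vorono\u{\i} functional equation for $\Phi$ and the $\GL_3$ Vorono\u{\i} summation formula \eqref{eqn:GL3Voronoi} invoked in tandem, and with \hyperref[lem:sumXiXiF]{Lemma \ref*{lem:sumXiXiF}} in place of \hyperref[lem:sumXiF]{Lemma \ref*{lem:sumXiF}}, reflecting the extra $L(1/2, f)$ factor in the moment. The plan is to introduce two complex parameters $w_1, w_2$ initially taken in the joint region of absolute convergence of the Dirichlet series
\[L(w_1, f) L(w_2, F \otimes f) = \sum_{m, \ell, n = 1}^{\infty} \frac{A_F(\ell, n) \lambda_f(m) \lambda_f(n)}{m^{w_1} \ell^{2w_2} n^{w_2}},\]
and its Eisenstein analogue with $\lambda_f(m) \lambda_f(n)$ replaced by $\lambda(m, t) \lambda(n, t)$. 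Multiplying the sum of both signs of \eqref{eqn:Kuznetsovformula} together with \eqref{eqn:Peterssonformula} by $A_F(\ell, n)/(m^{w_1} \ell^{2w_2} n^{w_2})$ and summing over $m, \ell, n \in \N$ reassembles, on the spectral side, the left-hand side of \eqref{eqn:4x2identity}.

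On the Kloosterman side, after Mellin inversion of $H^\pm$ via \eqref{eqn:H+fromKscr}--\eqref{eqn:H-fromKscr}, opening the Kloosterman sum $S(m, \pm n; c)$, and substituting $\ell \mapsto c\ell$ to force $c \mid \ell$, the inner sums over $m$ and $n$ assemble into $\Phi(c, d; w_1 + s/2)$ and $\Phi_F(c, \pm d, \ell/c; w_2 + s/2)$ as in \eqref{eqn:Phidefeq} and \eqref{eqn:PhiFdefeq}. I would shift the Mellin contour into a region where both Vorono\u{\i} transforms converge, apply them to produce $\Xi(c, \pm_1 d; \cdot) \Xi_F(c, \pm \pm_2 d, \ell/c; \cdot)$ weighted by the Gamma factors $G^{\mp_1}$ and $\Gscr_{\mu_F}^{\pm_2}$, and then invoke \eqref{eqn:sumXiXiF} to transmute the resulting sum over $c \mid \ell$ and $d \in (\Z/c\Z)^\times$ into a sum of $\GL_3$ Fourier coefficients $A_F(n_2, n_1)$ weighted by Kloosterman sums $S(m, \mp_1 \pm \pm_2 n_2; \ell/n_1)$. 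This expression has exactly the shape of the right-hand side of the Kloosterman summation formula \eqref{eqn:Kloostermanformula} applied to a composite test function built from $\widehat{H^\pm}(s)$, $G^{\mp_1}$, and $\Gscr_{\mu_F}^{\pm_2}$; running \eqref{eqn:Kloostermanformula} in reverse spectrally decomposes the sum into the dual $\GL_4 \times \GL_2$ moment in \eqref{eqn:4x2identity}, while the Mellin transform identities \eqref{eqn:JJr+Mellin}--\eqref{eqn:JJkholMellin} match the Bessel transforms $(\Lscr^\pm H)(t)$ and $(\Lscr^{\hol} H)(k)$ against the explicit integral representations \eqref{eqn:tildehpmdefeq} and \eqref{eqn:tildehholdefeq}.

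The main-term factor $L(1, F)^2/\zeta(2)$ emerges from two sources. First, the diagonal $\delta_{m, \pm n}$-contributions from \eqref{eqn:Kuznetsovformula} and \eqref{eqn:Peterssonformula} collapse the sum $\sum_{\ell, n} A_F(\ell, n)/(\ell^{2w_2} n^{w_1 + w_2})$, which by the $\GL_3$ Hecke relations and self-duality of $F$ evaluates to $L(2w_2, F) L(w_1 + w_2, F)/\zeta(w_1 + 3w_2)$, specialising at $(w_1, w_2) = (1/2, 1/2)$ to $L(1, F)^2/\zeta(2)$. Secondly, as in the final stages of the proof of \hyperref[thm:3x2reciprocity]{Theorem \ref*{thm:3x2reciprocity}}, further contributions arise from residues encountered when shifting the outer contour through poles of $\zeta$-type factors during the analytic continuation to $(w_1, w_2) = (1/2, 1/2)$; these supply the remaining $h^-$-summands together with the correct holomorphic contribution restricted to $k \equiv 0 \pmod{4}$, where the sign $i^{-k}$ from the $\JJ_k^{\hol}$ Mellin transform combines with the diagonal's $+1$ to yield $2$.

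The principal obstacle is the analytic continuation itself, together with the contour-shifting bookkeeping. One must carefully track residues of the Gamma factors $G^{\mp_1 \pm_2}$ and $\Gscr_{\mu_F}^{\pm}$, of possible poles of $L(2w_2, \widetilde F)$ (which vanish at $w_2 = 1/2$ by self-duality, mirroring the $3 \times 2$ cancellation), and of the $\zeta$-factors that produce the main term. Absolute convergence throughout requires the hypothesised decay $\widehat{H^\pm}(s) \ll (1 + |\Im(s)|)^{\Re(s) - 5}$, the Stirling-type bounds \eqref{eqn:Gpmbounds} and \eqref{eqn:Gscrmubounds}, and weak hybrid second-moment bounds from \hyperref[lem:integralsecondmomentbounds]{Lemma \ref*{lem:integralsecondmomentbounds}}, applied via the Cauchy--Schwarz inequality to control the Eisenstein integrand $|\zeta(1/2 + it)|^2 |L(1/2 + it, F)|^2/|\zeta(1 + 2it)|^2$. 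The bookkeeping is appreciably more intricate than in \hyperref[thm:3x2reciprocity]{Theorem \ref*{thm:3x2reciprocity}} because two Vorono\u{\i} transforms act simultaneously, producing four sign combinations $(\pm_1, \pm_2)$ which must be consolidated against the two outer Kuznetsov signs $\pm$ to recover the clean form of the dual spectral moment claimed in \eqref{eqn:4x2identity}.
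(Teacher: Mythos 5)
Your proposal is correct and follows essentially the same route as the paper's proof: two auxiliary complex parameters $w_1, w_2$, the Kuznetsov/Petersson formulae weighted by $A_F(\ell,n) m^{-w_1} \ell^{-2w_2} n^{-w_2}$, Mellin inversion of $H^{\pm}$, the substitution $\ell \mapsto c\ell$, simultaneous $\GL_1$ and $\GL_3$ Vorono\u{\i} transforms assembled via \hyperref[lem:sumXiXiF]{Lemma \ref*{lem:sumXiXiF}}, the reverse Kuznetsov (Kloosterman summation) formula, and analytic continuation to $w_1 = w_2 = 1/2$, with the main term arising from the Kuznetsov/Petersson diagonal together with the residue of the $c = 1$ Hurwitz zeta pole. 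Two small imprecisions worth flagging: the extra $\int h^{-}\,d_{\spec}r$ and $i^{-k}$ contributions come from the pole at $s = 2(1 - w_1)$ picked up on the $s$-contour (the Mellin inversion contour) rather than from an ``outer'' contour as in the $3 \times 2$ proof, and you omit the bookkeeping of the polar divisors that cross the $t$-integration contour in the continuous spectrum when $\Re(w_1) < 1$ (and likewise $3\Re(w_2) - \Re(w_1) < 2$ on the dual side); the paper notes these vanish at $w_1 = w_2 = 1/2$ precisely because $L(0, F) = L(0, \widetilde F) = 0$ by self-duality, which is the same mechanism that makes your observation about vanishing at $w_2 = 1/2$ work.
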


\begin{remark}
The assumptions on the decay of $h^{\pm}(t)$, $h^{\hol}(k)$, and $\widehat{H^{\pm}}(s)$ are sufficient but certainly not necessary for the identity \eqref{eqn:4x2identity} to hold; with more work, one can impose weaker assumptions on $h^{\pm}(t)$ and $h^{\hol}(k)$ for this identity to remain valid.
\end{remark}

\hyperref[thm:4x2reciprocity]{Theorem \ref*{thm:4x2reciprocity}} is a cuspidal analogue of Kuznetsov's formula for the fourth moment of $L(1/2,f)$, as discussed in \hyperref[sect:GL4xGL2intro]{Section \ref*{sect:GL4xGL2intro}}; indeed, if $F$ is replaced by a minimal parabolic Eisenstein series, then the identity \eqref{eqn:4x2identity} is Kuznetsov's formula (with additional degenerate terms appearing on the right-hand side due to the non-cuspidality of $F$). The authors have previously proven an analogue of the identity \eqref{eqn:4x2identity} with $F$ replaced by a maximal parabolic Eisenstein series induced from a dihedral Hecke--Maa\ss{} cusp form \cite[Proposition 7.1]{HK20}, with applications towards $L^4$-norm asymptotic formul\ae{} for dihedral Maa\ss{} cusp forms. More generally, Blomer, Li, and Miller have proven a \emph{completely} cuspidal version of \eqref{eqn:4x2identity} for the first moment of $L(1/2,G \otimes f)$, where $G$ is a Hecke--Maa\ss{} cusp form for $\SL_4(\Z)$ \cite[Theorem 1]{BLM19}.

We briefly mention that there additionally exist \emph{level}-aspect versions of $\GL_4 \times \GL_2 \leftrightsquigarrow \GL_4 \times \GL_2$ spectral reciprocity, which have striking applications towards subconvexity; see, in particular, \cite{AK18,BlK19a,BlK19b,Nun23,Zac19,Zac21}. These level-aspect versions have also been generalised to higher rank spectral reciprocity formul\ae{} \cite{JN21,Mia21}.

\begin{proof}[Proof of {\hyperref[thm:4x2reciprocity]{Theorem \ref*{thm:4x2reciprocity}}}]
Let $w_1 = u_1 + iv_1$ and $w_2 = u_2 + iv_2$ be complex variables. We initially assume that $5/4 < u_1 < u_2 \leq 4/3$ and multiply the Kuznetsov and Petersson formul\ae{}, \eqref{eqn:Kuznetsovformula} and \eqref{eqn:Peterssonformula}, by $A_F(\ell,n) m^{-w_1} \ell^{-2w_2} n^{-w_2}$, then sum over $\ell,m,n \in \N$. Adding the Petersson formula to the sum of the same sign and opposite sign Kuznetsov formul\ae{}, we obtain the identity
\begin{multline}
\label{eqn:preVoronoi2}
\sum_{\pm} \sum_{f \in \BB_0} \epsilon_f^{\frac{1 \mp 1}{2}} \frac{L(w_1,f) L(w_2,F \otimes f)}{L(1,\ad f)} h^{\pm}(t_f)	\\
+ \sum_{\pm} \frac{1}{2\pi} \int_{-\infty}^{\infty} \frac{\zeta(w_1 + it) \zeta(w_1 - it) L(w_2 + it,F) L(w_2 - it,F)}{\zeta(1 + 2it) \zeta(1 - 2it)} h^{\pm}(t) \, dt	\\
+ \sum_{f \in \BB_{\hol}} \frac{L(w_1,f) L(w_2,F \otimes f)}{L(1,\ad f)} h^{\hol}(k_f)	\\
= \frac{L(2w_2,\widetilde{F}) L(w_1 + w_2,F)}{\zeta(w_1 + 3w_2)} \Nscr^{+} h^{+} + \frac{L(2w_2,\widetilde{F}) L(w_1 + w_2,F)}{\zeta(w_1 + 3w_2)} \Nscr^{\hol} h^{\hol}	\\
+ \sum_{c,\ell = 1}^{\infty} \frac{1}{c\ell^{2w_2}} \frac{1}{2\pi i} \int_{\sigma_0 - i\infty}^{\sigma_0 + i\infty} \sum_{\pm} \widehat{H^{\pm}}(s) c^s \sum_{m,n = 1}^{\infty} \frac{A_F(\ell,n)}{m^{\frac{s}{2} + w_1} n^{\frac{s}{2} + w_2}} S(m,\pm n;c) \, ds.
\end{multline}
For the diagonal terms, we have used the Hecke relations \cite[Theorem 6.4.11]{Gol06}
\begin{equation}
\label{eqn:GL3Heckerelations}
A_F(\ell,n) = \sum_{d \mid (\ell,n)} \mu(d) A_F\left(\frac{\ell}{d},1\right) A_F\left(1,\frac{n}{d}\right)
\end{equation}
and made the change of variables $\ell \mapsto d\ell$ and $n \mapsto dn$ in order to see that
\[\sum_{\ell,n = 1}^{\infty} \frac{A_F(\ell,n)}{\ell^{2w_2} n^{w_1 + w_2}} = \frac{L(2w_2,\widetilde{F}) L(w_1 + w_2,F)}{\zeta(w_1 + 3w_2)}.\]
The identity \eqref{eqn:preVoronoi2} is valid for $2 - 2u_1 < \sigma_0 < -1/2$, which is a nonempty region provided that $u_1 > 5/4$. Here we have used the Mellin inversion formula to write
\[H^{\pm}(x) = \frac{1}{2\pi i} \int_{\sigma_0 - i\infty}^{\sigma_0 + i\infty} \widehat{H^{\pm}}(s) x^{-s} \, ds\]
for $-5 < \sigma_0 < 1$, since in this range we have the bounds
\begin{equation}
\label{eqn:Ksdecay2}
\widehat{H^{\pm}}(\sigma + i\tau) \ll_{\sigma} (1 + |\tau|)^{\sigma - 5}.
\end{equation}
By the Weil bound for Kloosterman sums, the sum over $c$ converges absolutely since $\sigma_0 < -1/2$, while the sums over $m,n$ converge absolutely since $\sigma_0 > 2 - 2u_1 > 2 - 2u_2$; the sum over $\ell$ converges since $u_2 > 1$.

In anticipation of future simplifications, we write $\ell' = c\ell$, relabel $\ell'$ as $\ell$, and open up the Kloosterman sum, so that the last term on the right-hand side of \eqref{eqn:preVoronoi2} is
\[\sum_{\ell = 1}^{\infty} \frac{1}{\ell^{2w_2}} \frac{1}{2\pi i} \int_{\sigma_0 - i\infty}^{\sigma_0 + i\infty} \sum_{\pm} \widehat{H^{\pm}}(s) \sum_{c \mid \ell} c^{s + 2w_2 - 1} \sum_{d \in (\Z/c\Z)^{\times}} \Phi\left(c,d;\frac{s}{2} + w_1\right) \Phi_F\left(c,\pm d,\frac{\ell}{c};\frac{s}{2} + w_2\right) \, ds,\]
where the Vorono\u{\i} series $\Phi$ and $\Phi_F$ are as in \eqref{eqn:Phidefeq} and \eqref{eqn:PhiFdefeq}.

The left-hand side of \eqref{eqn:preVoronoi2} extends holomorphically to $u_1,u_2 \geq 1/2$, since the convexity bounds for $L(w_1,f)$, $L(w_2,F \otimes f)$, $\zeta(w_1 + it)$, and $L(w_2 + it,F)$ together with the assumptions $h^{\pm}(r) \ll (1 + |r|)^{-5}$ and $h^{\hol}(k) \ll k^{-5}$ ensure that the left-hand side converges for all $u_1,u_2 \geq 1/2$. The holomorphic extension to $w_1 = w_2 = 1/2$ is precisely the left-hand side of the desired identity \eqref{eqn:4x2identity}, since if $f \in \BB_0$, the root number of $L(w,F \otimes f)$ is $\epsilon_f$, and hence $L(1/2,F \otimes f) = 0$ when $\epsilon_f = -1$. Note that for $\Re(w_1) < 1$, additionally polar divisors arise via shifting the contour in the integration over $t \in \R$ in the second term of \eqref{eqn:preVoronoi2}, since the integrand has poles at $t = \pm i(1 - w_1)$. The holomorphic extension of these polar divisors vanishes when $w_1 = w_2 = 1/2$, however, since $L(0,F) = 0$ as $F$ is self-dual.

We shall show that the right-hand side of \eqref{eqn:preVoronoi2} extends holomorphically to $w_1 = w_2 = 1/2$ and is equal to the right-hand side of the desired identity. To begin, we shift the contour of integration of the third term on the right-hand side of \eqref{eqn:preVoronoi2} to $\Re(s) = \sigma_1$ with $5/2 - u_1 - 3u_2 < \sigma_1 < -2u_2$, which is a nonempty region since $u_2 > u_1 > 5/4$; due to the bounds \eqref{eqn:PhiFPL}, \eqref{eqn:PhiPL}, and \eqref{eqn:PhiPLpole} the ensuing integral is absolutely convergent. The only pole that we encounter along the way is at $s = 2(1 - w_1)$ when $c = 1$. For $u_2 > u_1$, the resulting residue is
\begin{multline*}
\sum_{\ell = 1}^{\infty} \frac{1}{\ell^{2w_2}} 2 \sum_{\pm} \widehat{H^{\pm}}(2 - 2w_1) \Phi_F\left(1,\pm 1,\ell; 1 - w_1 + w_2\right)	\\
= \frac{2 L(2w_2,\widetilde{F}) L(1 - w_1 + w_2,F)}{\zeta(1 - w_1 + 3w_2)} \sum_{\pm} \widehat{H^{\pm}}(2 - 2w_1).
\end{multline*}
This extends holomorphically to $w_1 = w_2 = 1/2$, where it is equal to
\[\frac{L(1,\widetilde{F}) L(1,F)}{\zeta(2)} \int_{-\infty}^{\infty} h^{-}(r) \, d_{\spec}r + \frac{L(1,\widetilde{F}) L(1,F)}{\zeta(2)} \sum_{\substack{k = 2 \\ k \equiv 0 \hspace{-.25cm} \pmod{2}}}^{\infty} \frac{k - 1}{2\pi^2} i^{-k} h^{\hol}(k),\]
where we have used \eqref{eqn:widehatH+1} and \eqref{eqn:widehatH-1} for the values of $\widehat{H^{\pm}}(1)$. Combined with the first two terms on the right-hand side of \eqref{eqn:preVoronoi2} evaluated at $w_1 = w_2 = 1/2$, this yields the first two terms on the right-hand side of \eqref{eqn:4x2identity}, 

Now we wish to re-express the remaining Kloosterman term where $\sigma_0$ has been replaced by $\sigma_1$, with $5/2 - u_1 - 3u_2 < \sigma_1 < -2u_2$. We apply the Vorono\u{\i} summation formul\ae{} to both Vorono\u{\i} series, yielding
\begin{multline*}
\sum_{\ell = 1}^{\infty} \frac{1}{\ell^{2w_2}} \frac{1}{2\pi i} \int_{\sigma_1 - i\infty}^{\sigma_1 + i\infty} \sum_{\pm_1} \widehat{H^{\pm_1}}(s) \sum_{\pm_2} G^{\mp_2}\left(1 - \frac{s}{2} - w_1\right) \sum_{\pm_3} \Gscr_{\mu_F}^{\pm_3}\left(1 - \frac{s}{2} - w_2\right)	\\
\times \sum_{c \mid \ell} c^{s + 2w_2 - 1} \sum_{d \in (\Z/c\Z)^{\times}} \Xi\left(c,\pm_2 d;-\frac{s}{2} - w_1\right) \Xi_F\left(c,\pm_1 \pm_3 d,\frac{\ell}{c};-\frac{s}{2} - w_2\right) \, ds.
\end{multline*}
Inserting the identity \eqref{eqn:sumXiXiF}, interchanging the order of summation and integration, and making the change of variables $\ell \mapsto \ell n_1$, we find that this is equal to
\[\sum_{\pm} \sum_{m,n_1,n_2 = 1}^{\infty} \frac{A_F(n_2,n_1)}{m^{\frac{3w_2 - w_1}{2}} n_2^{\frac{w_1 + w_2}{2}} n_1^{w_1 + w_2}} \sum_{\ell = 1}^{\infty} \frac{S(m,\pm n_2;\ell)}{\ell} \widetilde{H}_{w_1,w_2}^{\pm}\left(\frac{\sqrt{mn_2}}{\ell}\right),\]
where
\[\widetilde{H}_{w_1,w_2}^{\pm}(x) \coloneqq \frac{1}{2\pi i} \int_{\sigma_1 - i\infty}^{\sigma_1 + i\infty} \sum_{\pm_1,\pm_2} \widehat{H^{\pm_1}}(s) G^{\pm_2}\left(1 - \frac{s}{2} - w_1\right) \Gscr_{\mu_F}^{\pm \pm_1 \pm_2}\left(1 - \frac{s}{2} - w_2\right) x^{s + w_1 + 3w_2 - 2} \, ds.\]
Here the integral converges since $\sigma_1 > -2 - u_1 - 3u_2$ via the bounds \eqref{eqn:Ksdecay2}, \eqref{eqn:Gpmbounds}, and \eqref{eqn:Gscrmubounds}, the sum over $m$ converges since $\sigma_1 < -2u_1$, the sum over $n_1$ converges since $u_1 + u_2 > 1$, the sum over $n_2$ converges since $\sigma_1 < -2u_2$, and the sum over $\ell$ converges since $\sigma_1 > 5/2 - u_1 - 3u_2$ via the Weil bound for Kloosterman sums.

Now we apply the Kloosterman summation formula, \eqref{eqn:Kloostermanformula}. In order to do so, we require that there exists some $\delta > 0$ such that for each $j \in \{0,1,2,3\}$,
\[x^j \frac{d^j}{dx^j} \widetilde{H}_{w_1,w_2}^{\pm}(x) \ll \begin{dcases*}
x^{\frac{1}{2} + \delta} & for $x \leq 1$,	\\
x^{-1 - \delta} & for $x \geq 1$.
\end{dcases*}\]
This can readily be seen by differentiating under the integral sign and shifting the contour of integration to $\Re(s) = \sigma_2$ with $\sigma_2 = 5/2 + \delta - u_1 - 3u_2$ for $x \leq 1$ and shifting the contour of integration to $\Re(s) = \sigma_3$ with $\sigma_3 = 1 - \delta - u_1 - 3u_2$ for $x \geq 1$.

As in \cite[Proofs of Theorems 2.3 and 2.4]{Mot97}, these conditions on $\widetilde{H}_{w_1,w_2}^{\pm}$ imply that
\begin{align*}
\widetilde{h}_{w_1,w_2}^{\pm}(t) & \coloneqq (\Lscr^{\pm} \widetilde{H}_{w_1,w_2}^{\pm})(t) \ll (1 + |t|)^{-\frac{5}{2} + \delta},	\\
\widetilde{h}_{w_1,w_2}^{\hol}(k) & \coloneqq (\Lscr^{\hol} \widetilde{H}_{w_1,w_2}^{+})(k) \ll k^{-\frac{5}{2} + \delta}.
\end{align*}
Via the Weyl law, this ensures the absolute convergence of the ensuing expression, and so we may interchange the order of summation in order to arrive at
\begin{multline}
\label{eqn:postVoronoi}
\sum_{\pm} \sum_{f \in \BB_0} \epsilon_f^{\frac{1 \mp 1}{2}} \frac{L\left(\frac{3w_2 - w_1}{2}, f\right) L\left(\frac{w_1 + w_2}{2}, \widetilde{F} \otimes f\right)}{L(1, \ad f)} \widetilde{h}_{w_1,w_2}^{\pm}(t_f)	\\
+ \sum_{\pm} \frac{1}{2\pi} \int_{-\infty}^{\infty} \frac{\zeta\left(\frac{3w_2 - w_1}{2} + it\right) \zeta\left(\frac{3w_2 - w_1}{2} - it\right) L\left(\frac{w_1 + w_2}{2} + it, \widetilde{F}\right) L\left(\frac{w_1 + w_2}{2} - it, \widetilde{F}\right)}{\zeta(1 + 2it) \zeta(1 - 2it)} \widetilde{h}_{w_1,w_2}^{\pm}(t) \, dt	\\
+ \sum_{f \in \BB_{\hol}} \frac{L\left(\frac{3w_2 - w_1}{2}, f\right) L\left(\frac{w_1 + w_2}{2}, \widetilde{F} \otimes f\right)}{L(1, \ad f)} \widetilde{h}_{w_1,w_2}^{\hol}(k_f).
\end{multline}

It remains to holomorphically extend this expression to $w_1 = w_2 = 1/2$, which gives us the last three terms on the right-hand side of \eqref{eqn:4x2identity}. Note that for $3\Re(w_2) - \Re(w_1) < 2$, additionally polar divisors arise via shifting the contour in the integration over $t \in \R$ in the second term of \eqref{eqn:postVoronoi}, since the integrand has poles at $t = \pm i(1 - 3(w_2 - w_1)/2)$. The holomorphic extension of these polar divisors vanishes when $w_1 = w_2 = 1/2$, however, since $L(0,\widetilde{F}) = 0$ as $F$ is self-dual. We are left with showing that the functions $\widetilde{h}_{w_1,w_2}^{\pm}(t)$ and $\widetilde{h}_{w_1,w_2}^{\hol}(k)$ extend holomorphically to $w_1 = w_2 = 1/2$ and have sufficiently rapid decay to ensure the absolute convergence of the three terms in \eqref{eqn:postVoronoi}.

We first observe that by Parseval's identity for the Mellin transform, we have that
\begin{align}
\label{eqn:tildehpmMellin}
\widetilde{h}_{w_1,w_2}^{\pm}(t) & = \frac{1}{2\pi i} \int_{\sigma - i\infty}^{\sigma + i\infty} \sum_{\pm_1,\pm_2} \widehat{H^{\pm_1}}(s) \widehat{\JJ_t^{\pm}}\left(s + w_1 + 3w_2 - 2\right)	\\
\notag
& \hspace{2.5cm} \times G^{\pm_2}\left(1 - \frac{s}{2} - w_1\right) \Gscr_{\mu_F}^{\pm \pm_1 \pm_2}\left(1 - \frac{s}{2} - w_2\right) \, ds,	\\
\notag
\widetilde{h}_{w_1,w_2}^{\hol}(k) & = \frac{1}{2\pi i} \int_{\sigma - i\infty}^{\sigma + i\infty} \sum_{\pm_1,\pm_2} \widehat{H^{\pm_1}}(s) \widehat{\JJ_k^{\hol}}\left(s + w_1 + 3w_2 - 2\right)	\\
\notag
& \hspace{2.5cm} \times G^{\pm_2}\left(1 - \frac{s}{2} - w_1\right) \Gscr_{\mu_F}^{\pm_1 \pm_2}\left(1 - \frac{s}{2} - w_2\right) \, ds
\end{align}
for $2 - u_1 - 3u_2 < \sigma < 2 - 2u_2$. As a function of $s = \sigma + i\tau$, the functions
\[\sum_{\pm_2} G^{\pm_2}\left(1 - \frac{s}{2} - w_1\right) \Gscr_{\mu_F}^{\pm_1 \pm_2}\left(1 - \frac{s}{2} - w_2\right)\]
extend meromorphically to the entire complex plane and are holomorphic in the left-half plane $\sigma < 2 - 2u_2$. By \eqref{eqn:Gpmbounds} and \eqref{eqn:Gscrmubounds}, we have the bounds
\begin{equation}
\label{eqn:Gpm2bounds}
\sum_{\pm_2} G^{\pm_2}\left(1 - \frac{s}{2} - w_1\right) \Gscr_{\mu_F}^{\pm_1 \pm_2}\left(1 - \frac{s}{2} - w_2\right) \ll_{F,K,\sigma}\left((1 + |\tau|)^{2 - 2\sigma - u_1 - 3u_2}\right)
\end{equation}
for $w_1,w_2$ in a compact subset $K$ of the vertical strip $1/2 \leq \Re(w_1),\Re(w_2) \leq 4/3$. Recalling the bounds \eqref{eqn:JJr+Mellinbound}, \eqref{eqn:JJr-Mellinbound}, and \eqref{eqn:JJkholMellinbound} for the Mellin transforms of $\JJ_t^{\pm}$ and $\JJ_k^{\hol}$, we see that the integrands are integrable along the vertical line $\Re(s) = \sigma$ provided that $\sigma > -5$. In particular, as functions of the complex variables $w_1 = u_1 + iv_1$ and $w_2 = u_2 + iv_2$, these integrals extend holomorphically to $w_1 = w_2 = 1/2$.

By the convexity bound and the Weyl law, in order to ensure the absolute convergence of each of the three terms in \eqref{eqn:postVoronoi}, it suffices to show that there exists some $\delta > 0$ such that $\widetilde{h}_{w_1,w_2}^{\pm}(t) \ll_{F,K} (1 + |t|)^{u_1 + 3u_2 - 6 - \delta}$ and that $\widetilde{h}_{w_1,w_2}^{\hol}(k) \ll_{F,K} k^{u_1 + 3u_2 - 6 - \delta}$. For the former, we shift the contour of integration in \eqref{eqn:tildehpmMellin} to $\Re(s) = \sigma_4$ with $\max\{-5,-3u_1 - 9u_2 + 3\} < \sigma_4 < -u_1 - 3u_2 + 1$. Due to the poles of $\widehat{\JJ_t^{\pm}}(s + w_1 + 3w_2 - 2)$, this picks up residues at $s = -w_1 - 3w_2 + 2 + 2(\pm it - \ell)$ of size $O_{F,K}((1 + |t|)^{\ell - 11/2})$ for $\ell \in \N_0$ by \eqref{eqn:JJrpmMellinResbound}, \eqref{eqn:Ksdecay2}, and \eqref{eqn:Gpm2bounds}. We then break up the ensuing integral into the three ranges $|\tau| \leq |t|$, $|t| \leq |\tau| \leq 3|t|$, and $|\tau| \geq 3|t|$. By \eqref{eqn:JJr+Mellinbound}, \eqref{eqn:JJr-Mellinbound}, \eqref{eqn:Ksdecay2}, and \eqref{eqn:Gpm2bounds}, the former and latter contributions are $O_{F,K}((1 + |t|)^{-5})$, while the middle range is $O_{F,K}((1 + |t|)^{-(\sigma_4 + u_1 + 3u_2 + 9)/2})$. Our assumption on $\sigma_4$ then ensures that this decays sufficiently rapidly. The same method (using the bounds \eqref{eqn:JJkholMellinbound} in place of \eqref{eqn:JJr+Mellinbound} or \eqref{eqn:JJr-Mellinbound}) yields the desired bounds for $\widetilde{h}_{w_1,w_2}^{\hol}(k)$.
\end{proof}

\section{Test Functions and Transforms for the Short Initial Range}

Our treatment of the short initial range requires the usage of $\GL_3 \times \GL_2 \leftrightsquigarrow \GL_4 \times \GL_1$ and $\GL_4 \times \GL_2 \leftrightsquigarrow \GL_4 \times \GL_2$ spectral reciprocity in the guises of \hyperref[thm:3x2reciprocity]{Theorems \ref*{thm:3x2reciprocity}} and \ref{thm:4x2reciprocity}. In order to apply these two forms of spectral reciprocity, we must choose triples of test functions $(h^{+},h^{-},h^{\hol})$ that localise to dyadic intervals $[T,2T]$. We subsequently bound the associated transforms $\HH_{\mu_F}$ as in \eqref{eqn:HHmuFt} and $(\widetilde{h}^{+},\widetilde{h}^{-},\widetilde{h}^{\hol})$ as in \eqref{eqn:tildehpmdefeq} and \eqref{eqn:tildehholdefeq}. Extra care must be undertaken in producing these bounds due to the hybrid nature of the problem at hand: we must obtain bounds that are uniform in both the dyadic parameter $T$ and the spectral parameter $t_g$.

\subsection{Test Functions}

We define two triples of test functions $(h^{+},h^{-},h^{\hol})$:
\begin{align}
\label{eqn:firsttriple}
h^{+}(t) &= 0, & h^{-}(t) & = e^{-\frac{t^2}{T^2}} \prod_{j = 1}^{M} \left(\frac{t^2 + \left(j - \frac{1}{2}\right)^2}{T^2}\right)^2, & h^{\hol}(k) & = 0,	\\
\label{eqn:secondtriple}
h^{+}(t) & = (\Lscr^{+} H^{+})(t), & h^{-}(t) & = 0, & h^{\hol}(k) & = (\Lscr^{\hol} H^{+})(k).
\end{align}
Here $\Lscr^{+}$ and $\Lscr^{\hol}$ are the transforms given by \eqref{eqn:Lscrdefeq}, while $H^{+}$ is chosen to be the function
\begin{equation}
\label{eqn:H+defeq}
H^{+}(x) \coloneqq \sinh^{M - 1}\left(\frac{1}{T}\right) (4\pi x)^M e^{-4\pi x \sinh \left(\frac{1}{T}\right)},
\end{equation}
which depends on auxiliary parameters $M \in \N$ and $T > 0$; for both \eqref{eqn:firsttriple} and \eqref{eqn:secondtriple}, $M$ is a fixed positive integer and $T > M$. These are chosen to localise to dyadic regions: \eqref{eqn:firsttriple} is evidently constructed such that $h^{-}(t)$ localises to the intervals $[-2T,-T] \cup [T,2T]$, while we shall presently show that \eqref{eqn:secondtriple} is constructed such that $i^k h^{\hol}(k)$ localises to the interval $[T,2T]$.

There are of course plenty of other choices of triples of test functions that localise to dyadic intervals. In order for these test functions to be admissible for \hyperref[thm:3x2reciprocity]{Theorems \ref*{thm:3x2reciprocity}} and \ref{thm:4x2reciprocity}, however, it is necessary that the Mellin transforms of the functions $H^{\pm}(x)$ given by \eqref{eqn:H+fromKscr} and \eqref{eqn:H-fromKscr} are holomorphic in a sufficiently wide vertical strip in which they decay sufficiently rapidly. This feature is by no means automatic and is crucial behind our choices of test functions.

\begin{lemma}
\label{lem:H+}
Let $H^+$ be as in \eqref{eqn:H+defeq} with $M \in \N$ and $T > M$.
\begin{enumerate}[leftmargin=*,label=\textup{(\arabic*)}]
\item\label{lem:H+Mellinbound} For $s = \sigma + i\tau$, the Mellin transform $\widehat{H^{+}}(s)$ of $H^{+}$ is holomorphic for $\sigma > -M$, in which it satisfies the bound
\[\widehat{H^{+}}(s) \ll_{\sigma,M} T^{1 + \sigma} (1 + |\tau|)^{\sigma + M - \frac{1}{2}} e^{-\frac{\pi}{2}|\tau|}.\]
\item\label{lem:H+Lhol} The transform $(\Lscr^{\hol} H^{+})(k)$ is such that for $k \in 2\N$,
\begin{enumerate}[label=\textup{(\alph*)}]
\item\label{lem:H+Lholbound} $(\Lscr^{\hol} H^{+})(k) \ll_M (k/T)^{M - 1} e^{-k/T}$,
\item\label{lem:H+Lholpos} $i^k (\Lscr^{\hol} H^{+})(k) > 0$ for $k > M$,
\item\label{lem:H+Lholasymp} $i^k (\Lscr^{\hol} H^{+})(k) \asymp_M 1$ for $k \asymp T$ with $k > M$.
\end{enumerate}
\item\label{lem:H+L+bound} The transform $(\Lscr^{+} H^{+})(r)$ is such that for $r \in \R \cup i(-\frac{1}{2},\frac{1}{2})$,
\[(\Lscr^{+} H^{+})(r) \ll \left(\frac{1 + |r|}{T}\right)^{M - 1} e^{-\pi|r|}.\]
\end{enumerate}
\end{lemma}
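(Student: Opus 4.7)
The plan is to compute $\widehat{H^+}$ in closed form and then to analyse both transforms by reducing them to derivatives of the elementary Laplace transform $\int_0^{\infty} J_{\nu}(y) e^{-ay}\,dy$.

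For part (1), the substitution $y = 4\pi x \sinh(1/T)$ in the Mellin integral gives immediately
\[\widehat{H^+}(s) = \frac{\Gamma(M+s)}{\sinh(1/T) (4\pi \sinh(1/T))^{s}},\]
which extends meromorphically to $\C$ with simple poles at $s = -M - \ell$ for $\ell \in \N_0$, hence is holomorphic in $\Re(s) > -M$. Combining $\sinh(1/T) \asymp 1/T$ for $T > M$ with $|\Gamma(M+s)| \ll_{\sigma,M} (1+|\tau|)^{\sigma+M-1/2} e^{-\pi|\tau|/2}$ from Stirling's formula yields the asserted bound.

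For part (2), since $\JJ_k^{\hol}(x) = 2\pi i^{-k} J_{k-1}(4\pi x)$, the substitution $y = 4\pi x$ and differentiation under the integral sign give $(\Lscr^{\hol} H^+)(k) = 2\pi i^{-k} \sinh^{M-1}(1/T) \cdot (-1)^{M-1} (d/da)^{M-1} I_1(a)|_{a = \sinh(1/T)}$, where the classical identity $I_1(a) = \int_0^{\infty} J_{k-1}(y) e^{-ay}\,dy = 1/[\sqrt{1+a^2}(a+\sqrt{1+a^2})^{k-1}]$ simplifies to $I_1 = \sech(u) e^{-(k-1)u}$ on writing $a = \sinh u$. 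Using $d/da = \sech(u)\, d/du$, an induction on $M$ expresses $(-1)^{M-1} (d/da)^{M-1} I_1$ as $\sech^{M}(u) e^{-(k-1)u} P_M(\tanh u, k)$, where $P_1 = 1$ and
\[P_{M+1}(\tau,k) = [M\tau + (k-1)] P_M(\tau,k) - (1-\tau^2)\, \partial_{\tau} P_M(\tau,k).\]
I would show by induction on $M$ that, whenever $k > M$, the polynomial $P_M(\tau,k)$ is strictly positive on $\tau \in [0,1)$ with $P_M(\tau,k) \asymp_M k^{M-1}$, the leading term being $(k-1)^{M-1}$. Substituting $u = 1/T$ and combining with $e^{-(k-1)/T} \asymp e^{-k/T}$ and $\sinh^{M-1}(1/T) \asymp T^{-(M-1)}$ then yields (a), (b), and (c) simultaneously, since $i^k \cdot i^{-k} = 1$ absorbs the phase.

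For part (3), writing $\JJ_r^+(x) = (\pi i/\sinh \pi r)(J_{2ir}(4\pi x) - J_{-2ir}(4\pi x))$, the Laplace transform formula for $\Re(\nu) > -1$ gives $I_1^{(\nu)}(a) = \sech(u) e^{-\nu u}$, so the difference $\nu = \pm 2ir$ collapses to $I_1^{(2ir)} - I_1^{(-2ir)} = -2i \sin(2ru) \sech(u)$, and hence
\[(\Lscr^+ H^+)(r) = \frac{2\pi \sinh^{M-1}(1/T)}{\sinh \pi r}\, (-1)^{M-1} \frac{d^{M-1}}{da^{M-1}}\!\left[\frac{\sin(2r\, \operatorname{arcsinh}\, a)}{\sqrt{1+a^2}}\right]_{a = \sinh(1/T)}.\]
Since $d/da = \sech(u)\,d/du$ and each differentiation in $u$ raises the degree in $r$ by at most one, the bracketed expression at $u = 1/T$ is $O((1+|r|)^{M-1})$; the prefactor $\sinh^{M-1}(1/T) \asymp T^{-(M-1)}$ supplies the $T$-decay, and $1/|\sinh \pi r| \ll e^{-\pi|r|}$ for real $r$ with $|r| \geq 1$, while for small real $|r|$ or $r \in i(-1/2,1/2)$ the analytic continuation of $\sin(2ru)/\sinh \pi r$ is uniformly bounded and the factor $(1+|r|)^{M-1}$ suffices. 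This yields $(\Lscr^+ H^+)(r) \ll ((1+|r|)/T)^{M-1} e^{-\pi|r|}$ as required. The main obstacle is the strict positivity in part (2)(b): the other claims all reduce cleanly to Stirling and to size bounds for the derivatives $(d/du)^{M-1}$, but the positivity of $P_M(\tau,k)$ on $[0,1)$ requires a careful inductive argument in which the hypothesis $k > M$ is essential to dominate the derivative term $(1 - \tau^2)\, \partial_{\tau} P_M$ by the $(k-1) P_M$ contribution.
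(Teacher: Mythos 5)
Your reduction of all three transforms to $(M-1)$-fold $a$-derivatives (with $a = \sinh u$) of $\sech(u)\,e^{-(k-1)u}$ and $\sech(u)\sin(2ru)$, together with the closed form for $\widehat{H^+}$, is exactly what the paper does via the Laplace transforms \cite[6.621.1, 6.621.4]{GR15}; parts (1) and (3) are then essentially the paper's argument (the paper expands the $(M-1)$-fold derivative as a terminating sum and bounds term by term via Stirling, but your ``each $d/du$ raises the $r$-degree by at most one'' reasoning reaches the same bound), and the recurrence $P_{M+1} = [M\tau+(k-1)]P_M - (1-\tau^2)\partial_\tau P_M$ you derive is correct.

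The gap is in part (2)(b)--(c), and it is the one you yourself flag. The paper settles positivity and size by identifying the $(M-1)$-fold derivative with the associated Legendre function $P_{M-1}^{1-k}(\tanh(1/T))$ and then invoking two classical facts: the integral representation \cite[8.714.2]{GR15}, whose integrand is nonnegative and whose prefactor $1/\Gamma(k-M)$ is positive precisely when $k>M$, which gives (2)(b) at a stroke; and the terminating expansion \cite[8.704]{GR15}, from which (2)(a) and (2)(c) follow from factorial-ratio asymptotics. Your proposed replacement --- induction on the polynomial recurrence --- does not close as stated: the inductive step requires $[M\tau+(k-1)]P_M > (1-\tau^2)\partial_\tau P_M$ on $\tau\in[0,1)$, but $P_M>0$ alone gives no pointwise control of $\partial_\tau P_M$ in terms of $P_M$. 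Even strengthening to coefficientwise nonnegativity of $P_M$ in $\tau$ leaves the coefficient recurrence
\[c_j^{(M+1)} = (M+j-1)\,c_{j-1}^{(M)} + (k-1)\,c_j^{(M)} - (j+1)\,c_{j+1}^{(M)}\]
with a negative term $-(j+1)c_{j+1}^{(M)}$ that would need a further propagating invariant (a bound on $c_{j+1}^{(M)}/c_j^{(M)}$) to dominate; that invariant is not supplied and does not obviously carry through the recurrence. So the positivity step needs a genuinely new idea beyond the stated induction. The cleanest fix is to recognise the Legendre function and read positivity off the classical integral representation, as the paper does.
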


\begin{proof}\hspace{1em}
\begin{enumerate}[leftmargin=*,label=\textup{(\arabic*)}]
\item By making the change of variables $x \mapsto (4\pi \sinh(1/T))^{-1} x$, we see that
\[\widehat{H^{+}}(s) \coloneqq \int_{0}^{\infty} H^{+}(x) x^s \, \frac{dx}{x} = 4\pi \left(4\pi\sinh\left(\frac{1}{T}\right)\right)^{-s - 1} \Gamma(s + M),\]
which is holomorphic for $\Re(s) > -M$. The desired bound for $\widehat{H^{+}}(s)$ then follows from Stirling's formula.
\item By \cite[6.621.1 and 6.621.4]{GR15}, we have that
\begin{align*}
(\Lscr^{\hol} H^{+})(k) 
& = 2\pi i^{-k} (-1)^{M - 1} \sinh^{M - 1}\left(\frac{1}{T}\right) \left. \left(\sech y \frac{d}{dy}\right)^{M - 1}\right|_{y = \frac{1}{T}} \left(e^{-(k - 1)y} \sech y\right)	\\
& = 2\pi i^{-k} \tanh^{M - 1}\left(\frac{1}{T}\right) \sech\left(\frac{1}{T}\right) \Gamma(k - 1 + M) \mathsf{P}_{M - 1}^{1 - k}\left(\tanh\left(\frac{1}{T}\right)\right).
\end{align*}
where $\mathsf{P}_{\alpha}^{\beta}(x)$ denotes the Ferrers function of the first kind. By \cite[8.714.2]{GR15}, $i^k (\Lscr^{\hol} H^{+})(k)$ is equal to
\[\tanh^{M - 1}\left(\frac{1}{T}\right) \sech^k\left(\frac{1}{T}\right) \frac{\pi \Gamma(2k - 1)}{2^{k - 3} \Gamma(k)\Gamma(k - M)} \int_{0}^{\infty} \frac{t^{k + M - 1}}{\left(1 + 2t \tanh \left(\frac{1}{T}\right) + t^2\right)^{k - \frac{1}{2}}} \, \frac{dt}{t}\]
for $k > M$, and in particular is positive in this range. Finally, by \cite[8.704]{GR15},
\begin{multline*}
i^k (\Lscr^{\hol} H^{+})(k) = 2\pi \sinh^{M - 1}\left(\frac{1}{T}\right) e^{-k/T}	\\
\times \sum_{j = 0}^{M - 1} (-1)^j \binom{M - 1}{j} \frac{(M - 1 + j)! (k - 2 + M)!}{(M - 1)! (k - 1 + j)!} 2^{-j} e^{-(j - 1)/T} \cosh^{M - j - 2}\left(\frac{1}{T}\right).
\end{multline*}
Since $(k - 2 + M)! / (k - 1 + j)! \asymp_M k^{M - 1 - j}$, the above quantity is $O_M((k/T)^{M - 1} e^{-k/T})$ for all $k \in 2\N$ and is $\asymp_M 1$ for $k \asymp T$.
\item Similarly, by \cite[6.621.1 and 6.621.4]{GR15},
\begin{align*}
(\Lscr^{+} H^{+})(r) & = \frac{2\pi}{\sinh \pi r} (-1)^{M - 1} \sinh^{M - 1}\left(\frac{1}{T}\right) \left. \left(\sech y \frac{d}{dy}\right)^{M - 1}\right|_{y = \frac{1}{T}} \left(\sech y \sin (2ry)\right)	\\
& = \frac{\pi i}{\sinh \pi r} \tanh^{M - 1}\left(\frac{1}{T}\right) \sech\left(\frac{1}{T}\right) \sum_{\pm} \pm \Gamma(M \pm 2ir) \mathsf{P}_{M - 1}^{\mp 2ir}\left(\tanh\left(\frac{1}{T}\right)\right).
\end{align*}
By \cite[8.714.2]{GR15},
\begin{multline*}
\sum_{\pm} \pm \Gamma(M \pm 2ir) \mathsf{P}_{M - 1}^{\mp 2ir}\left(\tanh \left(\frac{1}{T}\right)\right)	\\
= \sum_{j = 0}^{M - 1} (-1)^j \binom{M - 1}{j} \frac{(M - 1 + j)!}{(M - 1)!} 2^{-j} e^{-j/T} \sech^j \left(\frac{1}{T}\right) \sum_{\pm} \pm e^{\mp 2i r/T} \frac{\Gamma(M \pm 2ir)}{\Gamma(1 + j \pm 2ir)}.
\end{multline*}
The desired bound then holds from the fact that
\[\sum_{\pm} \pm e^{\mp 2i r/T} \frac{\Gamma(M \pm 2ir)}{\Gamma(1 + j \pm 2ir)} \ll_M (1 + |r|)^{M - 1 - j}.\qedhere\]
\end{enumerate}
\end{proof}

\subsection{\texorpdfstring{$\mathrm{GL}_4 \times \mathrm{GL}_1$}{GL\9040\204 \80\327 GL\9040\201} Transforms}

Next, we determine the behaviour of $\HH_{\mu_F}(t)$ as in \eqref{eqn:HHmuFt} with $(h^{+},h^{-},h^{\hol})$ either of the triples of test functions \eqref{eqn:firsttriple} or \eqref{eqn:secondtriple}.

\begin{lemma}
\label{lem:HHmuFbounds}
Let $g$ be a Hecke--Maa\ss{} cusp form on $\Gamma \backslash \Hb$ with spectral parameter $t_g$. Let $(h^{+},h^{-},h^{\hol})$ be either of the triples of test functions \eqref{eqn:firsttriple} or \eqref{eqn:secondtriple} with $M > 50$ and $T \geq 1$. Then for $F = \ad g$ and $\HH_{\mu_F}(t)$ as in \eqref{eqn:HHmuFt}, we have that
\begin{equation}
\label{eqn:HHmuFbounds}
\HH_{\mu_F}(t) \ll_M \begin{dcases*}
\frac{T^2}{t_g} & for $|t| \leq \frac{t_g^2}{T^2} + 1$,	\\
\frac{T}{|t|^{1/2}} \left(\frac{T^2 |t|}{t_g^2}\right)^{-\frac{M}{2}} & for $|t| \geq \frac{t_g^2}{T^2} + 1$.
\end{dcases*}
\end{equation}
\end{lemma}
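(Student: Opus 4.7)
The plan is to apply Stirling's formula to each gamma factor in the integrand of \eqref{eqn:HHmuFt}, then to optimise the position of the vertical contour over the admissible strip $-M < \sigma < 1$. As a preliminary step, I need uniform vertical-line bounds on $\widehat{H^\pm}(s)$: for the triple \eqref{eqn:secondtriple}, Lemma~\ref{lem:H+}(1) directly gives $\widehat{H^+}(s) \ll_M T^{1+\sigma}(1+|\tau|)^{\sigma+M-1/2}e^{-\pi|\tau|/2}$ throughout $\sigma > -M$; for the triple \eqref{eqn:firsttriple}, an analogous bound on $\widehat{H^-}(s) = \int \widehat{\JJ_r^-}(s)\, h^-(r)\, d_{\spec}r$ is derived by combining Stirling for $\widehat{\JJ_r^-}(s) = (2\pi)^{-s}\Gamma(s/2+ir)\Gamma(s/2-ir)\cosh(\pi r)$ with the support of $h^-$ and its high-order zeros at $r = \pm i(j - 1/2)$, which permit favourable contour shifts in the variable $r$.

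Next, I expand $\Gscr_{\mu_F}^{\pm_2}((1-s)/2) = \sum_\epsilon C_\epsilon^{\pm_2} \prod_{j=1}^3 G^{\epsilon_j}((1-s)/2 + \mu_j)$ over sign tuples $\epsilon = (\epsilon_1,\epsilon_2,\epsilon_3)$, with $\mu_F = (2it_g, -2it_g, 0)$ since $F = \ad g$. Applying Stirling factor by factor at $s = \sigma + i\tau$, each $G^{\epsilon_j}((1-s)/2+\mu_j)$ has polynomial size $|B_j|^{-\sigma/2}$ (with $B_j = -\tau/2 + \nu_j$ for $\nu_j \in \{2t_g, -2t_g, 0\}$), while $G^{\mp_1\pm_2}(s/2+it)$ has size $|\tau/2+t|^{\sigma/2-1/2}$; each factor contributes an exponential factor that equals $e^{-\pi|\cdot|}$ when the sign index matches the sign of the relevant imaginary part and is $\asymp 1$ otherwise. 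The dominant contribution to $\HH_{\mu_F}(t)$ comes from the sign combinations producing no extra exponential decay, in which case only the exponential $e^{-\pi|\tau|/2}$ from $\widehat{H^\pm}$ survives, effectively restricting the $\tau$-integration to $|\tau| \lesssim M \ll t_g$ by the hypothesis $T \leq t_g^{1 - \delta}$, so that $|B_j| \asymp t_g$ for $j = 1, 2$ and $|B_3| \asymp |\tau|/2$. Carrying out the $\tau$-integral then yields the bound
\[\HH_{\mu_F}(t) \ll_M T^{1+\sigma}\, t_g^{-\sigma}\, |t|^{\sigma/2 - 1/2} = \frac{T}{|t|^{1/2}}\left(\frac{T\sqrt{|t|}}{t_g}\right)^{\sigma}.\]

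I then optimise $\sigma \in (-M, 1)$ according to the sign of $\log(T\sqrt{|t|}/t_g)$. When $|t| \leq t_g^2/T^2$, I take $\sigma$ close to $1$ from below, staying strictly below the pole of $\Gscr_{\mu_F}^{\pm_2}((1-s)/2)$ at $s = 1$ arising from $\mu_3 = 0$, which yields the bound $T^2/t_g$. When $|t| \geq t_g^2/T^2$, I take $\sigma$ close to $-M$ from above, staying strictly above the pole of $\widehat{H^+}(s)$ at $s = -M$, which yields $T^{1-M}t_g^M|t|^{-M/2-1/2} = (T/|t|^{1/2})(T^2|t|/t_g^2)^{-M/2}$. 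The contour shift in the latter case crosses poles of $G^{\mp_1\pm_2}(s/2+it)$ at $s = -2\ell - 2it$ for $0 \leq \ell < M/2$, but the corresponding residues involve $\widehat{H^{\pm_1}}(-2\ell - 2it)$, which by the preliminary bound carry a factor $e^{-\pi|t|}$; since $|t| \geq t_g^2/T^2 \geq t_g^{2\delta} \to \infty$ in this regime, these residues are negligible compared to the main bound.

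The main obstacle is the careful accounting of the multiple sign combinations in $\Gscr_{\mu_F}^{\pm_2}((1-s)/2)\, G^{\mp_1\pm_2}(s/2+it)$: one must verify that the non-worst-case combinations (where at least one $G$-factor supplies additional exponential decay) contribute no more than the worst-case bound, and that the polynomial factors from the worst case combine correctly to produce the claimed hybrid dependence on $T$, $t_g$, and $|t|$. A secondary subtlety is the derivation of the vertical-line bound on $\widehat{H^-}(s)$ for the triple \eqref{eqn:firsttriple}, where the precise use of the high-order zeros of $h^-$ in the $r$-integral is essential to control the effective $\tau$-support and size of $\widehat{H^-}$, matching the cleaner bound available for $\widehat{H^+}(s)$ in \eqref{eqn:secondtriple}.
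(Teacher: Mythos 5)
Your overall strategy — vertical-line bounds on $\widehat{H^{\pm}}$, Stirling for the gamma factors, and contour shifts calibrated to the sign of $\log(T\sqrt{|t|}/t_g)$ — is the same as the paper's, but there are two genuine gaps.

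First, you assert that $\widehat{H^-}(s)$ for the triple \eqref{eqn:firsttriple} inherits the exponential factor $e^{-\pi|\tau|/2}$ enjoyed by $\widehat{H^+}$ in \eqref{eqn:secondtriple}. This is false: the paper's bound (via \cite[Lemma 4]{BLM19}) is $\widehat{H^-}(s) \ll_{\sigma,M} T^{1+\sigma}(1+|\tau|)^{-M}$, valid only in the strip $-M/2 < \Re(s) < M/2$, with no exponential decay. This breaks two of your steps. (a) When you shift left and cross the poles of $G^{\mp_1\pm_2}(s/2+it)$ at $s = -2\ell - 2it$, you claim the residues are negligible because $\widehat{H^{\pm_1}}(-2\ell-2it)$ carries $e^{-\pi|t|}$; for the first triple it carries only $(1+|t|)^{-M}$, so the residues are polynomially bounded, not negligible, and must be explicitly compared against the target bound $T|t|^{-1/2}(T^2|t|/t_g^2)^{-M/2}$ using the hypotheses $T \leq t_g^{1-\delta}$ and $|t| \geq t_g^2/T^2$ — this verification is the content of the paper's residue estimate and you omit it. (b) Your proposed shift to $\sigma$ near $-M$ is not permissible for the first triple: $\widehat{H^-}$ is not known to extend past $\sigma = -M/2$, which is why the paper stops at $\Re(s) = -M/4$.

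Second, in the regime $|t| \leq t_g^2/T^2$ you avoid the pole of $\Gscr_{\mu_F}^{\pm_2}((1-s)/2)$ at $s = 1$ by staying at $\sigma$ strictly below $1$. But this does not produce the bound $T^2/t_g$. For fixed $\sigma = 1 - \epsilon$, the integrand bound $T^{2-\epsilon}t_g^{\epsilon-1}(1+|t|)^{-\epsilon/2}$ exceeds $T^2/t_g$ by the factor $(t_g/T)^{\epsilon}(1+|t|)^{-\epsilon/2}$, which is $\geq 1$ precisely in the regime $|t| \leq t_g^2/T^2$ and reaches a genuine power of $t_g$ for $|t|$ small; letting $\epsilon \to 0$ instead produces a logarithmic divergence from the pole at $\tau = 0$. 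The paper's route is to shift \emph{past} $s = 1$ (and past $s = 1 \pm 2it_g$), extract the residue at $s = 1$, and observe that $\Res_{s=1}\Gscr_{\mu_F}^{\pm}((1-s)/2) \ll 1/t_g$ gives the dominant contribution $T^2/t_g$; the shifted integral (at, say, $\Re(s) = 3/2$) is then $\ll T^{5/2}t_g^{-3/2}(1+|t|)^{1/4} \ll T^2/t_g$ exactly in the range $|t| \leq t_g^2/T^2$. The residue at $s = 1$ \emph{is} the main term, and your argument misses it.
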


Combining the bounds in \hyperref[lem:HHmuFbounds]{Lemma \ref*{lem:HHmuFbounds}} with the $\GL_3 \times \GL_2 \leftrightsquigarrow \GL_4 \times \GL_1$ spectral reciprocity formula obtained in \hyperref[thm:3x2reciprocity]{Theorem \ref*{thm:3x2reciprocity}}, we may deduce an identity roughly of the form
\begin{multline*}
\sum_{\substack{f \in \BB_0 \\ T \leq t_f \leq 2T}} \frac{L\left(\frac{1}{2},\ad g \otimes f\right)}{L(1,\ad f)} + \frac{1}{2\pi} \int\limits_{T \leq |t| \leq 2T} \left|\frac{L\left(\frac{1}{2} + it,\ad g\right)}{\zeta(1 + 2it)}\right|^2 \, dt + \sum_{\substack{f \in \BB_{\hol} \\ T \leq k_f \leq 2T}} \frac{L\left(\frac{1}{2},\ad g \otimes f\right)}{L(1,\ad f)}	\\
\approx T^2 + \frac{T^2}{t_g} \int_{-\frac{t_g^2}{T^2} - 1}^{\frac{t_g^2}{T^2} + 1} L\left(\frac{1}{2} + it,\ad g\right) \zeta\left(\frac{1}{2} - it\right) \, dt.
\end{multline*}
In \hyperref[prop:upperbounds]{Proposition \ref*{prop:upperbounds}}, we use this identity to produce upper bounds for each of the terms on the left-hand side.

\begin{proof}[Proof of {\hyperref[lem:HHmuFbounds]{Lemma \ref*{lem:HHmuFbounds}}}]
For the triple of test functions \eqref{eqn:firsttriple}, we have that for $s = \sigma + i\tau$ with $-M/2 < \sigma < M/2$,
\[\widehat{H^{-}}(s) \ll_{\sigma,M} T^{1 + \sigma} (1 + |\tau|)^{-M}\]
by \cite[Lemma 4]{BLM19}, while $\widehat{H^{+}}(s) = 0$, where $H^+$ and $H^-$ are as in \eqref{eqn:H+fromKscr} and \eqref{eqn:H-fromKscr}. Similarly, for the triple of test functions \eqref{eqn:secondtriple}, the Sears--Titchmarsh inversion formula \cite[Appendix B.5]{Iwa02} implies that $(\Kscr^{+} h^{+})(x) + (\Kscr^{\hol} h^{\hol})(x) = H^{+}(x)$, so that
\[\widehat{H^{+}}(s) \ll_{\sigma,M} T^{1 + \sigma} (1 + |\tau|)^{\sigma + M - \frac{1}{2}} e^{-\frac{\pi}{2}|\tau|}\]
for $\sigma > -M$ by \hyperref[lem:H+Mellinbound]{Lemma \ref*{lem:H+} \ref*{lem:H+Mellinbound}}, while $\widehat{H^{-}}(s) = 0$.

Provided that $\sigma$ is bounded and $s$ is a bounded distance away from the poles at $s = 1 + 2\ell$, $s = 1 + 2\ell \pm 4it_g$, and $s = -2(it + \ell)$ with $\ell \in \N_0$, we have by the definitions \eqref{eqn:Gscrmupm} and \eqref{eqn:Gpm} and Stirling's formula that the integrand in \eqref{eqn:HHmuFt} satisfies the bounds
\begin{multline*}
\sum_{\pm_1,\pm_2} \widehat{H^{\pm_1}}(s) \Gscr_{\mu_F}^{\pm_2}\left(\frac{1 - s}{2}\right) G^{\mp_1 \pm_2}\left(\frac{s}{2} + it\right)	\\
\ll_{\sigma,M} T^{1 + \sigma} (1 + |\tau|)^{-M} ((1 + |\tau + 4t_g|) (1 + |\tau|) (1 + |\tau - 4t_g|))^{-\frac{\sigma}{2}} (1 + |\tau + 2t|)^{\frac{\sigma - 1}{2}}.
\end{multline*}

Since
\[\Res_{s = 1} \Gscr_{\mu_F}^{\pm}\left(\frac{1 - s}{2}\right) \ll \frac{1}{t_g}, \qquad \Res_{s = 1 \pm_3 4it_g} \Gscr_{\mu_F}^{\pm}\left(\frac{1 - s}{2}\right) \ll \frac{1}{t_g},\]
we have that
\begin{align*}
\Res_{s = 1} \sum_{\pm_1, \pm_2} \widehat{H^{\pm_1}}(s) \Gscr_{\mu_F}^{\pm_2}\left(\frac{1 - s}{2}\right) G^{\mp_1 \pm_2}\left(\frac{s}{2} + it\right) & \ll_M \frac{T^2}{t_g},	\\
\Res_{s = 1 \pm_3 4it_g} \sum_{\pm_1,\pm_2} \widehat{H^{\pm_1}}(s) \Gscr_{\mu_F}^{\pm_2}\left(\frac{1 - s}{2}\right) G^{\mp_1 \pm_2}\left(\frac{s}{2} + it\right) & \ll_M \frac{T^2}{t_g^{M + 1}}.
\end{align*}
To bound $\HH_{\mu_F}(t)$ when $|t| \leq t_g^2/T^2$, we shift the contour of integration in \eqref{eqn:HHmuFt} to $\Re(s) = 1$, avoiding the poles at $s = 1$ and $s = 1 \pm 4it_g$ via deforming the contour to instead consist of small semicircles to the left of this line at each of these three poles. Since the integrand decays rapidly due to the decay of the Mellin transform of $H^{\pm_1}(x)$, the main contribution arises from the pole at $s = 1$ and from the portion of the integral for which $\tau$ is essentially bounded. In this way, we find that for $|t| \leq t_g^2/T^2$,
\[\HH_{\mu_F}(t) \ll_M \frac{T^2}{t_g}.\]

Since
\[\Res_{s = -2(it + \ell)} G^{\pm}\left(\frac{s}{2} + it\right) \ll_{\ell} 1,\]
we have that for each nonnegative integer $\ell < M$,
\begin{multline*}
\Res_{s = -2(it + \ell)} \sum_{\pm_1,\pm_2} \widehat{H^{\pm_1}}(s) \Gscr_{\mu_F}^{\pm_1}\left(\frac{1 - s}{2}\right) G^{\mp_1 \pm_2}\left(\frac{s}{2} + it\right)	\\
\ll_M T^{1 - 2\ell} (1 + |t|)^{-M} ((1 + |2t_g + t|) (1 + |t|) (1 + |2t_g - t|))^{\ell}.
\end{multline*}
To bound $\HH_{\mu_F}(t)$ when $|t| \geq t_g^2/T^2$, we shift the contour of integration in \eqref{eqn:HHmuFt} to $\Re(s) = -M/4$, deforming the contour if necessary by a small semicircle if a pole lies on this line. For the resulting integral on the line $\Re(s) = -M/4$, the integrand is negligibly small unless $\tau$ is essentially bounded, and hence for $|t| \geq t_g^2/T^2$,
\[\HH_{\mu_F}(t) \ll_M \frac{T}{|t|^{1/2}} \left(\frac{T^2 |t|}{t_g^2}\right)^{-\frac{M}{2}}.\qedhere\]
\end{proof}

\subsection{\texorpdfstring{$\mathrm{GL}_4 \times \mathrm{GL}_2$}{GL\9040\204 \80\327 GL\9040\202} Transforms}

Similarly, we determine the behaviour of $(\widetilde{h}^{+},\widetilde{h}^{-},\widetilde{h}^{\hol})$ as in \eqref{eqn:tildehpmdefeq} and \eqref{eqn:tildehholdefeq} with $(h^{+},h^{-},h^{\hol})$ either of the triples of test functions \eqref{eqn:firsttriple} or \eqref{eqn:secondtriple}.

\begin{lemma}
\label{lem:tildehpmbounds}
Let $g$ be a Hecke--Maa\ss{} cusp form on $\Gamma \backslash \Hb$ with spectral parameter $t_g$. Let $(h^{+},h^{-},h^{\hol})$ be either of the triples of test functions \eqref{eqn:firsttriple} or \eqref{eqn:secondtriple} for a fixed positive integer $M > 50$ and $T > M$. Then for $F = \ad g$ and $(\widetilde{h}^{+},\widetilde{h}^{-},\widetilde{h}^{\hol})$ as in \eqref{eqn:tildehpmdefeq} and \eqref{eqn:tildehholdefeq}, we have that
\begin{align}
\label{eqn:tildeh+bound}
\widetilde{h}^{+}(t) & \ll_M \frac{T^2 \log (t_g + T)}{t_g} (1 + |t|)^{-\frac{1}{2}(M + 1)},	\\
\label{eqn:tildeh-bound}
\widetilde{h}^{-}(t) & \ll_M \begin{dcases*}
\frac{T^2 \log (t_g + T)}{t_g} & for $|t| \leq \frac{t_g}{T} + 1$,	\\
\frac{T}{|t|} \left(\frac{T |t|}{t_g}\right)^{-\frac{M}{4}} & for $|t| \geq \frac{t_g}{T} + 1$,	\\
\end{dcases*}	\\
\label{eqn:tildehholbound}
\widetilde{h}^{\hol}(k) & \ll_M \begin{dcases*}
\frac{T^2 \log (t_g + T)}{t_g} & for $k \leq \frac{t_g}{T} + 1$,	\\
\frac{T}{k} \left(\frac{T k}{t_g}\right)^{-\frac{M}{4}} & for $k \geq \frac{t_g}{T} + 1$.
\end{dcases*}
\end{align}
\end{lemma}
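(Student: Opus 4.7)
The plan is to estimate the Mellin--Barnes integrals \eqref{eqn:tildehpmdefeq} and \eqref{eqn:tildehholdefeq} by contour shifting, paralleling the proof of \hyperref[lem:HHmuFbounds]{Lemma \ref*{lem:HHmuFbounds}}. First I would record bounds on $\widehat{H^{\pm_1}}(s)$ for both triples of test functions: for \eqref{eqn:firsttriple}, $\widehat{H^+}\equiv 0$ and $\widehat{H^-}(s)\ll T^{1+\sigma}(1+|\tau|)^{-M}$ by \cite[Lemma 4]{BLM19}; for \eqref{eqn:secondtriple}, $\widehat{H^-}\equiv 0$ while the Sears--Titchmarsh inversion formula together with \hyperref[lem:H+]{Lemma \ref*{lem:H+}} gives $\widehat{H^+}(s)\ll T^{1+\sigma}(1+|\tau|)^{\sigma+M-1/2}e^{-\pi|\tau|/2}$. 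Applying Stirling via \eqref{eqn:Gpmbounds} and \eqref{eqn:Gscrmubounds}, and noting that $F=\ad g$ has spectral parameters $(2it_g,0,-2it_g)$, the factor $G^{\pm_2}((1-s)/2)\Gscr_{\mu_F}^{\pm}((1-s)/2)$ is controlled away from its poles by $\bigl((1+|\tau|)^{2}(1+|\tau/2-2t_g|)(1+|\tau/2+2t_g|)\bigr)^{-\sigma/2}$.

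For $\widetilde{h}^+(t)$, I would exploit the exponential decay $e^{-\pi(2|t|-|\tau|)/2}$ of $\widehat{\JJ_t^+}(s)$ from \eqref{eqn:JJr+Mellinbound} throughout $|\tau|\leq 2|t|$, which on a contour $\Re(s)=\sigma$ for a moderate positive $\sigma$ localises the effective range of $\tau$ near $\pm 2|t|$. There the polynomial factor of $\widehat{\JJ_t^+}(s)$ together with $\widehat{H^{\pm_1}}(s)$ supplies the polynomial decay $(1+|t|)^{-(M+1)/2}$, while $\Gscr_{\mu_F}^{\pm}((1-s)/2)$ contributes a factor $t_g^{-\sigma}$, producing with the choice $\sigma=1$ the prefactor $T^2/t_g$ and hence \eqref{eqn:tildeh+bound}.

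For $\widetilde{h}^-(t)$ and $\widetilde{h}^{\hol}(k)$, no exponential decay in $|t|$ or $k$ is available from $\widehat{\JJ_t^-}$ or $\widehat{\JJ_k^{\hol}}$ in the regime $|\tau|\leq 2|t|$ or $|\tau|\leq k$, so the analysis splits at the dyadic threshold $t_g/T$. In the range $|t|\leq t_g/T$ (respectively $k\leq t_g/T$), I would take $\Re(s)=1$; the effective range of $\tau$ is restricted by $\widehat{H^{\pm_1}}(s)$, the Gamma bounds yield $t_g^{-1}(1+|\tau|)^{-1}$ for $|\tau|\ll t_g$, and summing over the dyadic subranges delimited by $|\tau|\sim t_g$ and $|\tau|\sim 2t_g$ produces the expected $T^2/t_g$ together with a $\log t_g$ coming from a harmonic-type integration. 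In the complementary tail range, I would shift the contour leftward to $\Re(s)=-M/4$; the residues at $s=2(\pm it-\ell)$ of $\widehat{\JJ_t^-}(s)$ (respectively $s=1-k-2\ell$ of $\widehat{\JJ_k^{\hol}}(s)$), bounded via \eqref{eqn:JJrpmMellinResbound} (respectively \eqref{eqn:JJkholMellinResbound}), inject the decay factor $(T|t|/t_g)^{-M/4}$ (respectively $(Tk/t_g)^{-M/4}$) matching the claimed bounds, while the integral along the shifted line is negligible. The principal obstacle will be tracking the hybrid $(T,t_g,|t|)$-dependence cleanly through these shifts so that the transition at $t_g/T$ emerges correctly and the polynomial rate of decay is preserved in the tail range; it must also be verified that the subleading poles near $s=1$, such as those at $s=1\pm 4it_g$, contribute only $O(t_g^{-M})$ errors, by the same kind of precise Stirling estimate already employed in the proof of \hyperref[lem:HHmuFbounds]{Lemma \ref*{lem:HHmuFbounds}}.
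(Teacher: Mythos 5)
Your proposal follows the same broad contour-shifting strategy as the paper's proof and correctly identifies the input ingredients (the Mellin transform bounds for $H^{\pm}$, the bounds of \eqref{eqn:JJr+Mellinbound}--\eqref{eqn:JJkholMellinbound}, \eqref{eqn:Gpmbounds}, \eqref{eqn:Gscrmubounds}, and the residue bounds \eqref{eqn:JJrpmMellinResbound} and \eqref{eqn:JJkholMellinResbound}). The treatment of $\widetilde{h}^{+}(t)$ via exponential decay and of the tail range $|t| \geq t_g/T$ via a leftward shift to $\Re(s) = -M/4$ both land in the right place; the paper finds the main contribution from the tail comes from the integral on the shifted line rather than from the crossed poles, but the residues are comparable or smaller so your emphasis on them does not derail the argument.

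There is, however, a concrete gap in your explanation of where the factor $\log t_g$ comes from in the small-$|t|$ range. You write that moving the contour to $\Re(s)=1$ and ``summing over the dyadic subranges delimited by $|\tau|\sim t_g$ and $|\tau|\sim 2t_g$ produces $\ldots$ a $\log t_g$ coming from a harmonic-type integration.'' This cannot be right: $\widehat{H^{\pm_1}}(s)$ decays like $(1+|\tau|)^{-M}$ with $M > 50$ for the first triple \eqref{eqn:firsttriple}, and exponentially for the second triple \eqref{eqn:secondtriple}, so the integral in $\tau$ is concentrated in $|\tau| \ll 1$ and no logarithm can arise from accumulating dyadic ranges out to $|\tau| \asymp t_g$. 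The actual source of the $\log t_g$ is the \emph{double} pole at $s=1$, where both $G^{\pm_2}\bigl(\frac{1-s}{2}\bigr)$ and $\Gscr_{\mu_F}^{\pm \pm_1 \pm_2}\bigl(\frac{1-s}{2}\bigr)$ have simple poles; since the initial contour satisfies $0 < \sigma < 1$, moving it to or past $\Re(s) = 1$ crosses this double pole and picks up a residue involving a derivative, and the logarithmic derivative of $\Gamma\bigl(\frac{1-s}{2} \pm 2it_g\bigr)$ at $s = 1$ contributes $\psi(\pm 2it_g) \approx \log t_g$ by Stirling. Without isolating this residue your argument would only produce $T^2/t_g$ and would not account for the displayed $\log t_g$. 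You should make the contour crossing of the pole at $s = 1$ explicit and derive the $\log t_g$ from the double-pole residue, as the paper does, rather than from the $\tau$-integral.
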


Combining the bounds in \hyperref[lem:tildehpmbounds]{Lemma \ref*{lem:tildehpmbounds}} with the $\GL_4 \times \GL_2 \leftrightsquigarrow \GL_4 \times \GL_2$ spectral reciprocity formula obtained in \hyperref[thm:4x2reciprocity]{Theorem \ref*{thm:4x2reciprocity}}, we may deduce an identity roughly of the form
\begin{multline*}
\sum_{\substack{f \in \BB_0 \\ T \leq t_f \leq 2T}} \frac{L\left(\frac{1}{2},f\right) L\left(\frac{1}{2},\ad g \otimes f\right)}{L(1,\ad f)} + \frac{1}{2\pi} \int\limits_{T \leq |t| \leq 2T} \left|\frac{\zeta\left(\frac{1}{2} + it\right) L\left(\frac{1}{2} + it,\ad g\right)}{\zeta(1 + 2it)}\right|^2 \, dt	\\
+ \sum_{\substack{f \in \BB_{\hol} \\ T \leq k_f \leq 2T}} \frac{L\left(\frac{1}{2},f\right) L\left(\frac{1}{2},\ad g \otimes f\right)}{L(1,\ad f)}	\\
\approx T^2 + \frac{T^2 \log t_g}{t_g} \sum_{\substack{f \in \BB_0 \\ t_f \leq \frac{t_g}{T}}} \frac{L\left(\frac{1}{2},f\right) L\left(\frac{1}{2},\ad g \otimes f\right)}{L(1,\ad f)} + \frac{T^2 \log t_g}{t_g} \frac{1}{2\pi} \int\limits_{|t| \leq \frac{t_g}{T}} \left|\frac{\zeta\left(\frac{1}{2} + it\right) L\left(\frac{1}{2} + it,\ad g\right)}{\zeta(1 + 2it)}\right|^2 \, dt	\\
+ \frac{T^2 \log t_g}{t_g} \sum_{\substack{f \in \BB_{\hol} \\ k_f \leq \frac{t_g}{T}}} \frac{L\left(\frac{1}{2},f\right) L\left(\frac{1}{2},\ad g \otimes f\right)}{L(1,\ad f)}.
\end{multline*}
In \hyperref[prop:momentsboundsinitial]{Proposition \ref*{prop:momentsboundsinitial}}, we use this identity to provide upper bounds for each of the terms on the left-hand side.

We first state bounds for the integrands in \eqref{eqn:tildehpmdefeq} and \eqref{eqn:tildehholdefeq}.

\begin{lemma}
\label{lem:mellinbounds}
Provided that $s$ is a bounded distance away from the poles at $s = 1 + 2\ell$, $s = 1 + 2\ell \pm 4it_g$, and $s = -2(\pm it + \ell)$ with $\ell \in \N_0$, we have that for $t \in \R$,
\begin{multline}
\label{eqn:tildeintegrandpm}
\sum_{\pm_2} \widehat{\JJ_t^{\pm}}(s) G^{\mp_2}\left(\frac{1 - s}{2}\right) \Gscr_{\mu_F}^{\pm \pm_1 \pm_2}\left(\frac{1 - s}{2}\right)	\\
\ll_{\sigma} (1 + |\tau|)^{-\sigma} ((1 + |\tau + 4t_g|)(1 + |\tau - 4t_g|))^{-\frac{\sigma}{2}} ((1 + |\tau + 2t|)(1 + |\tau - 2t|))^{\frac{1}{2} (\sigma - 1)} e^{-\frac{\pi}{2} \Omega^{\pm,\pm_1}(\tau,t,t_g)},
\end{multline}
and for $k \in 2\N$,
\begin{multline}
\label{eqn:tildeintegrandhol}
\sum_{\pm_2} \widehat{\JJ_k^{\hol}}(s) G^{\mp_2}\left(\frac{1 - s}{2}\right) \Gscr_{\mu_F}^{\pm \pm_1 \pm_2}\left(\frac{1 - s}{2}\right)	\\
\ll_{\sigma} (1 + |\tau|)^{-\sigma} ((1 + |\tau + 4t_g|)(1 + |\tau - 4t_g|))^{-\frac{\sigma}{2}} (k + |\tau|)^{\sigma - 1} e^{-\frac{\pi}{2} \Omega^{\hol,\pm_1}(\tau,t_g)},
\end{multline}
where
\begin{align*}
\Omega^{+,+}(\tau,t,t_g) & \coloneqq \begin{dcases*}
2|t| & if $|\tau| \leq 2\min\{t_g,|t|\}$,	\\
|\tau| & if $2|t| \leq |\tau| \leq 2t_g$,	\\
2(2t_g + |t| - |\tau|) & if $2t_g \leq |\tau| \leq 2\min\{2t_g,|t|\}$,	\\
4t_g - |\tau| & if $2\max\{t_g,|t|\} \leq |\tau| \leq 4t_g$,	\\
2|t| - |\tau| & if $4t_g \leq |\tau| \leq 2|t|$,	\\
0 & if $|\tau| \geq 2\max\{2t_g,|t|\}$,
\end{dcases*}	\\
\Omega^{-,+}(\tau,t,t_g) & \coloneqq \begin{dcases*}
0 & if $|\tau| \leq 2\min\{2t_g,|t|\}$,	\\
|\tau| - 2|t| & if $2|t| \leq |\tau| \leq 4t_g$,	\\
|\tau| - 4t_g & if $4t_g \leq |\tau| \leq 2|t|$,	\\
2(|\tau| - |t| - 2t_g) & if $|\tau| \geq 2\max\{2t_g,|t|\}$,
\end{dcases*}	\\
\Omega^{\hol,+}(\tau,t_g) & \coloneqq \begin{dcases*}
|\tau| & if $|\tau| \leq 2t_g$,	\\
4t_g - |\tau| & if $2t_g \leq |\tau| \leq 4t_g$,	\\
0 & if $|\tau| \geq 4t_g$,
\end{dcases*}
\end{align*}
while
\begin{align*}
\Omega^{+,-}(\tau,t,t_g) & \coloneqq \begin{dcases*}
2|t| - |\tau| & if $|\tau| \leq 2\min\{2t_g,|t|\}$,	\\
0 & if $2|t| \leq |\tau| \leq 4t_g$,	\\
2(|t| - 2t_g) & if $4t_g \leq |\tau| \leq 2|t|$,	\\
|\tau| - 4t_g & if $|\tau| \geq 2\max\{2t_g,|t|\}$,
\end{dcases*}	\\
\Omega^{-,-}(\tau,t,t_g) & \coloneqq \begin{dcases*}
|\tau| & if $|\tau| \leq 2\min\{t_g,|t|\}$,	\\
2(|\tau| - |t|) & if $2|t| \leq |\tau| \leq 2t_g$,	\\
4t_g - |\tau| & if $2t_g \leq |\tau| \leq 2\min\{2t_g,|t|\}$,	\\
2(2t_g - |t|) & if $2\max\{t_g,|t|\} \leq |\tau| \leq 4t_g$,	\\
0 & if $4t_g \leq |\tau| \leq 2|t|$,	\\
|\tau| - 2|t| & if $|\tau| \geq 2\max\{2t_g,|t|\}$,
\end{dcases*}	\\
\Omega^{\hol,-}(\tau,t_g) & \coloneqq \begin{dcases*}
0 & if $|\tau| \leq 4t_g$,	\\
|\tau| - 4t_g & if $|\tau| \geq 4t_g$.
\end{dcases*}
\end{align*}
\end{lemma}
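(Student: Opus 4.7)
The plan is to reduce both bounds to Stirling's formula, exploiting the special structure of $\mu_F$ to collapse the sum over $\pm_2$ into a closed form. Since $F = \ad g$ is self-dual with $\mu_F = (-2it_g, 0, 2it_g)$, the product-to-sum identities
\begin{align*}
\cos\!\left(\tfrac{\pi(w - 2it_g)}{2}\right) \cos\!\left(\tfrac{\pi(w + 2it_g)}{2}\right) &= \tfrac{1}{2}\left(\cosh(2\pi t_g) + \cos(\pi w)\right),	\\
\sin\!\left(\tfrac{\pi(w - 2it_g)}{2}\right) \sin\!\left(\tfrac{\pi(w + 2it_g)}{2}\right) &= \tfrac{1}{2}\left(\cosh(2\pi t_g) - \cos(\pi w)\right)
\end{align*}
with $w = (1 - s)/2$ obviate the need to track sign cancellations among the $2^4$ terms arising from a full expansion of $G((1-s)/2) \Gscr_{\mu_F}((1-s)/2)$ into products of four $G^\eta$ factors.

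First I would substitute the identities $G_0(z) = 2(2\pi)^{-z} \Gamma(z) \cos(\pi z/2)$ and $G_1(z) = 2(2\pi)^{-z} \Gamma(z) \sin(\pi z/2)$ into the definition \eqref{eqn:Gscrmupm} of $\Gscr_{\mu_F}^\pm$, together with $G^\pm(z) = \frac{1}{2}(G_0(z) \mp i G_1(z))$. After summing over $\pm_2$, the cross terms cancel and the trigonometric identities above reduce the resulting expression to the closed form
\[(2\pi)^{2s - 2} \Gamma\!\left(\tfrac{1 - s}{2}\right)^{\!2} \Gamma\!\left(\tfrac{1 - s}{2} - 2it_g\right) \Gamma\!\left(\tfrac{1 - s}{2} + 2it_g\right) T_{\pm, \pm_1}(s, t_g),\]
where $T_{\pm, \pm_1}(s,t_g)$ is a polynomial of degree at most two in $\cos(\pi w) = \sin(\pi s/2)$ and $\cosh(2\pi t_g)$ whose precise form depends on the outer signs. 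Multiplying by the explicit formul\ae{} \eqref{eqn:JJr+Mellin}--\eqref{eqn:JJr-Mellin} for $\widehat{\JJ_t^\pm}(s)$, or by \eqref{eqn:JJkholMellin} for $\widehat{\JJ_k^{\hol}}(s)$, the integrand becomes an explicit product of six Gamma functions together with elementary trigonometric and hyperbolic factors.

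The polynomial portion of the bound then follows directly from Stirling's formula, yielding the stated prefactor. For the exponential portion, each Gamma factor $\Gamma(x + iy)$ contributes $e^{-\pi |y|/2}$, while $|\cos(\pi s/2)|, |\sin(\pi s/2)| \asymp e^{\pi |\tau|/2}$ and $\cosh(2\pi t_g) \asymp e^{2\pi t_g}$. Assembling these contributions yields a total exponential factor $e^{-\pi \Phi(\tau, t, t_g)/2}$ for a piecewise-linear function $\Phi$ determined by the signs of $\tau$, $\tau \pm 2|t|$, and $\tau \pm 4t_g$, together with the transition at $|\tau| \asymp 2t_g$ at which the dominant summand inside $T_{\pm,\pm_1}$ switches between $\cosh(2\pi t_g)$ and $|\cos(\pi w)|^2$.

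The main obstacle is the bookkeeping required to identify $\Phi$ with $\Omega^{\pm, \pm_1}$ in each subregion of $(\tau, t, t_g)$-space. Each of the four sign combinations $(\pm, \pm_1)$ must be treated separately, and within each, the interplay between the $|\tau|$-scales $2|t|$, $2t_g$, $4t_g$, and $4t_g + 2|t|$ produces the six cases appearing in the piecewise definition of $\Omega^{\pm, \pm_1}$. A convenient simplification is that flipping $\pm_1$ reverses the sign of $\cos(\pi w)$ inside $T_{\pm,\pm_1}$ (interchanging dominance in $\cosh(2\pi t_g) \pm \cos(\pi w)$), while flipping $\pm$ swaps the two regimes of $\widehat{\JJ_t^\pm}(s)$, so the four cases largely reduce by symmetry. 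The holomorphic case is strictly simpler: by \eqref{eqn:JJkholMellinbound}, the factor $\widehat{\JJ_k^{\hol}}(s)$ carries no $\tau$-dependent exponential, so only the transition at $|\tau| = 4t_g$ survives, producing the two or three cases of $\Omega^{\hol, \pm_1}$.
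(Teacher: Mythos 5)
Your overall strategy matches what the paper intends: the paper's own proof is a single sentence ("This follows from the definitions of $\Gscr_{\mu_F}$ and $G^{\pm}$, the bounds for the Mellin transforms of $\JJ_t^{\pm}$ and $\JJ_k^{\hol}$, and Stirling's formula"), and your proposal supplies a correct fleshing-out: collapse the sum over $\pm_2$ using the structure $\mu_F = (2it_g,0,-2it_g)$, apply product-to-sum identities, and then extract polynomial and exponential growth via Stirling. Indeed, carrying this out reproduces precisely the identity \eqref{eqn:GtoJ} that appears later in the paper.

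However, your claimed symmetry is false, and this is the one point where the proposal would have derailed had you relied on it. Writing $w = (1-s)/2$, the two values of $\pm\pm_1$ produce genuinely different trigonometric factors after the sum over $\pm_2$: when the $G$ and $\Gscr$ superscripts agree, one obtains $\tfrac{1}{2}\bigl(\cos^2 \pi w + \cosh 2\pi t_g\bigr)$, and when they disagree, one obtains $\tfrac{1}{2}\cos \pi w\,\bigl(1 + \cosh 2\pi t_g\bigr)$. Neither has the form $\cosh 2\pi t_g \pm \cos \pi w$, and the two are not exchanged by flipping the sign of $\cos \pi w$ (the first is even in $\cos\pi w$, the second odd). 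They also have different exponential size profiles — $\max\{e^{\pi|\tau|}, e^{2\pi t_g}\}$ with a transition at $|\tau| = 2t_g$, versus the fixed $e^{\pi|\tau|/2 + 2\pi t_g}$ — so the $\pm_1 = \pm$ cases cannot be folded into one another by symmetry and must each be traced through the Gamma-factor bookkeeping separately. You do acknowledge that "each of the four sign combinations must be treated separately," so the approach survives; but the "convenient simplification" you propose to halve the work does not exist, and the correct description of the $|\tau| \asymp 2t_g$ transition (which is what produces the middle cases of $\Omega^{\hol,+}$ and $\Omega^{\pm,\mp}$) comes from the internal $\max$ inside $\cos^2\pi w + \cosh 2\pi t_g$ rather than from any dominance interchange between $\cosh 2\pi t_g + \cos\pi w$ and $\cosh 2\pi t_g - \cos\pi w$.
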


\begin{proof}
This follows from the definitions \eqref{eqn:Gscrmupm} and \eqref{eqn:Gpm} of $\Gscr_{\mu_F}^{\pm}(s)$ and $G^{\pm}(s)$, the bounds \eqref{eqn:JJr+Mellinbound}, \eqref{eqn:JJr-Mellinbound}, and \eqref{eqn:JJkholMellinbound} for the Mellin transforms of $\JJ_t^{\pm}$ and $\JJ_k^{\hol}$, and Stirling's formula.
\end{proof}

\begin{proof}[Proof of {\hyperref[lem:tildehpmbounds]{Lemma \ref*{lem:tildehpmbounds}}}]
For the triple of test functions \eqref{eqn:firsttriple}, we have that for $s = \sigma + i\tau$ with $-M/2 < \sigma < M/2$,
\begin{equation}
\label{eqn:H-Mellinbound}
\widehat{H^{-}}(s) \ll_{\sigma,M} T^{1 + \sigma} (1 + |\tau|)^{-M},
\end{equation}
while $\widehat{H^{+}}(s) = 0$. Similarly, for the triple of test functions \eqref{eqn:secondtriple}, we have that for $\sigma > -M$,
\begin{equation}
\label{eqn:H+Mellinbound}
\widehat{H^{+}}(s) \ll_{\sigma,M} T^{1 + \sigma} (1 + |\tau|)^{\sigma + M - \frac{1}{2}} e^{-\frac{\pi}{2}|\tau|},
\end{equation}
while $\widehat{H^{-}}(s) = 0$.

Since $\sum_{\pm_2} G^{\mp_2}(\frac{1 - s}{2}) \Gscr_{\mu_F}^{\pm \pm_2}(\frac{1 - s}{2})$ has a double pole at $s = 1$ and simple poles at $s = 1 \pm 4it_g$, we have that for $|t| \leq t_g/T + 1$ and $k \leq t_g/T + 1$,
\begin{align*}
\Res_{s = 1} \sum_{\pm_2} \widehat{H^{\pm_1}}(s) \widehat{\JJ_t^{\pm}}(s) G^{\mp_2}\left(\frac{1 - s}{2}\right) \Gscr_{\mu_F}^{\pm \pm_1 \pm_2}\left(\frac{1 - s}{2}\right) & \ll_M \frac{T^2 \log (t_g + T)}{t_g} e^{-\frac{\pi}{2} \Omega^{\pm,\pm_1}(0,t,t_g)},	\\
\Res_{s = 1} \sum_{\pm_2} \widehat{H^{\pm_1}}(s) \widehat{\JJ_k^{\hol}}(s) G^{\mp_2}\left(\frac{1 - s}{2}\right) \Gscr_{\mu_F}^{\pm \pm_1 \pm_2}\left(\frac{1 - s}{2}\right) & \ll_M \frac{T^2 \log (t_g + T)}{t_g} e^{-\frac{\pi}{2} \Omega^{\hol,\pm_1}(0,t_g)},	\\
\Res_{s = 1 \pm 4it_g} \sum_{\pm_2} \widehat{\JJ_t^{\pm}}(s) G^{\mp_2}\left(\frac{1 - s}{2}\right) \Gscr_{\mu_F}^{\pm \pm_1 \pm_2}\left(\frac{1 - s}{2}\right) & \ll_M \frac{T^2}{t_g^{M + 1}} e^{-\frac{\pi}{2} \Omega^{\pm,\pm_1}(\pm 4t_g,t,t_g)},	\\
\Res_{s = 1 \pm 4it_g} \sum_{\pm_2} \widehat{\JJ_k^{\hol}}(s) G^{\mp_2}\left(\frac{1 - s}{2}\right) \Gscr_{\mu_F}^{\pm \pm_1 \pm_2}\left(\frac{1 - s}{2}\right) & \ll_M \frac{T^2}{t_g^{M + 1}} e^{-\frac{\pi}{2} \Omega^{\hol,\pm_1}(\pm 4t_g,t_g)}.
\end{align*}
Here we have used Stirling's formula, the bounds \eqref{eqn:JJr+Mellinbound}, \eqref{eqn:JJr-Mellinbound}, and \eqref{eqn:JJkholMellinbound} for the Mellin transforms of $\JJ_t^{\pm}$ and $\JJ_k^{\hol}$, and the bounds \eqref{eqn:H-Mellinbound} and \eqref{eqn:H+Mellinbound} for the Mellin transforms of $H^{\pm}$. To bound $\widetilde{h}^{-}(t)$ when $|t| \leq t_g/T + 1$ and $\widetilde{h}^{\hol}(k)$ when $k \leq t_g/T + 1$, we shift the contour of integration to $\Re(s) = 1$, avoiding the poles at $s = 1$ and $s = 1 \pm 4it_g$ via deforming the contour to instead consist of small semicircles to the left of this line at each of these three poles. Since the integrand decays rapidly due to the decay of the Mellin transform of $H^{\pm_1}(x)$, the main contribution arises from the pole at $s = 1$ and from the portion of the integral for which $\tau$ is essentially bounded. Together with the bounds \eqref{eqn:tildeintegrandpm} and \eqref{eqn:tildeintegrandhol}, we find that for $|t| \leq t_g/T + 1$ and $k \leq t_g/T + 1$,
\[\widetilde{h}^{-}(t) \ll_M \frac{T^2 \log (t_g + T)}{t_g}, \qquad \widetilde{h}^{\hol}(k) \ll_M \frac{T^2 \log (t_g + T)}{t_g}.\]
Similarly, we find that for all $t \in \R$,
\[\widetilde{h}^{+}(t) \ll_M \frac{T^2}{t_g} (1 + |t|)^{-\frac{1}{2}(M + 1)}.\]

Next, we have that for each nonnegative integer $\ell < M/4$,
\begin{multline*}
\Res_{s = -2(\pm it + \ell)} \sum_{\pm_1,\pm_2} \widehat{H^{\pm_1}}(s) \widehat{\JJ_t^{-}}(s) \Gscr_{\mu_F}^{\pm_1}\left(\frac{1 - s}{2}\right) G^{\mp_1 \pm_2}\left(\frac{s}{2} + it\right)	\\
\ll_M T^{1 - 2\ell} (1 + |t|)^{-M + \ell - \frac{1}{2}} ((1 + |t + 2t_g|)(1 + |t - 2t_g|))^{\ell}
\end{multline*}
via \eqref{eqn:JJrpmMellinResbound}, while for each nonnegative integer $\ell < M/2 + 1 - k$,
\begin{multline*}
\Res_{s = 1 - k - 2\ell} \sum_{\pm_1,\pm_2} \widehat{H^{\pm_1}}(s) \widehat{\JJ_k^{\hol}}(s) \Gscr_{\mu_F}^{\pm_1}\left(\frac{1 - s}{2}\right) G^{\mp_1 \pm_2}\left(\frac{s}{2} + it\right)	\\
\ll_M T^{1 - 2\ell} (1 + |t|)^{-M + \ell - \frac{1}{2}} ((1 + |t + 2t_g|)(1 + |t - 2t_g|))^{\ell}
\end{multline*}
via \eqref{eqn:JJkholMellinResbound}. To bound $\widetilde{h}^{-}(t)$ when $|t| \geq t_g/T + 1$ and $\widetilde{h}^{\hol}(k)$ when $k \geq t_g/T + 1$, we shift the contour of integration to $\Re(s) = -M/4$, deforming the contour if necessary by a small semicircle if a pole lies on this line. For the resulting integral on the line $\Re(s) = -M/4$, the integrand is negligibly small unless $\tau$ is essentially bounded, and hence for $|t| \geq t_g/T + 1$ and $k \geq t_g/T + 1$,
\[\widetilde{h}^{-}(t) \ll_M \frac{T}{|t|} \left(\frac{T |t|}{t_g}\right)^{-\frac{M}{4}}, \qquad \widetilde{h}^{\hol}(k) \ll_M \frac{T}{k} \left(\frac{T k}{t_g}\right)^{-\frac{M}{4}}.\qedhere\]
\end{proof}

\section{Bounds for Mixed Moments of \texorpdfstring{$L$}{L}-Functions in the Short Initial Range}

\subsection{Bounds via the Spectral Large Sieve}

The simplest approach to bounding the mixed moment of $L$-functions \eqref{eqn:initial} is to perform a dyadic subdivision, apply the Cauchy--Schwarz inequality, and bounding the ensuing second moments of $L$-functions via the spectral large sieve. Below, we state the bounds that one achieves via this approach.

\begin{proposition}
\label{prop:largesievebounds1}\hspace{1em}
\begin{enumerate}[leftmargin=*,label=\textup{(\arabic*)}]
\item\label{item:largesievebounds1} Let $g$ be a Hecke--Maa\ss{} cusp form on $\Gamma \backslash \Hb$ with spectral parameter $t_g$. For $T \geq 1$, we have the bounds
\begin{equation}
\label{eqn:largesievebounds1}
\begin{drcases*}
\sum_{\substack{f \in \BB_0 \\ T \leq t_f \leq 2T}} \frac{L\left(\frac{1}{2},\ad g \otimes f\right)^2}{L(1,\ad f)} & \\
\frac{1}{2\pi} \int\limits_{T \leq |t| \leq 2T} \left|\frac{L\left(\frac{1}{2} + it,\ad g\right)^2}{\zeta(1 + 2it)}\right|^2 \, dt & \\
\sum_{\substack{f \in \BB_{\hol} \\ T \leq k_f \leq 2T}} \frac{L\left(\frac{1}{2},\ad g \otimes f\right)^2}{L(1,\ad f)} & 
\end{drcases*} \ll_{\e} \begin{dcases*}
t_g^{2 + \e} T & if $T \leq 2t_g$,	\\
T^{3 + \e} & if $T \geq 2t_g$.
\end{dcases*}
\end{equation}
\item\label{item:largesievebounds2} For $T \geq 1$ and $1 \leq U \leq T$, we have the bounds
\begin{equation}
\label{eqn:largesievebounds2}
\begin{drcases*}
\sum_{\substack{f \in \BB_0 \\ T - U \leq t_f \leq T + U}} \frac{L\left(\frac{1}{2},f\right)^2}{L(1,\ad f)} & \\
\frac{1}{2\pi} \int\limits_{T - U \leq |t| \leq T + U} \left|\frac{\zeta\left(\frac{1}{2} + it\right)^2}{\zeta(1 + 2it)}\right|^2 \, dt & \\
\sum_{\substack{f \in \BB_{\hol} \\ T - U \leq k_f \leq T + U}} \frac{L\left(\frac{1}{2},f\right)^2}{L(1,\ad f)} & 
\end{drcases*} \ll_{\e} T^{1 + \e} U.
\end{equation}
\end{enumerate}
\end{proposition}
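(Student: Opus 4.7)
The plan is to apply the approximate functional equation \cite[Theorem 5.3]{IK04} to each of the $L$-values in the statement, expressing it as a Dirichlet polynomial of length $\asymp \sqrt{C}$, where $C$ is the analytic conductor at $s = 1/2$, and then to bound the cuspidal sums via the short-interval form of the spectral large sieve (see, e.g., \cite[Theorem 7.24]{Iwa02}), the holomorphic sums via its Petersson-based analogue, and the Eisenstein integrals via the Montgomery--Vaughan mean value theorem \cite[Corollary 3]{MV74}, cf.~\hyperref[lem:integralsecondmomentbounds]{Lemma \ref*{lem:integralsecondmomentbounds}}.

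For \hyperref[item:largesievebounds1]{Part \ref*{item:largesievebounds1}}, the analytic conductor of $L(s, \ad g \otimes f)$ at $s = 1/2$ equals $(1 + |t_f|)^2 (1 + |t_f + 2t_g|)^2 (1 + |t_f - 2t_g|)^2$, so for $T \leq t_f \leq 2T$ the approximate functional equation produces a Dirichlet polynomial of length $N \asymp Tt_g^2$ when $T \leq 2t_g$ and $N \asymp T^3$ when $T \geq 2t_g$. Via the expansion $L(s, \ad g \otimes f) = \sum_{\ell, n} A_{\ad g}(\ell, n) \lambda_f(n) \ell^{-2s} n^{-s}$, I would rewrite this as $\sum_{n \leq N} b_n(g) \lambda_f(n)/\sqrt{n}$, where $b_n(g) \coloneqq \sum_{\ell \leq \sqrt{N/n}} A_{\ad g}(\ell, n) \ell^{-1} V(\ell^2 n / N)$ depends only on the adjoint Fourier coefficients of $g$; the spectral large sieve then yields an upper bound of the shape $(T^2 + N)^{1 + \e} \sum_n |b_n(g)|^2/n$. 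The Cauchy--Schwarz inequality combined with the uniform Rankin--Selberg estimate $\sum_{\ell^2 n \leq X} |A_{\ad g}(\ell, n)|^2 \ll_\e X t_g^\e$ forces the inner sum to be $O_\e(t_g^\e)$, yielding \eqref{eqn:largesievebounds1}. The Eisenstein integral is handled via the identity $|L(1/2 + it, \ad g)|^4 = |L(1/2 + it, \ad g)^2|^2$, observing that the Dirichlet series of $L(s, \ad g)^2$ has length $\asymp N$, to which the Montgomery--Vaughan inequality applies directly (after absorbing the harmless factor $|\zeta(1 + 2it)|^{-2} \ll (\log T)^{O(1)}$); the holomorphic sum is handled identically via Petersson.

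For \hyperref[item:largesievebounds2]{Part \ref*{item:largesievebounds2}}, the setup is simpler: the analytic conductor of $L(s, f)$ at $s = 1/2$ is $\asymp T^2$, so the Dirichlet polynomial has length $\asymp T$. The short-interval large sieve then yields a bound of $(UT + T)^{1 + \e} \sum_{n \leq T} 1/n \ll_\e T^{1 + \e} U$, using $U \geq 1$; the holomorphic sum and the Eisenstein integral are handled in the same way. The only mildly technical point in the whole argument is the $t_g$-uniform Rankin--Selberg coefficient bound used in \hyperref[item:largesievebounds1]{Part \ref*{item:largesievebounds1}}, which follows from the pole of $L(s, \ad g \otimes \ad g)$ at $s = 1$ together with polynomial growth in $t_g$ of its residue; beyond this, no step presents a genuine obstacle.
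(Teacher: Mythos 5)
Your overall plan --- approximate functional equation followed by the (short-interval) large sieve / Montgomery--Vaughan, with $t_g$-uniform control on the $\GL_3$ coefficient sums --- is the same as the paper's, and Part~\ref{item:largesievebounds2} and the Eisenstein integral are fine. The gap is in the key coefficient estimate for Part~\ref{item:largesievebounds1}. You assert that the Cauchy--Schwarz inequality combined with the Rankin--Selberg bound $\sum_{\ell^2 n\le X}|A_{\ad g}(\ell,n)|^2\ll_\e X t_g^\e$ forces $\sum_n|b_n(g)|^2/n\ll_\e t_g^\e$. That deduction is not valid: after Cauchy--Schwarz in $\ell$ you are left with $\sum_{\ell^2 n\le N}|A_{\ad g}(\ell,n)|^2/(n\ell)$, and since the weight $1/(n\ell)$ is not a function of $\ell^2n$, the Rankin--Selberg bound cannot be fed in by partial summation. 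Indeed, a coefficient array satisfying both the Rankin--Selberg bound \emph{and} the self-duality $A(m,n)=A(n,m)$ can have its mass concentrated on $\ell\asymp n\asymp N^{1/3}$ with $|A(\ell,n)|^2\asymp N^{1/3}t_g^\e$ there, and then $\sum_n|b_n(g)|^2/n\asymp N^{1/3}t_g^\e$; multiplying by the large sieve factor $T^2+N\asymp N$ gives $N^{4/3}t_g^\e\gg t_g^2T$, ruining the exponent. The estimate you want is nonetheless true, but it relies on the Hecke multiplicativity $A_F(\ell,n)=\sum_{d\mid(\ell,n)}\mu(d)A_F(\ell/d,1)A_F(1,n/d)$, which lets $\sum|A(\ell,n)|^2/(n\ell)$ essentially factor into two convergent one-variable series. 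The paper takes a different route that avoids invoking this directly: it decomposes \emph{both} $m$ and $n$ dyadically, applies the large sieve block by block, and bounds each block by $(T^2+N)\min\{M,N\}$ via Rankin's trick and the symmetry $A_F(m,n)=\overline{A_F(n,m)}$, arriving at a $\sup$ over dyadic $(M,N)$; it is this $\sup$ structure, rather than a sharpening of the per-block coefficient sum to $O(t_g^\e)$, that delivers $t_g^{2+\e}T$.

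A second, smaller issue: ``polynomial growth in $t_g$ of the residue'' of $L(s,\ad g\otimes\ad g)$ at $s=1$ would only give $\sum_{\ell^2 n\le X}|A_{\ad g}(\ell,n)|^2\ll X t_g^{O(1)}$, which is much too weak for \eqref{eqn:largesievebounds1}. The $t_g^\e$ is not automatic: it comes from the factorisation $L(s,\ad g\otimes\ad g)=\zeta(s)L(s,\ad g)L(s,\sym^4 g)$ together with Li's bounds $L(1,\ad g),L(1,\sym^4 g)\ll_\e t_g^\e$ \cite[Theorem 2]{Li10}, which the paper cites explicitly. Once these two points are supplied, your argument closes the same way as the paper's.
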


\begin{proof}
These are all consequences of the approximate functional equation \cite[Theorem 5.3]{IK04} and the spectral large sieve. We give details for \eqref{eqn:largesievebounds1} for the first term on the left-hand side; the other cases are similar. For $f \in \BB_0$, we have that
\[L(s,\ad g \otimes f) = \sum_{m,n = 1}^{\infty} \frac{A_F(m,n) \lambda_f(n)}{m^{2s} n^s}\]
for $\Re(s) > 1$, where $F = \ad g$, and the conductor of $L(1/2,\ad g \otimes f)$ is $O(t_f^2 \max\{t_g^4,t_f^4\})$. Thus by writing $L(1/2,\ad g \otimes f)$ as a Dirichlet polynomial via the approximate functional equation \cite[Theorem 5.3]{IK04} and applying the spectral large sieve \cite[Theorem 3.3]{Mot97}, we find that the first term on the left-hand side is
\[\ll_{\e} t_g^{\e} T^{\e} \sup_{M^2 N \leq t_g^{\e} T^{1 + \e} \max\{t_g^2,T^2\}} (T^2 + N) \sum_{N \leq n \leq 2N} \left|\sum_{M \leq m \leq 2M} \frac{A_F(m,n)}{m\sqrt{n}}\right|^2.\]
By the Cauchy--Schwarz inequality, this in turn is
\[\ll_{\e} t_g^{\e} T^{\e} \sup_{M^2 N \leq t_g^{\e} T^{1 + \e} \max\{t_g^2,T^2\}} (T^2 + N) \sum_{M \leq \ell \leq 2M} \frac{1}{\ell} \sum_{M \leq m \leq 2M} \sum_{N \leq n \leq 2N} \frac{|A_F(m,n)|^2}{mn}.\]
The sum over $\ell$ is $O_{\e}(t_g^{\e} T^{\e})$. By Rankin's trick and the fact that $A_F(m,n) = \overline{A_F(n,m)}$, the double sum over $m$ and $n$ above is
\[O_{\e} \left(t_g^{\e} T^{\e} \min\{M,N\}\sum_{m,n = 1}^{\infty} \frac{|A_F(m,n)|^2}{(m^2 n)^{1 + \e}}\right).\]
The new double sum over $m,n \in \N$ is equal to
\[\frac{L(1 + \e,\ad g \otimes \ad g)}{\zeta(3 + 3\e)} = \frac{L(1 + \e,\sym^4 g) L(1 + \e,\ad g) \zeta(1 + \e)}{\zeta(3 + 3\e)}.\]
By \cite[Theorem 2]{Li10}, this is $O_{\e}(t_g^{\e})$. Thus the left-hand side of \eqref{eqn:largesievebounds1} is
\[\ll_{\e} t_g^{\e} T^{\e} \sup_{M^2 N \leq t_g^{\e} T^{1 + \e} \max\{t_g^2,T^2\}} (T^2 + N) \min\{M,N\} \ll_{\e} \begin{dcases*}
t_g^{2 + \e} T & if $T \leq 2t_g$,	\\
T^{3 + \e} & if $T \geq 2t_g$.
\end{dcases*}\qedhere\]
\end{proof}

We record the following refinement of the second moment bound \eqref{eqn:LsF2ndmoment} when $\sigma = 1/2$. We shall shortly use this to bound the last term on the right-hand side of \eqref{eqn:3x2identity}.

\begin{lemma}
\label{lem:L1/2F2ndmoment}
Let $g$ be a Hecke--Maa\ss{} cusp form on $\Gamma \backslash \Hb$ with spectral parameter $t_g$. Then for $U \geq 1$, we have that
\begin{equation}
\label{eqn:L1/2F2ndmoment}
\int_{U}^{2U} \left|L\left(\frac{1}{2} + it,\ad g\right)\right|^2 \, dt \ll_{\e} \begin{dcases*}
t_g^{1 + \e} U^{\frac{1}{2}} & if $U \leq t_g$,	\\
U^{\frac{3}{2} + \e} & if $U \geq t_g$.
\end{dcases*}
\end{equation}
\end{lemma}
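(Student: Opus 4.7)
The plan is to follow exactly the strategy used for \eqref{eqn:LsF2ndmoment}, namely the approximate functional equation together with the Montgomery--Vaughan mean value theorem, but keeping careful track of the hybrid dependence on $t_g$. By the approximate functional equation \cite[Theorem 5.3]{IK04}, for $t \in [U,2U]$ we may write
\[
L\left(\tfrac{1}{2}+it,\ad g\right) = \sum_{n=1}^{\infty} \frac{\lambda_{\ad g}(n)}{n^{\frac{1}{2}+it}} V_t(n) + \epsilon(t) \sum_{n=1}^{\infty} \frac{\lambda_{\ad g}(n)}{n^{\frac{1}{2}-it}} \widetilde{V}_t(n),
\]
where $V_t,\widetilde{V}_t$ are smooth cut-offs essentially supported on $n \le N(t) \coloneqq C(t)^{1/2+\e}$, with analytic conductor
\[
C(t) \asymp (1+|t|)\,(1+|t+2t_g|)\,(1+|t-2t_g|),
\]
and $\epsilon(t)$ is a unimodular factor. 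Since $\ad g$ is a self-dual $\GL_3$ form of archimedean parameters $\{0,\pm 2it_g\}$ (up to bounded perturbation), this conductor formula is standard.

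First I would treat the range $U \le t_g$. For $t \in [U,2U]$ one has $|t \pm 2t_g| \asymp t_g$, so $C(t) \asymp U t_g^2$ and the Dirichlet polynomial has effective length $N \asymp U^{1/2} t_g \cdot t_g^{\e}$. Applying the Montgomery--Vaughan mean value theorem \cite[Corollary 3]{MV74} to each of the two sums yields
\[
\int_U^{2U} \left|L\left(\tfrac{1}{2}+it,\ad g\right)\right|^2 dt
\ll_{\e} t_g^{\e} \sum_{n \le N} \frac{|\lambda_{\ad g}(n)|^2}{n}\,(U + O(n)).
\]
To control the coefficient sum I would use Rankin--Selberg: the Dirichlet series $\sum_n |\lambda_{\ad g}(n)|^2 n^{-s}$ inherits its analytic behaviour from $L(s,\ad g \otimes \ad g)/\zeta(2s) = L(s,\sym^4 g) L(s,\ad g) \zeta(s)/\zeta(2s)$, and by \cite[Theorem 2]{Li10} together with the standard convexity/residue estimates we obtain $\sum_{n \le N} |\lambda_{\ad g}(n)|^2 \ll_{\e} N t_g^{\e}$. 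Hence the bound simplifies to $\ll_{\e} t_g^{\e} (U + N) \ll_{\e} t_g^{1+\e} U^{1/2}$, using that $U \le t_g$ forces $N \asymp U^{1/2} t_g \ge U$.

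For the range $U \ge t_g$ the same argument applies, now using only the uniform conductor bound $C(t) \ll U^3$, so that $N \ll U^{3/2+\e}$. Montgomery--Vaughan and the same Rankin--Selberg input then give $(U+N)t_g^{\e} \ll U^{3/2+\e}$, as $t_g \le U$.

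The main obstacle is essentially bookkeeping: one must verify that the approximate functional equation yields an effective length that is genuinely $\sqrt{C(t)}$ uniformly in the hybrid $(U,t_g)$ regime, rather than the weaker $\max(U,t_g)^{3/2}$ one would get from treating $t_g$ as fixed. Once this is in place, the two ranges collapse into the claimed piecewise bound, matching \eqref{eqn:LsF2ndmoment} for $U \ge t_g$ and improving on it in the short range $U \le t_g$.
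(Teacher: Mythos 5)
Your proof follows exactly the same route as the paper's (which is a one-line sketch citing the approximate functional equation and Montgomery--Vaughan, with the hybrid conductor $O(t_g^2(1+|t|))$ for $|t|\le t_g$ and $O(|t|^3)$ for $|t|\ge t_g$), and your bookkeeping of the effective length $N \ll (U t_g^2)^{1/2+\e}$ versus $U^{3/2+\e}$ and the coefficient bound via $L(1+\e,\sym^4 g)L(1+\e,\ad g)\zeta(1+\e)$ and \cite[Theorem 2]{Li10} is correct. The only cosmetic point is that $|t-2t_g|\asymp t_g$ can fail as $U \to t_g$, so the conductor estimate should be stated as an upper bound $\ll$ rather than $\asymp$; since you only need an upper bound on $N$, this does not affect the argument.
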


\begin{proof}
This follows by using the approximate functional equation \cite[Theorem 5.3]{IK04} to write $L(1/2 + it, \ad g)$ in terms of a Dirichlet polynomial and then invoking the Montgomery--Vaughan mean value theorem for Dirichlet polynomials \cite[Corollary 3]{MV74}, noting that the analytic conductor of $L(1/2 + it,\ad g)$ is $O(t_g^2 (1 + |t|))$ if $|t| \leq t_g$ and is $O(|t|^3)$ if $|t| \geq t_g$.
\end{proof}

\subsection{Bounds via Spectral Reciprocity}

We now show how to obtain improved bounds for the mixed moment of $L$-functions \eqref{eqn:initial} via spectral reciprocity. Our first step is to use \hyperref[thm:3x2reciprocity]{Theorem \ref*{thm:3x2reciprocity}} to prove bounds for the first moment of $L(1/2,\ad g \otimes f)$.

\begin{proposition}
\label{prop:upperbounds}
Let $g$ be a Hecke--Maa\ss{} cusp form on $\Gamma \backslash \Hb$ with spectral parameter $t_g$. Then for $T \geq 1$, we have that
\begin{equation}
\label{eqn:upperbounds}
\begin{drcases*}
\sum_{\substack{f \in \BB_0 \\ T \leq t_f \leq 2T}} \frac{L\left(\frac{1}{2},\ad g \otimes f\right)}{L(1,\ad f)} &	\\
\frac{1}{2\pi} \int\limits_{T \leq |t| \leq 2T} \left|\frac{L\left(\frac{1}{2} + it,\ad g\right)}{\zeta(1 + 2it)}\right|^2 \, dt &	\\
\sum_{\substack{f \in \BB_{\hol} \\ T \leq k_f \leq 2T}} \frac{L\left(\frac{1}{2},\ad g \otimes f\right)}{L(1,\ad f)} &	
\end{drcases*}
\ll_{\e} \begin{dcases*}
t_g^{1 + \e} T^{\frac{3}{2}} & if $T \leq t_g^{\frac{1}{4}}$,	\\
t_g^{\frac{3}{2} + \e} T^{-\frac{1}{2}} & if $t_g^{\frac{1}{4}} \leq T \leq t_g^{\frac{1}{2}}$,	\\
t_g^{1 + \e} T^{\frac{1}{2}} & if $t_g^{\frac{1}{2}} \leq T \leq t_g^{\frac{2}{3}}$,	\\
t_g^{\e} T^2 & if $T \geq t_g^{\frac{2}{3}}$.
\end{dcases*}
\end{equation}
\end{proposition}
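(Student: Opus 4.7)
The plan is to combine Cauchy--Schwarz plus the spectral large sieve for small $T$ with $\GL_3 \times \GL_2 \leftrightsquigarrow \GL_4 \times \GL_1$ spectral reciprocity (\hyperref[thm:3x2reciprocity]{Theorem \ref*{thm:3x2reciprocity}}) applied with $F = \ad g$ for larger $T$, using the test-function triples \eqref{eqn:firsttriple} and \eqref{eqn:secondtriple} that dyadically localise the weight at scale $T$.

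For $T \leq t_g^{1/4}$ the Cauchy--Schwarz inequality gives
\[\sum_{\substack{f \in \BB_0 \\ T \leq t_f \leq 2T}} \frac{L(\tfrac{1}{2},\ad g \otimes f)}{L(1,\ad f)} \ll \Bigl(\sum_{T \leq t_f \leq 2T} \tfrac{1}{L(1,\ad f)}\Bigr)^{1/2} \Bigl(\sum_{T \leq t_f \leq 2T} \tfrac{L(\tfrac{1}{2},\ad g \otimes f)^2}{L(1,\ad f)}\Bigr)^{1/2} \ll_{\e} t_g^{1 + \e} T^{3/2}\]
via the Weyl law, the Hoffstein--Lockhart bound $L(1,\ad f)^{-1} \ll_{\e} t_f^{\e}$, and \hyperref[prop:largesievebounds1]{Proposition \ref*{prop:largesievebounds1}}\ref{item:largesievebounds1}; the Eisenstein integral and the holomorphic sum are treated identically.

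For $T \geq t_g^{1/4}$ I apply \hyperref[thm:3x2reciprocity]{Theorem \ref*{thm:3x2reciprocity}} twice with $F = \ad g$. First, with the triple \eqref{eqn:firsttriple} (for which $h^+ = h^{\hol} = 0$ and $h^-(t_f) \asymp_M 1$ on $T \leq t_f \leq 2T$), the nonnegativity of $L(\tfrac{1}{2},\ad g \otimes f)$ from \cite[Theorem 1.1]{Lap03} ensures that both terms on the left of \eqref{eqn:3x2identity} are nonnegative, so each of the restricted Maaß sum and the restricted Eisenstein integral is bounded by the full right-hand side. Second, with the triple \eqref{eqn:secondtriple}, the transform $h^+ = \Lscr^+ H^+$ is $O_M(T^{-(M - 1)}(1 + |t|)^{M - 1} e^{-\pi|t|})$ by \hyperref[lem:H+]{Lemma \ref*{lem:H+}}\ref{lem:H+L+bound}, rendering the Maaß and Eisenstein contributions to the left of \eqref{eqn:3x2identity} negligible for $M$ sufficiently large, while $i^k h^{\hol}(k) \asymp_M 1$ for $k \asymp T$, so the holomorphic sum is controlled analogously.

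It remains to bound the right-hand side of \eqref{eqn:3x2identity}. The main term $L(1, \ad g)\bigl(\sum_{\pm}\int h^{\pm}(r)\, d_{\spec}r + \text{(holomorphic analogue)}\bigr)$ is $O_{\e}(t_g^{\e} T^2)$. For the dual moment I invoke \hyperref[lem:HHmuFbounds]{Lemma \ref*{lem:HHmuFbounds}}: $\HH_{\mu_F}(t) \ll_M T^2/t_g$ on $|t| \leq U \coloneqq t_g^2/T^2$ with super-polynomial decay beyond, so Cauchy--Schwarz gives
\[\Bigl|\int_{\R} L(\tfrac{1}{2} + it, \ad g) \zeta(\tfrac{1}{2} - it) \HH_{\mu_F}(t)\,dt\Bigr| \ll \frac{T^2}{t_g} \Bigl(\int_{|t| \leq U}|L(\tfrac{1}{2} + it, \ad g)|^2\,dt\Bigr)^{1/2} \Bigl(\int_{|t| \leq U}|\zeta(\tfrac{1}{2} + it)|^2\,dt\Bigr)^{1/2},\]
and \hyperref[lem:integralsecondmomentbounds]{Lemma \ref*{lem:integralsecondmomentbounds}} together with \hyperref[lem:L1/2F2ndmoment]{Lemma \ref*{lem:L1/2F2ndmoment}} yields $O_{\e}(t_g^{3/2 + \e}/T^{1/2})$ when $T \leq t_g^{1/2}$ (so $U \geq t_g$) and $O_{\e}(t_g^{1 + \e} T^{1/2})$ when $T \geq t_g^{1/2}$ (so $U \leq t_g$); balancing against the main term $t_g^{\e} T^2$ produces the four stated regimes, with the main term taking over precisely from $T \geq t_g^{2/3}$ onwards. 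The main obstacle is the hybrid nature of $\HH_{\mu_F}$: because we are in the short-moment, conductor-dropping regime $T < t_g$, the transform is concentrated on the tiny window $|t| \leq t_g^2/T^2$ at height $T^2/t_g$ rather than exhibiting the bounded-parameter behaviour of the classical Motohashi transform, and it is precisely this delicate analysis that has already been carried out in \hyperref[lem:HHmuFbounds]{Lemma \ref*{lem:HHmuFbounds}}, leaving the remainder a routine matter of Dirichlet polynomial mean values.
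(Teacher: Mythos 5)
Your proposal follows exactly the paper's proof: Cauchy--Schwarz plus the spectral large sieve for $T \leq t_g^{1/4}$, then Theorem \ref{thm:3x2reciprocity} with $F = \ad g$ using the triples \eqref{eqn:firsttriple} and \eqref{eqn:secondtriple}, bounding the dual moment via Lemma \ref{lem:HHmuFbounds}, Cauchy--Schwarz, and Lemmata \ref{lem:integralsecondmomentbounds} and \ref{lem:L1/2F2ndmoment}, and the regime arithmetic you carry out is correct. The only slight gloss is in the second-triple step: to drop the Maa\ss{}/Eisenstein and small-weight ($k_f \leq M$) holomorphic contributions you should, as the paper does, bound them via Cauchy--Schwarz and \eqref{eqn:largesievebounds1} to $O_{M,\e}(t_g^{1+\e}T^{1-M})$ rather than merely call them "negligible", and for the holomorphic upper bound you need the positivity $i^k(\Lscr^{\hol}H^+)(k) > 0$ for all $k > M$ (Lemma \ref{lem:H+}\ref{lem:H+Lhol}\ref{lem:H+Lholpos}), not just the asymptotic on $k \asymp T$.
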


The summands and integrands on the left-hand side of \eqref{eqn:upperbounds} are nonnegative due to the fact that $L(1/2,\ad g \otimes f) \geq 0$ by \cite[Theorem 1.1]{Lap03}.

\begin{proof}
For $T \leq t_g^{1/4}$, this follows by the Cauchy--Schwarz inequality together with the bounds \eqref{eqn:largesievebounds1}. For $T \geq t_g^{1/4}$, we use \hyperref[thm:3x2reciprocity]{Theorem \ref*{thm:3x2reciprocity}} with $F = \ad g$, so that $\mu_F = (2it_g,0,-2it_g)$.

We first take the triple of test functions $(h^{+},h^{-},h^{\hol})$ given by \eqref{eqn:firsttriple}. With this choice of test functions, the left-hand side of \eqref{eqn:3x2identity} provides an upper bound for the first and second terms on the left-hand side of \eqref{eqn:upperbounds} by positivity, as $h^{+}(t) = 0$ and $h^{\hol}(k) = 0$, while $h^{-}(t) \geq 0$ for all $t \in \R$ and $h^{-}(t) \asymp_M 1$ for $t \in [-2T,-T] \cup [T,2T]$. The first term on the right-hand side of \eqref{eqn:3x2identity} is $O_{\e}(t_g^{\e} T^2)$ via the bounds $L(1,\ad g) \ll_{\e} t_g^{\e}$ and $\int_{-\infty}^{\infty} h^{\pm}(r) \, d_{\spec}r \ll T^2$, which is clear from the definitions \eqref{eqn:firsttriple} of $h^{\pm}$ and \eqref{eqn:dspecdefeq} of $d_{\spec}r$. The second term is equal to zero since $h^{\hol}(k) = 0$. Finally, for the third term, we use the bounds \eqref{eqn:HHmuFbounds} for $\HH_{\mu_F}$ and then apply the Cauchy--Schwarz inequality. The desired bounds then follow from the bounds \eqref{eqn:L1/2F2ndmoment} and \eqref{eqn:zeta2ndmoment} for the second moments of $L(1/2 + it,\ad g)$ and $\zeta(1/2 + it)$.

We next take the triple of test functions $(h^{+},h^{-},h^{\hol})$ given by \eqref{eqn:secondtriple}. Here it is no longer the case that the left-hand side of \eqref{eqn:3x2identity} consists of only nonnegative terms. Nonetheless, the first two terms on the left-hand side of \eqref{eqn:3x2identity} as well as the contribution from the terms in the third term for which $k_f \leq M$ are $O_{M,\e}(t_g^{1 + \e} T^{1 - M})$ by \hyperref[lem:H+Lholbound]{Lemma \ref*{lem:H+} \ref*{lem:H+Lhol} \ref*{lem:H+Lholbound}} and \ref{lem:H+L+bound}, the Cauchy--Schwarz inequality, and the bounds \eqref{eqn:largesievebounds1}. The contribution from the terms in the third term on the left-hand side of \eqref{eqn:3x2identity} for which $k_f > M$ provides an upper bound for the third term on the left-hand side of \eqref{eqn:upperbounds} by positivity via \hyperref[lem:H+Lholpos]{Lemma \ref*{lem:H+} \ref*{lem:H+Lhol} \ref*{lem:H+Lholpos}} and \ref{lem:H+Lholasymp}, noting that the root number of $L(s,\ad g \otimes f)$ is $i^{k_f}$, and hence $L(1/2,\ad g \otimes f) = 0$ when $k_f \equiv 2 \pmod{4}$. Finally, the right-hand side of \eqref{eqn:3x2identity} is bounded in the same way as for the triple of test functions given by \eqref{eqn:firsttriple}.
\end{proof}

In our treatment of the mixed moment of $L$-functions \eqref{eqn:initial}, we shall apply H\"{o}lder's inequality to separate the $L$-functions involved. Underlying this step is the key requirement that we have strong bounds for high moments of $L(1/2,f)$ and $\zeta(1/2 + it)$. While we could merely employ the \emph{individual} Weyl-strength subconvex bounds $L(1/2,f) \ll_{\e} t_f^{1/3 + \e}$ and $\zeta(1/2 + it) \ll_{\e} (1 + |t|)^{1/6 + \e}$, stronger bounds hold \emph{on average}; indeed, we are best served by using bounds for the twelfth moment (though fifth moment bounds would also be advantageous, as discussed in \hyperref[sect:fifthmoment]{Section \ref*{sect:fifthmoment}}).

\begin{proposition}
\label{prop:Jutilabounds}
For $T \geq 1$, we have that
\begin{equation}
\label{eqn:Jutilabounds}
\begin{drcases*}
\sum_{\substack{f \in \BB_0 \\ T \leq t_f \leq 2T}} \frac{L\left(\frac{1}{2},f\right)^{12}}{L(1,\ad f)} & \\
\frac{1}{2\pi} \int\limits_{T \leq |t| \leq 2T} \left|\frac{\zeta\left(\frac{1}{2} + it\right)^{12}}{\zeta(1 + 2it)}\right|^2 \, dt & \\
\sum_{\substack{f \in \BB_{\hol} \\ T \leq k_f \leq 2T}} \frac{L\left(\frac{1}{2},f\right)^{12}}{L(1,\ad f)} & 
\end{drcases*} \ll_{\e} T^{4 + \e}.
\end{equation}
\end{proposition}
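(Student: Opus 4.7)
The plan is to treat these three twelfth moment bounds as spectral and archimedean analogues of Heath-Brown's classical bound $\int_0^T |\zeta(1/2+it)|^{12}\,dt \ll_{\varepsilon} T^{2+\varepsilon}$, and to appeal to the corresponding known results in the literature.

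For the integral involving $\zeta$, I would invoke Heath-Brown's bound directly, together with the uniform estimate $|\zeta(1+2it)|^{-2} \ll (\log T)^2$ on the dyadic window; this already gives the integral bound $\ll T^{2+\varepsilon}$, which is strictly stronger than the required $T^{4+\varepsilon}$. For the sum over holomorphic Hecke cusp forms, the corresponding weight-aspect twelfth moment bound is proved in the same way, with the Petersson formula playing the role of the Kuznetsov formula in the spectral analysis.

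For the Maass sum, I would appeal to the spectral twelfth moment estimate of Jutila, the $t_f$-aspect analogue of Heath-Brown's bound. The underlying strategy is to write $L(1/2,f)^6$, via the approximate functional equation, as a Dirichlet polynomial in the Hecke eigenvalues $\lambda_f(n)$ of effective length $\asymp t_f^3$ (after multiplying out six copies of the AFE for $L(1/2,f)$, consolidating products via the Hecke multiplicativity relations, and invoking Heath-Brown's reflection trick to shorten the effective length). One then bounds $\sum_f L(1/2,f)^{12}/L(1,\ad f) = \sum_f |L(1/2,f)^6|^2/L(1,\ad f)$ by applying the spectral large sieve to the mean square of this Dirichlet polynomial over the dyadic family $T \leq t_f \leq 2T$ of size $\asymp T^2$; the diagonal contribution gives the dominant term of size $T^{4+\varepsilon}$, while the off-diagonal is controlled via the Kuznetsov formula.

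The main technical obstacle is precisely this reflection/shortening step. A naive combination of the Motohashi--Kuznetsov fourth moment bound $\sum_{t_f \asymp T} L(1/2,f)^4/L(1,\ad f) \ll T^{2+\varepsilon}$ with the Weyl-strength subconvex bound $L(1/2,f) \ll t_f^{1/3+\varepsilon}$ applied to the remaining factor $L(1/2,f)^8 \ll T^{8/3+\varepsilon}$ yields only $T^{14/3+\varepsilon}$, falling short of the target $T^{4+\varepsilon}$. To bridge this gap one must exploit the full Dirichlet polynomial structure of $L^6$ rather than splitting the product and bounding individual factors pointwise; this is where the spectral large sieve, applied to a polynomial of length $\asymp T^3$ (rather than the naive $T^6$ one would get without the functional equation), provides the necessary savings.
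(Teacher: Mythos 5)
Your treatment of the zeta integral contains a genuine error. The integrand in the second quantity is
\[
\left|\frac{\zeta(1/2+it)^{12}}{\zeta(1+2it)}\right|^2 = \frac{|\zeta(1/2+it)|^{24}}{|\zeta(1+2it)|^2},
\]
not $|\zeta(1/2+it)|^{12}/|\zeta(1+2it)|^2$. (This is the standard Parseval normalisation: the Eisenstein series has Hecke eigenvalues $\lambda(n,t)$, so its $L$-value at $1/2$ is $|\zeta(1/2+it)|^2$ rather than $\zeta(1/2+it)$, which is why the continuous-spectrum term always carries twice the power.) Heath-Brown's twelfth moment bound therefore does \emph{not} give the required estimate on its own, and your claimed $T^{2+\e}$ for the integral is simply not what the proposition asserts (nor is it known). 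To obtain $T^{4+\e}$ one writes $|\zeta|^{24}=|\zeta|^{12}\cdot|\zeta|^{12}$, bounds one factor pointwise by the Weyl-strength subconvex bound $\zeta(1/2+it)\ll_\e(1+|t|)^{1/6+\e}$ (so $|\zeta|^{12}\ll T^{2+\e}$), and applies Heath-Brown to the remaining factor. This is exactly what the paper does, and it is not a cosmetic point: the interplay between the twelfth moment and Weyl subconvexity is essential.

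Two further issues. First, for the holomorphic sum you say it is ``proved in the same way,'' but this was an open problem prior to the present paper; the authors had to prove it themselves in \cite{HK22} (and they cite that work here). Without such a reference this part of your argument is incomplete. Second, your heuristic sketch of the cuspidal proofs is inaccurate: the approximate functional equation for $L(1/2,f)^6$ (a degree-$12$ $L$-function with analytic conductor $\asymp t_f^{12}$) has length $\asymp t_f^6$, not $t_f^3$; there is no ``reflection trick'' that shortens it to $T^3$, and if there were, your own large-sieve bookkeeping would produce $T^{3+\e}$ rather than the $T^{4+\e}$ you claim. As the paper points out in the remark following the proposition, both \cite{Jut04b} and \cite{HK22} rely crucially on $\GL_4\times\GL_2\leftrightsquigarrow\GL_4\times\GL_2$ spectral reciprocity, not on the spectral large sieve alone. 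Citing Jutila for the Maa\ss{} case is the correct move, but the surrounding narrative about why it works does not reflect the actual mechanism.
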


\begin{proof}
For the second term on the left-hand side, this bound follows from Heath-Brown's bound for the twelfth moment of the Riemann zeta function \cite[Theorem 1]{H-B78} (see also \cite[Theorem 4]{Iwa80}, \cite[Theorem 4.7]{Jut87}, and more generally \cite[Chapter 8]{Ivi03}) together with the Weyl-strength subconvex bound $\zeta(1/2 + it) \ll_{\e} (1 + |t|)^{1/6 + \e}$ and the classical lower bound $|\zeta(1 + 2it)| \gg 1/\log (2 + |t|)$; alternatively, we can simply appeal to \cite[Theorem 8.3]{Ivi03}. For the first term on the left-hand side, this bound is a result of Jutila \cite[Theorem 2]{Jut04b}. The authors extended Jutila's result to cover the same result for the third term on the left-hand side, namely for holomorphic cusp forms, in \cite[Theorem 1.1]{HK24}.
\end{proof}

\begin{remark}
Notably, the proofs \cite[Theorem 2]{Jut04b} and \cite[Theorem 1.1]{HK24} of the cuspidal cases of \eqref{eqn:Jutilabounds} use $\GL_4 \times \GL_2 \leftrightsquigarrow \GL_4 \times \GL_2$ spectral reciprocity in a crucial way, as discussed in \cite[Section 3]{HK24}.
\end{remark}

We can combine the bounds attained so far to prove new bounds via $\GL_4 \times \GL_2 \leftrightsquigarrow \GL_4 \times \GL_2$ spectral reciprocity. In this way, we can show the following bounds for mixed moments of $L$-functions in the short initial range.

\begin{proposition}
\label{prop:momentsboundsinitial}
Let $g$ be a Hecke--Maa\ss{} cusp form on $\Gamma \backslash \Hb$ with spectral parameter $t_g$. Then for $T \geq 1$, we have that
\begin{multline}
\label{eqn:momentsboundsinitial}
\begin{drcases*}
\sum_{\substack{f \in \BB_0 \\ T \leq t_f \leq 2T}} \frac{L\left(\frac{1}{2},f\right) L\left(\frac{1}{2},\ad g \otimes f\right)}{L(1,\ad f)} &	\\
\frac{1}{2\pi} \int\limits_{T \leq |t| \leq 2T} \left|\frac{\zeta\left(\frac{1}{2} + it\right) L\left(\frac{1}{2} + it,\ad g\right)}{\zeta(1 + 2it)}\right|^2 \, dt &	\\
\sum_{\substack{f \in \BB_{\hol} \\ T \leq k_f \leq 2T}} \frac{L\left(\frac{1}{2},f\right) L\left(\frac{1}{2},\ad g \otimes f\right)}{L(1,\ad f)} &	
\end{drcases*}
\ll_{\e} \begin{dcases*}
t_g^{1 + \e} T^{\frac{3}{2}} & if $T \leq t_g^{\frac{3}{19}}$,	\\
t_g^{\frac{5}{4} + \e} T^{-\frac{1}{12}} & if $t_g^{\frac{3}{19}} \leq T \leq t_g^{\frac{1}{3}}$,	\\
t_g^{\frac{5}{6} + \e} T^{\frac{7}{6}} & if $t_g^{\frac{1}{3}} \leq T \leq t_g^{\frac{1}{2}}$,	\\
t_g^{1 + \e} T^{\frac{5}{6}} & if $t_g^{\frac{1}{2}} \leq T \leq t_g^{\frac{2}{3}}$,	\\
t_g^{\frac{1}{6} + \e} T^{\frac{25}{12}} & if $t_g^{\frac{2}{3}} \leq T \leq t_g^{\frac{16}{19}}$,	\\
t_g^{\frac{3}{2} + \e} T^{\frac{1}{2}} & if $t_g^{\frac{16}{19}} \leq T \leq 2t_g$,	\\
T^{2 + \e} & if $T \geq 2t_g$.
\end{dcases*}
\end{multline}
\end{proposition}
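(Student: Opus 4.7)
The plan is to establish the seven claimed estimates by treating the ranges of $T$ separately, using three techniques: (i) Cauchy--Schwarz and the spectral large sieve for $T \leq t_g^{3/19}$; (ii) H\"older's inequality combined with Jutila's twelfth moment bound (Proposition \ref{prop:Jutilabounds}) together with the first moment bounds of Proposition \ref{prop:upperbounds} (which come from $\GL_3 \times \GL_2 \leftrightsquigarrow \GL_4 \times \GL_1$ spectral reciprocity) for $t_g^{1/2} \leq T \leq t_g^{16/19}$ and $T \geq 2t_g$; and (iii) $\GL_4 \times \GL_2 \leftrightsquigarrow \GL_4 \times \GL_2$ spectral reciprocity (Theorem \ref{thm:4x2reciprocity}) combined with the bounds from (i) and (ii) applied to a dual moment, for $t_g^{3/19} \leq T \leq t_g^{1/2}$ and $t_g^{16/19} \leq T \leq 2t_g$.

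For technique (i), I would apply Cauchy--Schwarz to split the summand as $L(1/2,f)$ against $L(1/2,\ad g \otimes f)/L(1,\ad f)^{1/2}$ (and analogously for the Eisenstein and holomorphic parts) and then invoke \eqref{eqn:largesievebounds2} and \eqref{eqn:largesievebounds1} to obtain $t_g^{1+\e} T^{3/2}$. For technique (ii), apply H\"older's inequality in the form
\[
\sum_f \frac{L(1/2,f) L(1/2,\ad g \otimes f)}{L(1,\ad f)} \leq \Bigg(\sum_f \frac{L(1/2,f)^{12}}{L(1,\ad f)}\Bigg)^{1/12} \Bigg(\sum_f \frac{L(1/2,\ad g \otimes f)^{12/11}}{L(1,\ad f)}\Bigg)^{11/12}.
\]
The first factor is bounded by $T^{1/3+\e}$ via Proposition \ref{prop:Jutilabounds}. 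For the second factor, interpolate via a further H\"older inequality
\[
\sum_f \frac{L(1/2,\ad g \otimes f)^{12/11}}{L(1,\ad f)} \leq \Bigg(\sum_f \frac{L(1/2,\ad g \otimes f)^2}{L(1,\ad f)}\Bigg)^{1/11} \Bigg(\sum_f \frac{L(1/2,\ad g \otimes f)}{L(1,\ad f)}\Bigg)^{10/11},
\]
and combine the relevant cases of \eqref{eqn:largesievebounds1} and \eqref{eqn:upperbounds}. This yields the bounds $t_g^{1+\e} T^{5/6}$ for $t_g^{1/2} \leq T \leq t_g^{2/3}$, $t_g^{1/6+\e} T^{25/12}$ for $t_g^{2/3} \leq T \leq t_g^{16/19}$, and $T^{9/4+\e}$ for $T \geq 2t_g$, with identical arguments in the Eisenstein and holomorphic cases.

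For technique (iii), I would apply Theorem \ref{thm:4x2reciprocity} with $F = \ad g$, using each of the two triples of test functions \eqref{eqn:firsttriple}--\eqref{eqn:secondtriple} with $M$ a sufficiently large fixed integer. The nonnegativity of $L(1/2,f) L(1/2,\ad g \otimes f)/L(1,\ad f)$ (using Lapid's nonnegativity of $L(1/2,\ad g \otimes f)$ and the vanishing of $L(1/2,f)$ when $\epsilon_f = -1$), together with the positivity properties in Lemma \ref{lem:H+}, allows us to bound the target mixed moment on $[T,2T]$ by the LHS of \eqref{eqn:4x2identity} with triple \eqref{eqn:firsttriple} (for the Maass and Eisenstein contributions) and with triple \eqref{eqn:secondtriple} (for the holomorphic contribution), up to test-function tail contributions bounded trivially via Cauchy--Schwarz and the large sieve. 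The main terms on the RHS of \eqref{eqn:4x2identity} contribute $O_\e(T^{2+\e})$. Lemma \ref{lem:tildehpmbounds} shows that $\widetilde h^+$ is negligible, while $\widetilde h^-$ and $\widetilde h^{\hol}$ are essentially localized to $|t|,k \leq t_g/T$ with size $\asymp T^2 (\log t_g)/t_g$, so the dual moment is bounded, after dyadic subdivision, by $(T^2 \log t_g/t_g)$ times the same type of mixed moment over $[T',2T']$ for $T' \leq t_g/T$. Inserting the bounds already established in the ranges handled by (i) and (ii), the worst dyadic contribution comes from $T' \asymp t_g/T$; a direct calculation yields $t_g^{5/4+\e} T^{-1/12}$ for $t_g^{3/19} \leq T \leq t_g^{1/3}$ (matching dual range $[t_g^{2/3}, t_g^{16/19}]$), $t_g^{5/6+\e} T^{7/6}$ for $t_g^{1/3} \leq T \leq t_g^{1/2}$ (matching $[t_g^{1/2}, t_g^{2/3}]$), and $t_g^{3/2+\e} T^{1/2}$ for $t_g^{16/19} \leq T \leq 2t_g$ (matching $[1, t_g^{3/19}]$).

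The main obstacle is the careful execution of the reciprocity step: one must simultaneously handle both triples of test functions, track positivity via root numbers and signs (the root number of $L(s,\ad g \otimes f)$ for $f \in \BB_{\hol}$ is $i^{k_f}$, which is positive precisely on the support of $i^{k}h^{\hol}$ in Lemma \ref{lem:H+}\ref{lem:H+Lhol}), and use the sharp hybrid support and size bounds of Lemma \ref{lem:tildehpmbounds} rather than the naive length $t_g^{\e} T$ valid in the conductor-stable regime. This bootstrapping closure of the moment hierarchy is what permits the seemingly counter-intuitive step of passing from a short primal moment on $[T,2T]$ to a \emph{longer} dual moment on $[1,t_g/T]$ while still obtaining improved bounds.
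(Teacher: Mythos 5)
Your proposal is correct and follows essentially the same strategy as the paper: Cauchy--Schwarz plus spectral large sieve for small $T$, H\"older plus Jutila's twelfth moment plus $\GL_3 \times \GL_2 \leftrightsquigarrow \GL_4 \times \GL_1$ reciprocity (via Proposition~\ref{prop:upperbounds}) for the middle ranges and for $T \geq 2t_g$, and a bootstrapping step via $\GL_4 \times \GL_2 \leftrightsquigarrow \GL_4 \times \GL_2$ reciprocity (Theorem~\ref{thm:4x2reciprocity}, with both triples of test functions to cover the Maass/Eisenstein and holomorphic contributions separately) for the remaining ranges. The only cosmetic difference is that the paper applies a single three-factor H\"older inequality with exponents $(1/12,1/12,5/6)$, while you iterate two two-factor H\"older steps; multiplying out your two steps shows this is algebraically identical. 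Your dyadic-summation argument for the dual moment is valid because the bounds \eqref{eqn:momentsboundsinitiallargesieve} and \eqref{eqn:momentsboundGL3xGL2} are nondecreasing in the dyadic parameter, so the $T' \asymp t_g/T$ piece dominates, and the main term $O_\e(T^{2+\e})$ from \eqref{eqn:4x2identity} is dominated in each range as you tacitly assume.
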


\begin{proof}
Our first approach is to apply the Cauchy--Schwarz inequality and use the bounds \eqref{eqn:largesievebounds1} and \eqref{eqn:largesievebounds2} from \hyperref[prop:largesievebounds1]{Proposition \ref*{prop:largesievebounds1}} arising from the spectral large sieve; this shows that the left-hand side of \eqref{eqn:momentsboundsinitial} is
\begin{equation}
\label{eqn:momentsboundsinitiallargesieve}
\ll_{\e} \begin{dcases*}
t_g^{1 + \e} T^{\frac{3}{2}} & if $T \leq 2t_g$,	\\
T^{\frac{5}{2} + \e} & if $T \geq 2t_g$.
\end{dcases*}
\end{equation}

Our second approach is to write
\[\frac{L\left(\frac{1}{2},f\right) L\left(\frac{1}{2},\ad g \otimes f\right)}{L(1,\ad f)} = \left(\frac{L\left(\frac{1}{2},f\right)^{12}}{L(1,\ad f)}\right)^{\frac{1}{12}} \left(\frac{L\left(\frac{1}{2},\ad g \otimes f\right)^2}{L(1,\ad f)}\right)^{\frac{1}{12}} \left(\frac{L\left(\frac{1}{2},\ad g \otimes f\right)}{L(1,\ad f)}\right)^{\frac{5}{6}}\]
(which crucially uses the nonnegativity of these $L$-functions), apply H\"{o}lder's inequality with exponents $(1/12,1/12,5/6)$, and use the bounds from \hyperref[prop:largesievebounds1]{Propositions \ref*{prop:largesievebounds1}}, \ref{prop:upperbounds}, and \ref{prop:Jutilabounds}; that is, as well as bounds arising from the spectral large sieve, we use twelfth moment bounds and bounds from $\GL_3 \times \GL_2 \leftrightsquigarrow \GL_4 \times \GL_1$ spectral reciprocity. This shows that the left-hand side of \eqref{eqn:momentsboundsinitial} is
\begin{equation}
\label{eqn:momentsboundGL3xGL2}
\ll_{\e} \begin{dcases*}
t_g^{1 + \e} T^{\frac{5}{3}} & if $T \leq t_g^{\frac{1}{4}}$,	\\
t_g^{\frac{17}{12} + \e} & if $t_g^{\frac{1}{4}} \leq T \leq t_g^{\frac{1}{2}}$,	\\
t_g^{1 + \e} T^{\frac{5}{6}} & if $t_g^{\frac{1}{2}} \leq T \leq t_g^{\frac{2}{3}}$,	\\
t_g^{\frac{1}{6} + \e} T^{\frac{25}{12}} & if $t_g^{\frac{2}{3}} \leq T \leq 2t_g$,	\\
T^{\frac{9}{4} + \e} & if $T \geq 2t_g$.
\end{dcases*}
\end{equation}
These bounds improve upon the earlier bounds in the range $T \geq t_g^{5/18}$.

Our final approach is to use \hyperref[thm:4x2reciprocity]{Theorem \ref*{thm:4x2reciprocity}}, namely $\GL_4 \times \GL_2 \leftrightsquigarrow \GL_4 \times \GL_2$ spectral reciprocity. We first take the triple of test functions $(h^{+},h^{-},h^{\hol})$ given by \eqref{eqn:firsttriple}. With this choice of test functions, the left-hand side of \eqref{eqn:4x2identity} provides an upper bound for the first and second terms on the left-hand side of \eqref{eqn:momentsboundsinitial} by positivity, as $h^{+}(t) = 0$ and $h^{\hol}(k) = 0$, while $h^{-}(t) \geq 0$ for all $t \in \R$ and $h^{-}(t) \asymp_M 1$ for $t \in [-2T,-T] \cup [T,2T]$. The first term on the right-hand side of \eqref{eqn:4x2identity} is $O_{\e}(t_g^{\e} T^2)$ as $L(1,\ad g) \ll_{\e} t_g^{\e}$. The second term is equal to zero since $h^{\hol}(k) = 0$. Finally, for the third, fourth, and fifth terms, we divide the terms into dyadic ranges and use the bounds \eqref{eqn:tildeh+bound}, \eqref{eqn:tildeh-bound}, and \eqref{eqn:tildehholbound} for the transforms $\widetilde{h}^{+}(t)$, $\widetilde{h}^{-}(t)$, and $\widetilde{h}^{\hol}(t)$. We then apply the pre-existing bounds for the left-hand side of \eqref{eqn:momentsboundsinitial} from \eqref{eqn:momentsboundsinitiallargesieve} and \eqref{eqn:momentsboundGL3xGL2}. By this method, we deduce slightly improved bounds for the first and second terms on the left-hand side of \eqref{eqn:momentsboundsinitial} in certain ranges, namely that the first and second terms are
\[\ll_{\e} \begin{dcases*}
t_g^{\frac{5}{4} + \e} T^{-\frac{1}{12}} & if $t_g^{\frac{3}{19}} \leq T \leq t_g^{\frac{1}{3}}$,	\\
t_g^{\frac{5}{6} + \e} T^{\frac{7}{6}} & if $t_g^{\frac{1}{3}} \leq T \leq t_g^{\frac{1}{2}}$,	\\
t_g^{\frac{3}{2} + \e} T^{\frac{1}{2}} & if $t_g^{\frac{16}{19}} \leq T \leq 2t_g$,	\\
T^{2 + \e} & if $T \geq 2t_g$.
\end{dcases*}\]

To deduce analogous improved bounds for the third term on the left-hand side of \eqref{eqn:momentsboundsinitial}, we use \hyperref[thm:4x2reciprocity]{Theorem \ref*{thm:4x2reciprocity}} with the triple of test functions $(h^{+},h^{-},h^{\hol})$ given by \eqref{eqn:secondtriple}. Here it is no longer the case that the left-hand side of \eqref{eqn:4x2identity} consists of only nonnegative terms. Nonetheless, the first two terms on the left-hand side of \eqref{eqn:4x2identity} as well as the contribution from the terms in the third term for which $k_f \leq M$ are $O_{M,\e}(t_g^{1 + \e} T^{1 - M})$ by \hyperref[lem:H+Lholbound]{Lemma \ref*{lem:H+} \ref*{lem:H+Lhol} \ref*{lem:H+Lholbound}} and \ref{lem:H+L+bound}, the Cauchy--Schwarz inequality, and the bounds \eqref{eqn:largesievebounds1} and \eqref{eqn:largesievebounds2} from \hyperref[prop:largesievebounds1]{Proposition \ref*{prop:largesievebounds1}} arising from the spectral large sieve. The contribution from the terms in the third term on the left-hand side of \eqref{eqn:4x2identity} for which $k_f > M$ provides an upper bound for the third term on the left-hand side of \eqref{eqn:momentsboundsinitial} by positivity via \hyperref[lem:H+Lholpos]{Lemma \ref*{lem:H+} \ref*{lem:H+Lhol} \ref*{lem:H+Lholpos}} and \ref{lem:H+Lholasymp}, noting once more that the root number of $L(s,\ad g \otimes f)$ is $i^{k_f}$, and hence $L(1/2,F \otimes f) = 0$ when $k_f \equiv 2 \pmod{4}$. Finally, the right-hand side of \eqref{eqn:4x2identity} is bounded in the same way as for the triple of test functions given by \eqref{eqn:firsttriple}.
\end{proof}

\begin{remark}
For \emph{fixed} $g$, the bound $O_{t_g,\e}(T^{2 + \e})$ for the first term on the left-hand side of \eqref{eqn:momentsboundsinitial} was previously known by work of Li \cite[Theorem 1.1]{Li09}.
\end{remark}

\subsection{Proof of \texorpdfstring{\hyperref[item:initial]{Proposition \ref*{prop:fourranges} \ref*{item:initial}}}{Proposition \ref{prop:fourranges} \ref{item:initial}}}

We now prove \hyperref[item:initial]{Proposition \ref*{prop:fourranges} \ref*{item:initial}}, namely the bound \eqref{eqn:initial} for the short initial range, via \hyperref[prop:momentsboundsinitial]{Proposition \ref*{prop:momentsboundsinitial}}.

\begin{proof}[Proof of {\hyperref[item:initial]{Proposition \ref*{prop:fourranges} (1)}}]
By the lower bound $L(1,\ad g) \gg_{\e} t_g^{-\e}$ and the asymptotic formula \eqref{eqn:Htasymp} for $H(t)$, it suffices to show that
\[\begin{drcases*}
\sum_{\substack{f \in \BB_0 \\ t_f \leq t_g^{1 - \alpha}}} \frac{1}{t_f} \frac{L\left(\frac{1}{2},f\right) L\left(\frac{1}{2},\ad g \otimes f\right)}{L(1,\ad f)} &	\\
\frac{1}{2\pi} \int\limits_{|t| \leq t_g^{1 - \alpha}} \frac{1}{1 + |t|} \left|\frac{\zeta\left(\frac{1}{2} + it\right) L\left(\frac{1}{2} + it,\ad g\right)}{\zeta(1 + 2it)}\right|^2 \, dt & \end{drcases*}
\ll_{\e} t_g^{\frac{41}{38} + \e}.\]
We dyadically decompose both the sum over $f$ and the integral over $t$, so that we are left with proving the bounds
\begin{equation}
\label{eqn:shortinitialtobeproved}
\begin{drcases*}
\sum_{\substack{f \in \BB_0 \\ T \leq t_f \leq 2T}} \frac{L\left(\frac{1}{2},f\right) L\left(\frac{1}{2},\ad g \otimes f\right)}{L(1,\ad f)} &	\\
\frac{1}{2\pi} \int\limits_{T \leq |t| \leq 2T} \left|\frac{\zeta\left(\frac{1}{2} + it\right) L\left(\frac{1}{2} + it,\ad g\right)}{\zeta(1 + 2it)}\right|^2 \, dt & \end{drcases*}
\ll_{\e} t_g^{\frac{41}{38} + \e} T
\end{equation}
for $T \leq t_g^{1 - \alpha}$. These desired bounds are a consequence of \hyperref[prop:momentsboundsinitial]{Proposition \ref*{prop:momentsboundsinitial}}, which gives these bounds when either $T \asymp t_g^{3/19}$ or $T \asymp t_g^{16/19}$ and gives stronger bounds otherwise.
\end{proof}

\begin{remark}
\label{rem:shortinitialoptimal}
Obtaining the essentially optimal bound $\|g\|_4 \ll_{\e} t_g^{\e}$ for the $L^4$-norm would require us to improve the bounds for the left-hand side of \eqref{eqn:shortinitialtobeproved} to $O_{\e}(t_g^{1 + \e} T)$ for $T \leq t_g^{1 - \alpha}$. \hyperref[prop:momentsboundsinitial]{Proposition \ref*{prop:momentsboundsinitial}} gives these bounds apart from the ranges $T \leq t_g^{3/13}$ and $t_g^{10/13} \leq T \leq t_g^{1 - \alpha}$.
\end{remark}

\begin{remark}
\label{rem:Eisensteindihedral}
Improvements on the bounds \eqref{eqn:shortinitialtobeproved} are known when $g$ is either an Eisenstein series or a dihedral Hecke--Maa\ss{} cusp form, which is crucial to the \emph{unconditional} $L^4$-norm asymptotic formul\ae{} that have been proven in these settings \cite{DK20,HK20}. The key difference behind these improvements is the factorisation of the $\GL_3 \times \GL_2$ Rankin--Selberg $L$-function into the product of $L$-functions of lower degree in these settings: in particular, if $g(z) = E(z,1/2 + it_g)$ is an Eisenstein series, then $L(1/2,\ad g \otimes f) = L(1/2,f) |L(1/2 + 2it_g,f)|^2$. This allows for additionally flexibility in applying H\"{o}lder's inequality in bounding \eqref{eqn:shortinitialtobeproved}; for instance, in place of $\GL_3 \times \GL_2 \leftrightsquigarrow \GL_4 \times \GL_1$ spectral reciprocity, one can do better by instead inputting the individual Weyl--strength subconvex bounds $L(1/2,f) \ll_{\e} t_f^{1/3 + \e}$ and $\zeta(1/2 + it) \ll_{\e} (1 + |t|)^{1/6 + \e}$ together with the second moment bounds
\begin{equation}
\label{eqn:EisensteinJutila}
\sum_{\substack{f \in \BB_0 \\ T \leq t_f \leq 2T}} \frac{\left|L\left(\frac{1}{2} + 2it_g,f\right)\right|^2}{L(1,\ad f)} + \frac{1}{2\pi} \int\limits_{T \leq |t| \leq 2T} \left|\frac{\zeta\left(\frac{1}{2} + it + 2it_g\right)^2}{\zeta(1 + 2it)}\right|^2 \, dt \ll_{\e} \begin{dcases*}
t_g^{\frac{2}{3} + \e} & if $1 \leq T \leq t_g^{\frac{1}{3}}$,	\\
T^{2 + \e} & if $t_g^{\frac{1}{3}} \leq T \leq t_g$,
\end{dcases*}
\end{equation}
due to Jutila \cite[Theorem]{Jut04a}. This gives an effective treatment of the portion $T \leq t_g^{1/2}$ of the short initial range; coupled with $\GL_4 \times \GL_2 \leftrightsquigarrow \GL_4 \times \GL_2$ spectral reciprocity for the remaining portion, this shows that the short initial range in the Eisenstein setting is negligibly small.
\end{remark}

\section{Bounds for Mixed Moments of \texorpdfstring{$L$}{L}-Functions in the Bulk Range}

\subsection{Proof of \texorpdfstring{\hyperref[item:bulk]{Proposition \ref*{prop:fourranges} \ref*{item:bulk}}}{Proposition \ref{prop:fourranges} \ref{item:bulk}}}

The proof of \hyperref[item:bulk]{Proposition \ref*{prop:fourranges} \ref*{item:bulk}}, namely the bound \eqref{eqn:bulk} for the bulk range, follows by modifying earlier work of Buttcane and the second author \cite{BuK17}, where the \emph{asymptotic formula} $2 + o(1)$ was proven for the mixed moment of $L$-functions in the bulk range \eqref{eqn:bulk} under the assumption of the generalised Lindel\"{o}f hypothesis (GLH). We explain the minor modifications required to weaken this to an unconditional upper bound.

\begin{proof}[Proof of {\hyperref[item:bulk]{Proposition \ref*{prop:fourranges} (2)}}]
The bound \eqref{eqn:bulk} for the bulk range follows by modifying the main result of \cite{BuK17}. There are several places in \cite{BuK17} where we must remove the assumption of the GLH, which we list below. For the sake of consistency, we use the notation in \cite{BuK17}; in particular, $g$ is replaced by $f$ and $t_g$ is replaced by $T$.
\begin{enumerate}[leftmargin=*,label=\textup{(\arabic*)}]
\item It is stated in \cite[Lemma 2.1]{BuK17} and proven in \cite[Section 4]{BuK17} that under GLH,
\[\int_{-\infty}^{\infty} \left|\left\langle f^2, E\left(\cdot, \frac{1}{2} + it\right)\right\rangle\right|^2 \, dt \ll_{\e} T^{-1 + \e}.\]
We do not need to bound this separately.
\item In \cite[p.~1494]{BuK17}, it is stated that under GLH,
\[T^{-1/2 + \beta/2 + \alpha + \epsilon} \sum_{T^{1 - \alpha} < t_j < T^{1 + \e}} \left|H(t_j) W(t_j) \right| \left|\sum_{m,r \geq 1} \frac{\lambda_j(m) A_f(r,m)}{rm^{1/2}} V_2(r^2 m,t_j)\right| \ll T^{-1/2 + \beta/2 + \alpha/2 + \epsilon}.\]
To bound this unconditionally, we use the fact that $V_2(r^2 m,t_j)$ is negligibly small unless $r^2 m \leq T^{2 + \e} (1 + |2T - t_j|)$, in which case it is $O(1)$. We then apply the Cauchy--Schwarz inequality and the spectral large sieve, as in the proof of \hyperref[item:largesievebounds1]{Proposition \ref*{prop:largesievebounds1} \ref*{item:largesievebounds1}}, to unconditionally obtain the weaker bound $O(T^{\beta/2 + \alpha + \epsilon})$.
\item In \cite[p.~1495]{BuK17}, it is shown under GLH that the error term in \cite[(6.1)]{BuK17} is $O(T^{-1/2})$. This same bound holds unconditionally by using the fact that for $\Re(s_1) = \Re(s_2) = \epsilon$, $L(1 + 2s_2,\sym^2 f) L(1 + s_1 + s_2,\sym^2 f) \ll T^{\epsilon}$ by \cite[Theorem 1.1]{Li09}.
\item In \cite[p.~1496]{BuK17}, it is shown under GLH that in \cite[(6.2)]{BuK17} there is an error term of size $O(T^{-(1 - \beta)/10 + \epsilon})$ from shifting the line of integration to $\Re(s_1) = -\frac{1}{10}$. Instead using the convexity bound for $L(1 + s_1 + s_2,\sym^2 f)$ with $\Re(s_1) = -\frac{1}{10}$ and $\Re(s_2) = \epsilon$, we get the unconditional error term $O(T^{\beta/10 + \epsilon})$. Similarly, after shifting the line of integration to $\Re(s_1) = -\frac{1}{10}$, we get the unconditional error term $O(T^{\epsilon})$ instead of $O(T^{-1/20})$.
\item In \cite[Section 7]{BuK17}, it is shown that the continuous spectrum has a negligible contribution under GLH. However we do not need to bound this separately, since it is present in \eqref{eqn:bulk}.
\item In \cite[p.~1500]{BuK17}, it is stated that the quantity in \cite[(9.7)]{BuK17} is trivially $O(T^{\epsilon})$ unconditionally, which suffices for our purposes; we do not need a power-saving for this, which is given under GLH. Here $\epsilon$ is dependent on $\alpha$ and is arbitrarily small for arbitrarily small $\alpha$.
\item In \cite[Section 9.4]{BuK17}, we simply use the unconditional trivial bound $O(T^{\epsilon})$ for \cite[(9.35)]{BuK17} instead of the power-saving $O(T^{-\delta})$ proven under GLH. Again, $\epsilon$ is dependent on $\alpha$ and is arbitrarily small for arbitrarily small $\alpha$.
\qedhere
\end{enumerate}
\end{proof}

\begin{remark}
The presence of the bound $O_{\e}(t_g^{c(\alpha) + \e})$ on the right-hand side of \eqref{eqn:bulk} with $\lim_{\alpha \to 0} c(\alpha) = 0$ is due to the fact that the $\epsilon$-convention is used in \cite[Section 9]{BuK17} (see, in particular, \cite[p.~1499]{BuK17}). Indeed, from this usage of the $\epsilon$-convention, our unconditional modification of \cite{BuK17} shows that for all $\beta > 0$, there exists some $\alpha > 0$ such that
\begin{multline*}
\sum_{\substack{f \in \BB_0 \\ t_g^{1 - \alpha} \leq t_f \leq 2t_g - t_g^{1 - \alpha}}} \frac{L\left(\frac{1}{2},f\right) L\left(\frac{1}{2},\ad g \otimes f\right)}{L(1,\ad f) L(1,\ad g)^2} H(t_f)	\\
+ \frac{1}{2\pi} \int\limits_{t_g^{1 - \alpha} \leq |t| \leq 2t_g - t_g^{1 - \alpha}} \left|\frac{\zeta\left(\frac{1}{2} + it\right) L\left(\frac{1}{2} + it,\ad g\right)}{\zeta(1 + 2it) L(1,\ad g)}\right|^2 H(t) \, dt \ll_{\e} t_g^{\beta + \e}.
\end{multline*}
With more care, we could make the dependence of $c(\alpha)$ on $\alpha$ in \eqref{eqn:bulk} explicit by a more precise treatment of \cite[Section 9]{BuK17}\footnote{For instance, with a little work, it can be shown that $c(\alpha) \leq 9\alpha/2$ is admissible. With an overhaul of the methods in \cite{BuK17}, we expect that it should be possible to take $c(\alpha) \leq \alpha/2$.}. This, however, is not necessary for our purposes, since our bounds \eqref{eqn:initial} and \eqref{eqn:transition} for the short initial and short transition ranges are worse than our bound \eqref{eqn:bulk} for the bulk range.
\end{remark}

\section{Test Functions and Transforms for the Short Transition Range}

Our treatment of the short transition range once more requires the usage of $\GL_4 \times \GL_2 \leftrightsquigarrow \GL_4 \times \GL_2$ spectral reciprocity in the guise of \hyperref[thm:4x2reciprocity]{Theorem \ref*{thm:4x2reciprocity}}. We choose a triple of test functions $(h^{+},h^{-},h^{\hol})$ that localises to short intervals $[T,T + U]$ with $T \sim 2t_g$ and subsequently bound the associated transforms $(\widetilde{h}^{+},\widetilde{h}^{-},\widetilde{h}^{\hol})$ as in \eqref{eqn:tildehpmdefeq} and \eqref{eqn:tildehholdefeq}. Just as with our treatment of the short initial range, the hybrid nature of the problem at hand requires us to obtain bounds that are uniform in both the dyadic parameter $T$ and the spectral parameter $t_g$.

\subsection{Test Functions}

We define the following triple of test functions $(h^{+},h^{-},h^{\hol})$:
\begin{align}
\label{eqn:fourthtriple}
h^{+}(t) &= 0, & h^{-}(t) & = \sum_{\pm} e^{-\frac{(t \pm T)^2}{U^2}} \prod_{j = 1}^{M} \left(\frac{t^2 + \left(j - \frac{1}{2}\right)^2}{T^2}\right)^2, & h^{\hol}(k) & = 0.
\end{align}
Here $M \in \N$ is a fixed positive integer and $T,U$ are auxiliary parameters such that $2t_g - t_g^{1 - \alpha} \leq T \leq 2t_g$ and $U = t_g - \frac{T}{2} + 1$. In this section, we assume that $U>T^{1/3+\delta}$ for some $\delta>0$ which we may choose as small as we like. With this choice of triple of test functions, we have that $H^{+}(x) \coloneqq (\Kscr^{+} h^{+})(x) + (\Kscr^{\hol} h^{\hol})(x) = 0$, while we have the following bounds for $H^{-}(x) \coloneqq (\Kscr^{-} h^{-})(x)$ and its derivatives.

\begin{lemma}
\label{lem:k-hbounds}
For $j \in \N_0$ with $j \leq M$, we have that
\begin{equation}
\label{eqn:H-derivbound}
x^j \frac{d^j}{dx^j} H^{-}(x) \ll_M \begin{dcases*}
U \min\left\{\left(\frac{x}{T}\right)^{M/2}, \left(\frac{x}{T}\right)^{-M/2}, T^{-M/10}\right\}   & if $|2\pi x - T| > U \log T$,	\\
T \left(\frac{T}{U}\right)^j \left(1 + \frac{|2\pi x - T|}{U}\right)^{4M} e^{-\left(\frac{2\pi x - T}{U}\right)^2} & if $|2\pi x - T| \leq U \log T$.
\end{dcases*}
\end{equation}
\end{lemma}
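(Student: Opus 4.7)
The plan is to prove \hyperref[lem:k-hbounds]{Lemma \ref*{lem:k-hbounds}} by adapting the argument of \cite[Lemma 12.2]{HK20}, which treats a structurally identical transform. The object of study is
\[ H^-(x) = \int_{-\infty}^\infty 4\cosh(\pi r) K_{2ir}(4\pi x) h^-(r) \, d_{\spec} r, \]
with $h^-$ as in \eqref{eqn:fourthtriple}, and one analyses it in two regimes determined by the size of $|x - T|$ relative to $U \log T$.

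In the on-diagonal regime $|x-T| \leq U\log T$, I would apply stationary phase to the oscillatory integral in $r$. Inserting the uniform large-$r$ asymptotic of $\cosh(\pi r) K_{2ir}(4\pi x)$ reduces the integral to an oscillatory factor of amplitude $\asymp r^{-1/2}$ against the weight $h^-(r) \, r \tanh(\pi r)$; the phase has a stationary point in $r$ whose location matches the centre of the Gaussian support of $h^-$ precisely when $x$ is near $T$. Standard stationary-phase estimates then give a main contribution of size $T$, arising from the amplitude times the stationary-phase Jacobian times the weight $r \tanh(\pi r) \asymp T$ in $d_{\spec}r$. The Gaussian factor $\sum_\pm e^{-(r \pm T)^2/U^2}$ transfers its Gaussian width to $H^-$, producing the decay $e^{-(x-T)^2/U^2}$; each $x$-derivative brings down a factor of $T/U$ by differentiating the phase through its $x$-dependence, yielding the claimed $(T/U)^j$, while the polynomial weight $\prod_j((r^2 + (j-1/2)^2)/T^2)^2$ in $h^-$ contributes the factor $(1 + |x-T|/U)^{4M}$.

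In the off-diagonal regime $|x-T| > U\log T$, I would exploit the polynomial factor in $h^-$ to shift the $r$-contour. This factor has double zeros at $r = \pm i(j - 1/2)$ for $j = 1, \dots, M$, which precisely cancel the simple poles of $\tanh(\pi r)$ appearing in $d_{\spec}r$; consequently the integrand extends holomorphically across the horizontal strip $|\Im r| \leq M - 1/2$, and one may shift the contour to $\Im r = \pm(M - 1/2 + \varepsilon)$ with sign chosen according to whether $x < T$ or $x > T$. Along the shifted contour, the Bessel factor $\cosh(\pi r) K_{2ir}(4\pi x)$ acquires a multiplicative factor of size $(x/T)^{\pm M/2}$ (seen most cleanly via \eqref{eqn:JJr-Mellinbound} after Mellin inversion, or directly from the integral representation of $K_{2ir}$), while the Gaussian decay of $h^-$ along the shifted contour supplies absolute convergence at cost $O(U)$. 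Combining these yields the off-diagonal estimate $U\min\{(x/T)^{M/2}, (x/T)^{-M/2}\}$.

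The main technical obstacle is maintaining uniformity in the parameters $T$, $U$, $M$, and $j$ simultaneously in both regimes, particularly in the transitional zone where the amplitude, phase, and polynomial-weight contributions are all comparable and the two bounds in \eqref{eqn:H-derivbound} must match at $|x-T| \asymp U\log T$. Since these issues are handled carefully in \cite[Lemma 12.2]{HK20}, whose hypotheses and test function are of the same shape as ours, the argument transfers with only cosmetic changes, and no fundamentally new ideas are required.
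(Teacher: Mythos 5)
The paper gives no proof of this lemma; it is quoted verbatim from \cite[Lemma 12.2]{HK20}, so what you are reconstructing is the argument in that reference. Your identification of the role of the polynomial factor (cancelling the poles of $\tanh\pi r$ in $d_{\spec}r$ so the $r$-contour can be shifted) and your on-diagonal stationary-phase picture are both in the right spirit, and match what HK20 does via the integral representation $\cosh(\pi r)K_{2ir}(4\pi x) = \tfrac{\pi}{2}\int_{\R}\cos(4\pi x\sinh\pi v)\,e(rv)\,dv$. A minor point: the uncancelled poles of $\tanh\pi r$ begin at $r=\pm i(M+\tfrac12)$, so the admissible strip is $|\Im r| < M+\tfrac12$, not $M-\tfrac12$.

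The genuine gap is in the off-diagonal mechanism. On the shifted contour $r = u - ic$ with $u\approx T$ and $c\approx M$, the modulus $|\cosh\pi r\,K_{2ir}(4\pi x)|$ does \emph{not} behave like $(x/T)^{\pm M/2}$; for small $x$ it in fact \emph{grows} like $x^{-2c}$ (since $\Re(2ir)=2c>0$ and $K_\nu(y)\sim\tfrac12\Gamma(\nu)(y/2)^{-\nu}$ as $y\to0$). The claimed power-law decay of $H^-(x)$ for $x$ away from $T$ does not come from the shifted Bessel factor pointwise, but from cancellation in the $r$-integral. Concretely: after inserting the Fourier representation above, one obtains $H^-(x) = \pi^{-1}\int\cos(4\pi x\sinh\pi v)\,\widehat{g}(v)\,dv$ with $\widehat{g}(v)=\int e(rv)\,r\tanh(\pi r)h^-(r)\,dr$. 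The contour shift shows $\widehat{g}(v)\ll UT\,e^{-\pi(2M+1-\e)|v|}$ (on top of Gaussian decay for $|v|\lesssim U^{-1}$), and then stationary phase in $v$ — with stationary point at $v_0 = \pi^{-1}\operatorname{arccosh}(T/(2\pi x))$ — converts this exponential into the power-law bound, since $e^{-c\,v_0}\asymp(x/T)^{c'}$ when $x\ll T$. Your alternate suggestion of reading the decay off \eqref{eqn:JJr-Mellinbound} via Mellin inversion is circular here: the Mellin bounds of \hyperref[lem:mellink-h]{Lemma~\ref*{lem:mellink-h}} are themselves deduced from the present lemma. So the proof outline needs the Fourier-representation-plus-stationary-phase step spelled out in the off-diagonal regime, in place of the pointwise Bessel claim.
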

\begin{proof}
This can be seen from the statement and proof of \cite[Lemma 12.2]{HK20}, which in turn follows ideas from the proof of \cite[Lemma 4]{BLM19}, after correcting a small error. The function $H^{-}(x)$ from \cite{HK20} corresponds to the function $H_T(4\pi x)$ from \cite{BLM19}, but unfortunately the factor $4\pi$ was not accounted for in \cite[Lemma 12.2]{HK20}. After making this correction, we see that the required bound for $|2\pi x - T| \leq U \log T$ is the same as the corresponding bound in the statement of \cite[Lemma 4]{BLM19}. For $|2\pi x - T| > U \log T$ and $4\pi x \le 1$ or $4 \pi x > T^{13/12}$, the required bound is again available in the statement of \cite[Lemma 4]{BLM19}. For $|2\pi x - T| > U \log T$ and $1< 4\pi x < T^{13/12}$, one must look in the proof of \cite[Lemma 4]{BLM19}, specifically at the bound given therein for $x^\beta \frac{d^{\alpha+\gamma}}{d x^{\alpha+\gamma}} x^n h_{\mathrm{spec}}(\frac{x}{2})$, after which it is remarked that this decays faster than any power of $T$.
\end{proof}

Using \hyperref[lem:k-hbounds]{Lemma \ref*{lem:k-hbounds}} and integration by parts, we deduce the following bounds for the Mellin transform of $H^-$.

\begin{lemma}[{\cite[Corollary 12.10]{HK20}}]
\label{lem:mellink-h}
For $s = \sigma + i\tau$, $\widehat{H^-}(s)$ is holomorphic in the strip $-M/2 < \Re(s) < M/2$ and satisfies the bounds
\begin{equation}
\label{eqn:mellink-hline}
\widehat{H^-}(s) \ll_{\sigma,M} \begin{dcases*}
U T^{\sigma} & for $|\tau| \leq \frac{T}{U}$,	\\
U T^{\sigma} \left(\frac{|\tau| U}{T}\right)^{-M/2} & for $|\tau| \geq \frac{T}{U}$.
\end{dcases*}
\end{equation}
\end{lemma}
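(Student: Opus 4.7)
The plan is to deduce both the holomorphy and the stated bounds directly from \hyperref[lem:k-hbounds]{Lemma \ref*{lem:k-hbounds}}. Writing $\widehat{H^-}(s) = \int_0^\infty H^-(x)\, x^{s - 1}\, dx$, holomorphy in the strip $-M/2 < \sigma < M/2$ is immediate from absolute convergence: on the tails $|x - T| > U \log T$ the estimate $H^-(x) \ll_M U \min\{(x/T)^{M/2}, (x/T)^{-M/2}\}$ yields integrability both near $0$ and near $\infty$ precisely when $|\sigma| < M/2$, while on the short interval $|x - T| \leq U\log T$ the function $H^-(x)$ is pointwise bounded by a multiple of $T$ times a Gaussian in $(x - T)/U$ and hence integrable over an interval of length $O(U\log T)$.

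For the range $|\tau| \leq T/U$, I would simply take absolute values in the defining integral and split at $|x - T| = U\log T$. Substituting $y = (x - T)/U$ on the short interval, and observing that $x = T + Uy \asymp T$ throughout (which we may assume, since otherwise the claim is trivial), the contribution of the short interval is
\[
\ll_M U T^{\sigma - 1} \cdot T \cdot \int_{-\infty}^{\infty} (1 + |y|)^{4M} e^{-y^2}\, dy \ll_M U T^{\sigma}.
\]
The two tail contributions are handled by inserting $H^-(x) \ll_M U (x/T)^{\pm M/2}$; since $M/2 \pm \sigma > 0$ in the strip of interest, the resulting integrals yield $\ll_{\sigma, M} U T^{\sigma}$ as well.

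For $|\tau| \geq T/U$, the key step is repeated integration by parts in the Mellin integral. Because \hyperref[lem:k-hbounds]{Lemma \ref*{lem:k-hbounds}} supplies decay of $H^-$ and of all of its derivatives at both $0$ and $\infty$, every boundary term vanishes in the strip $-M/2 < \sigma < M/2$, so that for any admissible $j \in \N_0$,
\[
\widehat{H^-}(s) = \frac{(-1)^j}{s(s + 1) \cdots (s + j - 1)} \int_0^\infty (H^-)^{(j)}(x)\, x^{s + j - 1}\, dx.
\]
Writing $x^{s + j - 1} (H^-)^{(j)}(x) = x^{s - 1} \bigl(x^j (H^-)^{(j)}(x)\bigr)$ and repeating the analysis of the previous paragraph with the sharper derivative bound $x^j (H^-)^{(j)}(x) \ll_M T (T/U)^j (1 + |x - T|/U)^{4M} e^{-((x - T)/U)^2}$ on the short interval shows that the remaining integral is $\ll_{\sigma, M} U T^{\sigma} (T/U)^j$. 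Since $|s(s + 1) \cdots (s + j - 1)| \gg_{\sigma, j} |\tau|^j$ for $|\tau| \geq 1$, choosing $j = \lceil M/2 \rceil$ then yields
\[
\widehat{H^-}(s) \ll_{\sigma, M} U T^{\sigma} \left(\frac{|\tau| U}{T}\right)^{-M/2},
\]
as desired. The only genuine obstacle I anticipate is bookkeeping: one must confirm that the derivative bounds of \hyperref[lem:k-hbounds]{Lemma \ref*{lem:k-hbounds}} remain valid at the chosen order $j = \lceil M/2 \rceil$ (that is, that this $j$ lies in the range $j \leq N$ permitted there), which is purely a matter of increasing $N$ at the outset, and that the implicit constants are tracked through the splitting so that they depend only on $\sigma$ and $M$.
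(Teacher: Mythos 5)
Your argument is correct and is exactly the route the paper indicates: the sentence preceding the lemma states that it follows from \hyperref[lem:k-hbounds]{Lemma \ref*{lem:k-hbounds}} together with integration by parts, which is precisely the decomposition (short interval plus tails, then $j$-fold integration by parts with $j = \lceil M/2 \rceil$) that you carried out. The two caveats you flag at the end---that one works in a regime where $U \log T = o(T)$ so that $x \asymp T$ on the short interval, and that the derivative order $j$ must lie within the range permitted by \hyperref[lem:k-hbounds]{Lemma \ref*{lem:k-hbounds}}---are both benign here, as $U/T \leq t_g^{-\delta}$ throughout the paper's application and the lemma holds for any fixed $N$.
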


\subsection{\texorpdfstring{$\mathrm{GL}_4 \times \mathrm{GL}_2$}{GL\9040\204 \80\327 GL\9040\202} Transforms}
\label{sect:GL4xGL2transforms}

We now determine the behaviour of $(\widetilde{h}^{+},\widetilde{h}^{-},\widetilde{h}^{\hol})$ as in \eqref{eqn:tildehpmdefeq} and \eqref{eqn:tildehholdefeq} with $(h^{+},h^{-},h^{\hol})$ the triple of test functions \eqref{eqn:fourthtriple}.

\begin{lemma}
\label{lem:tildehpmboundstransition}
Let $g$ be a Hecke--Maa\ss{} cusp form on $\Gamma \backslash \Hb$ with spectral parameter $t_g$. Fix $\delta > 0$ and $0 < \e < \delta/3$, and let $(h^{+},h^{-},h^{\hol})$ be the triple of test functions \eqref{eqn:fourthtriple} with $M \geq 1000$ a sufficiently large positive integer dependent on $\delta$ and $\e$, $2t_g - t_g^{1 - \delta} \leq T \leq 2t_g - t_g^{1/3 + \delta}$, and $U = t_g - \frac{T}{2} + 1$. Then for $F = \ad g$ and $(\widetilde{h}^{+},\widetilde{h}^{-},\widetilde{h}^{\hol})$ as in \eqref{eqn:tildehpmdefeq} and \eqref{eqn:tildehholdefeq}, we have that
\begin{align}
\label{eqn:tildeh+boundtransition}
\widetilde{h}^{+}(t) & \ll_{\e} \begin{dcases*}
U^{1 + \e} & for $|t| \leq \left(\frac{T}{U}\right)^{\frac{1}{2} + \e}$,	\\
(|t| T)^{-100} & for $|t| \geq \left(\frac{T}{U}\right)^{\frac{1}{2} + \e}$,
\end{dcases*}	\\
\label{eqn:tildeh-boundtransition}
\widetilde{h}^{-}(t) & \ll_{\e} \begin{dcases*}
U^{1 + \e} & for $|t| \leq T^{\e}$,	\\
(|t| T)^{-100} & for $|t| \geq T^{\e}$,
\end{dcases*}	\\
\label{eqn:tildehholboundtransition}
\widetilde{h}^{\hol}(k) & \ll_{\e} \begin{dcases*}
U^{1 + \e} & for $k \leq \left(\frac{T}{U}\right)^{\frac{1}{2} + \e}$,	\\
(kT)^{-100} & for $k \geq \left(\frac{T}{U}\right)^{\frac{1}{2} + \e}$.
\end{dcases*}
\end{align}
\end{lemma}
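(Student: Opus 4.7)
The plan is to follow the contour-shifting strategy of the proof of \hyperref[lem:tildehpmbounds]{Lemma \ref*{lem:tildehpmbounds}}. Since $h^+ = h^{\hol} = 0$ in the triple \eqref{eqn:fourthtriple}, we have $H^+ = 0$, so only the terms with $\pm_1 = -$ survive in the Mellin integrals \eqref{eqn:tildehpmdefeq} and \eqref{eqn:tildehholdefeq}. The key new ingredient is the Mellin decay bound \eqref{eqn:mellink-hline} for $\widehat{H^{-}}(s)$, which is essentially supported on $|\tau| \leq T/U$ with magnitude $UT^{\sigma}$ and has rapid polynomial decay beyond. This is combined with the integrand estimates in \hyperref[lem:mellinbounds]{Lemma \ref*{lem:mellinbounds}}; the assumption $T \asymp 2t_g$ substantially alters the regions where the exponential factors $\Omega^{\pm,-}$ and $\Omega^{\hol,-}$ vanish.

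For the small-$|t|$ (or small-$k$) regime, I would shift the contour of integration to $\Re(s) = 1$ and collect the residues at the double pole of $\sum_{\pm_2} G^{\mp_2}(\frac{1-s}{2}) \Gscr_{\mu_F}^{\pm \pm_1 \pm_2}(\frac{1-s}{2})$ at $s = 1$, together with the simple poles at $s = 1 \pm 2it_g$. By the same Laurent-expansion calculation as in the proof of \hyperref[lem:tildehpmbounds]{Lemma \ref*{lem:tildehpmbounds}}, the residue at $s = 1$ is bounded by $\widehat{H^{-}}(1) \log(t_g)/t_g \ll UT \log(t_g)/t_g \ll U^{1+\e}$, using $T \asymp 2t_g$ and $\log t_g \ll U^{\e}$ (which holds since $U \geq t_g^{1/3 + \delta}$). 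The residues at $s = 1 \pm 2it_g$ are negligible because $|2t_g| \gg T/U$ forces $\widehat{H^{-}}(1 \pm 2it_g)$ to carry a factor $(2t_g U/T)^{-M/2}$ by \eqref{eqn:mellink-hline}, which is superpolynomially small for $M \geq 1000$. The remaining integral on $\Re(s) = 1$ is controlled by splitting $|\tau|$ into the ranges $|\tau| \leq T/U$ and $|\tau| \geq T/U$ and combining the Mellin decay from \eqref{eqn:mellink-hline} with the integrand bounds \eqref{eqn:tildeintegrandpm} and \eqref{eqn:tildeintegrandhol}; this stays $O(U^{1+\e})$ provided $|t|$ or $k$ lies below its respective threshold.

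For the large-$|t|$ (or large-$k$) regime, the very strong bound $(|t|T)^{-100}$, respectively $(kT)^{-100}$, is obtained by shifting the contour far to the left, say to $\Re(s) = -C$ for a large constant $C < M/2$ (permissible since $M \geq 1000$). The residues encountered at $s = -2(\pm it + \ell)$ for $\widetilde{h}^{\pm}$ and at $s = 1 - k - 2\ell$ for $\widetilde{h}^{\hol}$ are negligible because the residues of $\widehat{\JJ_t^\pm}$ and $\widehat{\JJ_k^{\hol}}$ given in \eqref{eqn:JJrpmMellinResbound} and \eqref{eqn:JJkholMellinResbound} produce only polynomial growth, while the Mellin bounds for $\widehat{H^{-}}$ evaluated at these points contribute a superpolynomial saving once $|t|$ or $k$ exceeds the stated threshold. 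The residual integral on the line $\Re(s) = -C$ is similarly negligible, using the same Mellin decay.

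The main obstacle lies in justifying the two distinct thresholds: $T^{\e}$ for $\widetilde{h}^{-}$ versus $(T/U)^{1/2 + \e}$ for $\widetilde{h}^{+}$ and $\widetilde{h}^{\hol}$. This difference reflects the contrasting structure of the exponential weights in \hyperref[lem:mellinbounds]{Lemma \ref*{lem:mellinbounds}}: the vanishing region $4t_g \leq |\tau| \leq 2|t|$ of $\Omega^{-,-}$ is inaccessible to the support $|\tau| \leq T/U \ll 4t_g$ of $\widehat{H^{-}}$, so $\Omega^{-,-}$ provides genuine exponential decay as soon as $|t| \gg T^{\e}$ and the Mellin decay cleanly cuts off the integral; by contrast, the large vanishing regions of $\Omega^{+,-}$ and $\Omega^{\hol,-}$ overlap the support of $\widehat{H^{-}}$, forcing a stationary-phase analysis of the oscillatory integrand arising from the phase of $\widehat{\JJ_t^{+}}$ or $\widehat{\JJ_k^{\hol}}$ against that of $\Gscr_{\mu_F}$. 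A stationary-phase analysis of this integral, analogous in spirit to that performed in \cite[Section 12]{HK20}, reveals that the stationary phase point exits the effective support of $\widehat{H^{-}}$ precisely when $|t|$ or $k$ exceeds $(T/U)^{1/2 + \e}$, which is the source of this intermediate threshold. Extracting this threshold uniformly in $T$, $U$, and $t_g$ is the most technical step of the proof.
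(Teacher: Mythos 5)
Your proposal correctly identifies the broad structure---contour shifting in the Mellin variable, exploiting the decay of $\widehat{H^{-}}$ and the exponential weights $\Omega^{\pm,-}, \Omega^{\hol,-}$---and your treatment of the small-$|t|$ regime for $\widetilde{h}^{-}$ is essentially sound. However, there are two genuine gaps.

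First, for $\widetilde{h}^{-}$ in the regime $|t| \geq T^{\e}$, you propose shifting the contour far to the left to $\Re(s) = -C$ and claim the residues at $s = -2(\pm it + \ell)$ are negligible because ``the Mellin bounds for $\widehat{H^{-}}$ evaluated at these points contribute a superpolynomial saving once $|t|$ exceeds the stated threshold.'' This is false in the range $T^{\e} < |t| < T/(2U)$: at these poles $\widehat{H^{-}}$ is evaluated at $\tau = \mp 2t$, and the superpolynomial factor $(|\tau|U/T)^{-M/2}$ in \eqref{eqn:mellink-hline} only engages when $|\tau| \geq T/U$, which fails throughout this intermediate range (note $T/U \gg t_g^{\delta}$ is a genuine power of $t_g$, whereas $T^{\e}$ is not). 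In fact the residue contributions do not decay in $\ell$ once the $t_g^{2\ell}$-type growth from $\Gscr_{\mu_F}$ is cancelled against the $T^{-2\ell}$ from $\widehat{H^{-}}$. The paper instead deforms \emph{only} the $|\tau| < T^{\e}/2$ segment of the contour to the left; since the poles of $\widehat{\JJ_{t}^{-}}(s)$ sit at $|\tau| = 2|t| > 2T^{\e} > T^{\e}/2$, this deformation crosses no poles at all, and one then uses the exponential factor $\Omega^{-,-}(\tau,t,t_g) \geq |\tau|$ on the range $T^{\e}/2 \leq |\tau| \leq T^{1+\e}/U$ and the Mellin decay on $|\tau| > T^{1+\e}/U$.

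Second, and more seriously, for $\widetilde{h}^{+}$ and $\widetilde{h}^{\hol}$ you concede the hard part: you correctly guess that the threshold $(T/U)^{1/2+\e}$ comes from a stationary phase phenomenon, but you do not carry out the analysis. The paper's argument here is not a generic Mellin-side stationary phase computation; it passes back to physical space via the factorisation $\sum_{\pm_2} G^{\mp_2}(\frac{1-s}{2})\Gscr_{\mu_F}^{\pm_2}(\frac{1-s}{2}) = 4\widehat{\JJ_0^+}(1-s)\widehat{\JJ_{2t_g}^-}(1-s) + \ldots$, develops an asymptotic expansion of $\widehat{\JJ_{2t_g}^-}$, reduces to the transform $(\Lscr^+\widetilde{\phi})(t)$ of $\widetilde{\phi}(x) = \frac{x}{t_g}\int_0^\infty H^-(y)Y_0(\frac{4\pi^2 xy}{t_g})\,dy$, applies the asymptotic \eqref{eqn:Y0long} for $Y_0$, and makes the substitution $y \mapsto \frac{Uy+T}{2\pi}$. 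Only then---and only because of the algebraic identity $T = 2t_g - 2U + 2$---does the oscillatory factor $e(\pm 2x)\,e(\mp\frac{2(U-1)x}{t_g})$ emerge, putting the integrand into the form $e(\pm 2x)\psi(x)$ required by \hyperref[item:transf-bounds2]{Lemma \ref*{lem:transf-bounds} \ref*{item:transf-bounds2}}. As the paper itself remarks, no constant other than $a = \pm 2$ in the phase would suffice, so this specific reduction is the crux of the proof rather than a routine technical step.
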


The proof of \hyperref[lem:tildehpmboundstransition]{Lemma \ref*{lem:tildehpmboundstransition}} is rather involved. We first prove the bounds \eqref{eqn:tildeh-boundtransition} as well as weakened forms of the bounds \eqref{eqn:tildeh+boundtransition} and \eqref{eqn:tildehholboundtransition}; we then refine these latter two bounds.

Throughout, we use the $\e$-convention: $\e$ denotes an arbitrarily small constant whose value may change from occurrence to occurrence. The same goes for the auxiliary constant $M$, which is an arbitrarily large constant, and an auxiliary constant $A$, which is also an arbitrarily large constant. We shall additionally always impose the conditions given in \hyperref[lem:tildehpmboundstransition]{Lemma \ref*{lem:tildehpmboundstransition}} on the choice of triple of test functions $(h^{+},h^{-},h^{\hol})$ and the size of the parameters $T$ and $U$.

\subsubsection{Weak Bounds}

We first prove the bounds \eqref{eqn:tildeh-boundtransition} for $\widetilde{h}^{-}(t)$.

\begin{lemma}
\label{lem:-small}
Let $t \in \R$. We have that
\[\widetilde{h}^{-}(t) \ll_{M,\e} \begin{dcases*}
U^{1 + \e} & for $|t| \leq T^{\e}$,	\\
(|t|T)^{-100} & for $|t| \geq T^{\e}$.
\end{dcases*}\]
\end{lemma}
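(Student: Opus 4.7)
Because $h^+(t) = h^{\hol}(k) = 0$ in the triple \eqref{eqn:fourthtriple}, we have $H^+ \equiv 0$, so $\widehat{H^+}(s) = 0$ and only the $\pm_1 = -$ summands survive in \eqref{eqn:tildehpmdefeq}, leaving the single Mellin integral
\[\widetilde{h}^-(t) = \frac{1}{2\pi i} \int_{(\sigma)} \widehat{H^-}(s) \sum_{\pm_2} \widehat{\JJ_t^-}(s)\, G^{\pm_2}\!\left(\tfrac{1-s}{2}\right) \Gscr_{\mu_F}^{\pm_2}\!\left(\tfrac{1-s}{2}\right) ds\]
for $0 < \sigma < 1$. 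The plan is to control this by combining \hyperref[lem:mellink-h]{Lemma \ref*{lem:mellink-h}} for $\widehat{H^-}(s)$ with the specialization of the bound \eqref{eqn:tildeintegrandpm} to $\pm = \pm_1 = -$, which produces the exponential factor $e^{-\pi \Omega^{-,-}(\tau, t, t_g)/2}$. Since $T \asymp 2t_g$ in the transition regime, an inspection of the piecewise formula for $\Omega^{-,-}$ shows that this factor is at least $e^{-\pi|\tau|/2}$ on $|\tau| \leq 2|t|$ and at least $e^{-\pi(|\tau|-|t|)}$ on $2|t| \leq |\tau| \leq 2t_g$, so the main support of the integrand is confined to $|\tau| \ll \log T$, up to negligible contributions from $|\tau|$ near $2|t|$ or $|\tau| \geq 2t_g$ that are killed by the same exponential factor.

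For $|t| \leq T^\e$, I would take $\sigma = 1 - \eta$ with $\eta > 0$ small, staying to the left of the pole at $s = 1$. In the main support $|\tau| \ll \log T$, the bounds $\widehat{H^-}(s) \ll UT^{1-\eta}$ and the gamma factor bound $\ll (1+|\tau|)^{-(1-\eta)} t_g^{-(1-\eta)}$ combine, via $T \asymp 2t_g$, to give integrand $\ll U(1+|\tau|)^{-(1-\eta)}$. Integration over $|\tau| \ll \log T$ yields $\widetilde{h}^-(t) \ll U \log T \ll U^{1+\e}$.

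For $|t| \geq T^\e$, I would shift the contour to $\Re(s) = -A$ for a large constant $A$ depending on $\e$, crossing the poles of $\widehat{\JJ_t^-}(s)$ at $s = \pm 2it - 2\ell$ with $0 \leq \ell \leq \lfloor A/2 \rfloor$. On the shifted line, the polynomial factor $((1+|\tau+2t|)(1+|\tau-2t|))^{(\sigma-1)/2} \ll (2|t|)^{-(A+1)}$ in the main support $|\tau| \ll \log T$, combined with $\widehat{H^-}(-A+i\tau) \ll UT^{-A}$ and the gamma bound $\ll t_g^A$, gives the shifted integral a bound of order $U(2|t|)^{-(A+1)} \log T$, which is $\ll (|t|T)^{-100}$ once $A$ is chosen sufficiently large. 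Each residue at $s = \pm 2it - 2\ell$ is analyzed using \hyperref[lem:mellink-h]{Lemma \ref*{lem:mellink-h}} at $|\Im(s)| = 2|t|$: if $|t| \geq T/U$, the factor $(2|t|U/T)^{-M/2}$ delivers rapid decay directly for $M$ large; if $T^\e \leq |t| \leq T/U$, one must extract the required decay from cancellation between the paired residues at $+2it - 2\ell$ and $-2it - 2\ell$ (using the symmetry $\widehat{H^-}(-\bar{s}) = \overline{\widehat{H^-}(s)}$ from reality of $H^-$) together with explicit Stirling asymptotics for $\sum_{\pm_2} G^{\pm_2}\Gscr_{\mu_F}^{\pm_2}$ evaluated at $(1-s)/2 = 1/2 + \ell \mp it$.

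The main obstacle is precisely this residue analysis in the intermediate window $T^\e \leq |t| \leq T/U$: there $\widehat{H^-}$ alone provides no decay in $|t|$, and the required rapid decay must be squeezed out of delicate cancellation among conjugate pairs of residues combined with the specific oscillation structure of the gamma factor combination. All the remaining bookkeeping (choosing $A$ in terms of $\e$, and controlling the subsidiary support near $|\tau| \approx 2|t|$) should then be routine.
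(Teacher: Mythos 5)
Your treatment of the $|t| \leq T^\e$ regime is fine and matches the paper's trivial estimate, but your plan for $|t| \geq T^\e$ stalls at precisely the step you flag as the ``main obstacle.'' You propose to shift the entire vertical line $\Re(s) = \sigma$ leftwards to $\Re(s) = -A$, which forces you to account for the residues of $\widehat{\JJ_t^-}(s)$ at $s = -2\ell \pm 2it$. For $T^\e \leq |t| \leq T/U$ you then have $\widehat{H^-}(\pm 2it - 2\ell) \ll U T^{-2\ell}$ with no decay in $|t|$, and you defer the required $(|t|T)^{-100}$ saving to an unverified ``delicate cancellation among conjugate pairs of residues.'' That is not a routine bookkeeping item; it is an open hole in the argument, and I see no reason to believe the residues actually cancel to that precision.

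The paper sidesteps this problem entirely with a different contour strategy, and the difference is the whole point. After cutting off the tails $|\tau| \geq T^\e/2$ (the range $T^\e/2 \leq |\tau| \leq T^{1+\e}/U$ by the exponential decay $\Omega^{-,-}(\tau,t,t_g) \geq |\tau|$ in the integrand bound \eqref{eqn:tildeintegrandpm}, and $|\tau| \geq T^{1+\e}/U$ by the $|\tau|^{-M/2}$ decay of $\widehat{H^-}$ from Lemma~\ref{lem:mellink-h}), one is left with the short segment $|\tau| < T^\e/2$ of the original contour. This segment is then deformed into the rectangle from $-iT^\e/2$ to $-B - iT^\e/2$ to $-B + iT^\e/2$ to $iT^\e/2$. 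The crucial observation, which you are missing, is that \emph{no poles are crossed} in this deformation: the poles of $\widehat{\JJ_t^\pm}(s)$ sit at $s = -2\ell \pm 2it$, hence at $|\Im(s)| = 2|t| \geq 2T^\e$, which is strictly outside the strip $|\Im(s)| < T^\e/2$. On the vertical segment $\Re(s) = -B$ one gets the elementary bound $U T^{-B} t_g^B |t|^{-B-1}$, which is $\ll (|t|T)^{-100}$ once $B$ is large since $|t| > T^\e$, and the horizontal segments are killed by the exponential factor as before. So the paper's argument requires no residue analysis at all for $|t| \geq T^\e$, whereas yours cannot close without it. (The separate case $|t| > t_g^{1000}$ is handled in the paper by a genuine left shift picking up residues, but there the $(1+|t|)^{-\ell - 1/2}$ decay from \eqref{eqn:JJrpmMellinResbound} and the factor $((1+|\tau+2t|)(1+|\tau-2t|))^{(\sigma-1)/2}$ on the shifted line do the job trivially; the intermediate window you worry about is exactly what the truncate-then-deform trick avoids.)
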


\begin{proof}
From \eqref{eqn:tildehpmdefeq}, we have that for $0 < \sigma < 1$,
\begin{equation}
\label{eqn:l-h-bound}
\widetilde{h}^{-}(t) = \frac{1}{2\pi i} \int_{\sigma - i\infty}^{\sigma + i\infty} \widehat{H^{-}}(s) \widehat{\JJ_t^{-}}(s) \sum_{\pm} G^{\pm}\left(\frac{1 - s}{2}\right) \Gscr_{\mu_F}^{\pm}\left(\frac{1 - s}{2}\right) \, ds.
\end{equation}

First we show that the required bound for $\widetilde{h}^{-}(t)$ holds when $|t| >t_g^{1000}$ by shifting the line of integration to $\sigma = -\left \lfloor{\frac{M}{1000}}\right \rfloor+\frac{1}{2}$. The residues of the poles crossed are $O_M((|t|T)^{-100})$ provided that $M$ is sufficiently large since
\begin{multline*}
\Res_{s = 2(\pm it - \ell)} \widehat{H^{-}}(s) \widehat{\JJ_t^{-}}(s) \sum_{\pm} G^{\pm}\left(\frac{1 - s}{2}\right) \Gscr_{\mu_F}^{\pm}\left(\frac{1 - s}{2}\right)	\\
\ll_{M} UT^{-2\ell} \left(\frac{T}{U}\right)^{\frac{M}{2}} (1 + |t|)^{-\frac{M}{2} + \ell - \frac{1}{2}} ((1 + |t + 4t_g|)(1 + |t - 4t_g|))^{\ell}.
\end{multline*}
On the new line of integration $\sigma = -\left \lfloor{\frac{M}{1000}}\right \rfloor + \frac{1}{2}$, we apply \hyperref[lem:mellink-h]{Lemma \ref*{lem:mellink-h}} and \hyperref[lem:mellinbounds]{Lemma \ref*{lem:mellinbounds}} and then integrate trivially to see that the integral is $O((|t|T)^{-100})$ for large enough $M$. This is because the $|\tau|^{-M/2}$ term in \eqref{eqn:mellink-hline} ensures convergence and the term $((1 + |\tau + 2t|)(1 + |\tau - 2t|))^{\sigma}$ in \eqref{eqn:tildeintegrandpm} gives the saving, since one of $|\tau + 2t|$ and $|\tau - 2t|$ is $\gg |t|$.

Now assume that $T^{\e} \leq |t| \leq t_g^{1000}$. The contribution of the range $|\tau| > T^{1 + \e}/U$ is $O_M((|t|T)^{-100})$ provided that $M$ is sufficiently large via \hyperref[lem:mellink-h]{Lemma \ref*{lem:mellink-h}}. The contribution of the range $T^{\e}/2 \leq |\tau| \leq T^{1 + \e}/U$ is $O_M(|(t|T)^{-100})$ since $\Omega^{-,-}(\tau, t, t_g) \geq |\tau|$ in this range, so that the integrand in \eqref{eqn:l-h-bound} decays exponentially. For the range $|\tau| < T^{\e}/2$, we deform the $|\tau| < T^{\e}/2$ segment to a contour going horizontally from $-iT^{\e}/2$ to $-B - iT^{\e}/2$, vertically to $-B + iT^{\e}/2$, and horizontally to $iT^{\e}/2$, for a large constant $0 < B < M/4$. Since the poles of $\widehat{\JJ^\pm_t}(s)$ are at $s = 2(-\ell \pm it)$ for $\ell \in \N_0$, we do not cross any poles as we deform the contour in this way. The horizontal parts of the contour contribute $O_M((|t|T)^{-100})$ by the argument above. The vertical part at $\Re(s) = -B$ contributes $O(U T^{-B} t_g^B |t|^{-B - 1})$ by \hyperref[lem:mellinbounds]{Lemma \ref*{lem:mellinbounds}}. Taking $B$ sufficiently large ensures that this is $O_M((|t|T)^{-100})$ since $|t| > T^{\e}$. 

Finally, we assume that $|t| \leq T^{\e}$. The contribution of the range $|\tau| \geq T^{\e}/2$ is treated by the same method as above, while we estimate trivially the contribution of $|\tau| < T^{\e}/2$ segment by taking $\sigma=\e$. We get that this contribution is $O(U^{1 + \e})$.
\end{proof}

We next prove the bounds \eqref{eqn:tildeh+boundtransition} for $\widetilde{h}^{+}(t)$ apart from the range $T^{\e} \leq |t| \leq T^{1 + \e}/U$.

\begin{lemma}
\label{lem:+crude}
Let $t \in \R$. We have that
\[\widetilde{h}^{+}(t) \ll \begin{dcases*}
U^{1 + \e} & for $|t| \leq T^{\e}$,	\\
T^{1 + \e} & for $T^{\e} \leq |t| \leq \frac{T^{1 + \e}}{U}$,	\\
(|t| T)^{-100} & for $|t| \geq \frac{T^{1 + \e}}{U}$.
\end{dcases*}\]
\end{lemma}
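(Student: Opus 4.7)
Since the triple \eqref{eqn:fourthtriple} has $H^+ = 0$, only the $\pm_1 = -$ terms survive in \eqref{eqn:tildehpmdefeq}, giving
\[
\widetilde{h}^{+}(t) = \frac{1}{2\pi i} \int_{\sigma - i\infty}^{\sigma + i\infty} \widehat{H^{-}}(s)\, \widehat{\JJ_t^{+}}(s) \sum_{\pm_2} G^{\pm_2}\!\left(\tfrac{1 - s}{2}\right) \Gscr_{\mu_F}^{-\pm_2}\!\left(\tfrac{1 - s}{2}\right) ds.
\]
The plan is to follow the same three-regime strategy used for Lemma \ref{lem:-small}, but with the exponential bookkeeping rewritten in terms of $\Omega^{+,-}(\tau,t,t_g)$ from \hyperref[lem:mellinbounds]{Lemma \ref*{lem:mellinbounds}} rather than $\Omega^{-,-}$. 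The decisive difference is that for $\widetilde{h}^{+}$ the exponential factor $e^{-\pi\Omega^{+,-}(\tau,t,t_g)/2}$ is \emph{large} (no decay) only on the band $2|t| \leq |\tau| \leq 4t_g$, while it produces decay in $|t|$ on the range $|\tau| \leq 2|t|$, via $\Omega^{+,-} = 2|t| - |\tau|$. Combined with the localisation of $\widehat{H^{-}}$ to $|\tau| \lesssim T/U$ from \hyperref[lem:mellink-h]{Lemma \ref*{lem:mellink-h}}, this is what drives the $T$-saving once $|t|$ exceeds $T^{1+\e}/U$.

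Very large $|t|$: For $|t| \geq t_g^{1000}$, shift the contour to $\sigma = -\lfloor M/1000 \rfloor + \tfrac{1}{2}$. The poles of $\widehat{\JJ_t^{+}}(s)$ at $s = 2(\pm it - \ell)$ that are crossed contribute residues bounded by \eqref{eqn:JJrpmMellinResbound} together with \eqref{eqn:mellink-hline}, which are $O_M((|t|T)^{-100})$ for $M$ sufficiently large. On the shifted contour, the integrand has large negative power saving from the combination of $(|\tau|U/T)^{-M/2}$ in \eqref{eqn:mellink-hline} and the factor $((1+|\tau+2t|)(1+|\tau-2t|))^{(\sigma-1)/2}$ in \eqref{eqn:tildeintegrandpm}, yielding $O((|t|T)^{-100})$ as desired.

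Intermediate $|t|$ in $[T^{1+\e}/U,\,t_g^{1000}]$: Work on $\sigma = \e$ and split at $|\tau| = T^{1+\e}/U$. The range $|\tau| > T^{1+\e}/U$ is negligible by the $(|\tau|U/T)^{-M/2}$ decay of $\widehat{H^{-}}$. For the complementary range $|\tau| \leq T^{1+\e}/U \leq |t|$, we lie in the regime $|\tau| \leq 2|t|$ where $\Omega^{+,-}(\tau,t,t_g) = 2|t| - |\tau| \geq |t|$, so the integrand is bounded by $e^{-\pi|t|/2}$ times a polynomial and is therefore $O_M((|t|T)^{-100})$.

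Small and very small $|t|$: For $T^{\e} \leq |t| \leq T^{1+\e}/U$ we estimate on $\sigma = \e$ trivially. Using \hyperref[lem:mellink-h]{Lemma \ref*{lem:mellink-h}} truncates to $|\tau| \leq T^{1+\e}/U$, and on this range the combined polynomial bounds from \eqref{eqn:tildeintegrandpm} together with the $\widehat{H^{-}}$ bound $\widehat{H^{-}}(s) \ll U T^\e$ yield an integrand of size $\ll U T^{\e} \cdot t_g^{-\e}\cdot((1+|\tau+2t|)(1+|\tau-2t|))^{-1/2+\e/2}$. The $|\tau|$-integration of the last factor is $O(T^{\e})$ (with an integrable square-root singularity at $|\tau| = 2|t|$), giving $U^{1+\e} \leq T^{1+\e}$. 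For $|t| \leq T^\e$ the same trivial estimate gives $U^{1+\e}$ directly. The main obstacle throughout is the intermediate band $|\tau| \approx 2|t|$, where $\Omega^{+,-}$ vanishes and we must rely on the integrable pole of the polynomial factor $((1+|\tau+2t|)(1+|\tau-2t|))^{-1/2}$; once this is handled, the remaining work is bookkeeping modelled on Lemma \ref{lem:-small}.
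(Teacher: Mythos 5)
Your proof is correct, and the intermediate and small $|t|$ regimes are handled exactly as in the paper. The one genuine deviation is that you add a separate ``very large $|t| \geq t_g^{1000}$'' regime treated by shifting the $s$-contour to $\Re(s) = -\lfloor M/1000\rfloor + \tfrac{1}{2}$ and collecting residues, modelled on the proof of Lemma~\ref{lem:-small}. That step is valid but superfluous here. The reason the paper's proof of Lemma~\ref{lem:+crude} is shorter and more uniform than that of Lemma~\ref{lem:-small} is precisely the feature you yourself identify: since $\Omega^{+,-}(\tau,t,t_g) \geq |t|$ whenever $|\tau| < |t|$ (in every sub-case of its defining piecewise formula), once one truncates $|\tau| > |t|^\e T^{1+\e}/U$ using the decay of $\widehat{H^{-}}$, the surviving range satisfies $|\tau| \leq \min\{|t|,\, |t|^\e T^{1+\e}/U\}$ and the factor $e^{-\pi\Omega^{+,-}/2} \leq e^{-\pi|t|/2}$ crushes any polynomial in $|t|$ and $T$ uniformly for \emph{all} $|t| > T^\e$, arbitrarily large included. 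By contrast $\Omega^{-,-}(\tau,t,t_g) = |\tau|$ near $\tau = 0$ gives no decay in $|t|$, which is what forces the box-contour deformation and the separate very-large-$|t|$ contour shift in Lemma~\ref{lem:-small}; carrying that architecture over to $\widetilde{h}^{+}$ is not needed. One small phrasing imprecision: you write that the decay-giving regime is ``$|\tau| \leq 2|t|$ where $\Omega^{+,-} = 2|t|-|\tau| \geq |t|$'', but $2|t|-|\tau| \geq |t|$ only for $|\tau| \leq |t|$ (and the formula $\Omega^{+,-}=2|t|-|\tau|$ itself requires $|\tau| \leq 2\min\{2t_g,|t|\}$); your subsequent use is correct because you do restrict to $|\tau| \leq T^{1+\e}/U \leq |t|$, but the stated regime is a little loose.
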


\begin{proof}
From \eqref{eqn:tildehpmdefeq}, we have that for $0 < \sigma < 1$,
\begin{equation}
\label{eqn:l+h+bound}
\widetilde{h}^{+}(t) = \frac{1}{2\pi i}\int_{\sigma - i\infty}^{\sigma + i\infty} \widehat{H^{-}}(s) \widehat{\JJ_t^{+}}(s) \sum_{\pm} G^{\mp}\left(\frac{1 - s}{2}\right) \Gscr_{\mu_F}^{\pm}\left(\frac{1 - s}{2}\right) \, ds.
\end{equation}

First suppose that $|t| > T^{\e}$. The contribution of the range $|\tau|> |t|^\e T^{1 + \e}/U$ is $O((|t|T)^{-100})$ by \hyperref[lem:mellink-h]{Lemma \ref*{lem:mellink-h}} together with the bounds \hyperref[lem:mellinbounds]{Lemma \ref*{lem:mellinbounds}}. Now consider the range $|\tau|\leq |t|^\e T^{1 + \e}/U$. The contribution of the sub-range $ |\tau| < |t|$ is $O((|t|T)^{-100})$ since then $\Omega^{+,+}(\tau, t, t_g) \geq |t|$, so that the integrand in \eqref{eqn:l+h+bound} decays exponentially. Thus in the case $|t| > T^{\e}$, we have shown that $\widetilde{h}^{+}(t) = O((|t|T)^{-100})$ unless $|t| \leq |\tau| \leq |t|^\e T^{1 + \e} / U$, which implies that $|t| \leq T^{1 + \e} / U$ and $|\tau| \leq T^{1 + \e} / U$, for redefined values of $\e$. We can crudely bound the integral \eqref{eqn:l+h+bound} over the range $|\tau|\leq T^{1 + \e}/U$ by taking $\sigma=\e$ and using \hyperref[lem:mellinbounds]{Lemma \ref*{lem:mellinbounds}}, yielding the bound $O(T^{1 + \e})$.

Now suppose that $|t| \leq T^{\e}$. The contribution of the range $|\tau|> T^{1 + \e}/U$ is $O(T^{-100})$ as shown above. We can bound the integral \eqref{eqn:l+h+bound} over the range $|\tau|\leq T^{1 + \e}/U$ by taking $\sigma=\e$ and using \hyperref[lem:mellinbounds]{Lemma \ref*{lem:mellinbounds}} to get
\[\widetilde{h}^{+}(t) \ll \int_{|\tau| \leq \frac{T^{1 + \e}}{U}} U T^{\e} (1 + |\tau|)^{-1} \, d\tau + O(T^{-A}) \ll U^{1 + \e}.\qedhere\]
\end{proof}

Similarly, we prove the bounds \eqref{eqn:tildehholboundtransition} for $\widetilde{h}^{\hol}(k)$ apart from the range $T^{\e} \leq k \leq T^{1 + \e}/U$.

\begin{lemma}
\label{lem:holcrude}
Let $k \in 2\N$. We have that
\[\widetilde{h}^{\hol}(k) \ll \begin{dcases*}
U^{1 + \e} & for $k \leq T^{\e}$,	\\
T^{1 + \e} & for $T^{\e} \leq k \leq \frac{T^{1 + \e}}{U}$,	\\
(kT)^{-100} & for $k > \frac{T^{1 + \e}}{U}$.
\end{dcases*}\]
\end{lemma}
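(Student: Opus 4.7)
The plan is to mirror the proof of \hyperref[lem:+crude]{Lemma \ref*{lem:+crude}}, starting from the integral representation
\[\widetilde{h}^{\hol}(k) = \frac{1}{2\pi i} \int_{\sigma - i\infty}^{\sigma + i\infty} \widehat{H^{-}}(s) \widehat{\JJ_k^{\hol}}(s) \sum_{\pm} G^{\mp}\left(\frac{1 - s}{2}\right) \Gscr_{\mu_F}^{\pm}\left(\frac{1 - s}{2}\right) \, ds\]
coming from \eqref{eqn:tildehholdefeq}, and combining the Mellin decay \eqref{eqn:mellink-hline} for $\widehat{H^{-}}$ from \hyperref[lem:mellink-h]{Lemma \ref*{lem:mellink-h}} with the integrand bound \eqref{eqn:tildeintegrandhol} from \hyperref[lem:mellinbounds]{Lemma \ref*{lem:mellinbounds}}. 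The essential structural difference from the analysis of $\widetilde{h}^{+}(t)$ in \hyperref[lem:+crude]{Lemma \ref*{lem:+crude}} is that $\Omega^{\hol,-}(\tau,t_g)$ depends only on $\tau$ and $t_g$, vanishing for $|\tau| \leq 4t_g$; there is no $k$-dependent exponential decay analogous to the bound $\Omega^{+,-}(\tau,t,t_g) \geq |t|$ that was used on the range $|\tau| < |t|$ in the $+$ case. Instead, decay in $k$ must be supplied entirely by the factor $(k + |\tau|)^{\sigma - 1}$ coming from $\widehat{\JJ_k^{\hol}}(s)$, while exponential decay in $\tau$ reappears only once $|\tau| \geq 4t_g$, where $\Omega^{\hol,-}(\tau,t_g) = |\tau| - 4t_g$.

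Fixing $\sigma = \e$, I would split the $\tau$-integral at $|\tau| = T^{1 + \e}/U$. On the outer range $|\tau| \geq T^{1 + \e}/U$, the Mellin decay $(|\tau|U/T)^{-M/2}$ from \eqref{eqn:mellink-hline}, combined with the polynomial growth of the remaining factors and the exponential decay for $|\tau| \geq 4t_g$, yields a contribution of size $O((kT)^{-100})$ provided $M$ is chosen sufficiently large in terms of $\e$. On the inner range $|\tau| \leq T^{1 + \e}/U$, the hypothesis $U \geq t_g^{1/3 + \delta}/2$ together with $\e < \delta/3$ gives $T^{1 + \e}/U \ll t_g^{2/3 - \delta + \e} \ll t_g$, so $\Omega^{\hol,-}$ vanishes identically and the $((1 + |\tau + 4t_g|)(1 + |\tau - 4t_g|))^{-\e/2}$ factor is $O(t_g^{-\e})$. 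Using $T \asymp t_g$, the integrand is bounded by $O(U (k + |\tau|)^{\e - 1})$, and trivial integration yields $O(U^{1 + \e})$ when $k \leq T^{\e}$ (the $(k + |\tau|)^{\e - 1}$ weight damps the integral near $\tau = 0$) and at most $O(U T^{\e})$, hence $O(T^{1 + \e})$, throughout $T^{\e} \leq k \leq T^{1 + \e}/U$.

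The only minor subtlety is confirming that the inversion contour $\Re(s) = \e$ lies in the strip of joint holomorphy of the integrand: the poles of $\widehat{\JJ_k^{\hol}}(s)$ are at $s = 1 - k - 2\ell$ for $\ell \in \N_0$, well to the left, while those of $G^{\mp}((1 - s)/2)$ and $\Gscr_{\mu_F}^{\pm}((1-s)/2)$ all lie in $\Re(s) \geq 1$, and \eqref{eqn:mellink-hline} holds throughout $-M/2 < \sigma < M/2$, so no contour deformation is required. The main obstacle, insofar as there is one, is largely bookkeeping: verifying that the ranges of $T$, $U$, $k$, $|\tau|$, and $t_g$ are mutually consistent with the lemma's hypotheses and that $M$ may be chosen large enough to absorb all polynomial losses in the outer range while keeping the inner integral manageable.
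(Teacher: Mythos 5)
Your analysis of the inner range at $\sigma = \e$ correctly reproduces the paper's treatment of the first two bounds, $U^{1+\e}$ for $k \leq T^{\e}$ and $T^{1+\e}$ for $T^{\e} \leq k \leq T^{1+\e}/U$. However, there is a genuine gap in your argument for the third bound, $(kT)^{-100}$ when $k > T^{1+\e}/U$, and the gap is precisely where you remark that ``no contour deformation is required.''

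At $\sigma = \e$, the only $k$-dependence in the integrand comes from the factor $(k + |\tau|)^{\e - 1}$ in the bound \eqref{eqn:tildeintegrandhol}, which supplies at most $k^{\e - 1}$ decay. The Mellin decay $(|\tau| U/T)^{-M/2}$ of $\widehat{H^{-}}$ is entirely independent of $k$; taking $M$ large beats down powers of $T$ on the outer range $|\tau| \geq T^{1+\e}/U$ but gives no decay in $k$ whatsoever, so that outer-range contribution is of size roughly $k^{\e - 1} T^{1 + O(\e) - M\e/2}$, which is not $O((kT)^{-100})$ once $k$ exceeds a fixed power of $T$. The situation is even worse on the inner range $|\tau| \leq T^{1+\e}/U$, which you do not address at all for $k > T^{1+\e}/U$; there the trivial bound is of size $k^{\e - 1} T^{1+2\e}$. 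Neither piece decays fast enough in $k$. This is structurally different from the $+$ case in \hyperref[lem:+crude]{Lemma \ref*{lem:+crude}}, where the exponential weight $\Omega^{+,-}(\tau,t,t_g)$ forces $|\tau| \geq |t|$ in the non-negligible range, a constraint that has no analogue for $\widetilde{h}^{\hol}(k)$ since $\Omega^{\hol,-}(\tau,t_g)$ has no $k$-dependence.

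To recover the $(kT)^{-100}$ bound for $k > T^{1+\e}/U$ you must shift the contour to negative $\sigma$: at $\Re(s) = -B$ the factor $(k + |\tau|)^{\sigma - 1}$ becomes $(k + |\tau|)^{-B - 1}$, supplying decay of arbitrarily high polynomial order in $k$, while the bound \eqref{eqn:mellink-hline} for $\widehat{H^{-}}$ remains valid provided $-M/2 < -B$. Since $k > T^{1+\e}/U \gg 1$ in this range, no poles of $\widehat{\JJ_k^{\hol}}(s)$ at $s = 1 - k - 2\ell$ are encountered for a fixed $B$ once $T$ is large. The paper also separately isolates the regime $k > t_g^{1000}$, shifting the full contour and using \eqref{eqn:JJkholMellinResbound} to check that any residues crossed are negligible; this is needed because the $O(T^{-A})$ truncation error from Lemma \ref{lem:mellink-h} alone cannot be made $O((kT)^{-100})$ uniformly in $k$ by choosing $A$ in terms of $t_g$ alone. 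Your sketch, as written, omits both contour shifts and therefore does not establish the third case of the lemma.
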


\begin{proof} 
The proof follows similar ideas, so we do not give full details. By \eqref{eqn:tildehholdefeq},
\[\widetilde{h}^{\hol}(k) = \frac{1}{2\pi i}\int_{\sigma - i\infty}^{\sigma + i\infty} \widehat{H^{-}}(s) \widehat{\JJ_k^{\hol}}(s) \sum_{\pm} G^{\mp}\left(\frac{1 - s}{2}\right) \Gscr_{\mu_F}^{\pm}\left(\frac{1 - s}{2}\right) \, ds,\]
for $0 < \sigma < 1$. First we show that $\widetilde{h}^{\hol}(k) \ll (kT)^{-100}$ for $k > t_g^{1000}$, by shifting the contour to the left, noting that for $k \in 2\N$ and $\ell \in \N_0$,
\begin{align*}
\Res_{s = 1 - k - 2\ell} \widehat{H^{-}}(s) \widehat{\JJ_k^{\hol}}(s) \sum_{\pm} G^{\mp}\left(\frac{1 - s}{2}\right) \Gscr_{\mu_F}^{\pm}\left(\frac{1 - s}{2}\right) \ll UT^{1 - k - 2\ell} t_g^{k - 1 + 2\ell} \left(\frac{k - 1}{2\pi e}\right)^{1 - k} k^{-1/2}.
\end{align*}
In the remaining range $k \leq t_g^{1000}$, we use \hyperref[lem:mellink-h]{Lemma \ref*{lem:mellink-h}} in order to restrict to the integral to $|\tau| < T^{1 + \e}/U$ up to an error of $O(T^{-A})$ for any $A > 0$. Since $k \leq t_g^{1000}$, this error is $O((kT)^{-100})$. 

For $k > T^{\e}$, we shift the $|\tau| < T^{1 + \e}/U$ integral to the left to see that we can assume $k \leq T^{1 + \e}/U$. For such $k$, we trivially estimate the integral at $\sigma=\e$ to get that $\widetilde{h}^{\hol}(k) \ll T^{1 + \e}$.

For $k \leq T^{\e}$, we trivially estimate the integral at $\sigma=\e$ to get that $\widetilde{h}^{\hol}(k) \ll U^{1 + \e}$.
\end{proof}

\subsubsection{Strong Bounds for $\widetilde{h}^{+}(t)$}

Our goal is to prove refined estimates for $\widetilde{h}^+(t)$ in the range $T^{\e} \leq |t| \leq T^{1 + \e}/U$. In this range, any error term $O(T^{-A})$ for arbitrarily large $A$ may also be written as $O((|t| T)^{-A'})$ for arbitrarily large $A'$. We begin by noting that
\begin{multline}
\label{eqn:GtoJ}
\sum_{\pm} G^{\mp}\left(\frac{1 - s}{2}\right) \Gscr_{\mu_F}^{\pm}\left(\frac{1 - s}{2}\right)	\\
= 4 (\cosh 2\pi t_g + 1) (2\pi)^{2(s - 1)} \Gamma\left(\frac{1 - s}{2}\right)^2 \Gamma\left(\frac{1 - s}{2} + 2it_g\right) \Gamma\left(\frac{1 - s}{2} - 2it_g\right) \sin \frac{\pi s}{2}	\\
= 4 \widehat{\JJ_0^{+}}(1 - s) \widehat{\JJ_{2t_g}^{-}}(1 - s) + 4 \widehat{\JJ_0^{-}}(1 - s) \widehat{\JJ_{2t_g}^{+}}(1 - s).
\end{multline}
The latter term turns out to give a negligible contribution, while for the former term, we make use of the following asymptotic formula.

\begin{lemma}
For $s = \sigma + i\tau$ with $|\tau| \leq T^{1 + \e}/U$ and $0 < \sigma < 1$, there exist constants $c_{j,j_1}$ such that
\begin{equation}
\label{eqn:stir4}
\widehat{\JJ_{2t_g}^-}(s) = \frac{1}{2} \left(\frac{t_g}{\pi}\right)^{s - 1} \left(1 + \sum_{\substack{2 \leq j \leq M\\0 \leq j_1\le3j/2}} c_{j,j_1} \frac{s^{j_1} }{t_g^j}\right) + O(T^{-A})
\end{equation}
for all $A > 0$ and for all $M$ sufficiently large with respect to $A$.
\end{lemma}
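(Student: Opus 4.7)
The identity is a refined Stirling expansion for $\widehat{\JJ^-_{2t_g}}(s)$. The plan is to start from \eqref{eqn:JJr-Mellin} with $r = 2t_g$, which gives
\[
\widehat{\JJ^-_{2t_g}}(s) = (2\pi)^{-s}\,\Gamma(w)\Gamma(w')\,\cosh(2\pi t_g), \qquad w := \tfrac{s}{2}+2it_g,\quad w' := \tfrac{s}{2}-2it_g,
\]
and expand asymptotically. In the stated range $|\tau| \leq T^{1+\e}/U$ with $U \geq t_g^{1/3+\delta}$ and $\e < \delta/3$, we have $|w|,|w'| \asymp t_g$; setting $b := s/(4it_g)$ so that $w = 2it_g(1+b)$ and $w' = -2it_g(1-b)$, we have $|b| \ll T^{\e}/U \ll t_g^{\e - 1/3 - \delta}$, and hence $|b|^{M+1} = O(T^{-A})$ once $M$ is taken sufficiently large in terms of $A$, $\delta$, and $\e$.

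I would then apply the full Stirling series
\[
\log\Gamma(z) = (z-\tfrac{1}{2})\log z - z + \tfrac{1}{2}\log(2\pi) + \sum_{k=1}^{N}\frac{B_{2k}}{2k(2k-1)}\,z^{-(2k-1)} + O\bigl(|z|^{-(2N+1)}\bigr)
\]
to each of $\Gamma(w)$ and $\Gamma(w')$, expand the logarithms via $\log w = \log(2t_g) + i\pi/2 + \log(1+b)$ and $\log w' = \log(2t_g) - i\pi/2 + \log(1-b)$ (the branch choices being dictated by the upper/lower half-plane positions of $w, w'$), and sum. The combination $2it_g(i\pi/2) + (-2it_g)(-i\pi/2) = -2\pi t_g$ produces the expected exponential prefactor, while by the symmetry of the setup only even powers of $b$ survive in $\log w + \log w'$ and only odd powers in $\log w - \log w'$. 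A direct computation will then yield
\[
\log\Gamma(w) + \log\Gamma(w') = (s-1)\log(2t_g) - 2\pi t_g + \log(2\pi) + \sum_{\substack{2 \leq j \leq M\\ j \text{ even}}}\frac{Q_j(s)}{t_g^j} + O(T^{-A}),
\]
for polynomials $Q_j \in \C[s]$ of degree at most $j+1$. Exponentiating and using $(2\pi)^{-s}(2\pi)(2t_g)^{s-1} = (t_g/\pi)^{s-1}$ together with $e^{-2\pi t_g}\cosh(2\pi t_g) = \tfrac{1}{2}\bigl(1 + O(e^{-4\pi t_g})\bigr)$ gives
\[
\widehat{\JJ^-_{2t_g}}(s) = \tfrac{1}{2}\left(\tfrac{t_g}{\pi}\right)^{s-1}\exp\!\left(\sum_{\substack{2\leq j\leq M\\ j \text{ even}}}\frac{Q_j(s)}{t_g^j}\right) + O(T^{-A}),
\]
and expanding the exponential and collecting terms of the same order in $1/t_g$ yields \eqref{eqn:stir4}.

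The main bookkeeping step will be to verify the degree bound $j_1 \leq 3j/2$. After exponentiation, the coefficient at order $1/t_g^j$ takes the form $\tfrac{1}{m!}\prod_{i=1}^m Q_{j_i}(s)$ with $j_1+\cdots+j_m = j$ and each $j_i \geq 2$ even; since $\deg Q_{j_i} \leq j_i + 1$, the total $s$-degree is $\sum(j_i+1) = j + m$, and the constraint $j_i \geq 2$ forces $m \leq j/2$, so the $s$-degree is at most $3j/2$ as claimed. I expect this degree count to be the only delicate point, together with the verification that the $B_{2k}\{2k(2k-1)\}^{-1}(w^{-(2k-1)}+w'^{-(2k-1)})$ corrections contribute only odd powers of $b$ (and hence sub-maximal $s$-degree at each order); the analytic content, namely the uniformity of the Stirling remainder and the exponential smallness of $e^{-4\pi t_g}$, poses no difficulty in the stated parameter range.
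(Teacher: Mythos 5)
Your proposal is essentially the paper's proof, just packaged with slightly cleaner bookkeeping. Both start from the Mellin formula \eqref{eqn:JJr-Mellin}, apply the Stirling series to $\Gamma(s/2+2it_g)$ and $\Gamma(s/2-2it_g)$, expand the logarithms via $\log(s/2 \pm 2it_g) = \log 2t_g \pm i\pi/2 + \log(1 \mp is/(4t_g))$ (the paper writes it as a geometric series in $\pm is/(4t_g)$, you introduce $b = s/(4it_g)$), identify the main terms $(s-1)\log 2t_g - 2\pi t_g + \log 2\pi$, exponentiate the remaining series, and then collect terms to read off the degree bound $j_1 \leq 3j/2$. The key combinatorial step — that the exponent constraint $j^{(i)} \geq 2$ forces the number of factors $m \leq j/2$ so that the $s$-degree is at most $(j+1) + (m-1) \cdot 1 \leq j + m \leq 3j/2$ — is what you spell out at the end, and it matches the paper's reasoning.

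Two small remarks. First, your parity observation (that only even powers of $1/t_g$ survive, so only even $j$ appear in $\sum Q_j(s)/t_g^j$) is correct and is a mild strengthening not stated in the paper; the paper's lemma simply allows all $2 \leq j \leq M$, which is harmless since it only adds terms whose coefficients can be zero. Second, for the exponential of the sum to converge and for the truncation at $j = M$ to produce the claimed $O(T^{-A})$ error, one does need the quantitative smallness you invoke, namely $|s|^3/t_g^2 \ll T^{1+\e}/U^3 \ll T^{-3\delta+\e}$; this is exactly where the hypotheses $U \geq T^{1/3+\delta}$ and $\e < \delta/3$ enter, and the paper makes the same point after stating \eqref{eqn:stir4}. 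You have correctly identified this as the analytic input. No gaps.
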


\begin{proof}
By Stirling's formula, there exist constants $c_{j,j_1}$ such that
\begin{multline*}
\log \Gamma\left(\frac{s}{2} + 2it_g\right) + \log \Gamma\left(\frac{s}{2} - 2it_g\right)	\\
= \left(\frac{s}{2} - \frac{1}{2} + 2it_g\right) \log \left(\frac{s}{2} + 2it_g\right) + \left(\frac{s}{2} - \frac{1}{2} - 2it_g\right) \log\left(\frac{s}{2} - 2it_g\right)	\\
- s + \log 2\pi + \sum_{\substack{2 \leq j \leq M\\ 0 \leq j_1 \leq j}} c_{j,j_1} \frac{s^{j_1}}{t_g^j} + O(T^{-A}),
\end{multline*}
for any $A > 0$, provided that $M$ is sufficiently large with respect to $A$. The complex logarithm $\log z \coloneqq \log|z| + i\arg z$ for $z \in \C \setminus (-\infty,0]$ satisfies the identity
\[\log\left(\frac{s}{2} \pm 2it_g\right) = \log (\pm 2it_g) +\log\left(1 \mp \frac{is}{4t_g}\right) = \log 2t_g \pm i \frac{\pi}{2} - \sum_{j = 1}^{\infty} \frac{1}{j} \left(\frac{\pm i s}{4t_g}\right)^j.\]
It follows that
\begin{multline}
\label{eqn:stir2}
\left(\frac{s}{2} - \frac{1}{2} + 2it_g\right) \log \left(\frac{s}{2} + 2it_g\right) + \left(\frac{s}{2} - \frac{1}{2} - 2it_g\right) \log \left(\frac{s}{2} - 2it_g\right) - s + \log 2\pi \\
= (s - 1) \log 2t_g - 2\pi t_g + \log 2\pi + \sum_{\substack{2 \leq j \leq M \\ 0 \leq j_1 \leq j + 1}} c_{j,j_1} \frac{s^{j_1}}{t_g^j} + O(T^{-A})
\end{multline}
for some constants $c_{j,j_1}$. We take the exponential of the right hand side of \eqref{eqn:stir2}. Writing 
\[\exp\left(\sum_{\substack{2 \leq j \leq M \\ 0 \leq j_1 \leq j + 1}} c_{j,j_1} \frac{s^{j_1}}{t_g^j} \right) = \sum_{\ell = 0}^{\infty} \frac{1}{\ell!} \left(\sum_{\substack{2 \leq j \leq M \\ 0 \leq j_1 \leq j + 1}} c_{j,j_1} \frac{s^{j_1}}{t_g^j}\right)^\ell\]
and expanding, we obtain the desired expansion \eqref{eqn:stir4} (for different values of $c_{j,j_1}$ and $M$) upon recalling the definition \eqref{eqn:JJr-Mellin} of $\widehat{\JJ_{2t_g}^{-}}(s)$. Thus we have written $\widehat{\JJ_{2t_g}^{-}}(s)$ as $\frac{1}{2} (t_g/\pi)^{s - 1}$ plus similar but smaller functions. This is because the largest term in the series in \eqref{eqn:stir4}, corresponding to $j = 2$ and $j_1 = 3$, is of size
\[\frac{|s|^3}{t_g^2}\ll \frac{T^{1 + \e}}{U^3} \ll T^{-3\delta + \e},\]
using the assumptions $|\tau| \ll T^{1 + \e} / U$ and $U \geq T^{1/3 + \delta}$. 
\end{proof}

We require the following bounds for $\JJ_t^+(x)$.

\begin{lemma}[{\cite[Proposition 9, Remark 6]{HM06}}]
For $t \in \R$, we have that
\begin{equation}
\label{eqn:Jt+bound}
\JJ_t^+(x) \ll_{\e} \begin{dcases*}
(1 + |t|)^{\e} (x^\e+x^{-\e}) & if $0 < 4\pi x \leq 1 + 4|t|^2$, \\
\frac{1}{\sqrt{x}} & if $4\pi x > 1 + 4|t|^2$.
\end{dcases*}
\end{equation}
\end{lemma}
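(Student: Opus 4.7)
The plan is to prove the two bounds via two different representations of $\JJ_t^+(x)$, each suited to the size regime under consideration.

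For the regime $4\pi x > 1 + 4t^2$, the natural approach is to invoke the Debye uniform asymptotic expansion for the Bessel functions $J_{\pm 2it}(z)$ of purely imaginary order at the argument $z = 4\pi x$. The hypothesis ensures that $4\pi x$ is well past the turning point $z \asymp 2|t|$ and moreover that $16\pi^2 x^2 + 4t^2 \asymp 16\pi^2 x^2$ in this range. The standard asymptotic then reads
\[J_{\pm 2it}(z) = \sqrt{\tfrac{2}{\pi(z^2+4t^2)^{1/2}}}\exp\!\left(\pm i\sqrt{z^2+4t^2}\mp 2it\,\mathrm{arcsinh}\tfrac{2t}{z}\mp\tfrac{i\pi}{4}\right)\exp(\mp\pi t)\bigl(1+O(z^{-1})\bigr)\]
plus its complex conjugate. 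Substituting into the definition \eqref{eqn:JJrpmdefeq}, the exponential factors $\exp(\mp\pi t)$ combine with the $(\sinh\pi t)^{-1}$ prefactor via $e^{\pi t}-e^{-\pi t}=2\sinh\pi t$, so that the apparently exponentially large contributions cancel exactly and one is left with $|\JJ_t^+(x)|\ll (z^2+4t^2)^{-1/4}\ll (4\pi x)^{-1/2}$.

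For the regime $0<4\pi x\leq 1+4t^2$, I would use Mellin inversion together with the transform identity \eqref{eqn:JJr+Mellin}, giving
\[\JJ_t^+(x)=\frac{1}{2\pi i}\int_{(\sigma)}(2\pi)^{-s}\Gamma\!\left(\tfrac{s}{2}+it\right)\Gamma\!\left(\tfrac{s}{2}-it\right)\cos\tfrac{\pi s}{2}\,x^{-s}\,ds\]
for any $\sigma\in\R$ avoiding the poles at $s=\pm 2it-2\ell$, $\ell\in\N_0$. Take $\sigma=\e$ when $x\leq 1$ and $\sigma=-\e$ when $x>1$; since for $t\in\R$ the relevant poles lie on or to the left of the imaginary axis, these contours are pole-free (with a harmless small indentation around $s=0$ in the degenerate case $t=0$). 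Applying the hybrid bound \eqref{eqn:JJr+Mellinbound}, the inner range $|\tau|\leq 2|t|$ contributes $O_\e((1+|t|)^\e)$ via the exponential decay in $2|t|-|\tau|$, and the outer range $|\tau|\geq 2|t|$ contributes $O_\e((1+|t|)^\e)$ via the integrable $((1+|\tau+2t|)(1+|\tau-2t|))^{(\sigma-1)/2}$ decay. Multiplying by $x^{-\sigma}=x^{\mp\e}$ yields the claimed bound $(1+|t|)^\e(x^\e+x^{-\e})$.

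The main obstacle is the exact cancellation in the first regime: bounding $J_{2it}(z)$ and $J_{-2it}(z)$ individually loses a factor $e^{\pi|t|}$ that the combination $(J_{2it}-J_{-2it})/\sinh\pi t$ is precisely designed to kill. I would handle this most transparently by rewriting $\JJ_t^+(x)$ in terms of the Hankel functions $H^{(1,2)}_{2it}$, whose Debye asymptotics produce oscillation against a clean $(z^2+4t^2)^{-1/4}$ envelope with no exponential factors, after which a straightforward estimate delivers the claimed $x^{-1/2}$ bound.
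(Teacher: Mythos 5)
The paper does not actually prove this lemma in-text: the displayed proof (Watson's integral representation for $x\geq 1$, power series for $0<x<10^6$) matches the subsequent lemma about $Y_0$ attributed to Kha21 and appears to have been misplaced; the bound \eqref{eqn:Jt+bound} itself is simply cited from HM06, Proposition~9 and Remark~6. So your proposal furnishes an independent derivation, and it is worth assessing on its own merits.

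For the oscillatory regime $4\pi x>1+4|t|^2$, your approach is essentially correct. The key observation --- that the $e^{\pm\pi t}$ exponentials in the Hankel asymptotics cancel exactly against the $\sinh\pi t$ in the definition of $\JJ_t^+$ --- is right, and the cleanest bookkeeping is via the exact identity $\JJ_t^+(x)=\pi i\bigl(e^{-\pi t}H_{2it}^{(1)}(4\pi x)-e^{\pi t}H_{2it}^{(2)}(4\pi x)\bigr)$, which is precisely what you describe in your final paragraph. One conceptual slip: for purely imaginary order the coefficient $z^2-\nu^2=z^2+4t^2$ is positive for all real $z>0$, so there is no turning point on the real axis and the equation is oscillatory for every $z>0$; the hypothesis $4\pi x>1+4t^2$ is what makes the Liouville--Green amplitude $(z^2+4t^2)^{-1/4}$ comparable to $z^{-1/2}$, not passage past a turning point. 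This does not affect the estimate.

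For the pre-oscillatory regime $0<4\pi x\leq 1+4|t|^2$, the Mellin argument has a genuine gap. At $\sigma=\e>0$, which you propose for $x\leq 1$, the inversion integral is \emph{not} absolutely convergent: by \eqref{eqn:JJr+Mellinbound}, for $|\tau|\gg|t|$ the integrand is of size $\bigl((1+|\tau+2t|)(1+|\tau-2t|)\bigr)^{(\sigma-1)/2}\asymp|\tau|^{\sigma-1}=|\tau|^{\e-1}$, and $\int^{\infty}|\tau|^{\e-1}\,d\tau$ diverges because $\e-1>-1$. There is no residual exponential decay to save you (the $\cos\frac{\pi s}{2}$ growth exactly matches the decay of the two gamma factors), so the triangle-inequality estimate you invoke fails on $\Re(s)=\e$. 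Separately, the claim that the contours at $\sigma=\pm\e$ are ``pole-free'' elides the point that the poles $s=\pm 2it$ lie \emph{on} the imaginary axis: to justify the inversion formula at $\sigma=-\e$ one must shift from the strip $0<\sigma<\frac12$ of absolute Mellin convergence across $\Re(s)=0$, and that shift picks up residues at $s=\pm 2it$ (a double pole at $s=0$ when $t=0$).

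The fix is not hard: take $\sigma=-\e$ for \emph{all} $x$ in this regime. Then the integrand decays like $|\tau|^{-1-\e}$ and the integral converges absolutely; a direct estimate gives $\ll_\e x^{\e}(1+|t|)^{-\e}$, the residues at $s=\pm 2it$ contribute $\ll(1+|t|)^{-1/2}$ by \eqref{eqn:JJrpmMellinResbound}, and the double pole at $s=0$ (when $t=0$) contributes $\ll 1+|\log x|\ll_\e x^{-\e}$. Since $x\ll 1+t^2$ here, the total is $\ll_\e(1+|t|)^{\e}\bigl(x^{\e}+x^{-\e}\bigr)$, as claimed. With that repair, your Mellin route is a perfectly good alternative to the Bessel-integral and power-series manipulations of HM06.
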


\begin{proof}
This follows by using the integral representation of $Y_0(x)$ found in \cite[p.~26]{Wat44} in the case $x \geq 1$, and by using the power series representation \cite[8.402, 8.403.2]{GR15} in the case $0 < x < 10^6$.
\end{proof}

Now we work towards the improved estimates \eqref{eqn:tildeh+boundtransition} for $\widetilde{h}^{+}(t)$.

\begin{lemma}
\label{lem:tildeh+truncate}
For $T^{\e} \leq |t| \leq T^{1 + \e}/U$, there are constants $c_{j,j_1}$ such that
\begin{equation}
\label{eqn:newL} \widetilde{h}^{+}(t) = \sum_{\substack{0 \leq j \leq M \\ 0 \leq j_1 \leq 3j/2}} c_{j,j_1} (\Lscr^+ \widetilde{\phi}_{j,j_1})(t)   + O(T^{-A}),
\end{equation}
where
\begin{equation}
\label{eqn:newL'}
(\Lscr^+ \widetilde{\phi}_{j,j_1})(t) \coloneqq \int_0^\infty \JJ_t^+(x) \left( \frac{x}{t_g^{j+1} } \int_0^\infty  H_{j_1}^-(y)Y_0\left(\frac{4\pi^2 xy}{t_g} \right) \, dy \right) \, \frac{dx}{x}
\end{equation}
and 
\[H_{j_1}^-(y) \coloneqq y^{j_1}\frac{d^{j_1}}{dy^{j_1}} H^-(y), \]
for all $A > 0$ and for all $M$ sufficiently large with respect to $A$.
\end{lemma}

Here $Y_{\alpha}(x)$ denotes the Bessel function of the second kind.

\begin{proof}
Via \eqref{eqn:GtoJ}, we may write
\begin{equation}
\label{eqn:L+H+def}
\widetilde{h}^{+}(t) = \int_0^\infty \JJ_t^+(x) \phi(x) \, \frac{dx}{x} + \frac{2}{\pi i} \int_{\sigma - i\infty}^{\sigma + i\infty} \widehat{H^{-}}(1 - s) \widehat{\JJ_t^{+}}(1 - s) \widehat{\JJ_0^{-}}(s) \widehat{\JJ_{2t_g}^{+}}(s) \, ds,
\end{equation}
where
\begin{equation}
\label{eqn:phidefeq}
\phi(x) \coloneqq \frac{2}{\pi i} \int_{\sigma - i\infty}^{\sigma + i\infty} \widehat{H^{-}}(1 - s) \widehat{\JJ_0^{+}}(s) \widehat{\JJ_{2t_g}^{-}}(s) x^{1 - s} \, ds
\end{equation}
for $x > 0$ and $1/2 < \sigma < 1$. The second term on the right-hand side of \eqref{eqn:L+H+def} is negligibly small by Stirling's formula. The bounds \eqref{eqn:mellink-hline} for $\widehat{H^{-}}(s)$ ensure that the integral \eqref{eqn:phidefeq} converges absolutely, and by \eqref{eqn:Jt+bound}, the first term on the right-hand side of \eqref{eqn:L+H+def} is absolutely convergent for $t \in \R$ provided that $1/2 < \sigma < 1$.

Next, we may restrict the range of integration in \eqref{eqn:phidefeq} to $|\Im(s)| < T^{1 + \e}/U$ up to a negligibly small error term, since the bounds \eqref{eqn:mellink-hline} for $\widehat{H^{-}}(s)$ ensure that the remaining portion of the integral is negligibly small. In the range $|\Im(s)| < T^{1 + \e}/U$, we may replace $\widehat{\JJ_{2t_g}^-}(s)$ by the expansion in \eqref{eqn:stir4} and then extend the integral back to the whole line $\Re(s) = \sigma$ at the cost of a negligibly small error term, since once more \eqref{eqn:mellink-hline} ensures that $\widehat{H^{-}}(s)$ is negligibly small in the remaining portion of the integral. Thus in place of $\phi(x)$, we are led to studying linear combinations of terms of the form
\[\int_0^\infty H^-(y) \frac{1}{2\pi i} \int_{\sigma - i\infty}^{\sigma + i\infty} \widehat{\JJ_0^+}(s) \frac{s^{j_1}}{t_g^j} \left(\frac{\pi x y}{t_g} \right)^{1 - s} \, ds \, \frac{dy}{y}\]
for $1/2 < \sigma < 1$ and $j \geq 0$ fixed, where $0 \leq j_1 \leq 3j/2$. We may write $s^{j_1}$ as a linear combination of products $\prod_{k=1}^{k=\ell} (-s+k)$ for $0\le \ell \le j_1$, where the empty product is interpreted as being equal to $1$. Doing so and then integrating by parts $\ell$ times and using the fact that
\[\frac{1}{2\pi i} \int_{\sigma - i\infty}^{\sigma + i\infty} \widehat{\JJ_0^+}(s) w^{-s} \, ds = \JJ_0^+(w) = -2\pi Y_0(4\pi w),\]
we find that up to a negligibly small error, $\phi(x)$ equals a linear combination of
\[{\phi}_{j,\ell}(x) \coloneqq \frac{x}{t_g} \int_0^\infty \frac{y^{\ell}}{t_g^j} \frac{d^{\ell}H^-(y)}{dy^{\ell}}  Y_0\left(\frac{4\pi^2 xy}{t_g} \right) \, dy
\]
for $\ell\le j_1\le 3/2 j$.
\end{proof}

Now as a preliminary step, we show that, up to a negligible error term, we may restrict the outer integral in \eqref{eqn:newL'} to $x \geq 2$. This will be useful because we will be able to use of the following expression for $Y_0(x)$.

\begin{lemma}[{\cite[Lemma 2.7]{Kha22}}]
\label{lem:Y0}
For $x \geq 1$, we have that
\begin{equation}
\label{eqn:Y0long}
Y_0(x)= \sum_{\pm} \frac{e^{\pm ix}}{\sqrt{x}} W_{\pm}(x)
\end{equation}
for some smooth functions $W_{\pm}$ satisfying
\begin{equation}
\label{eqn:Wderivbounds}
W_{\pm}^{(j)}(x) \ll_j x^{-j}
\end{equation}
for $j \in \N_0$. For $0 < x < 10^6$, we have that
\begin{equation}
\label{eqn:Y0bound}
Y_0^{(j)}(x) \ll_{j,\e} x^{-j - \e}
\end{equation}
for $j \in \N_0$.
\end{lemma}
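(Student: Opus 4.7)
The plan is to establish the two parts separately, corresponding to the two ranges of $x$.

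For $x \geq 1$, I would start from the Hankel function $H_0^{(1)}(x) = J_0(x) + i Y_0(x)$, which admits the standard integral representation
\[
H_0^{(1)}(x) = \sqrt{\frac{2}{\pi x}}\, e^{i(x - \pi/4)} \cdot \frac{1}{\Gamma(1/2)} \int_0^\infty e^{-u}\, u^{-1/2} \left(1 - \frac{u}{2ix}\right)^{-1/2} du,
\]
valid for $x > 0$, which follows from the Poisson-type representation of Bessel functions after a contour rotation. Taking the imaginary part and setting $W(x) := \sqrt{2/\pi}\, e^{-i\pi/4}\, V(x)$, where $V(x)$ denotes the integral factor above, yields the desired factorisation $Y_0(x) = \Im(e^{ix} W(x)/\sqrt{x})$. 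For the derivative bounds, I would differentiate under the integral sign: each derivative with respect to $x$ of the integrand $(1 - u/(2ix))^{-1/2}$ produces a factor proportional to $u/x^2 \cdot (1 - u/(2ix))^{-3/2}$, and after $j$ differentiations the integrand gains a factor bounded by $O_j(x^{-j})$ uniformly in $u \geq 0$ for $x \geq 1$, since $|1 - u/(2ix)| \geq 1$. The remaining $u$-integral is absolutely convergent, giving $W^{(j)}(x) \ll_j x^{-j}$.

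For $0 < x < 10^6$, I would invoke the classical series expansion
\[
Y_0(x) = \frac{2}{\pi}\left(\log\frac{x}{2} + \gamma\right) J_0(x) - \frac{2}{\pi} \sum_{k=1}^\infty (-1)^k \frac{H_k}{(k!)^2}\left(\frac{x}{2}\right)^{2k},
\]
where $H_k = 1 + 1/2 + \cdots + 1/k$ and $\gamma$ is the Euler--Mascheroni constant. The series on the right, together with $J_0(x)$, is an entire function uniformly bounded (along with all its derivatives) on $[0, 10^6]$. Differentiating $j$ times by the Leibniz rule, the only singular contribution at $x = 0$ comes from $(d/dx)^{\ell} \log(x/2) = O(x^{-\ell})$ for $\ell \geq 1$ (and $O(|\log x|)$ for $\ell = 0$), multiplied by bounded derivatives of $J_0(x)$. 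Combining these contributions yields $Y_0^{(j)}(x) \ll_j 1 + x^{-j}|\log x|$, which is subsumed in $1 + x^{-j + \e}$ on the range $0 < x < 10^6$.

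The main obstacle, such as it is, lies in bookkeeping for the derivative bounds in the first case: one has to track carefully that differentiation of $(1 - u/(2ix))^{-1/2}$ never produces an integral that fails to converge uniformly for $x \geq 1$. This is ultimately routine because the exponential weight $e^{-u}$ suppresses the large-$u$ tail and the factor $(1 - u/(2ix))^{-1/2}$ is bounded in modulus by $1$ on the real axis for $x \geq 1$. Both parts of the lemma are in any case standard and are recorded in \cite[Lemma 2.7]{Kha21}.
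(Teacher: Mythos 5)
The paper does not give its own proof of this lemma; it is cited verbatim from \cite[Lemma 2.7]{Kha21}, and the only indication of the intended method appears in the nearby proof of the $\JJ_t^{+}$ bound, which likewise invokes Watson's integral representation for $x \geq 1$ and the power-series expansion from \cite{GR15} for $0 < x < 10^6$. Your approach is therefore the same one the paper alludes to, with the Hankel-function representation $H_0^{(1)}(x) = \sqrt{2/\pi x}\, e^{i(x-\pi/4)} \Gamma(1/2)^{-1}\int_0^\infty e^{-u} u^{-1/2}(1 + iu/2x)^{-1/2}\,du$ playing the role of Watson's integral. Your treatment of the first part is correct: since $|1 + iu/2x| \geq 1$ on the integration range and the $e^{-u} u^{-1/2}$ weight absorbs the powers of $u$ produced by differentiation, Fa\`{a} di Bruno gives $W^{(j)}(x) \ll_j x^{-j-1}$ for $j \geq 1$, which is even a little stronger than claimed.

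The second part, however, has a genuine gap. You correctly reach $Y_0^{(j)}(x) \ll_j 1 + x^{-j}|\log x|$ from the Leibniz expansion of $\frac{2}{\pi}(\log(x/2)+\gamma)J_0(x)$, but the final assertion that this ``is subsumed in $1 + x^{-j+\e}$'' is false as $x \to 0$: for instance at $j = 0$ one has $Y_0(x) \sim \frac{2}{\pi}\log(x/2) \to -\infty$, whereas $1 + x^{\e} \to 1$; more generally, $x^{-j}|\log x| / x^{-j+\e} = x^{-\e}|\log x| \to \infty$. What your computation actually yields is $Y_0^{(j)}(x) \ll_{j,\e} 1 + x^{-j-\e}$ (using $|\log x| \ll_\e x^{-\e}$ for small $x$). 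This is almost certainly what the cited source intends, and it is the bound that the paper's subsequent application (deducing $\frac{d^j}{dx^j} Y_0(4\pi^2 xy/t_g) \ll_j T^\e x^{-j}$ for $T^{-100A} < x < 2$) in fact requires; the ``$+\e$'' in \eqref{eqn:Y0bound} appears to be a sign slip. You should have flagged the incompatibility rather than claiming a subsumption that does not hold; as written, your proof does not establish the stated inequality but a corrected version of it.
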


We shall make use of the following bounds for the transform $\Lscr^+$.

\begin{lemma}[{\cite[Lemma 1]{BHM07}, \cite[Lemma 3]{Jut99}}]
\label{lem:transf-bounds}
\hspace{1em}
\begin{enumerate}[leftmargin=*,label=\textup{(\arabic*)}]
\item\label{item:transf-bounds1} If $\phi(x)$ is a smooth function supported on $0 < X < x <2X$, with $\phi^{(j)}(x) \ll_j X^{-j}$ for all $j \in \N_0$, then for $t \in \R$, we have that
\[(\Lscr^+ \phi)(t) \ll_{\ell} \frac{1 + |\log X|}{1 + X} \left(\frac{1 + X}{1 + |t|}\right)^{\ell}\]
for any $\ell \in \N_0$.
\item\label{item:transf-bounds2} If $\phi(x) = e(\pm 2x) \psi(x)$ with $\psi(x)$ a smooth function supported on $1 < X < x < 2X$ satisfying $\psi^{(j)}(x) \ll_j X^{-j}$ for all $j \in \N_0$, then for $t \in \R$, we have that
\[(\Lscr^+ \phi)(t) \ll_{\e} X^{-\frac{1}{2} + \e}.\]
For $|t| > X^{1/2 + \e}$, we have that
\[(\Lscr^+ \phi)(t) \ll_{\ell,\e} (|t| + X)^{-\ell} X^{\e}\]
for any $\ell \in \N_0$.
\end{enumerate}
\end{lemma}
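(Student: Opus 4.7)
{Lemma \ref*{lem:transf-bounds}}]
Since the lemma is cited from \cite{BHM07,Jut99}, we content ourselves with a sketch of the standard strategy. For part \ref{item:transf-bounds1}, the plan is to combine Mellin--Parseval inversion with repeated integration by parts. Writing
\[
(\Lscr^+\phi)(t) = \frac{1}{2\pi i}\int_{(\sigma)} \widehat{\JJ_t^+}(s)\, \widetilde{\phi}(1-s)\, ds,
\]
where $\widetilde{\phi}(s) = \int_0^\infty \phi(x) x^{s-1}\,dx$, the smoothness and compact support of $\phi$ on $[X,2X]$ yield, via $\ell$-fold integration by parts, the bound $\widetilde{\phi}(s) \ll_\ell X^{\Re(s)-1}(1 + |s|/X)^{-\ell}$. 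Inserting the bounds \eqref{eqn:JJr+Mellinbound} for $\widehat{\JJ_t^+}(s)$ and exploiting the exponential decay in the regime $|\Im(s)| \leq 2|t|$ restricts the effective range of integration, while the integration-by-parts saving supplies the desired $((1+X)/(1+|t|))^\ell$ factor. The prefactor $(1+\log X)/(1+X)$ ultimately reflects the pointwise size of $\JJ_t^+(x)$ for $t = 0$, namely its $\log x$ behaviour near the origin and its $x^{-1/2}$-decay at infinity.

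For part \ref{item:transf-bounds2}, the oscillatory weight $e(\pm 2x)$ must be analysed alongside the Bessel oscillation. The plan is to split the $x$-integral at $X \asymp 1 + 4t^2$. In the regime $4\pi x > 1 + 4t^2$ one invokes the large-argument asymptotic expansion
\[
\JJ_t^+(x) = \sum_{\pm_1} c_{\pm_1}(t)\, \frac{e^{\pm_1 4\pi i x}}{\sqrt{4\pi x}} + (\text{lower order}),
\]
with $c_{\pm_1}(t)$ bounded, producing combined phases $\pm_1 4\pi x \pm 4\pi x$. The non-stationary combinations give a negligible contribution by repeated integration by parts in $x$ (the derivative of the phase is $\gg 1$), while the stationary combination, which is simply trivial, contributes $\ll X^{-1/2 + \e}$ by the crude bound $\int x^{-3/2}\psi(x)\,dx \ll X^{-1/2}$. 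In the complementary regime $4\pi x \leq 1 + 4t^2$, which only arises once $|t|^2 \gtrsim X$, we revert to the Mellin inversion approach of part \ref{item:transf-bounds1}; the improved truncation $|t| > X^{1/2+\e}$ (as opposed to $|t| > X$) is recovered because the auxiliary oscillation $e(\pm 2x)$ shifts the effective spectral parameter, allowing for an even sharper stationary-phase analysis.

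The principal technical difficulty will be the uniform control across the Bessel transition region $4\pi x \asymp 1 + 4t^2$, where neither the oscillatory large-argument expansion nor the power-series expansion of $J_{2it}(4\pi x)$ is uniformly valid. This is handled in the cited references by Langer-type uniform asymptotics for Bessel functions, and since the lemma is standard we shall invoke it as a black box rather than reproduce the full analysis.
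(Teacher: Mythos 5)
The paper does not prove this lemma; it cites \cite[Lemma 1]{BHM07} and \cite[Lemma 3]{Jut99} and uses it as a black box, so there is no internal proof to compare against. Your decision to present only a strategic sketch and likewise invoke the cited result is therefore consistent with what the paper does, and as a conceptual outline the sketch is reasonable. Two technical slips would nonetheless derail the bookkeeping if you actually carried the details through. First, with the normalisation $\phi^{(j)}(x) \ll_j X^{-j}$ and $\widetilde{\phi}(s) = \int_X^{2X} \phi(x) x^{s-1}\,dx$, $\ell$-fold integration by parts gives
\[
\widetilde{\phi}(s) \ll_\ell X^{\Re(s)} (1+|s|)^{-\ell},
\]
not $X^{\Re(s)-1}(1+|s|/X)^{-\ell}$: each integration by parts costs a factor of $|s|^{-1}$, the shift from $x^{s-1}$ to $x^{s+\ell-1}$ costs $X^{\ell}$, the derivative $\phi^{(\ell)}$ costs $X^{-\ell}$, and the length of the interval costs $X$, so the $X$-powers net to $X^{\Re(s)}$ and the $|s|$-scale is $1$, not $X$. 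Second, with the paper's convention $\widehat{f}(s) = \int_0^\infty f(x) x^s\,\frac{dx}{x}$, the Parseval identity reads $\frac{1}{2\pi i}\int_{(\sigma)} \widehat{\JJ_t^+}(s)\,\widetilde{\phi}(-s)\,ds$ in your normalisation of $\widetilde{\phi}$, not $\widetilde{\phi}(1-s)$. For part \ref{item:transf-bounds2} your description of the stationary-phase mechanism (the phase combination $\mp_1 4\pi x \pm 4\pi x$ that vanishes contributes $X^{-1/2}$; the non-stationary combination is killed by integration by parts) is indeed what drives \cite[Lemma 3]{Jut99}. Your account of the regime $|t| > X^{1/2+\e}$ is vaguer than it should be: the relevant point is not that $e(\pm 2x)$ ``shifts the effective spectral parameter'' but rather that in this range the whole support $[X,2X]$ lies in $4\pi x < 1+4|t|^2$, where the Bessel oscillation has a frequency incompatible with $e(\pm 2x)$, so the combined phase has derivative bounded below and non-stationary-phase integration by parts delivers arbitrary power savings. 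Deferring the uniform treatment of the Airy transition to the cited references is appropriate given the paper does the same.
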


\begin{lemma}
For $|t| > T^{\e}$, we have that
\begin{equation}
\label{eqn:1integral}
 \widetilde{h}^{+}(t)   =  \sum_{\substack{0 \leq j \leq M \\ 0 \leq j_1 \leq 3j/2}} c_{j,j_1}  \int_{2}^\infty \JJ_t^+(x)  \int_0^\infty \frac{1}{t_g^{j+1}} H_{j_1}^-(y) Y_0\left(\frac{4\pi^2 xy}{t_g} \right) \, dy  \, dx + O(T^{-A})
\end{equation}
for any $A > 0$ and $M$ sufficiently large in terms of $A$.
\end{lemma}

\begin{proof}
First consider the contribution to \eqref{eqn:newL} of the range $x \leq T^{-100A}$ for $A > 0$. Inserting the bounds \eqref{eqn:Jt+bound} and \eqref{eqn:Y0bound}, we get that the contribution of this range is $O(T^{-A})$. 

Now consider the range $T^{-100A} < x < 2$. The portion of the integral over $y \in \R$ for which $|2\pi y - T| > U^{1 + \e}$ is $O(T^{-A})$ by the bounds \eqref{eqn:H-derivbound} for $H_{j_1}^-(y)$, \eqref{eqn:Jt+bound} for $\JJ_t^+$, and \eqref{eqn:Y0bound} for $Y_0$. For the remaining range $|2\pi y - T| < U^{1 + \e}$, we have that $H_{j_1}^-(y) \ll T^{j_1+1}$ by \eqref{eqn:H-derivbound} and that
\[\frac{d^j}{dx^j} Y_0\left( \frac{4 \pi^2 xy}{t_g}\right) \ll_j T^{\e} x^{-j}\]
by \eqref{eqn:Y0bound}. So by dividing the interval $T^{-100A} < x < 2 $ into dyadic intervals and using \hyperref[item:transf-bounds1]{Lemma \ref*{lem:transf-bounds} \ref*{item:transf-bounds1}} with $\ell$ sufficiently large, we get in the current range that when $|t| > T^{\e}$, the contribution to \eqref{eqn:newL} is $O(T^{-A})$.
\end{proof}

Next we show that up to a negligible error term, the inner integral in \eqref{eqn:1integral} can be restricted to $|2\pi y - T| < U^{1 + \e}$. To this end, let $W_0(y)$ denote a smooth function that approximates the characteristic function of the interval $|2\pi y - T| < U^{1 + \e}$, such that
\begin{equation}
\label{eqn:W0derivbounds} W_0^{(j)}(y)\ll_j (U^{-1 - \e})^j
\end{equation}
for all $j\ge 1$.
\begin{lemma}
\label{lem:restrict-y}
For $|t| > T^{\e}$, we have that
\begin{equation}
\label{eqn:newL2}
\widetilde{h}^{+}(t) = \sum_{\substack{0 \leq j \leq M \\ 0 \leq j_1 \leq 3j/2}} c_{j,j_1}  \int_{2}^\infty \JJ_t^+(x) \int\limits_0^\infty  \frac{W_0(y)  H_{j_1}^-(y)}{t_g^{j+1}} Y_0\left(\frac{4\pi^2 xy}{t_g} \right) \, dy  \, dx + O(T^{-A})
\end{equation}
for any $A > 0$ and $M$ large enough in terms of $A$.
\end{lemma}

\begin{proof}
We write the integral in \eqref{eqn:1integral} as
\[\int_2^\infty \JJ_t^+(x) \left(\int_0^{t_g/ 4\pi^2 x} + \int_{t_g/ 4\pi^2 x}^\infty \right) \frac{H_{j_1}^-(y)}{t_g^{j+1}} Y_0\left(\frac{4 \pi^2 xy}{t_g}\right) \, dy \, dx.\]
The first of these integrals is bounded, using \hyperref[lem:Y0]{Lemma \ref*{lem:Y0}} to estimate $Y_0( \frac{4 \pi^2 xy}{t_g})$, by
\begin{equation}
\label{eqn:firstintegral}
\int_2^\infty |\JJ_t^+(x) | \int_0^{t_g/ 4\pi^2 x} \frac{|H_{j_1}^-(y)|}{t_g^{j+1}} \left(\frac{4\pi^2 xy}{t_g}\right)^{-\e} \, dy \, dx.
\end{equation}
By inserting the bounds \eqref{eqn:H-derivbound} for $H_{j_1}^-(y)$, integrating over $y$, and then inserting the bound from \eqref{eqn:Jt+bound} for $\JJ_t^+$, we see that \eqref{eqn:firstintegral} converges. In this integral, we have $|2\pi y - T|\geq U^{1 + \e}$, since $y\le t_g/(4\pi^2 x) < T/(8\pi^2)$. Thus we can insert the bound \eqref{eqn:H-derivbound} for $H_{j_1}^-(y)$ to see that the contribution of \eqref{eqn:firstintegral} is $O(T^{-A})$ for any $A > 0$. The second integral is, using Lemma \ref{lem:Y0},
\[\sum_{\pm} \int_2^\infty \JJ_t^+(x) \int_{t_g/ 4\pi^2 x}^\infty \frac{H_{j_1}^-(y)}{t_g^{j+1}} \left( \frac{4 \pi^2 xy}{t_g^{j+1}}\right)^{-\frac{1}{2}} e\left(\pm \frac{2 \pi xy}{t_g}\right) W_{\pm}\left( \frac{4 \pi^2 xy}{t_g}\right) \, dy \, dx.\]
Inserting $W_0(y)+W_1(y)$ into the integrand, where $W_1(y)=1-W_0(y)$, we need to show that
\[\sum_{\pm} \int_2^\infty \JJ_t^+(x) \int_{t_g/ 4\pi^2 x}^\infty \frac{W_1(y)  H_{j_1}^-(y)}{t_g^{j+1}} \left( \frac{4 \pi^2 xy}{t_g}\right)^{-\frac{1}{2}} e\left(\pm \frac{2 \pi xy}{t_g}\right) W_{\pm}\left( \frac{4 \pi^2 xy}{t_g}\right) \, dy \, dx \ll T^{-A}.\]
By integrating by parts once, this double integral is equal to
\begin{align*}
&\pm \int_2^\infty \JJ_t^+(x) \left[\frac{t_g}{i 4 \pi^2 x} e\left(\pm \frac{2 \pi xy}{t_g}\right)  \frac{W_1(y)  H_{j_1}^-(y)}{ t_g^{j+1}} \left( \frac{4 \pi^2 xy}{t_g}\right)^{-\frac{1}{2}} W_{\pm}\left( \frac{4 \pi^2 xy}{t_g}\right)\right]_{y = \frac{t_g}{4\pi^2 x}}^{y = \infty} \, dx	\\
&\mp \int_2^\infty \JJ_t^+(x) \int_{t_g/ 4\pi^2 x}^\infty \frac{t_g}{i 4 \pi^2 x} e\left(\pm \frac{2 \pi xy}{t_g}\right) \\
&\hskip1in \times \frac{d}{dy}\left( \frac{W_1(y) H_{j_1}^-(y)}{ t_g^{j+1}} \left( \frac{4 \pi^2 xy}{t_g}\right)^{-\frac{1}{2}} W_{\pm}\left( \frac{4 \pi^2 xy}{t_g}\right)\right) \, dy \, dx.
\end{align*}
By \eqref{eqn:Jt+bound} and the rapid decay of $H_{j_1}^{-}$, these integrals converge. Moreover these integrals are bounded by $O(T^{-A})$ for any $A > 0$ by \eqref{eqn:H-derivbound} since the support of $W_1(y)$ restricts the integrals to the range $|2\pi y - T| \geq U^{1 + \e}$.
\end{proof}

Now we prove that $(\Lscr^+ \widetilde{\phi}_{j,j_1})(t)$ is in the form needed in order to apply \hyperref[lem:transf-bounds]{Lemma \ref*{lem:transf-bounds}}.

\begin{lemma}
We have that 
\begin{equation}
\label{eqn:Xellexpansion}
(\Lscr^+  \widetilde{\phi}_{j,j_1})(t) = U \sum_{\pm} \sum_{1 \leq \ell \leq T^{\e}} X_\ell^\frac{1}{2} (\Lscr^+ \phi_{j,j_1,\ell,\pm})(t) + O(T^{-A})
\end{equation}
for some functions $\phi_{j,j_1,\ell,\pm}$ of the form $\phi_{j,j_1,\ell,\pm}(x) = e(\pm 2x) \psi_{j,j_1,\ell,\pm}(x)$ with $\psi_{j,j_1,\ell,\pm}(x)$ supported on $1 \leq X_\ell < x < 2X_\ell <(T/U)^{1 + \e}$ and satisfying $\psi_{j,j_1,\ell,\pm}^{(j)}(x) \ll_{j,k} X_\ell^{-k}$ for all $k \geq 0$.
\end{lemma}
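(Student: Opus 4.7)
The plan is to apply \hyperref[lem:Y0]{Lemma \ref*{lem:Y0}} inside \eqref{eqn:newL2} to pull out the leading oscillation of the Bessel function, then dyadically decompose in $x$ and change variables in $y$ to expose the $e(\pm 2x)$ phase. Since $x \geq 2$ and $|2\pi y - T| < U^{1+\e}$ with $T \asymp t_g$ force $4\pi^2 xy/t_g$ to exceed an absolute positive constant, \hyperref[lem:Y0]{Lemma \ref*{lem:Y0}} gives
\[Y_0\!\left(\frac{4\pi^2 xy}{t_g}\right) = \Im\!\left(\frac{e^{i\cdot 4\pi^2 xy/t_g}}{\sqrt{4\pi^2 xy/t_g}}\, W\!\left(\frac{4\pi^2 xy}{t_g}\right)\right),\]
rewriting $\widetilde{\phi}(x)$ as the imaginary part of an oscillatory integral in $y$ with linear phase $4\pi^2 xy/t_g$ and amplitude $H^-(y)W(\cdot)/\sqrt{\cdot}$ that is $y$-smooth at scale $1/U$ by \hyperref[lem:k-hbounds]{Lemma \ref*{lem:k-hbounds}}.

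Next, I would truncate the $x$-integration to $2 \leq x \leq (T/U)^{1+\e}$, placing the tail in the error term. For $x$ above this threshold, each integration by parts in $y$ picks up a factor of $(4\pi^2 x/t_g)^{-1}$ while the amplitude costs only $1/U$ per derivative, yielding a net gain of $xU/t_g \gg (T/U)^\e$ per step and $O(T^{-A})$ after sufficiently many iterations. Within the truncated range I install a smooth dyadic partition of unity $1 = \sum_\ell \chi_\ell(x)$ with $\chi_\ell$ supported on $X_\ell \leq x \leq 2X_\ell$, $X_\ell = 2^{\ell-1}$; the cap forces $\ell \ll \log(T/U) \ll T^\e$, matching the asserted range in \eqref{eqn:Xellexpansion}.

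In each dyadic piece, I change variables $y = T/(2\pi) + v$ with $|v| < U^{1+\e}/(2\pi)$ and factor the phase as
\[e^{\pm i\cdot 4\pi^2 xy/t_g} \;=\; e(\pm 2x)\;\cdot\;e^{\pm i\cdot 2\pi x(T/t_g - 2)}\;\cdot\;e^{\pm i\cdot 4\pi^2 xv/t_g},\]
the two signs arising from $\Im(z) = (z-\bar z)/(2i)$. Extracting $e(\pm 2x)$, the remaining factors --- the residual phase, the amplitude $W/\sqrt{\cdot}$, and the $v$-integration against $H^-$ --- assemble into a function of $x$ of the form $U X_\ell^{1/2}\psi_{\ell,\pm}(x)$. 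A quick size-count confirms the prefactor: $H^-$ contributes height $\asymp t_g$ on an interval of length $\asymp U$, the amplitude $1/\sqrt{4\pi^2 xy/t_g}$ scales as $X_\ell^{-1/2}$, and the external $x/t_g$ scales as $X_\ell/t_g$, combining to $U X_\ell^{1/2}$.

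The main obstacle is verifying $\psi_{\ell,\pm}^{(j)}(x) \ll_j X_\ell^{-j}$. Each $x$-derivative of the residual phase $e^{\pm i\cdot 2\pi x(T/t_g - 2)}$ produces $|T/t_g - 2| \asymp U/t_g$ (using $U = t_g - T/2 + 1$), and each $x$-derivative through the $v$-integrand $e^{\pm i\cdot 4\pi^2 xv/t_g}$ yields $4\pi^2|v|/t_g \ll U^{1+\e}/t_g$; after $j$ derivatives these contribute $(UX_\ell/t_g)^j$. Using the truncation $X_\ell \leq (T/U)^{1+\e}$ and $T \asymp t_g$, this is bounded by $(T/U)^{j\e}$, absorbable via the $\e$-convention into the claimed $X_\ell^{-j}$ scaling. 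Derivatives through $W$ and $1/\sqrt{\cdot}$ in $x$ produce only the natural $X_\ell^{-j}$ behaviour by \eqref{eqn:Wderivbounds}, so no further growth appears, and a routine application of \hyperref[lem:k-hbounds]{Lemma \ref*{lem:k-hbounds}} controls the smoothness contributed by integrating against $H^-$.
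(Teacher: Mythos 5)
Your proposal follows essentially the same route as the paper's proof: insert the oscillatory expansion of $Y_0$ from Lemma \ref{lem:Y0}, recentre the $y$-variable at $T/(2\pi)$ where $H^-$ is concentrated, factor $e(\pm 2x)$ out of the phase via the relation $T/t_g - 2 = -2(U-1)/t_g$, truncate the $x$-range to $x \ll (T/U)^{1+\e}$ by integration by parts in $y$ (gain $\asymp xU/t_g$ per step), and install a dyadic partition of unity. The paper performs the change of variables $y \mapsto (Uy+T)/(2\pi)$, which additionally rescales, whereas you only shift, but this is cosmetic; the resulting residual phases $e(\mp 2(U-1)x/t_g)$ and $e(\pm Uxy/t_g)$ agree, and your size-count giving the prefactor $UX_\ell^{1/2}$ and the derivative verification $\psi_{\ell,\pm}^{(j)} \ll_j X_\ell^{-j}$ (up to the $\e$-convention, exactly as the paper implicitly allows) match.
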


\begin{proof}
By \eqref{eqn:Y0long}, we have that the inner integral in \eqref{eqn:newL2} equals
\begin{equation}
\label{eq:innerint} \sum_\pm \int\limits_0^\infty  \frac{1}{2\pi \sqrt{xy}}\frac{W_0(y)  H_{j_1}^-(y)}{t_g^{j+\frac12}}   e\left(\frac{\pm 2\pi xy}{t_g} \right)  W_\pm \left(\frac{4\pi^2 xy}{t_g} \right)\, dy. 
\end{equation}
For $x \geq T^{1 + \e} / U$, we repeatedly integrate by parts with respect to $y$, integrating $e(\pm Uxy/t_g)$ and differentiating the rest. Via the bounds \eqref{eqn:Wderivbounds} for the derivatives of $W_\pm$, the bounds \eqref{eqn:W0derivbounds} for the derivatives of $W_0$, and the bounds \eqref{eqn:H-derivbound} for the derivatives of $H_{j_1}^-$, we see that the portion of the outer integral in \eqref{eqn:newL2} for which $x \geq T^{1 + \e} / U$ is negligibly small.

We make the substitution $y \mapsto \frac{Uy + T}{2\pi}$ in the inner integral \eqref{eq:innerint} and recall that $T = 2t_g - 2U + 2$, to see that up to a negligible error, $(\Lscr^+  \widetilde{\phi}_{j,j_1})(t)$ is proportional to
\begin{multline}
\label{eqn:delicate}
\sum_\pm U \int_{2}^{\frac{T^{1 + \e}}{ U}} \JJ_t^+(x) \Bigg( \frac{x^\frac12 e(\pm 2x)  }{ (t_g T)^{\frac12} t_g^{j} } e\left(\mp \frac{2(U - 1)x}{t_g}\right)
 \int\limits_0^\infty \left(1 + \frac{Uy}{T}\right)^{-\frac12} W_0\left( \frac{Uy + T}{2\pi} \right) \\
\left(\frac{Uy + T}{2\pi}\right)^{j_1} (H^{-})^{(j_1)}\left(\frac{Uy + T}{2\pi}\right) W_\pm \left(\frac{2 \pi x (Uy + T)}{t_g}\right) e\left(\pm \frac{Uxy}{t_g}\right) \, dy \Bigg) \frac{dx}{x}.
\end{multline}
Note that in \eqref{eqn:delicate}, the $y$-integral is restricted to $|y|<T^\varepsilon$ by the support of $W_0$, and that
\[\frac{1}{(t_g T)^{\frac12} t_g^{j}} \left(\frac{Uy + T}{2\pi}\right)^{j_1} (H^{-})^{(j_1)}\left(\frac{Uy + T}{2\pi}\right) \ll 1\]
using \eqref{eqn:H-derivbound}, since $j_1 \le 3/2 j$ and $U>T^{1/3+\delta}$. We obtain the desired decomposition upon applying a smooth partition of unity to split the interval $[2, T^{1 + \e} / U]$ into dyadic intervals.
\end{proof}

Finally, we complete the proof of the bounds \eqref{eqn:tildeh+boundtransition} for $\widetilde{h}^{+}(t)$.

\begin{proof}[Proof of {\eqref{eqn:tildeh+boundtransition}}]
Due to the bounds attained for $\widetilde{h}^{+}(t)$ in \hyperref[lem:+crude]{Lemma \ref*{lem:+crude}}, it remains only to show that
\[\widetilde{h}^{+}(t) \ll_{\e} \begin{dcases*}
U^{1 + \e} & if $T^{\e} \leq |t| \leq \left(\frac{T}{U}\right)^{\frac{1}{2} + \e}$,	\\
(|t|T)^{-100} & if $\left(\frac{T}{U}\right)^{\frac{1}{2} + \e} \leq |t| \leq \frac{T^{1 + \e}}{U}$.
\end{dcases*}\]
Via \hyperref[lem:tildeh+truncate]{Lemma \ref*{lem:tildeh+truncate}}, it suffices to prove these bounds for $(\Lscr^+  \widetilde{\phi}_{j,j_1})(t)$ in place of $\widetilde{h}^+(t)$. The desired bounds then follow by combining the expansion \eqref{eqn:Xellexpansion} for $(\Lscr^+  \widetilde{\phi}_{j,j_1})(t)$ together with the bounds for $(\Lscr^+ \phi_{j,j_1,\ell,\pm})(t)$ given in \hyperref[item:transf-bounds2]{Lemma \ref*{lem:transf-bounds} \ref*{item:transf-bounds2}}.
\end{proof}

\begin{remark}
The calculation of $\widetilde{h}^{+}(t)$ is delicate because in order to apply \hyperref[item:transf-bounds2]{Lemma \ref*{lem:transf-bounds} \ref*{item:transf-bounds2}}, one needs $\phi$ to be of the form $\phi(x) = e(ax) \psi(x)$ with $a \in \{2,-2\}$, which is what we arrived at in \eqref{eqn:delicate}; no other constants $a$ will suffice.
\end{remark}

\subsubsection{Strong Bounds for $\widetilde{h}^{\hol}(k)$}

The bounds \eqref{eqn:tildehholboundtransition} for $\widetilde{h}^{\hol}(k)$ are deduced in exactly the same way.

\begin{proof}[Proof of {\eqref{eqn:tildehholboundtransition}}]
Due to the bounds attained for $\widetilde{h}^{\hol}(t)$ in \hyperref[lem:holcrude]{Lemma \ref*{lem:holcrude}}, it remains only to show that
\[\widetilde{h}^{\hol}(k) \ll_{\e} \begin{dcases*}
U^{1 + \e} & if $T^{\e} \leq k \leq \left(\frac{T}{U}\right)^{\frac{1}{2} + \e}$,	\\
(kT)^{-100} & if $\left(\frac{T}{U}\right)^{\frac{1}{2} + \e} \leq k \leq \frac{T^{1 + \e}}{U}$.
\end{dcases*}\]
This follows by precisely the same method as for $\widetilde{h}^{+}(t)$; an analogue of \hyperref[lem:tildeh+truncate]{Lemma \ref*{lem:tildeh+truncate}} holds for $\widetilde{h}^{\hol}(k)$ in place of $\widetilde{h}^{+}(t)$ by using in place of \eqref{eqn:Jt+bound} the bounds \cite[Proposition 8]{HM06}
\[\JJ_k^\hol(x) \ll \begin{dcases*}
\frac{(4\pi x)^{k - 1}}{2^{k - 1} \Gamma\left(k - \frac{1}{2}\right)} & if $0 < 4\pi x \leq 1$, \\
\frac{k}{\sqrt{x}} & if $4\pi x > 1$,
\end{dcases*}\]
while the analogues of \hyperref[item:transf-bounds1]{Lemma \ref*{lem:transf-bounds} \ref*{item:transf-bounds1}} and \ref{item:transf-bounds2} hold with $\Lscr^+$ replaced by $\Lscr^{\hol}$ by \cite[Lemma 1]{BHM07} and \cite[Remark 1]{Jut99}.
\end{proof}

\section{Bounds for Mixed Moments of \texorpdfstring{$L$}{L}-Functions in the Short Transition Range}

\subsection{Bounds via the Spectral Large Sieve}

When bounding the mixed moment of $L$-functions \eqref{eqn:transition} in the short transition range, we can no longer perform a dyadic subdivision, since the analytic conductors of $L(1/2,\ad g \otimes f)$ and $L(1/2 + it,\ad g)$ exhibit conductor-dropping in this range. Instead, we must divide into shorter intervals. After an application of H\"{o}lder's inequality, this leads us to studying the second moment of $L(1/2,\ad g \otimes f)$ in short intervals. We can bound this via the spectral large sieve.

\begin{proposition}
\label{prop:largesievebounds3}
Let $g$ be a Hecke--Maa\ss{} cusp form on $\Gamma \backslash \Hb$ with spectral parameter $t_g$. For $t_g \leq T \leq 3t_g$ and $U = \left|t_g - \frac{T}{2}\right| + 1$, we have the bounds
\begin{equation}
\label{eqn:largesievebounds3}
\begin{drcases*}
\sum_{\substack{f \in \BB_0 \\ T - U \leq t_f \leq T + U}} \frac{L\left(\frac{1}{2},\ad g \otimes f\right)^2}{L(1,\ad f)} & \\
\frac{1}{2\pi} \int\limits_{T - U \leq |t| \leq T + U} \left|\frac{L\left(\frac{1}{2} + it,\ad g\right)^2}{\zeta(1 + 2it)}\right|^2 \, dt &
\end{drcases*} \ll_{\e} t_g^{2 + \e} U.
\end{equation}
\end{proposition}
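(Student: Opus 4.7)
The plan is to adapt the argument used in the proof of Proposition 6.1, but with careful attention to the conductor-dropping behaviour in the short transition range, which reduces the effective length of the approximate functional equation from $t_g^3$ (the generic case when $t_f \asymp t_g$) down to $t_g^2 U$. Since the Langlands parameters of $\ad g$ are $(2it_g, 0, -2it_g)$ and $t_f \in [T-U, T+U]$ with $T \in [t_g, 3t_g]$ and $U = |t_g - T/2| + 1$, the six archimedean gamma factors of $L(s, \ad g \otimes f)$ give an analytic conductor
\[
C_f \asymp (1 + |t_f + 2t_g|)^2 (1 + |t_f|)^2 (1 + |t_f - 2t_g|)^2 \asymp t_g^4 U^2,
\]
since by construction $|t_f - 2t_g| \asymp U$ while the other two archimedean factors are $\asymp t_g$. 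Via Stirling applied to the relevant ratios of gamma factors, the approximate functional equation \cite[Theorem 5.3]{IK04} therefore expresses $L(1/2, \ad g \otimes f)$ as a smoothly truncated Dirichlet polynomial of length $\sqrt{C_f} \asymp t_g^2 U$ in the variable $m^2 n$, plus its dual, uniformly in $t_f$ across the interval.

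Next, I would exploit nonnegativity: by \cite[Theorem 1.1]{Lap03}, $L(1/2, \ad g \otimes f) \geq 0$, so $L(1/2, \ad g \otimes f)^2 = |L(1/2, \ad g \otimes f)|^2$ is the absolute square of the Dirichlet polynomial above. Applying the spectral large sieve \cite[Theorem 3.3]{Mot97} to the spectral interval $[T-U, T+U]$ of length $U$, and then separating the $m$-variable from the $n$-variable by opening the square and applying Cauchy--Schwarz in $m$ exactly as in the proof of Proposition 6.1, yields
\[
\sum_{\substack{f \in \BB_0 \\ T-U \leq t_f \leq T+U}} \frac{|L(1/2, \ad g \otimes f)|^2}{L(1, \ad f)} \ll_\e t_g^\e \sup_{M^2 N \leq t_g^{2+\e} U} (U^2 + N) \min\{M, N\} \sum_{\substack{m \asymp M \\ n \asymp N}} \frac{|A_F(m,n)|^2}{m^2 n}.
\]
The innermost double sum is then handled by Rankin's trick together with the factorisation $L(s, \ad g \otimes \ad g) = L(s, \sym^4 g) L(s, \ad g) \zeta(s)$ and the bound $L(1+\e, \ad g \otimes \ad g) \ll_\e t_g^\e$ from \cite[Theorem 2]{Li10}, reducing the problem to a pure optimisation over $M$ and $N$.

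The optimisation is routine: among admissible pairs $(M, N)$ with $M^2 N \leq t_g^{2+\e} U$, the quantity $(U^2 + N) \min\{M, N\}$ is maximised either at the corner $M = N = (t_g^2 U)^{1/3}$ with value $U^{7/3} t_g^{2/3} + U^{2/3} t_g^{4/3}$ or at the corner $M = 1$, $N = t_g^2 U$ with value $U^2 + t_g^2 U$. For $U \leq t_g^{1-\alpha}$, the latter dominates and equals $t_g^{2+\e} U$, giving the claimed bound. The integral over $t$ is treated identically, replacing the discrete large sieve by the continuous component of Motohashi's theorem or by the Montgomery--Vaughan mean value inequality \cite[Corollary 3]{MV74}; the conductor analysis of $L(1/2 + it, \ad g)$ gives the same length-$t_g^2 U$ Dirichlet polynomial for $|L(1/2 + it, \ad g)|^2 = L(1/2 + it, \ad g) L(1/2 - it, \ad g)$.

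The main obstacle is establishing the conductor drop cleanly and uniformly. One has to verify that the smooth cutoff in the AFE is genuinely supported essentially on $m^2 n \ll t_g^{2+\e} U$ with rapidly decaying tails, uniformly as $t_f$ traverses $[T - U, T + U]$; this requires tracking the cancellation in $\Gamma_\R(s + 2it_g - it_f) \Gamma_\R(s - 2it_g + it_f)$ across the interval rather than naively bounding it by $t_g^2$. Once this technical point is secured, the rest of the argument is a direct parallel of Proposition 6.1 with $T$ replaced by $U$ and the maximal conductor $T t_g^2$ replaced by $t_g^2 U$.
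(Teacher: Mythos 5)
Your approach matches the paper's proof, which simply refers back to the argument of \hyperref[prop:largesievebounds1]{Proposition \ref*{prop:largesievebounds1}} while noting that in this range the conductor drops to $O(t_g^4 U^2)$; you correctly identify the conductor drop and the resulting Dirichlet polynomial length $\asymp t_g^2 U$, and the optimisation comes out to $t_g^{2+\e} U$ as required. Two small corrections. First, the spectral large sieve over the interval $[T - U, T + U]$ with $T \asymp t_g$ produces a factor $(t_g U + N)$, not $(U^2 + N)$; the factor is (interval length) $\times$ (height) $+ N$, consistent with the $(T^2 + N)$ appearing in the proof of \hyperref[prop:largesievebounds1]{Proposition \ref*{prop:largesievebounds1}} for the length-$T$ interval $[T,2T]$. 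Fortunately this does not change the final bound, since the supremum over $M^2 N \leq t_g^{2+\e} U$ of $(t_g U + N)\min\{M,N\}$ is still attained at $M \asymp 1$, $N \asymp t_g^2 U$, giving $t_g^{2+\e} U$. Second, invoking Lapid's nonnegativity theorem here is superfluous: you are bounding a \emph{second} moment, and $L(1/2, \ad g \otimes f)^2 = |L(1/2, \ad g \otimes f)|^2$ holds already because the central value is real (by self-duality); nonnegativity is needed only for the first moment arguments elsewhere in the paper.
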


\begin{proof}
Just as in the proof of \hyperref[prop:largesievebounds1]{Proposition \ref*{prop:largesievebounds1}}, this follows via the approximate functional equation and the spectral large sieve, noting that the conductor of $L(1/2,\ad g \otimes f)$ and of $|L(1/2 + it,\ad g)|^2$ in these ranges is $O(t_g^4 U^2)$.
\end{proof}

\subsection{Bounds via the Kuznetsov Formula}

When applying H\"{o}lder's inequality, we also are led to the study of the first moment of $L(1/2,\ad g \otimes f)$ in short intervals close to $2t_g$. While we expect that this can be achieved via $\GL_3 \times \GL_2 \leftrightsquigarrow \GL_4 \times \GL_1$ spectral reciprocity, we instead approach this problem in a more traditional fashion via the Kuznetsov formula. As in \hyperref[sect:GL4xGL2transforms]{Section \ref*{sect:GL4xGL2transforms}}, the statement and the proof of the following proposition uses the $\e$-convention: $\e$ denotes an arbitrarily small constant whose value may change from occurrence to occurrence.

\begin{proposition}
\label{prop:firstmomentconductordrop}
Let $g$ be a Hecke--Maa\ss{} cusp form on $\Gamma \backslash \Hb$ with spectral parameter $t_g$. For $2t_g - t_g^{1/3 + \e} \leq T \leq 2t_g$ and $U = t_g - \frac{T}{2} + 1$, we have that
\begin{equation}
\label{eqn:firstmomentconductordrop}
\begin{drcases*}
\sum_{\substack{f \in \BB_0 \\ T - U \leq t_f \leq T + U}} \frac{L\left(\frac{1}{2},\ad g \otimes f\right)}{L(1,\ad f)} & \\
\frac{1}{2\pi} \int\limits_{T - U \leq |t| \leq T + U} \left|\frac{L\left(\frac{1}{2} + it,\ad g\right)}{\zeta(1 + 2it)}\right|^2 \, dt & \end{drcases*} \ll_{\e} \begin{dcases*}
t_g^{\frac{3}{2} + \e} U^{\frac{1}{2}} & if $1 \leq U \leq t_g^{\frac{1}{5}}$,	\\
t_g^{\frac{7}{4} + \e} U^{-\frac{5}{4}} & if $t_g^{\frac{1}{5}} \leq U \leq t_g^{\frac{1}{3} + \e}$.
\end{dcases*}
\end{equation}
\end{proposition}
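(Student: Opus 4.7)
{Proposition \ref*{prop:firstmomentconductordrop}}}]
The plan is to proceed by a classical moment computation: smooth the spectral window, expand the $L$-values by the approximate functional equation, spectrally average by the Kuznetsov formula, and dualise the resulting Kloosterman term via $\GL_3$ Vorono\u\i\ summation. Throughout, the hybrid nature of the problem (two parameters $t_g$ and $U$) is what requires care, and the crucial input is conductor-dropping: since $T = 2t_g - 2U + 2$ and $t_f \in [T - U, T + U]$, the analytic conductor of $L(s, \ad g \otimes f)$ is of size $\asymp t_f^2 |t_f^2 - 4t_g^2|^2 \asymp t_g^4 U^2$ rather than the generic $t_f^6 \asymp t_g^6$. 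I would choose test functions $h^\pm(t)$ (in the mould of \eqref{eqn:fourthtriple}, but with much smaller width) that approximate the indicator of $\pm[T - U, T + U]$ with Gaussian-type decay outside, include a polynomial prefactor to kill low-lying spectrum, and bundle the discrete sum with the Eisenstein integral by positivity after using $L(1/2, \ad g \otimes f) \geq 0$.

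First I would apply the approximate functional equation (\cite[Theorem 5.3]{IK04}) to $L(1/2, \ad g \otimes f)$, writing it as two conjugate Dirichlet polynomials $\sum_{m,n \geq 1} A_F(m,n) \lambda_f(n) (m \sqrt{n})^{-1} V_\pm(m^2 n / C_f)$ where $F = \ad g$ and $C_f \asymp t_g^2 U$. Inserting this into the smoothed spectral average and applying the Kuznetsov formula \eqref{eqn:Kuznetsovformula} with the first index equal to $1$, one obtains a diagonal $\delta_{n, 1}$-term and an off-diagonal term involving $S(1, \pm n; c)$ weighted by $(\Kscr^\pm h^\pm)(\sqrt{n}/c)$. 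The diagonal term is
\[
(\Nscr^\pm h^\pm) \sum_{m \leq (t_g U^{1/2})^{1 + \e}} \frac{A_F(m,1)}{m} V_\pm\!\left(\frac{m^2}{C_f}\right) \ll_\e T U \cdot t_g^\e \ll t_g^{1 + \e} U,
\]
which is dominated by both $t_g^{3/2 + \e} U^{1/2}$ and $t_g^{7/4 + \e} U^{-5/4}$ in the relevant ranges of $U$.

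Next, in the off-diagonal term I would apply $\GL_3$ Vorono\u\i\ summation \eqref{eqn:GL3Voronoi} to the inner $n$-sum (treating $c$ and $m$ as fixed parameters and exposing the additive character $e(n \overline d / c)$ coming from opening the Kloosterman sum). This replaces $\sum_n A_F(m,n) e(n \overline d / c) n^{-1/2} \Phi(n)$ by a dual sum over $n_1 \mid cm$ and $n_2 \geq 1$ of $A_F(n_2, n_1) S(d m, \pm n_2; cm/n_1)$ times an archimedean transform of $\Phi$. A Mellin-Barnes analysis analogous to that in \hyperref[lem:HHmuFbounds]{Lemma \ref*{lem:HHmuFbounds}}, using Stirling's formula for the $\Gscr_{\mu_F}^\pm$ factors together with the known size of $(\Kscr^\pm h^\pm)(\sqrt{n}/c)$ (negligibly small unless $\sqrt{n}/c \asymp T$, and of size $\approx U T^{-1/2}$ there), will localise $n_2$ to $n_2 \ll t_g^{3+\e}/(U^3 c^3 m^{-1})$ and force $c \ll U^{1/2}$. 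After opening the remaining short Kloosterman sum, applying Weil's bound, and invoking Cauchy--Schwarz together with the Rankin--Selberg estimate $\sum_{m,n} |A_F(m,n)|^2 (m^2 n)^{-1-\e} \ll t_g^\e$, the off-diagonal is bounded by the right-hand side of \eqref{eqn:firstmomentconductordrop}.

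The main obstacle is the archimedean bookkeeping in Step 3. Unlike the standard case where $t_g$ is bounded relative to $T$, here $T \approx 2t_g$ and $U$ is short, so the relevant Bessel transform $(\Kscr^\pm h^\pm)(x)$ is not purely oscillatory near $x \asymp T$: the factor $\widehat{\JJ_r^\pm}(s)$ has a transition regime at $|\tau| \approx 2|r|$ that interacts nontrivially with the pole at $s = 1$ of the $\GL_3$ Mellin factor. Tracking the exact size and support of the dual transform requires splitting into the regimes $U \leq t_g^{1/5}$ and $t_g^{1/5} \leq U \leq t_g^{1/3}$: in the former one gains from the Weil bound for $S(1, \pm n; c)$ with $c \leq U^{1/2}$ relatively small, while in the latter the dual sum becomes long enough that a Cauchy--Schwarz plus large-sieve estimate (in the spirit of \hyperref[prop:largesievebounds3]{Proposition \ref*{prop:largesievebounds3}}) delivers the $U^{-5/4}$ saving. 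The break point $U = t_g^{1/5}$ comes from equating these two estimates, which explains the piecewise nature of the bound \eqref{eqn:firstmomentconductordrop}.
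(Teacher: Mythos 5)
Your broad outline (approximate functional equation, Kuznetsov, $\GL_3$ Vorono\u\i, exploit conductor dropping) is in the right direction, but there is a key device missing from your plan that the paper's proof hinges on, and without it the analysis you sketch would not close.

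The missing idea is the \emph{asymmetric} approximate functional equation. You propose writing $L(1/2, \ad g \otimes f)$ with a single symmetric weight $V(m^2 n / C_f)$ with $C_f \asymp t_g^2 U$, so that both dual sums have length $\approx t_g^2 U$. The paper instead takes the free parameter $X = U^{1-\e}$ in \hyperref[lem:afeFxf]{Lemma \ref*{lem:afeFxf}}, producing a long sum $S_+$ of length $\approx U^{2-\e} t_g^2$ and a \emph{very short} sum $S_-$ of length only $\approx t_g^{2+\e}$. This asymmetry is structurally essential: for $S_+$, the length is so long relative to the support of the Bessel transform that \hyperref[lem:possignKscrbound]{Lemma \ref*{lem:possignKscrbound}} shows $(\Kscr^+ h V_+)(\sqrt{n}/c)$ is negligibly small throughout the entire range (one needs $\sqrt{n}/c \geq U^{1-\e/3} t_g$, but $n \leq U^{1-\e} t_g^2$), so the off-diagonal for $S_+$ simply vanishes and only the diagonal $O(t_g^{1+\e} U)$ survives; for $S_-$, the sum is so short that the off-diagonal is forced to have $c \leq t_g^\e$ and $n \asymp t_g^2$, and one can afford a delicate stationary-phase argument. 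Your symmetric set-up, with sums of intermediate length $\approx t_g^2 U$, lands in neither regime and I do not see how to make the off-diagonal both tractable and small enough.

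Two further points. First, the root number $\epsilon_f$ makes the archimedean weight $V_\pm(x,t,\epsilon_f)$ depend on the parity, which is incompatible with a direct application of Kuznetsov; the paper introduces a second approximate functional equation (\hyperref[lem:afeFxf2]{Lemma \ref*{lem:afeFxf2}}) with parity-independent weights plus small correction terms $s_\pm$, and this is not addressed in your plan. Second, your explanation of the threshold $U = t_g^{1/5}$ (Weil bound for small $c$ versus large sieve for long dual sums, both within the Vorono\u\i\ analysis) is not what happens: in the paper, the range $U \leq t_g^{1/5}$ is handled by abandoning Vorono\u\i\ entirely and applying the spectral large sieve directly to the short $S_-$ sum, giving $t_g^{3/2+\e} U^{1/2}$, while the range $t_g^{1/5} \leq U \leq t_g^{1/3+\e}$ uses the full Vorono\u\i/stationary-phase/Montgomery--Vaughan machinery to get $t_g^{7/4+\e} U^{-5/4}$; the threshold is just where these two bounds cross. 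Your proposed endgame (Weil bound plus Rankin--Selberg Cauchy--Schwarz) also omits the stationary-phase estimate \hyperref[lem:stat]{Lemma \ref*{lem:stat}}, which provides an additional saving of $(t_g U)^{-1/2}$ that is necessary to reach the stated exponents.
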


We postpone the proof of \hyperref[prop:firstmomentconductordrop]{Proposition \ref*{prop:firstmomentconductordrop}} to \hyperref[sect:firstmomentconductordrop]{Section \ref*{sect:firstmomentconductordrop}}.

\subsection{Bounds via Spectral Reciprocity}

The application of H\"{o}lder's inequality that we use in the proof of \hyperref[item:transition]{Proposition \ref*{prop:fourranges} \ref*{item:transition}} leads us to a short interval third moment of $L(1/2,f)$ for $f \in \BB_0$. For this, we have the following well-known bound, which is essentially a consequence of $\GL_3 \times \GL_2 \leftrightsquigarrow \GL_4 \times \GL_1$ spectral reciprocity in the guise of Motohashi's formula.

\begin{proposition}[{Ivi\'{c} \cite[Theorem]{Ivi01}}]
\label{prop:thirdmoment}
For $1 \leq U \leq T$, we have that
\begin{equation}
\label{eqn:thirdmoment}
\begin{drcases*}
\sum_{\substack{f \in \BB_0 \\ T - U \leq t_f \leq T + U}} \frac{L\left(\frac{1}{2},f\right)^3}{L(1,\ad f)} & \\
\frac{1}{2\pi} \int\limits_{T - U \leq |t| \leq T + U} \left|\frac{\zeta\left(\frac{1}{2} + it\right)^3}{\zeta(1 + 2it)}\right|^2 \, dt &
\end{drcases*} \ll_{\e} T^{1 + \e} U.
\end{equation}
\end{proposition}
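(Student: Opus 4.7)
The plan is to prove this via Motohashi's formula, which is the special case of \hyperref[thm:3x2reciprocity]{Theorem \ref*{thm:3x2reciprocity}} obtained when $F$ is replaced by a minimal parabolic Eisenstein series, as discussed in \hyperref[sect:GL3xGL2intro]{Section \ref*{sect:GL3xGL2intro}}. First I would choose a nonnegative, even test function $h(t)$ that is holomorphic in a horizontal strip $|\Im(t)| \le 1/2 + \delta$ and majorises the indicator of $[-T-U,-T+U] \cup [T-U,T+U]$; a convenient model, mirroring the construction \eqref{eqn:fourthtriple}, is a pair of Gaussians of width $U$ centred at $\pm T$ decorated with a polynomial factor that enforces holomorphicity through the critical strip. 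Since $L(1/2,f) \geq 0$ (with $L(1/2,f) = 0$ whenever $\epsilon_f = -1$) and the Eisenstein integrand $|\zeta(1/2+it)^3/\zeta(1+2it)|^2$ is manifestly nonnegative, majorisation reduces the problem to bounding the smoothed spectral cubic moment
\begin{equation*}
\sum_{f \in \BB_0} \frac{L(1/2,f)^3}{L(1,\ad f)} h(t_f) + \frac{1}{2\pi} \int_{-\infty}^{\infty} \left|\frac{\zeta(1/2+it)^3}{\zeta(1+2it)}\right|^2 h(t) \, dt.
\end{equation*}

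Motohashi's formula identifies this smoothed moment with the sum of a main term and a dual fourth moment of the shape $\int_{-\infty}^{\infty} |\zeta(1/2+it)|^4 H(t) \, dt$, where $H$ is the explicit Motohashi transform of $h$ (the conductor-$1$ analogue of the transform $\HH_{\mu_F}$ defined in \eqref{eqn:HHmuFt}). A contour-shifting analysis entirely parallel to (but much simpler than) the proof of \hyperref[lem:HHmuFbounds]{Lemma \ref*{lem:HHmuFbounds}} shows that $H(t)$ is essentially concentrated on $|t| \leq T^{1+\e}/U$, where it satisfies $|H(t)| \ll U$, with negligible tails outside this region. The main term, arising from the polar structure of the Eisenstein series and involving a cubic polynomial in $\log t$ integrated against $h(t) \, d_{\spec} t$, contributes $O(TU \log^{O(1)} T) = O(T^{1+\e} U)$; the factor of $T$ comes from the spectral density $d_{\spec} t$ defined in \eqref{eqn:dspecdefeq}, which is $\asymp T$ on the support of $h$.

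For the dual moment, I invoke the classical Ingham fourth moment bound $\int_{-X}^{X} |\zeta(1/2+it)|^4 \, dt \ll X \log^4 X$ to estimate
\begin{equation*}
\int_{-\infty}^{\infty} |\zeta(1/2+it)|^4 H(t) \, dt \ll U \int_{|t| \leq T^{1+\e}/U} |\zeta(1/2+it)|^4 \, dt \ll T^{1+\e},
\end{equation*}
which is dominated by $T^{1+\e} U$ for $U \geq 1$. Summing the two contributions yields the claimed bound $O_\e(T^{1+\e} U)$.

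The hard part will be making the bounds for the Motohashi transform $H$ fully uniform in both $T$ and $U$. This amounts to carrying out the conductor-$1$ analogue of the transform analysis in \hyperref[lem:HHmuFbounds]{Lemma \ref*{lem:HHmuFbounds}}, which is substantially simpler in the Eisenstein setting since the archimedean factor $\Gscr_{\mu_F}^{\pm}$ degenerates to a product of three copies of $G^{\pm}$; consequently the requisite estimate is already worked out in the literature in \cite[Chapter 4]{Mot97} and \cite{Ivi01}, which I would cite rather than reproduce.
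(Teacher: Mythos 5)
The paper does not give a proof of this proposition — it is quoted directly from Ivi\'{c} \cite[Theorem]{Ivi01}, as the attribution in the statement indicates. Your sketch correctly reconstructs the Motohashi-formula argument that underlies Ivi\'{c}'s theorem: majorising the indicator by a suitable even holomorphic test function, identifying the smoothed cubic moment with a main term plus a dual fourth moment of $\zeta$, using that the transform is concentrated on $|t| \lesssim T/U$ with size $\approx U$ so that the dual piece contributes $O(T^{1+\e})$, which is absorbed by the main term $O(T^{1+\e}U)$ for $U \geq 1$. Your closing remark — that the transform estimates in the Eisenstein setting are already worked out in \cite{Mot97} and \cite{Ivi01} and should be cited rather than re-derived — is precisely what the authors do by stating the result as an attribution.
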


It is important to note that the terms on the left-hand side of \eqref{eqn:thirdmoment} are nonnegative, as $L(1/2,f) \geq 0$ for $f \in \BB_0$ by \cite[Corollary 0.1]{KaSa93}.

With these collections of bounds in hand, we are able to show the following bounds for mixed moments of $L$-functions in the short transition range.

\begin{proposition}
\label{prop:momentsboundstransition}
Let $g$ be a Hecke--Maa\ss{} cusp form on $\Gamma \backslash \Hb$ with spectral parameter $t_g$. Fix $\alpha > 0$. Then for $2t_g - t_g^{1 - \alpha} \leq T \leq 2t_g$ and $U = t_g - \frac{T}{2} + 1$, we have that
\begin{multline}
\label{eqn:momentsboundstransition}
\begin{drcases*}
\sum_{\substack{f \in \BB_0 \\ T - U \leq t_f \leq T + U}} \frac{L\left(\frac{1}{2},f\right) L\left(\frac{1}{2},\ad g \otimes f\right)}{L(1,\ad f)} &	\\
\frac{1}{2\pi} \int\limits_{T - U \leq |t| \leq T + U} \left|\frac{\zeta\left(\frac{1}{2} + it\right) L\left(\frac{1}{2} + it,\ad g\right)}{\zeta(1 + 2it)}\right|^2 \, dt &
\end{drcases*}
\ll_{\e} \begin{dcases*}
t_g^{\frac{3}{2} + \e} U^{\frac{5}{6}} & if $1 \leq U \leq t_g^{\frac{1}{5}}$,	\\
t_g^{\frac{19}{12} + \e} U^{\frac{1}{4}} & if $t_g^{\frac{1}{5}} \leq U \leq t_g^{\frac{1}{3} + \e}$,	\\
t_g^{\frac{47}{38} + \e} U & if $t_g^{\frac{1}{3} + \e} \leq U \leq t_g^{\frac{13}{19}}$,	\\
t_g^{\frac{7}{4} + \e} U^{\frac{1}{4}} & if $t_g^{\frac{13}{19}} \leq U \leq t_g^{1 - \alpha}$.
\end{dcases*}
\end{multline}
\end{proposition}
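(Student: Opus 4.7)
I would split the argument at $U = t_g^{1/3}$, following the dichotomy outlined in the introduction.

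For the range $1 \leq U \leq t_g^{1/3}$, the plan is to apply H\"{o}lder's inequality with exponents $(2/3, 1/3)$ to bound the left-hand side of \eqref{eqn:momentsboundstransition} by
\[
\left(\sum_{T - U \leq t_f \leq T + U} \frac{L\left(\frac{1}{2}, \ad g \otimes f\right)^{3/2}}{L(1, \ad f)}\right)^{2/3} \left(\sum_{T - U \leq t_f \leq T + U} \frac{L\left(\frac{1}{2}, f\right)^{3}}{L(1, \ad f)}\right)^{1/3},
\]
with an entirely parallel estimate for the continuous part of the spectrum. The cubic moment is $O_{\e}(t_g^{1 + \e} U)$ by \hyperref[prop:thirdmoment]{Proposition \ref*{prop:thirdmoment}} of Ivi\'{c}, itself the classical consequence of Motohashi's $\GL_3 \times \GL_2 \leftrightsquigarrow \GL_4 \times \GL_1$ formula. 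For the $3/2$-power moment, I would interpolate via Cauchy--Schwarz $\sum L^{3/2} \leq (\sum L)^{1/2} (\sum L^2)^{1/2}$ between the first moment from \hyperref[prop:firstmomentconductordrop]{Proposition \ref*{prop:firstmomentconductordrop}} and the second moment from \hyperref[prop:largesievebounds3]{Proposition \ref*{prop:largesievebounds3}}. The two sub-ranges $U \leq t_g^{1/5}$ and $t_g^{1/5} \leq U \leq t_g^{1/3}$ correspond exactly to the two cases of \hyperref[prop:firstmomentconductordrop]{Proposition \ref*{prop:firstmomentconductordrop}} and arithmetically produce the claimed exponents $t_g^{3/2 + \e} U^{5/6}$ and $t_g^{19/12 + \e} U^{1/4}$.

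For the range $t_g^{1/3} \leq U \leq t_g^{1 - \alpha}$, I would invoke \hyperref[thm:4x2reciprocity]{Theorem \ref*{thm:4x2reciprocity}} with $F = \ad g$ and the triple of test functions \eqref{eqn:fourthtriple}. Since $h^{+} = h^{\hol} = 0$, $h^{-} \geq 0$ on $\R$ with $h^{-}(t) \asymp 1$ on $\pm[T - U, T + U]$, and the spectral coefficients $L(1/2, f) L(1/2, \ad g \otimes f) \geq 0$ by the results of Katok--Sarnak and Lapid, the left-hand side of \eqref{eqn:4x2identity} dominates the left-hand side of \eqref{eqn:momentsboundstransition}. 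The main terms of \eqref{eqn:4x2identity} contribute $O_{\e}(t_g^{\e} T U)$, which is always absorbed by the claimed bounds. The heart of this regime is then controlling the dual moment. By \hyperref[lem:tildehpmboundstransition]{Lemma \ref*{lem:tildehpmboundstransition}}, $\widetilde{h}^{\pm}$ and $\widetilde{h}^{\hol}$ are essentially supported on $|t|, k \leq (T/U)^{1/2 + \e}$ with size $O(U^{1 + \e})$. Since $T \asymp t_g$ and $U \geq t_g^{1/3}$, the truncation height $(T/U)^{1/2}$ lies below $t_g^{1/3}$, placing the dual moment squarely inside the short initial range already treated by \hyperref[prop:momentsboundsinitial]{Proposition \ref*{prop:momentsboundsinitial}}. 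After a dyadic subdivision, the dual moment is bounded by $U^{1 + \e}$ times the maximum of the bounds in \eqref{eqn:momentsboundsinitial} over dyadic $T' \leq (T/U)^{1/2}$. The exponent $47/38$ appears naturally as the value at the transition $T' = t_g^{3/19}$ between the two dyadic regimes in \eqref{eqn:momentsboundsinitial}, producing $t_g^{47/38 + \e} U$ for $t_g^{1/3} \leq U \leq t_g^{13/19}$; once $U \geq t_g^{13/19}$ the truncation height falls below $t_g^{3/19}$ and only the first dyadic bound survives, yielding $t_g^{7/4 + \e} U^{1/4}$.

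The main technical subtlety, which I expect to be the obstacle, is already handled upstream in Section 7, namely producing the sharp hybrid bounds \eqref{eqn:tildeh+boundtransition}--\eqref{eqn:tildehholboundtransition} for the $\GL_4 \times \GL_2$ transforms in the conductor-dropping regime. This requires a delicate Stirling-type asymptotic expansion of $\widehat{\JJ_{2t_g}^{-}}(s)$ to isolate the oscillating factor $e(\pm 2x)$ and enable the stationary-phase analysis of \hyperref[lem:transf-bounds]{Lemma \ref*{lem:transf-bounds}}. Given these transform bounds and \hyperref[prop:momentsboundsinitial]{Proposition \ref*{prop:momentsboundsinitial}} for the short initial range, the remainder of the argument is careful bookkeeping.
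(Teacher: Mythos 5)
Your proposal is correct and follows essentially the same path as the paper: three-term H\"{o}lder (your H\"{o}lder-then-Cauchy--Schwarz is algebraically identical to the paper's $(1/3,1/3,1/3)$ split) combined with Propositions \ref{prop:largesievebounds3}, \ref{prop:firstmomentconductordrop}, and \ref{prop:thirdmoment} for $U \leq t_g^{1/3}$, and $\GL_4\times\GL_2$ spectral reciprocity via Theorem \ref{thm:4x2reciprocity} with the test triple \eqref{eqn:fourthtriple}, the transform bounds of Lemma \ref{lem:tildehpmboundstransition}, and Proposition \ref{prop:momentsboundsinitial} for $t_g^{1/3} \leq U \leq t_g^{1-\alpha}$. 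Your exponent bookkeeping, the positivity input (Katok--Sarnak and Lapid), and your identification of Section 7 as the technical bottleneck all match the paper.
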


\begin{proof}
Our first approach, valid for $1 \leq U \leq t_g^{1/3 + \e}$, is to write
\[\frac{L\left(\frac{1}{2},f\right) L\left(\frac{1}{2},\ad g \otimes f\right)}{L(1,\ad f)} = \left(\frac{L\left(\frac{1}{2},f\right)^3}{L(1,\ad f)}\right)^{\frac{1}{3}} \left(\frac{L\left(\frac{1}{2},\ad g \otimes f\right)^2}{L(1,\ad f)}\right)^{\frac{1}{3}} \left(\frac{L\left(\frac{1}{2},\ad g \otimes f\right)}{L(1,\ad f)}\right)^{\frac{1}{3}}\]
(which crucially uses the nonnegativity of these $L$-functions), apply H\"{o}lder's inequality with exponents $(1/3,1/3,1/3)$, and use the bounds from \hyperref[prop:largesievebounds3]{Propositions \ref*{prop:largesievebounds3}}, \ref{prop:firstmomentconductordrop}, and \ref{prop:thirdmoment}; that is, as well as bounds arising from the spectral large sieve, we use third moment bounds and bounds from the Kuznetsov formula. This shows that the left-hand side of \eqref{eqn:momentsboundstransition} is
\[\ll_{\e} \begin{dcases*}
t_g^{\frac{3}{2} + \e} U^{\frac{5}{6}} & if $1 \leq U \leq t_g^{\frac{1}{5}}$,	\\
t_g^{\frac{19}{12} + \e} U^{\frac{1}{4}} & if $t_g^{\frac{1}{5}} \leq U \leq t_g^{\frac{1}{3} + \e}$.
\end{dcases*}\]

Our second approach, valid for $t_g^{1/3 + \e} \leq U \leq t_g^{1 - \alpha}$, is to use \hyperref[thm:4x2reciprocity]{Theorem \ref*{thm:4x2reciprocity}}, namely $\GL_4 \times \GL_2 \leftrightsquigarrow \GL_4 \times \GL_2$ spectral reciprocity. We take the triple of test functions $(h^{+},h^{-},h^{\hol})$ given by \eqref{eqn:fourthtriple}. With this choice of test functions, the left-hand side of \eqref{eqn:4x2identity} provides an upper bound for the left-hand side of \eqref{eqn:momentsboundstransition} by positivity, as $h^{+}(t) = 0$ and $h^{\hol}(k) = 0$, while $h^{-}(t) \geq 0$ for all $t \in \R$ and $h^{-}(t) \asymp_M 1$ for $t \in [-T - U, -T + U] \cup [T - U,T + U]$. The first term on the right-hand side of \eqref{eqn:4x2identity} is $O_{\e}(t_g^{1 + \e} U)$ as $L(1,\ad g) \ll_{\e} t_g^{\e}$. The second term is equal to zero since $h^{\hol}(k) = 0$. Finally, for the third, fourth, and fifth terms, we divide the terms into dyadic ranges and use the bounds \eqref{eqn:tildeh+boundtransition}, \eqref{eqn:tildeh-boundtransition}, and \eqref{eqn:tildehholboundtransition} for the transforms $\widetilde{h}^{+}(t)$, $\widetilde{h}^{-}(t)$, and $\widetilde{h}^{\hol}(t)$. Due to the rapid decay of $\widetilde{h}^{+}(t)$ for $|t| \geq (T/U)^{1/2 + \e}$, $\widetilde{h}^{-}(t)$ for $|t| \geq T^{\e}$, and $\widetilde{h}^{\hol}(k)$ for $k \geq (T/U)^{1/2 + \e}$, we are left with bounding the quantities
\begin{gather*}
\sum_{\substack{f \in \BB_0 \\ V \leq t_f \leq 2V}} \frac{L\left(\frac{1}{2},f\right) L\left(\frac{1}{2},\ad g \otimes f\right)}{L(1,\ad f)}, \qquad \frac{1}{2\pi} \int\limits_{V \leq |t| \leq 2V} \left|\frac{\zeta\left(\frac{1}{2} + it\right) L\left(\frac{1}{2} + it,\ad g\right)}{\zeta(1 + 2it)}\right|^2 \, dt,	\\
\sum_{\substack{f \in \BB_{\hol} \\ V \leq k_f \leq 2V}} \frac{L\left(\frac{1}{2},f\right) L\left(\frac{1}{2},\ad g \otimes f\right)}{L(1,\ad f)}
\end{gather*}
for $V \leq (T/U)^{1/2 + \e}$, for which we may apply the bounds from \hyperref[prop:momentsboundsinitial]{Proposition \ref*{prop:momentsboundsinitial}}. This shows that the left-hand side of \eqref{eqn:momentsboundstransition} is
\[\ll_{\e} \begin{dcases*}
t_g^{\frac{47}{38} + \e} U & if $t_g^{\frac{1}{3} + \e} \leq U \leq t_g^{\frac{13}{19}}$,	\\
t_g^{\frac{7}{4} + \e} U^{\frac{1}{4}} & if $t_g^{\frac{13}{19}} \leq U \leq t_g^{1 - \alpha}$.
\end{dcases*}\qedhere\]
\end{proof}

\subsection{Proof of \texorpdfstring{\hyperref[item:transition]{Proposition \ref*{prop:fourranges} \ref*{item:transition}}}{Proposition \ref{prop:fourranges} \ref{item:transition}}}

We now prove \hyperref[item:transition]{Proposition \ref*{prop:fourranges} \ref*{item:transition}}, namely the bound \eqref{eqn:transition} for the short transition range, via \hyperref[prop:momentsboundstransition]{Proposition \ref*{prop:momentsboundstransition}}.

\begin{proof}[Proof of {\hyperref[item:transition]{Proposition \ref*{prop:fourranges} (3)}}]
By the lower bound $L(1,\ad g) \gg_{\e} t_g^{-\e}$ and the asymptotic formula \eqref{eqn:Htasymp} for $H(t)$, it suffices to show that
\[\begin{drcases*}
\sum_{\substack{f \in \BB_0 \\ 2t_g - t_g^{1 - \alpha} \leq t_f \leq 2t_g}} \frac{1}{1 + (2t_g - t_f)^{1/2}} \frac{L\left(\frac{1}{2},f\right) L\left(\frac{1}{2},\ad g \otimes f\right)}{L(1,\ad f)} &	\\
\frac{1}{2\pi} \int\limits_{2t_g - t_g^{1 - \alpha} \leq |t| \leq 2t_g} \frac{1}{1 + (2t_g - |t|)^{1/2}} \left|\frac{\zeta\left(\frac{1}{2} + it\right) L\left(\frac{1}{2} + it,\ad g\right)}{\zeta(1 + 2it)}\right|^2 \, dt & \end{drcases*}
\ll_{\e} t_g^{\frac{30}{19} + \e}.\]
We dyadically decompose both the sum over $f$ and the integral over $t$, so that we are left with proving the bounds
\begin{equation}
\label{eqn:shorttransitiontobeproved}
\begin{drcases*}
\sum_{\substack{f \in \BB_0 \\ T - U \leq t_f \leq T + U}} \frac{L\left(\frac{1}{2},f\right) L\left(\frac{1}{2},\ad g \otimes f\right)}{L(1,\ad f)} &	\\
\frac{1}{2\pi} \int\limits_{T - U \leq |t| \leq T + U} \left|\frac{\zeta\left(\frac{1}{2} + it\right) L\left(\frac{1}{2} + it,\ad g\right)}{\zeta(1 + 2it)}\right|^2 \, dt & \end{drcases*}
\ll_{\e} t_g^{\frac{30}{19} + \e} U^{\frac{1}{2}}
\end{equation}
for $2t_g - t_g^{1 - \alpha} \leq T \leq 2t_g$ and $U = t_g - \frac{T}{2} + 1$. These desired bounds are a consequence of \hyperref[prop:momentsboundstransition]{Proposition \ref*{prop:momentsboundstransition}}, which gives these bounds when $U \asymp t_g^{13/19}$ and gives stronger bounds otherwise.
\end{proof}

\begin{remark}
\label{rem:shorttransitionoptimal}
Obtaining the essentially optimal bound $\|g\|_4 \ll_{\e} t_g^{\e}$ for the $L^4$-norm would require us to improve the bounds for the left-hand side of \eqref{eqn:shorttransitiontobeproved} to $O_{\e}(t_g^{3/2 + \e} U^{1/2})$ for $2t_g - t_g^{1 - \alpha} \leq T \leq 2t_g$ and $U = t_g - \frac{T}{2} + 1$.
\end{remark}

\section{Bounds for the First Moment of \texorpdfstring{$L(\frac{1}{2},\ad g \otimes f)$}{L(\80\275,ad g\9042\227f)} via the Kuznetsov Formula}
\label{sect:firstmomentconductordrop}

The method of proof of \hyperref[prop:firstmomentconductordrop]{Proposition \ref*{prop:firstmomentconductordrop}} involves replacing the $L$-functions $L(1/2,\ad g \otimes f)$ and $|L(1/2 + it,\ad g \otimes f)|^2$ with Dirichlet polynomials via the approximate functional equation, interchanging the order of summation, and applying the Kuznetsov formula. In this way, the problem is reduced to bounding a sum of Kloosterman sums weighted by Hecke eigenvalues.

\subsection{Approximate Functional Equations}

For $\Re(s) > 1$, we have that
\[L(s,\ad g \otimes f) = \sum_{m,n = 1}^{\infty} \frac{A_F(m,n)\lambda_f(n)}{m^{2s} n^s},\]
where $F = \ad g$. This has functional equation
\[L(s, \ad g \otimes f) G(s,t_f,\epsilon_f) = \epsilon_f L(1 - s, \ad g \otimes f) G(1 - s,t_f, \epsilon_f),\]
where $\epsilon_f \in \{1,-1\}$ is the parity of $f$, and
\[G(s,t, \epsilon) \coloneqq \prod_\pm \Gamma_{\R}\left(s + \frac{1}{2} - \frac{\epsilon}{2} \pm it + 2it_g\right) \Gamma_{\R}\left(s + \frac{1}{2} - \frac{\epsilon}{2} \pm it\right) \Gamma_{\R}\left(s + \frac{1}{2} - \frac{\epsilon}{2} \pm it - 2it_g\right).\]
From this, we get the following approximate functional equation for the central $L$-values, which are known to be nonnegative. We also give a related approximate functional equation for $|L(1/2 + it,F)|^2$.

\begin{lemma}[{\cite[Theorem 5.3]{IK04}}]
\label{lem:afeFxf}
For $t \in \R$, $\epsilon \in \{1,-1\}$, $x > 0$, $\sigma>0$ and $X \geq 1$ a fixed parameter of our choice, let
\begin{equation}
\label{eqn:vdef}
V_\pm (x,t,\epsilon) \coloneqq \frac{1}{2\pi i} \int_{\sigma - i\infty}^{\sigma + i\infty} e^{s^2} \left(\frac{X^{\pm 1}}{x}\right)^s \frac{G\left(\frac{1}{2}+s,t,\epsilon\right)}{G\left(\frac{1}{2},t,\epsilon\right)} \, \frac{ds}{s}.
\end{equation}
We have that
\begin{equation}
\label{eqn:afe}
L\left(\frac{1}{2}, \ad g \otimes f\right) = S_+(t_f, \epsilon_f) + \epsilon_f S_-(t_f, \epsilon_f),
\end{equation}
where
\[S_\pm(t_f,\epsilon_f) \coloneqq \sum_{m,n = 1}^{\infty} \frac{A_F(m,n)\lambda_f(n)}{m \sqrt{n}} V_\pm(m^2 n,t_f, \epsilon_f).\]
Similarly
\[\left|L\left(\frac{1}{2} + it,\ad g\right)\right|^2 = E_+(t)+ E_-(t),\]
where
\[E_\pm(t) \coloneqq \sum_{m,n = 1}^{\infty} \frac{A_F(m,n)\lambda(n,t)}{m \sqrt{n}} V_{\pm} (m^2 n,t,1).\]
\end{lemma}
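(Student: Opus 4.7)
The plan is to apply the standard Mellin--Barnes contour-shift recipe for approximate functional equations, exactly as in \cite[Theorem 5.3]{IK04}; the only subtleties are the bookkeeping of the symmetric archimedean factor $G(s,t,\epsilon)$ and, for the second identity, the simultaneous use of the functional equation for two factors of a product of $L$-functions.

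For the first identity I would start from
\[
I \coloneqq \frac{1}{2\pi i} \int_{\sigma - i\infty}^{\sigma + i\infty} L\left(\tfrac{1}{2}+s,\ad g \otimes f\right)\frac{G\left(\tfrac{1}{2}+s,t_f,\epsilon_f\right)}{G\left(\tfrac{1}{2},t_f,\epsilon_f\right)}\, e^{s^2} X^s\, \frac{ds}{s}
\]
with $\sigma$ large enough for the Dirichlet series to converge absolutely. Expanding $L(1/2+s,\ad g \otimes f) = \sum_{m,n} A_F(m,n)\lambda_f(n)/(m^{1+2s} n^{1/2+s})$ with $F = \ad g$ and interchanging sum and integral identifies $I$ with $S_+(t_f,\epsilon_f)$. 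Next I would shift the contour to $\Re(s)=-\sigma$, collecting the simple pole at $s=0$ whose residue is $L(1/2,\ad g\otimes f)$ (the factor $e^{s^2}$, together with convexity bounds on vertical lines, ensures the shift is legitimate and that $s = 0$ is the only singularity crossed). In the remaining integral on $\Re(s) = -\sigma$, I would substitute $s\mapsto -s$ and apply the functional equation $L(1/2-s,\ad g\otimes f)\, G(1/2-s,t_f,\epsilon_f) = \epsilon_f L(1/2+s,\ad g\otimes f)\, G(1/2+s,t_f,\epsilon_f)$; after the change of variables the integrand recovers its original shape but with $X^s$ replaced by $X^{-s}$, so re-expanding the Dirichlet series transforms it into $-\epsilon_f S_-(t_f,\epsilon_f)$. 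Rearranging yields \eqref{eqn:afe}.

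The second identity proceeds identically in structure, but the functional equation must now be applied to both factors of $L(1/2+s+it,F) L(1/2+s-it,F)$, using the self-duality of $F = \ad g$. The key computation is to verify that the product of the two completing gamma ratios
\[
\frac{L_\infty(1/2-s+it,F)\, L_\infty(1/2-s-it,F)}{L_\infty(1/2+s+it,F)\, L_\infty(1/2+s-it,F)},
\]
with $L_\infty(s,F) = \Gamma_\R(s + 2it_g) \Gamma_\R(s) \Gamma_\R(s - 2it_g)$, collapses upon reorganising the twelve resulting factors $\Gamma_\R(1/2 \pm s \pm it)$, $\Gamma_\R(1/2 \pm s \pm it + 2it_g)$, $\Gamma_\R(1/2 \pm s \pm it - 2it_g)$ to exactly $G(1/2+s,t,1)/G(1/2-s,t,1)$, by the very definition of $G(\cdot,t,1)$. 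This identity is precisely what allows the shifted integral to reassemble into $E_-(t)$ without any extraneous gamma factors; the effective root number is $+1$ because $F$ is self-dual, producing the clean sum $E_+(t) + E_-(t)$ without a sign twist.

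The main point to verify carefully is the gamma-factor identity in the second case; everything else is routine. The usual convergence concerns---absolute convergence of the Dirichlet series for $\Re(s) > 1$, absolute convergence of each contour integral thanks to $e^{s^2}$, legitimacy of interchanging summation and integration, and absence of other poles (cuspidality of $F$ makes $L(s,\ad g \otimes f)$ and $L(s \pm it, F)$ entire, and $G(1/2, t, \epsilon) \neq 0$ for real $t$ and $\epsilon \in \{\pm 1\}$)---pose no genuine difficulty and can be handled in a few lines.
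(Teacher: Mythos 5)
Your proof is correct and follows the standard Mellin--Barnes contour-shift argument that underlies \cite[Theorem 5.3]{IK04}; the paper simply cites that result and does not reprove it, so there is nothing different in route. One small bookkeeping slip: the product of the two gamma ratios supplied by the functional equation is
\[
\frac{L_\infty\left(\tfrac{1}{2}+s+it,F\right)\, L_\infty\left(\tfrac{1}{2}+s-it,F\right)}{L_\infty\left(\tfrac{1}{2}-s+it,F\right)\, L_\infty\left(\tfrac{1}{2}-s-it,F\right)},
\]
with the $1/2+s$ arguments upstairs (you wrote it inverted), and indeed $L_\infty(\tfrac{1}{2}+s+it,F)\,L_\infty(\tfrac{1}{2}+s-it,F) = G(\tfrac{1}{2}+s,t,1)$ directly from the definition of $G(\cdot,t,1)$, so the ratio is $G(\tfrac{1}{2}+s,t,1)/G(\tfrac{1}{2}-s,t,1)$ as you claimed; this cancels the $G(\tfrac{1}{2}-s,t,1)$ already present after the substitution $s\mapsto -s$ and reassembles the integrand into $E_-(t)$.
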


Fix $\e > 0$, and let $t_g^{\e} \leq U \leq t_g^{1/3 + \e}$. We will be interested in the cases
\begin{equation}
\label{eqn:interest}
|t \pm 2t_g| \asymp U, \qquad X = U^{1-\e}.
\end{equation}
By Stirling's estimates and a standard contour shifting argument, we have that
\begin{equation}
\label{eqn:vdecay}
V_\pm (x,t,\epsilon) \ll_{\sigma} \left(\frac{X^{\pm 1}Ut_g^2}{x}\right)^\sigma.
\end{equation}
Thus the sums in the approximate functional equations \eqref{eqn:afe} have different lengths: the sums $S_+(t_f,\epsilon_f)$ and $E_+(t)$ are of length about $U^{2-\e} t_g^{2}$, while the sums $S_-(t_f,\epsilon_f)$ and $E_-(t)$ are of length about $U^\e t_g^{2}$. 

We will also need a version of the approximate functional equation in which the weight functions $V_\pm(x,t, \epsilon)$ do not depend on $\epsilon$.

\begin{lemma}
\label{lem:afeFxf2}
Suppose $|t_f|, |t_f-2t_g|, |t_f+2t_g|>t_g^\e$. We have that
\[L\left(\frac{1}{2}, \ad g \otimes f\right) = S_+(t_f, 1) + \epsilon_f S_-(t_f, 1) + \delta_{\epsilon_f,-1} \left(s_+(t_f, 1) + \epsilon_f s_-(t_f, 1) + O(t_g^{-50})\right), \]
where
\[s_\pm(t_f,1) \coloneqq \sum_{m,n = 1}^{\infty} \frac{A_F(m,n)\lambda_f(n)}{m \sqrt{n}} v_\pm(m^2 n, t_f, 1)\]
with
\[v_\pm(n^2 m, t_f, 1) \coloneqq \frac{1}{2\pi i} \int_{\sigma - i\infty}^{\sigma + i\infty} e^{s^2} \left(\frac{X^{\pm 1} }{m^2 n}\right)^s \frac{G\left(\frac{1}{2} + s,t,1\right)}{G\left(\frac{1}{2},t,1\right)} P_s\left(\frac{1}{t^2}, \frac{1}{(t - 2t_g)^2}, \frac{1}{(t + 2t_g)^2}\right) \, \frac{ds}{s}\]
for $\sigma>0$ and some polynomial $P_s(x_1,x_2,x_3)$ whose coefficients depend on $s$ and are bounded by $t_g^{\e}$, satisfying $P_s(0,0,0)=0$.
\end{lemma}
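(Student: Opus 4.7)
The plan is to reduce at once to the $\epsilon_f = -1$ case, since when $\epsilon_f = +1$ the claim is identical to \hyperref[lem:afeFxf]{Lemma \ref*{lem:afeFxf}}. It therefore suffices to show, uniformly for $(m, n)$ in the effective range $m^2 n \leq U^{2 - 2\e} t_g^{2 + \e}$ coming from \eqref{eqn:vdecay}, that
\begin{equation*}
V_\pm(m^2 n, t_f, -1) = V_\pm(m^2 n, t_f, 1) + v_\pm(m^2 n, t_f, 1) + O(t_g^{-50}).
\end{equation*}
The pointwise error, summed against $A_F(m, n) \lambda_f(n) m^{-1} n^{-1/2}$, is then controlled via the Rankin--Selberg bound $\sum_{m, n \leq Y} |A_F(m, n)|^2 \ll Y^{1 + \e}$ for $\ad g$ together with the Hecke bound for $\lambda_f(n)$, so after adjusting implicit $\e$'s it is absorbed into the stated $O(t_g^{-50})$.

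The heart of the argument is the comparison of Mellin kernels. Unwinding the definitions, it suffices to show that
\begin{equation*}
R(s, t_f) \coloneqq \frac{G\left(\tfrac{1}{2} + s, t_f, -1\right) G\left(\tfrac{1}{2}, t_f, 1\right)}{G\left(\tfrac{1}{2} + s, t_f, 1\right) G\left(\tfrac{1}{2}, t_f, -1\right)} = 1 + P_s\left(\frac{1}{t_f^2}, \frac{1}{(t_f - 2t_g)^2}, \frac{1}{(t_f + 2t_g)^2}\right) + O(t_g^{-50})
\end{equation*}
uniformly for $s$ on the contour. Writing the arguments as $w_j(s) \coloneqq \tfrac{1}{2} + s + i\nu_j$ with $\nu_j$ running through $\{\pm t_f, \pm(t_f + 2t_g), \pm(t_f - 2t_g)\}$, one has
\begin{equation*}
R(s, t_f) = \prod_{j = 1}^{6} \frac{\Gamma_\R(w_j(s) + 1) / \Gamma_\R(w_j(s))}{\Gamma_\R(w_j(0) + 1) / \Gamma_\R(w_j(0))}.
\end{equation*}
I would apply Stirling in the form $\Gamma_\R(w + 1) / \Gamma_\R(w) = \pi^{-1/2} (w/2)^{1/2} (1 + \sum_{k = 1}^{N} a_k w^{-k} + O(|w|^{-N - 1}))$ for universal constants $a_k$. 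The hypothesis $|t_f|, |t_f \pm 2t_g| > t_g^\e$ gives $|w_j(0)| \gg t_g^\e$, and since the factor $e^{s^2}$ effectively confines the contour to $|s| \leq (\log t_g)^2$, also $|w_j(s)| \gg t_g^\e$; truncating at $N = \lceil 100/\e \rceil$ thus yields tails of size $O(t_g^{-50})$.

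The key structural observation is that the six factors pair naturally into three conjugate pairs $(w_{2j-1}, w_{2j})$, $j = 1, 2, 3$, satisfying $w_{2j-1}(s) + w_{2j}(s) = 1 + 2s$ and $w_{2j-1}(s) w_{2j}(s) = (\tfrac{1}{2} + s)^2 + (t_f + \xi_j)^2$ with $\xi_j \in \{0, -2t_g, 2t_g\}$. Any symmetric function of $(w_{2j-1}, w_{2j})$ is therefore a polynomial in $s$ and $1/[(\tfrac{1}{2} + s)^2 + (t_f + \xi_j)^2]$, so after collecting, the truncated $R(s, t_f)$ is a polynomial in $s$ and in these three reciprocals. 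A further geometric-series expansion
\begin{equation*}
\frac{1}{(\tfrac{1}{2} + s)^2 + (t_f + \xi_j)^2} = \frac{1}{(t_f + \xi_j)^2} \sum_{\ell \geq 0} \left(\frac{-(\tfrac{1}{2} + s)^2}{(t_f + \xi_j)^2}\right)^{\ell},
\end{equation*}
truncated at a sufficiently high order, rewrites $R(s, t_f) - 1$ as a polynomial in $1/t_f^2$, $1/(t_f - 2t_g)^2$, $1/(t_f + 2t_g)^2$ with coefficients polynomial in $s$ of universally bounded degree and universally bounded absolute coefficients; on the contour these are $O(t_g^\e)$, and non-constancy of $P_s$ in $(x_1, x_2, x_3)$ is visible from the leading-order Stirling correction $a_1 \sum_j (w_j(s)^{-1} - w_j(0)^{-1})$. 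The hard part will be the bookkeeping: arranging the Stirling and geometric truncations simultaneously so that the combined error remains $O(t_g^{-50})$ after the integration in $s$ and the summation over $(m, n)$, while respecting the polynomial shape of $P_s$ claimed in the statement.
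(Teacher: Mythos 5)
Your proposal is correct and follows essentially the same route as the paper's own (very compressed) proof: compare the two Mellin kernels via the ratio $R(s,t_f)$, apply Stirling's expansion to each of the six $\Gamma_\R$-factors, and use the fact that $G(\tfrac{1}{2}+s,t,\epsilon)$ is an even function of each of $t$, $t-2t_g$, $t+2t_g$ (your conjugate-pairing observation $w_{2j-1}w_{2j} = (\tfrac{1}{2}+s)^2 + (t_f+\xi_j)^2$ is exactly this evenness) to rewrite the truncated expansion as a polynomial in the three reciprocal squares. The paper states this in one sentence; you have correctly filled in the intermediate steps, including the restriction of the contour via the Gaussian factor $e^{s^2}$, the geometric-series rewriting, and the control of the pointwise error after summation via Rankin--Selberg and Hecke bounds.
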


\begin{remark}
\label{rem:newafe}
In the cases of interest \eqref{eqn:interest} with $t = t_f$, the sums $s_\pm$ are of the same length as $S_\pm$ but their weight functions are smaller by a factor of $U^{-2 + \e}$; that is, $v_\pm(n^2m, t_f, 1) \ll U^{-2 + \e}$.
\end{remark}

\begin{proof}
For $|s| < t_g^\e$, we have that
\[\frac{G\left(\frac{1}{2} + s,t,-1\right)}{G\left(\frac{1}{2},t,-1\right)} = \frac{G\left(\frac{1}{2} + s,t,1\right)}{G\left(\frac{1}{2},t,1\right)} \left(1 + P_s\left(\frac{1}{t^2}, \frac{1}{(t - 2t_g)^2}, \frac{1}{(t + 2t_g)^2}\right)\right) + O(t_g^{-100}),\]
for $P_s(x_1,x_2,x_3)$ as described above. This is obtained by applying Stirling's formula to each gamma factor and using that $G(1/2 + s,t,\epsilon)$ is an even function of $t$, $t - 2t_g$, and $t + 2t_g$. This, together with \hyperref[lem:afeFxf]{Lemma \ref*{lem:afeFxf}}, completes the proof.
\end{proof}

\subsection{Bounds for the First Moment of Dirichlet Polynomials}

As shorthand, we write
\[S_\pm(t) \coloneqq S_\pm(t,1), \qquad V_\pm(x,t) \coloneqq V_\pm(x,t,1).\]
The chief input in the proof of \hyperref[prop:firstmomentconductordrop]{Proposition \ref*{prop:firstmomentconductordrop}} is the following.

\begin{proposition}
\label{prop:shorttransition}
Let
\begin{equation}
\label{eqn:UXhtdefeq}
t_g^{\e} \leq U \leq t_g^{\frac{1}{3} + \e}, \quad X=U^{1-\e}, \quad h(t) = \exp\left(-\left(\frac{t - 2t_g - U}{U^{1 - \e}}\right)^2\right) + \exp\left(-\left(\frac{t + 2t_g + U}{U^{1-\e}}\right)^2\right).
\end{equation}
\begin{enumerate}[leftmargin=*,label=\textup{(\arabic*)}]
\item\label{item:shorttransition1}
We have that
\begin{equation}
\label{eqn:thml1}
\sum_{f \in \BB_0} \frac{S_+(t_f)}{L(1,\ad f)} h(t_f) + \frac{1}{2\pi} \int_{-\infty}^{\infty} \frac{E_+(t)}{|\zeta(1 + 2it)|^2} h(t) \, dt \ll_{\e} t_g^{1 + \e} U.
\end{equation}
\item\label{item:shorttransition2}
If we further restrict to $U \geq t_g^{1/5}$, then we have that
\begin{equation}
\label{eqn:thml2}
\sum_{f \in \BB_0} \epsilon_f \frac{S_-(t_f)}{L(1,\ad f)} h(t_f) + \frac{1}{2\pi} \int_{-\infty}^{\infty} \frac{E_-(t)}{|\zeta(1 + 2it)|^2} h(t) \, dt \ll_{\e} t_g^{\frac{7}{4} + \e} U^{-\frac{5}{4}}.
\end{equation}
\end{enumerate}
\end{proposition}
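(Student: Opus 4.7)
The plan is to treat both parts in parallel: substitute the approximate functional equations of \hyperref[lem:afeFxf]{Lemma \ref*{lem:afeFxf}}, interchange summation, and apply the Kuznetsov formula to convert the spectral first moment of $\lambda_f(n)$ (and of $\lambda(n,t)$) into a main diagonal term plus a sum of Kloosterman sums. Concretely, each quantity in \eqref{eqn:thml1} and \eqref{eqn:thml2} rewrites as $\sum_{m,n \geq 1} A_F(m,n)/(m\sqrt{n}) \cdot M_{\pm}(m,n)$, where $M_{\pm}(m,n)$ is the spectral moment weighted by the test function $h(t) V_{\pm}(m^2 n, t)$. Since $G(1/2 + s,t,1)$ is even in $t$ and holomorphic in a sufficiently wide horizontal strip, this test function satisfies the hypotheses of \eqref{eqn:Kuznetsovformula}, with sign $+$ for Part 1 and sign $-$ for Part 2, yielding
\[
M_{\pm}(m,n) = \delta_{\pm,+}\delta_{n,1} \Nscr^{+}[hV_{+}(m^2,\cdot)] + \sum_{c \geq 1} \frac{S(1,\pm n;c)}{c} \Kscr^{\pm}[h V_{\pm}(m^2 n,\cdot)](\sqrt{n}/c).
\]

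The diagonal contributes only to Part 1. Using the Mellin--Barnes representation \eqref{eqn:vdef} of $V_{+}$ and Stirling's formula for $G(1/2+s,t,1)/G(1/2,t,1)$ at $t \approx \pm(2t_g+U)$ (where the six gamma factors have imaginary shifts of respective sizes $\asymp 4t_g, U, 2t_g, 2t_g, U, 4t_g$), the inner spectral integral will be $O_{\e}(t_g U^{1+\e})$ and will decay rapidly once $m^2 \gg U^{2-\e} t_g^2$. Combining this with the average bound $\sum_{m \leq Y} |A_F(m,1)|/m \ll_{\e} Y^{\e}$ from Rankin--Selberg bounds the diagonal by $O_{\e}(t_g^{1+\e} U)$, matching the target of Part 1.

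For the off-diagonal, insert the Mellin representations \eqref{eqn:JJr+Mellin}, \eqref{eqn:JJr-Mellin} of $\JJ_r^{\pm}$ and \eqref{eqn:vdef} of $V_{\pm}$ into $\Kscr^{\pm}[hV_{\pm}(m^2n,\cdot)](\sqrt{n}/c)$; stationary phase in $r$ (which the Gaussian factor of $h$ localises to $r \asymp 2t_g + U$ of width $U^{1-\e}$) combines the phase of the ratio of gamma factors in $V_{\pm}$ with that of $J_{2ir}$ (Part 1) or of $K_{2ir}$ (Part 2). This restricts $\sqrt{n}/c$ to a narrow transition window determined by the saddle in $r$, and yields sharp size estimates for the transform. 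Weil's bound for $S(1,\pm n;c)$, dyadic decompositions in $m, n, c$, and the Rankin--Selberg type bound $\sum_{m^2 n \leq Y} |A_F(m,n)|^2/(m^2 n) \ll_{\e} Y^{\e}$ then produce $O_{\e}(t_g^{1+\e} U)$ for Part 1 and, after balancing the oscillating and exponentially decaying regimes of $K_{2ir}(4\pi x)$ against the much shorter cutoff $m^2 n \leq U^{\e} t_g^2$ coming from $V_{-}$, the bound $O_{\e}(t_g^{7/4 + \e} U^{-5/4})$ for Part 2.

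The hard part will be the stationary phase analysis in the conductor-dropping regime $r \approx 2t_g$. For Part 2, the $K$-Bessel function $K_{2ir}(4\pi x)$ with $r \approx 2t_g$ has an oscillating phase of the shape $2r \log(r/(\pi x))$ in its range $4\pi x \lesssim r$, and this must be reconciled with the phase of the near-vanishing factors $\Gamma_{\R}(1/2 + s \pm it \pm 2it_g)$ inside $V_{-}$ whose arguments become small when $t \approx 2t_g$. The interplay of these two phases with the narrow width $U^{1-\e}$ coming from $h$ will determine the effective saddle, and the hypothesis $U \geq t_g^{1/5}$ is precisely what should guarantee that this saddle is non-degenerate and that the resulting Gaussian width fits inside the support of $h V_{-}$; this is what I expect to produce the exponents $7/4$ and $-5/4$ in Part 2.
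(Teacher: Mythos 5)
Your high-level skeleton for Part 1 is right: apply Kuznetsov after the approximate functional equation, bound the diagonal trivially, and show the off-diagonal has no significant content because the conductor dropping (encoded in the unbalanced choice $X = U^{1-\e}$) forces the Kloosterman-side argument $\sqrt{n}/c$ to stay well below the Bessel transition range. The paper formalises this as Lemma~\ref{lem:possignKscrbound}: $(\Kscr^+ h V_+(m^2n,\cdot))(x)$ is negligible unless $x \geq U^{1-\e/3} t_g$, while the AFE cutoff $m^2n \ll U^{1-\e} t_g^2$ caps $\sqrt{n}/c$ at about $U^{1/2}t_g$, so the off-diagonal is \emph{entirely} negligible and only the diagonal contributes. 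Your description of a nonzero but controlled off-diagonal window producing the same bound is a mild mis-framing; the mechanism is the same, just sharper than you anticipate.

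For Part 2 there is a genuine gap. You propose to close the argument by Weil's bound, dyadic decompositions, and the Rankin--Selberg average $\sum |A_F(m,n)|^2/(m^2n)$, with a stationary phase in $r$ balancing the oscillation of $K_{2ir}$ against the archimedean phase of $V_-$. But the paper explicitly notes that \emph{even under the Ramanujan conjecture} (i.e.\ with the best possible pointwise input on $A_F$), a trivial bound on the resulting sum is insufficient throughout the range $U \leq t_g^{1/3-\e}$. The missing ingredient is the $\GL_3$ Vorono\u{\i} summation formula for the twisted coefficient sum $\sum_n A_F(1,n) e(adn/c)(\cdots)$: after restricting to $c \ll t_g^{\e}$, $n \asymp t_g^2$, and Taylor-expanding $\sinh(\pi v/U)$ to cubic order (the role of $U \geq t_g^{1/5}$ is to justify this truncation, not to ensure non-degeneracy of a saddle as you suggest), the paper writes the $n$-sum via Mellin inversion as a contour integral against the Vorono\u{\i} series $\Phi_F$ and then bounds that by the functional equation, Cauchy--Schwarz, and the Montgomery--Vaughan mean value theorem. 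The stationary phase estimate (Lemma~\ref{lem:stat}) is a bound $\widetilde{I}(iy) \ll t_g^{-1/2+\e}U^{-1/2}$ for a double integral in $(x,v)$, not the saddle-point comparison between the $K$-Bessel phase and the $\Gamma$-factor phase you envisage. Without replacing the crude averaged bound on $A_F$ by genuine cancellation from Vorono\u{\i} plus a mean value theorem, your Part 2 argument cannot reach the exponent $t_g^{7/4}U^{-5/4}$, and will at best recover the spectral large sieve bound $t_g^{3/2+\e}U^{1/2}$, which loses to the trivial estimate unless $U$ is very small.
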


The proof of \hyperref[prop:shorttransition]{Proposition \ref*{prop:shorttransition}}, which we give in \hyperref[sect:shorttransitionproof]{Section \ref*{sect:shorttransitionproof}}, proceeds via a series of steps. Taking this result for granted for the time being, we proceed to the proof of \hyperref[prop:firstmomentconductordrop]{Proposition \ref*{prop:firstmomentconductordrop}}.

\begin{proof}[Proof of {\hyperref[prop:firstmomentconductordrop]{Proposition \ref*{prop:firstmomentconductordrop}}}]
We use \hyperref[lem:afeFxf]{Lemmata \ref*{lem:afeFxf}} and \ref{lem:afeFxf2} to express each of the $L$-functions $L(1/2, \ad g \otimes f)$ and $|L(1/2 + it,\ad g)|^2$ as Dirichlet series. The sums involving $S_+(t_f)$ and $E_+(t)$ are dealt with using \eqref{eqn:thml1}. The sums involving $s_\pm(t_f)$ are seen to be $O_{\e}(t_g^{3/2 + \e} U^{-1/2})$ by using the spectral large sieve after an application of the Cauchy--Schwarz inequality, keeping in mind \hyperref[rem:newafe]{Remark \ref*{rem:newafe}}. In this way, we are left to deal with the sums involving $S_-(t_f)$ and $E_-(t)$. With $h(t)$ as in \eqref{eqn:UXhtdefeq}, we deduce that
\begin{multline*}
\sum_{\substack{f \in \BB_0 \\ 2t_g - U - U^{1 - \e} \leq t_f \leq 2t_g - U + U^{1 - \e}}} \frac{L\left(\frac{1}{2}, \ad g \otimes f\right)}{L(1,\ad f)} + \frac{1}{2\pi} \int\limits_{2t_g - U - U^{1 - \e} \leq |t| \leq 2t_g - U + U^{1 - \e}} \left|\frac{L\left(\frac{1}{2} + it,\ad g\right)}{\zeta(1 + 2it)}\right|^2 \, dt	\\
\ll_{\e} \left|\sum_{f \in \BB_0} \epsilon_f \frac{S_-(t_f)}{L(1,\ad f)} h(t_f) + \frac{1}{2\pi} \int_{-\infty}^{\infty} \frac{E_-(t)}{|\zeta(1 + 2it)|^2} h(t) \, dt\right| + t_g^{\frac{3}{2} + \e} U^{-\frac{1}{2}}.
\end{multline*}
Recall that $S_-(t_f)$ and $E_-(t)$ are sums of length $O_{\e}(t_g^{2 + \e})$. We proceed differently according to the size of $U$. For $t_g^{\e} \leq U \leq t_g^{1/5}$, we apply the spectral large sieve to get the bound
\[\sum_{f \in \BB_0} \epsilon_f \frac{S_-(t_f)}{L(1,\ad f)} h(t_f) + \frac{1}{2\pi} \int_{-\infty}^{\infty} \frac{E_-(t)}{|\zeta(1 + 2it)|^2} h(t) \, dt \ll_{\e} t_g^{\frac{3}{2} + \e} U^{\frac{1}{2}}.\]
For $t_g^{1/5} \leq U \leq t_g^{1/3 + \e}$, we apply \eqref{eqn:thml2}. This yields \eqref{eqn:firstmomentconductordrop} for $t_g^{\e} \leq U \leq t_g^{1/3 + \e}$, while for $1 \leq U \leq t_g^{\e}$, we simply invoke the convexity bound.
\end{proof}

\subsection{Proof of \texorpdfstring{\hyperref[prop:shorttransition]{Proposition \ref*{prop:shorttransition}}}{Proposition \ref{prop:shorttransition}}}
\label{sect:shorttransitionproof}

Let $t_g^{\e} \leq U \leq t_g^{1/3 + \e}$. By the Kuznetsov formula \eqref{eqn:Kuznetsovformula}, we have that
\begin{align}
\nonumber
\sum_{f \in \BB_0} \epsilon_f^{\frac{1 \mp 1}{2}} \frac{S_\pm(t_f)}{L(1,\ad f)} h(t_f) + & \frac{1}{2\pi} \int_{-\infty}^{\infty} \frac{E_\pm(t)}{|\zeta(1 + 2it)|^2} h(t) \, dt	\\
\label{eqn:diag}
& = \delta_{\pm,+} \sum_{m = 1}^{\infty} \frac{A_f(m,1)}{m} \int_{-\infty}^{\infty} V_{+}(m^2,t) h(t) \, d_{\spec}(t) \\
\label{eqn:offdiag}
& \qquad + \sum_{c,m,n = 1}^{\infty} \frac{A_F(m,n)}{m\sqrt{n}} \frac{S(1,\pm n;c)}{c} (\Kscr^\pm h V_\pm (m^2 n,\cdot) ) \left(\frac{\sqrt{n}}{c}\right).
\end{align}
The diagonal contribution \eqref{eqn:diag} is $O_{\e}(t_g^{1 + \e} U)$ by bounding trivially. The main challenge is to treat the off-diagonal \eqref{eqn:offdiag}. We note first of all that we can crudely truncate the off-diagonal sum to $c \leq t_g^{1000}$, up to a negligible error term of size $O(t_g^{-1000})$, in a standard way by using Weil's bound for Kloosterman sums and shifting the $t$-integral in the definition of $(\Kscr^\pm h V_\pm (m^2 n,\cdot) ) (\frac{\sqrt{n}}{c})$ to $\Im(t) = -\frac{1}{2} + \e$ or $\Im(t) = \frac{1}{2} - \e$ and then bounding absolutely, as done in \cite[Lemma 5]{Blo12}.

\subsubsection{The Positive Sign Case of the Off-Diagonal}

We first prove the bound \eqref{eqn:thml1}. The main idea is that the approximate functional equation sum is short due to conductor dropping while the spectral family we average over is relatively large. We show the following.

\begin{lemma}
\label{lem:possignKscrbound}
We have that
\begin{equation}
\label{eqn:possignKscrbound}
(\Kscr^+ h V_+ (m^2n,\cdot))(x) \ll t_g^{-500}
\end{equation}
unless $x \geq U^{1-\e/3} t_g$.
\end{lemma}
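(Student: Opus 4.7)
The plan is to establish rapid decay of the integral
\[(\Kscr^+ h V_+(m^2n, \cdot))(x) = \int_{-\infty}^{\infty} h(t) V_+(m^2 n, t) \JJ_t^+(x) \, d_{\spec}t\]
for $x < U^{1 - \e/3} t_g$ via stationary-phase / repeated integration by parts with respect to the spectral variable $t$. As a preliminary step, I would establish that the weight $h(t) V_+(m^2 n, t)$ is essentially a smooth bump function localised to $|t - (2t_g + U)| \ll U^{1 - \e/2}$ (together with a mirror peak at $t = -(2t_g + U)$), with derivative bounds $\partial_t^j (h V_+) \ll_j U^{-j(1 - \e)}$ on its support and negligibly small outside. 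This follows from the explicit form of $h$ in \eqref{eqn:UXhtdefeq} together with a contour-shift analysis of the Mellin representation \eqref{eqn:vdef} of $V_+$, using Stirling's formula applied to the ratio $G(\tfrac{1}{2} + s, t, 1)/G(\tfrac{1}{2}, t, 1)$; the same contour shift simultaneously confirms that $V_+(m^2 n, t)$ is negligibly small unless $m^2 n \ll X U t_g^2 = U^{2 - \e} t_g^2$.

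I would next invoke a stationary-phase representation of $\JJ_t^+(x)$ for $t \asymp 2t_g + U$. Combining the identity $\JJ_t^+(x) = -2\pi i \sinh(\pi t) J_{2it}(4\pi x) - 2\pi \cosh(\pi t) Y_{2it}(4\pi x)$ (obtained from the classical relation between $J_{\pm \nu}$ and $Y_\nu$) with Olver's uniform asymptotic expansion for Bessel functions of large imaginary order, one obtains a decomposition
\[\JJ_t^+(x) = \sum_{\pm} a_\pm(x, t) e^{i \phi_\pm(x, t)} + O(t_g^{-1000}),\]
whose phase derivative satisfies $|\partial_t \phi_\pm(x, t)| \asymp \log(t/(\pi x))$ when $x \ll t/(2\pi)$ and $|\partial_t \phi_\pm(x, t)| \asymp \mathrm{arccosh}(2\pi x/t)$ when $x \gg t/(2\pi)$. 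Applying the operator $L_\pm = (i \partial_t \phi_\pm)^{-1} \partial_t$ and integrating by parts repeatedly in $t$ produces a saving of $(U^{1 - \e/2} |\partial_t \phi_\pm(x, t_0)|)^{-1}$ per iteration, where $t_0 = 2t_g + U$; a large but fixed number of iterations (depending on $\e$) then delivers the claimed bound $\ll t_g^{-500}$, provided this per-iteration saving is at most $t_g^{-\e/100}$. An elementary computation verifies this phase-derivative lower bound both in the below-turning regime $x \ll t_g/\pi$ (where $|\partial_t \phi_\pm| \asymp \log(t_g/x)$) and in the above-turning regime where $x$ is bounded away from the turning point $x = t_0/(2\pi) \approx t_g/\pi$.

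The main obstacle will be a careful treatment of the Airy transition window $|x - t_0/(2\pi)| \ll t_g^{1/3}$, where the naive stationary-phase estimate degrades because $|\partial_t \phi_\pm|$ becomes small and the pointwise size of $\JJ_t^+(x)$ is of Airy magnitude $\asymp t^{-1/3}$. Here one must invoke Olver's uniform Airy asymptotic for $J_{2it}(4\pi x)$ and $Y_{2it}(4\pi x)$ and analyse the resulting Airy-weighted $t$-integral directly, exploiting either the cancellation between the $J$- and $Y$-contributions to $\JJ_t^+$, integration by parts against the oscillatory Airy factor itself once $U^{1-\e/2}$ exceeds the Airy scale $t_g^{1/3}$, or the rapid decay of $V_+(m^2 n, t)$ at the edge of its support. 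The verification that the threshold $U^{1-\e/3} t_g$ in the statement is calibrated precisely so that every marginal contribution from the Airy window is either absorbed into the excluded region or handled by one of these auxiliary cancellations is the most technical aspect of the argument.
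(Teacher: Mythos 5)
The paper proves this by a different and slicker route than what you propose. Rather than invoking uniform Bessel asymptotics for $J_{2it}$ and $Y_{2it}$, the paper uses Watson's Mehler--Sonine-type integral representation (the identity on \cite[p.~180]{Wat44}) to replace $\JJ_t^+(x)$ by a double integral in $(t,v)$ of $\cos(2\pi(2x\cosh\pi v - t v))$. After inserting the cut-off $W(t)$, the inner $t$-integral is a Fourier transform of a bump with derivative bounds \eqref{eqn:deriv}; repeated integration by parts in $t$ localizes $v$ to $|v| \ll U^{-1+\e}$. On this range the outer $v$-integral has \emph{no} stationary point (the stationary point of $v \mapsto x\cosh\pi v - tv$ lies at $v^* = \pi^{-1}\mathrm{arcsinh}(t/(2\pi x))$, outside the localization window precisely when $x < U^{1-\e/3}t_g$), and the lemma follows from the non-stationary-phase estimate \cite[Lemma~8.1]{BKY13}. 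No Bessel asymptotics enter at all.

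Your approach has two concrete problems. First, the claimed phase-derivative asymptotic is wrong in the regime that actually governs the threshold. A stationary-phase computation from the integral representation $\JJ_t^+(x) = 2\pi\int_{\R}\cos(4\pi x\cosh\pi v - 2\pi t v)\,dv$ gives the leading phase $\phi(x,t) = 2\sqrt{(2\pi x)^2 + t^2} - 2t\,\mathrm{arcsinh}(t/(2\pi x))$, whence $\partial_t\phi = -2\,\mathrm{arcsinh}(t/(2\pi x))$. For $x \ll t$ this is indeed $\asymp \log(t/x)$, matching your formula; but for $x \gg t/(2\pi)$ it is $\asymp t/(\pi x)$, which \emph{tends to zero} as $x$ grows. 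Your claim that it grows like $\mathrm{arccosh}(2\pi x/t)$ has the wrong sign of monotonicity, and this matters: the lemma must cover $x$ up to $U^{1-\e/3}t_g$, which dwarfs $t \asymp 2t_g$ once $U$ exceeds a small power of $t_g$, so $x \gg t$ is the dominant (and hardest) case. With the correct, decaying phase derivative, the naive integration-by-parts gain per step is $\asymp x/(t_g U^{1-\e})$, which degenerates as $x$ approaches the threshold; the paper's approach circumvents this by first localizing $v$ so that the stationary point of the $v$-integral is never reached, but your direct $t$-integration-by-parts does not have this escape hatch and you would need an additional idea to recover the claimed threshold.

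Second, the Airy transition window you devote so much attention to does not exist for $\JJ_t^+$. For $J_\nu(z)$ of purely imaginary order $\nu = 2it$ there is no real turning point: the stationary point $v^*$ of the integral representation above is nondegenerate for every $x > 0$ (indeed $\phi''(v^*) = 2\pi^2\sqrt{(2\pi x)^2 + t^2}$, bounded away from zero), so $\JJ_t^+(x)$ is oscillatory of size $\asymp ((2\pi x)^2 + t^2)^{-1/4}$ throughout, with no Airy-scale thickening near $x \approx t/(2\pi)$. The Airy transition at $4\pi x = 2|t|$ is a feature of $K_{2it}(4\pi x)$, hence of $\JJ_t^-$, not $\JJ_t^+$. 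You appear to have imported the real-order (or $K$-Bessel) picture into a setting where it does not apply. This is not merely harmless extra work: you propose to handle the (nonexistent) Airy window via ``cancellation between the $J$- and $Y$-contributions'' or ``rapid decay of $V_+$ at the edge of its support,'' neither of which is substantiated, while the genuine difficulty — the decay of $|\partial_t\phi|$ for large $x$ — goes unaddressed. As currently written, the argument does not close.
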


We use this to deduce the bound \eqref{eqn:thml1}.

\begin{proof}[Proof of {\hyperref[item:shorttransition1]{Proposition \ref*{prop:shorttransition} (1)}}]
We must show that the off-diagonal term \eqref{eqn:offdiag} is $O_{\e}(t_g^{1 + \e} U)$. Since we may restrict to $m^2 n \ll U^{1-\e} t_g^2$ by the decay properties \eqref{eqn:vdecay} of $V_+ (m^2n,\cdot) (x)$, no value of $c \in \N$ can satisfy $\frac{\sqrt{n}}{c} \geq U^{1-\e/3} t_g$ once $t_g$ is sufficiently large. Coupled with \eqref{eqn:possignKscrbound}, this shows that the off-diagonal is negligibly small, and so the only contribution comes from the diagonal, which trivially satisfies the required bound \eqref{eqn:thml1}.
\end{proof}

\begin{proof}[Proof of {\hyperref[lem:possignKscrbound]{Lemma \ref*{lem:possignKscrbound}}}]
It suffices to prove this for $h(t)$ redefined as
\[h(t) = \exp\left(-\left(\frac{t - 2t_g - U}{U^{1 - \e}}\right)^2\right).\]
We require the derivative bounds
\begin{equation}
\label{eqn:deriv}
\frac{d^j}{dt^j} h(t) V_\pm (m^2n,t) \ll_j (U^{-1 + \e})^j
\end{equation}
for $|t - 2t_g| \asymp U$ and $j \in \N_0$. Assuming this for the time being, we proceed. In the integral defining $(\Kscr^+ h V_+ (m^2n,\cdot))(x)$ given by \eqref{eqn:NscrpmKscrpmdefeq}, we insert a smooth bump function $W(t)$ compactly supported on
\begin{equation}
\label{eqn:assump}
|t - 2t_g - U|\leq U^{1 - \e/2}
\end{equation}
and satisfying $\|W^{(j)}\| \ll_j (U^{-1 + \e/2})^j$ for $j \in \N_0$, since $h(t)$ decays rapidly outside this interval. Then using the identity \cite[p.~180]{Wat44}
\[\frac{J_{2it}(2\pi x) - J_{-2it}(2\pi x)}{\cosh \pi t} = -2i\tanh(\pi t)\int_{-\infty}^{\infty} \cos(2\pi x \cosh \pi v) \cos(2\pi tv) \, dv\]
and the fact that $\tanh \pi t = 1 + O(e^{-\pi t})$ for $t \geq 0$, we see that it suffices to show that 
\[\int_{-\infty}^{\infty} e(x\cosh \pi v)\int_{-\infty}^{\infty} h(t) V_+ (m^2n,t ) W(t) e(\pm vt) \, dt \, dv \ll t_g^{-100}\]
unless $x \geq U^{1 - \e/3} t_g$. By integrating by parts multiple times in the inner integral and using \eqref{eqn:deriv}, we deduce that the required bound holds for the portion of the outer integral for which $|v| \gg U^{-1 + 2\e}$. To treat the remaining portion of the outer integral, we insert a smooth bump function $\Omega(Uv)$ such that $\Omega$ is supported on $(-U^{2\e}, U^{2\e})$ with derivatives satisfying $\| \Omega^{(j)}\| \ll_j (U^{-2\e})^j$ for $j \in \N_0$. Thus it suffices to show that under the assumption \eqref{eqn:assump}, we have that
\begin{equation}
\label{eqn:vint}
\int_{-\infty}^{\infty} e\left(x\cosh \frac{\pi v}{U} \pm \frac{vt}{U}\right) \Omega(v) \, dv \ll t_g^{-300}
\end{equation}
if $x < U^{1 - \e/3} t_g$. This follows by integration by parts after observing that the phase
\[\phi(v) \coloneqq x\cosh \frac{\pi v}{U} \pm \frac{vt}{U}\]
satisfies
\[|\phi'(v)| = \left|\frac{x}{U}\sinh \frac{\pi v}{U} \pm \frac{t}{U} \right| \gg \frac{t_g}{U},\]
and
\[|\phi^{(j)}(v)| \ll_j x U^{-j + \e}\]
for $j \geq 2$. We apply \cite[Lemma 8.1]{BKY13} with $R = t_g/U$, $Y = x$, and $Q = U$ being the key parameters, which tells us that the integral \eqref{eqn:vint} is negligible provided $R\gg t_g^\e$ and $QRY^{-\frac{1}{2}}\gg t_g^{\e}$, which is the case here.

It remains to establish \eqref{eqn:deriv}. It is clear that the derivatives of $h(t)$ satisfy the bound, so we just need to consider the derivatives of $V_\pm (m^2 n,t)$. By taking $\sigma = \e$ in \eqref{eqn:vdef} and using the rapid decay of $e^{s^2}$, it suffices to show that
\begin{equation}
\label{eqn:gder}
\frac{d^j}{dt^j} \frac{G\left(\frac{1}{2} + s,t\right)}{G\left(\frac{1}{2},t\right)} \ll_j (U^{-1 + \e})^j
\end{equation}
for $|s| \leq t_g^\e$ and $j \in \N_0$. By Stirling's estimates (see \cite[(2.4)]{BlK19b}), for $y \geq t_g^\e$ and $\kappa > 0$ a fixed constant, there exists some function $\psi_{s,M}(y)$ satisfying $\psi_{s,M}^{(j)}(y) \ll_j y^{-j}$ for any $j \in \N_0$ such that
\[\frac{\Gamma_{\R}(s + \kappa + iy)}{\Gamma_{\R}(\kappa + iy)} = y^{\frac{s}{2}} \psi_{s,M}(y) + O_{s,M}(y^{-M})\]
for any $M > 0$. This gives \eqref{eqn:gder} (noting that the error term is the same under differentiation by Cauchy's integral formula) since each gamma ratio in $\frac{G(1/2 + s,t)}{G(1/2,t)}$ can be written as $\frac{\Gamma_{\R}(s + \kappa + iy)}{\Gamma_{\R}(\kappa + iy)}$ with $y\gg U$ under the assumption \eqref{eqn:assump}.
\end{proof}

\subsubsection{The Negative Sign Case of the Off-Diagonal}

We next prove the bound \eqref{eqn:thml2}. In this case, the off-diagonal is not empty but it is very nearly so; that is, we shall show that we may restrict the sum over $c \in \N$ to $c \ll t_g^\e$. An analysis of the $\Kscr^-$-transform then shows that the off-diagonal sum may be restricted to a short interval (see \eqref{eqn:xranget0} and \eqref{eqn:xranget}). We would then like to bound trivially, but two problems arise that we must circumvent. First, we do not know the Ramanujan conjecture, or efficient versions of this on average over \emph{short} intervals; we instead use $L$-functions and the spectral large sieve to obtain reasonable averaged bounds for sums of Hecke eigenvalues. Second, even if we were able to appeal to the Ramanujan conjecture, a trivial bound is insufficient in the range $U\leq t_g^{1/3 - \e}$. We get further savings by obtaining some stationary phase cancellation in the $\Kscr^-$-transform in \hyperref[lem:stat]{Lemma \ref*{lem:stat}}.

\begin{proof}[Proof of {\hyperref[item:shorttransition2]{Proposition \ref*{prop:shorttransition} (2)}}]
We assume that 
\[t_g^{\frac{1}{5}} \leq U \leq t_g^{\frac{1}{3} + \e}.\]
It suffices to consider $h(t)$ redefined as
\[h(t) = \exp\left(-\left(\frac{t - 2t_g - U}{U^{1 - \e}}\right)^2\right).\]
We must show that
\[\sum_{c,m,n = 1}^{\infty} \frac{A_F(m,n)}{m\sqrt{n}} \frac{S(1,-n;c)}{c} \int_{-\infty}^{\infty} \cosh \pi t K_{2it}\left(\frac{4 \pi \sqrt{n}}{c}\right) h(t) V_-(m^2 n, t) W(t) t \tanh \pi t \, dt\]
is $O_{\e}(t_g^{7/4 + \e} U^{-5/4})$, where $W(t)$ is a bump function defined exactly as in the proof of \hyperref[lem:possignKscrbound]{Lemma \ref*{lem:possignKscrbound}} with support as in \eqref{eqn:assump}. Using the identity \cite[8.432.4]{GR15}
\[\sinh \pi t K_{2it}(2\pi x) = \frac{\pi \tanh \pi t}{2} \int_{-\infty}^{\infty} \cos(2\pi x \sinh \pi v) e(tv) \, dv,\]
it suffices to prove this bound for
\[t_g \sum_{c,m,n = 1}^{\infty} \frac{A_F(m,n)}{m\sqrt{n}} \frac{S(1,-n;c)}{c} \int_{-\infty}^{\infty} e\left(\pm \frac{2\pi \sqrt{n}}{c} \sinh \pi v\right) \int_{-\infty}^{\infty} e(tv) h(t) V_-(m^2 n, t) W(t) \, dt \, dv.\]
The desired bound holds for the portion of the outer integral for which $|v| \geq U^{-1 + \e}$ by \eqref{eqn:deriv} and repeated integration by parts in the inner integral. To treat the remaining portion of the outer integral, we insert a smooth bump function $\Omega(Uv)$ as defined exactly as in the proof of \hyperref[lem:possignKscrbound]{Lemma \ref*{lem:possignKscrbound}}, so that we are left with
\begin{multline*}
t_g \sum_{c,m,n = 1}^{\infty} \frac{A_F(m,n)}{m \sqrt{n}} \frac{S(1,-n;c)}{c}	\\
\times \int_{-\infty}^{\infty} e\left(\pm \frac{2\pi \sqrt{n}}{c} \sinh \frac{\pi v}{U} + \frac{tv}{U}\right) \Omega(v) \int_{-\infty}^{\infty} h(t) V_-(m^2 n,t) W(t) \, \frac{dt}{U} \, dv.
\end{multline*}
If $\frac{\sqrt{n}}{c} \leq t_g^{1 - \e}$, then the outer integral is readily seen to be negligible via repeated integration by parts using \cite[Lemma 8.2]{BKY13} with $R = t_g/U$, $Y = \sqrt{n}/c$, and $Q = U$. Thus we may restrict to $\frac{\sqrt{n}}{c} > t_g^{1 - \e}$; as we may additionally restrict to $m^2 n \ll U^{\e} t_g^2$ by \eqref{eqn:vdecay}, we are left with the ranges
\[t_g^{2 - \e} < n < t_g^{2 + \e}, \qquad 1 \leq c, m \leq t_g^\e.\]

A further simplification we can make is the following. Since $U \geq t_g^{1/5}$, we may take the power series expansion of $ \sinh \frac{\pi v}{U}$ and absorb the exponential of all terms in the expansion beyond the cubic term into the weight function $\Omega(v)$. After opening up the Kloosterman sum $S(1,-n;c) = \sum_{a \in (\Z/c\Z)^{\times}} e(\frac{-\overline{a} + an}{c})$, using the Hecke relations \eqref{eqn:GL3Heckerelations}, and making the change of variables $n \mapsto dn$, we are thereby left with showing that
\[t_g \sum_{n = 1}^{\infty} \frac{A_F(1,n)}{\sqrt{n}} e\left(\frac{adn}{c}\right) \Psi\left(\frac{n}{t_g^2}\right) \int_{-\infty}^{\infty} e\left(\pm \frac{2\pi \sqrt{dn}}{c} \left(\frac{\pi v}{U} + \frac{\pi^3 v^3}{3! U^3}\right) + \frac{tv}{U}\right) \Omega(v) \, dv \ll_{\e} t_g^{\frac{7}{4} + \e} U^{-\frac{5}{4}}\]
for any $t\asymp 2t_g$, $1 \leq c,d \leq t_g^\e$, and $a \in (\Z/c\Z)^{\times}$, where $\Psi$ is a smooth function, compactly supported on $(t_g^{-\e},t_g^\e)$, with derivatives satisfying $\| \Psi^{(j)} \| \ll_j (t_g^\e)^{j}$ for any $j \in \N_0$. By Mellin inversion, this expression is equal to
\[\frac{1}{2\pi i} \int_{\sigma - i\infty}^{\sigma + i\infty} t_g^{1 + 2s} \Phi_F\left(\frac{c}{(c,d)},\overline{\frac{ad}{(c,d)}},1;\frac{1}{2} + s\right) \widetilde{I}(s) \, ds,\]
where $\sigma>0$, $\Phi_F$ is the Vorono\u{\i} series \eqref{eqn:PhiFdefeq}, and
\begin{equation}
\label{eqn:I-mell}
\widetilde{I}(s) \coloneqq \int_0^\infty \left(\int_{-\infty}^{\infty} e\left(\pm \frac{2\pi t_g \sqrt{dx}}{c} \left(\frac{\pi v}{U} + \frac{\pi^3 v^3}{3!U^3}\right) + \frac{tv}{U}\right) \Omega(v) \, dv \right) \Psi(x) x^{s - 1} \, dx.
\end{equation}
We may shift the line of integration to $\sigma = 0$ and repeatedly integrate by parts in \eqref{eqn:I-mell} in order to restrict the line of integration to the range
\[|s| \leq \frac{t_g^{1 + \e}}{U}\]
at the cost of a negligible error term. To bound the remaining integral, we show in \hyperref[lem:stat]{Lemma \ref*{lem:stat}} that $\widetilde{I}(iy) \ll_{\e} t_g^{-1/2 + \e} U^{-1/2}$, at which point we are left with showing that
\begin{equation}
\label{eqn:firstmomentPhiF}
\int_{|y| \leq \frac{t_g^{1 + \e}}{U}} \left|\Phi_F\left(\frac{c}{(c,d)},\overline{\frac{ad}{(c,d)}},1;\frac{1}{2} + iy\right)\right| \, dy \ll_{\e} t_g^{\frac{5}{4} + \e} U^{-\frac{3}{4}}.
\end{equation}
We achieve this via the Cauchy--Schwarz inequality coupled with \hyperref[lem:Phisecondmoment]{Lemma \ref*{lem:Phisecondmoment}} below.
\end{proof}

\subsubsection{A Stationary Phase Estimate}

In the course of the proof of \hyperref[item:shorttransition2]{Proposition \ref*{prop:shorttransition} \ref*{item:shorttransition2}}, we invoked the following estimate, which we now prove.

\begin{lemma}
\label{lem:stat}
We have that
\begin{equation}
\label{eqn:lemmbound}
\int_{t_g^{-\e}}^{t_g^\e} \left| \int_{-\infty}^{\infty} e\left(v\left(\frac{t}{U} \pm \frac{2\pi^2 t_g x}{Uc}\right) - v^3\frac{\pi^4 t_g x}{3U^3c} \right) \Omega(v) \, dv \right| \, dx \ll t_g^{-\frac{1}{2} + \e} U^{-\frac{1}{2}}.
\end{equation}
\end{lemma}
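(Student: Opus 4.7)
The plan is to evaluate the inner $v$-integral by stationary phase / Airy-function estimates for the cubic phase $\phi_x(v) = A(x)v + B(x)v^3$, with $A(x) = (t - 2\pi^2 t_g x/c)/U$ and $B(x) = -\pi^4 t_g x/(3U^3 c)$, and then integrate the resulting envelope in $x$ by splitting $[t_g^{-\e}, t_g^\e]$ according to the size of $|A(x)|$.

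First I would introduce the Airy scale $\beta(x) \coloneqq (6\pi|B(x)|)^{-1/3} \asymp U/t_g^{1/3}$. Since $U \leq t_g^{1/3+\e}$, we have $\beta \ll U^{\e}$, so the support $|v| \leq U^{\e}$ of the cut-off $\Omega$ inherited from the proof of \hyperref[item:shorttransition2]{Proposition \ref*{prop:shorttransition} \ref*{item:shorttransition2}} is much wider than the Airy scale; the rescaling $v = \beta w$ then turns $\int e(\phi_x(v))\Omega(v)\,dv$ into (essentially) an Airy integral. The classical bounds $|\mathrm{Ai}(z)| \ll (1+|z|)^{-1/4}$ for $z \leq 0$ and $|\mathrm{Ai}(z)| \ll e^{-c|z|^{3/2}}$ for $z \geq 0$ then yield $|I(x)| \ll \beta(1+|A|\beta)^{-1/4}$ when $A>0$, with exponential decay in $|A|\beta$ when $A<0$ and $|A|\beta \gg 1$. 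When the would-be stationary point $v_0^2 = cU^3 A/(\pi^4 t_g x)$ falls outside the support, i.e.\ $|A| \gg t_g U^{2\e-3}$, repeated integration by parts in $v$ (using $|\phi_x'(v)| \asymp |A|$ on the support) delivers arbitrary polynomial decay in $|A|$.

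Next I would integrate $|I(x)|$ in $x$ via the substitution $u = t - 2\pi^2 t_g x/c$, so that $dx = c/(2\pi^2 t_g)\,du$ and $|A|\beta \asymp |u|/t_g^{1/3}$, and partition the range into three pieces. In the \emph{Airy centre} $|u| \leq t_g^{1/3}$ the $x$-measure is $\asymp c/t_g^{2/3}$ and $|I(x)|\ll\beta$, contributing $\ll \beta c t_g^{-2/3} \asymp cU/t_g$. In the \emph{Airy oscillatory} region $t_g^{1/3} \leq |u| \leq t_g U^{2\e-2}$ (stationary point still inside the support) the bound $|I(x)|\ll \beta(|u|/t_g^{1/3})^{-1/4}$ gives
\[
\int \beta \, t_g^{1/12} |u|^{-1/4} \frac{c}{t_g}\,du \;\asymp\; \beta c\, t_g^{-11/12}\bigl[u^{3/4}\bigr]_{t_g^{1/3}}^{t_g U^{-2+2\e}} \;\asymp\; c\, t_g^{-1/2} U^{-1/2+3\e/2}
\]
upon inserting $\beta = U/t_g^{1/3}$ and cancelling the exponents $-1/3-11/12+3/4 = -1/2$. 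The remaining \emph{non-stationary} region $|u| \geq t_g U^{2\e-2}$ and the region $A<0$ with $|A|\beta \gg 1$ are both negligible. Since $U \leq t_g^{1/3+\e}$ forces $cU/t_g \ll c\, t_g^{-1/2} U^{-1/2}$, and since $c \leq t_g^{\e}$, the total is $\ll t_g^{-1/2+\e}U^{-1/2}$, as required.

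The main technical obstacle lies in the exponent bookkeeping for the Airy oscillatory region: the crucial factor $U^{-1/2}$ emerges only from the correct combination of the Airy amplitude $\beta\, t_g^{1/12}|u|^{-1/4}$, the Jacobian $c/t_g$, and the upper cut-off $|u| \lesssim t_g/U^{2-2\e}$ imposed by the support of $\Omega$. A secondary point is justifying the switch between the Airy estimate and integration by parts at the threshold $|A| \asymp t_g U^{2\e-3}$, which can be handled by a smooth dyadic partition of unity in $v$.
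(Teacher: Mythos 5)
Your proof is correct, and it reaches the paper's bound by a genuinely different organization of the stationary-phase analysis. The paper decomposes the $v$-integral first: it truncates to $|v| < V_0 \coloneqq (t_g/U^3)^{-1/3+\e}$ (absorbing the cubic term into the weight there) and then to dyadic pieces $V_1 \leq |v| \leq 2V_1$; in each piece it integrates by parts in $v$ to localize $x$, applies a direct stationary-phase estimate on the rescaled interval $[1,2]$ (split into a stationary-point case and a no-stationary-point case), and then maximizes over $V_1$. You instead keep $v$ unsplit, compute the Airy envelope $|I(x)| \ll \beta(1+|A|\beta)^{-1/4}$ of the $v$-integral as a function of $A(x)$, and then decompose in $x$ (equivalently in $u = UA$) into Airy-centre, Airy-oscillatory, and non-stationary regimes and integrate the envelope. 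The two arguments interchange the order of the $v$-decomposition and the $x$-localization: your envelope on the $x$-range $|A| \asymp t_gV_1^2/U^3$ reproduces the paper's per-piece bound $(t_gV_1/U^3)^{-1/2}$, so the estimates are the same — but yours packages them as a single envelope-then-integrate argument, which is conceptually cleaner, whereas the paper's dyadic-then-localize route keeps every stationary-phase application strictly local on a unit interval and so avoids having to establish a uniform Airy-type bound against a general cutoff weight. Two small caveats worth noting: with a compactly supported cutoff there is no genuine \emph{exponential} decay for $A<0$ (that is a feature of the exact Airy function), only rapid polynomial decay from integration by parts, but that is all your argument actually uses; and the Airy-centre contribution $Uc/t_g$ only beats $t_g^{-1/2}U^{-1/2}$ up to the $t_g^\e$ allowed in the target when $U$ is at the upper end $t_g^{1/3+\e}$, so you should state that comparison with the $\e$'s included.
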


\begin{proof}
First we consider the contribution of the small values of $v$. Let $V_0 \coloneqq (\frac{t_g}{U^3})^{-1/3 + \e}$. The range $|v| < V_0$ may essentially be picked out using a bump function $\Omega_0(v/V_0)$, where $\Omega_0$ is smooth, compactly supported on $[-1,1]$ and has bounded derivatives. In this range, the exponential of the cubic term in the phase may be absorbed in the weight function $\Omega(v)$, since this exponential has derivatives bounded by powers of $t_g^\e$. In turn, we may then replace $\pm x$ by $-x$ via a change of variables. By integrating by parts repeatedly in the integral
\[\int_{-\infty}^{\infty} e\left(v\left(\frac{t}{U}-\frac{2\pi^2 t_g x}{Uc}\right) \right) \Omega(v) \Omega_0\left(\frac{v}{V_0}\right) \, dv,\]
we see that we may restrict to
\begin{equation}
\label{eqn:xranget0} \left|\frac{t}{U} - \frac{2\pi^2 t_g x}{Uc}\right|\ll t_g^{\e} \frac{1}{V_0}, 
\end{equation}
which implies that the outer integral is restricted to an interval of size at most $t_g^{\e} \frac{U}{t_gV_0}$. Using this and the fact that the inner integral is restricted to an interval of size $O(V_0)$, by the support of $\Omega_0$, we bound trivially to get the bound $O(t_g^\e \frac{U}{t_g})$ for the double integral, which is sufficient.

Now we consider the contribution of larger values of $v$; by symmetry, it suffices to consider $v$ positive. For 
\begin{equation}
\label{eqn:v1range}
\left(\frac{t_g}{U^3}\right)^{-\frac{1}{3} + \e} \leq V_1 \leq t_g^\e,
\end{equation}
we insert a bump function $\Omega_1(v/V_1)$, where $\Omega_1$ is smooth, compactly supported on $[1,2]$ and has $j$-th derivative bounded by $(t_g^\e)^j$. After a substitution, the inner integral is equal to
\begin{equation}
\label{eqn:vintg}
V_1 \int_{-\infty}^{\infty} e(\phi(v)) \Omega_1(v) \, dv,
\end{equation}
where $\Omega(vV_1)$ has been absorbed into $\Omega_1(v)$, and 
\[\phi(v) \coloneqq v \left(\frac{tV_1}{U}-\frac{2\pi^2 t_g xV_1}{Uc}\right) -v^3\frac{\pi^4t_g x V_1^3}{3U^3c}\]
is the phase, with derivatives
\begin{align*}
\phi'(v) & = \frac{1}{U}\left(tV_1 -\frac{2\pi^2 x t_g V_1}{c}\right)-\frac{t_g x}{U^3} \frac{\pi^4 V_1^3 v^2}{c},\\
\phi''(v) & = -\frac{t_g x}{U^3} \frac{2 \pi^4 V_1^3 v}{c}.
\end{align*}
First, by repeated integration by parts, we see that we may restrict to
\begin{equation}
\label{eqn:xranget}
\left|\frac{tV_1}{U}-\frac{2\pi^2 x t_g V_1 }{Uc}\right| \ll t_g^\e \frac{t_g V_1^3}{U^3},
\end{equation}
To see this, one may use \cite[Lemma 8.1]{BKY13} with the parameters therein being $R = \frac{t_g V_1^3}{U^3}$, $Y = t_g$, and $Q = (\frac{U}{V_1})^{3/2}$. By \eqref{eqn:xranget}, the outer integral is restricted to an interval of size at most $t_g^\e \frac{Uc}{t_g} \frac{t_g V_1^3}{U^3}$. Now we will bound \eqref{eqn:vintg} by $V_1 |\phi''|^{-\frac{1}{2}} \asymp (\frac{t_g x V_1 }{U^3 c})^{-\frac{1}{2}}$. This is expected by stationary phase analysis in the inner integral, but we give the details below. Once we have this bound, the double integral is seen to be bounded by $t_g^\e \frac{U}{t_g} (\frac{t_g}{U^3})^\frac{1}{2} V_1^\frac52$. By taking the maximum over $V_1$ in the range \eqref{eqn:v1range}, we obtain the required bound in \eqref{eqn:lemmbound}.

\textit{Case 1: $tV_1 -\frac{2\pi^2 x t_g V_1}{c}> 0$.} In this range, there are two stationary points, possibly outside the interval $[1,2]$. Let $\nu$ be the positive stationary point, so that $\nu > 0$ and $\phi'( \nu)=0$. We split the integral in \eqref{eqn:vintg} into two integrals $I_1 + I_2$, where $I_1$ is an integral over
\[S \coloneqq \left\{v\in [1,2] : |v - \nu| \leq \left(\frac{t_g V_1^3}{U^3}\right)^{-\frac{1}{2}} \right\},\]
and $I_2$ is an integral over $[1,2] \backslash S$. Bounding trivially, we have $|I_1| \ll (\frac{t_g V_1}{U^3})^{-\frac{1}{2}}$. For $I_2$, we integrate by parts once as follows:
\begin{align}
\nonumber
I_2 & = V_1 \int_{[1,2]\backslash S} \frac{\phi'(v)}{\phi'(v)} e(\phi(v))\Omega_1(v) \, dv \\
\label{eqn:intbypartsonce}
& \ll V_1 \left|\int_{[1,2] \backslash S} \left(\frac{ \Omega_1'(v)}{\phi'(v)} - \frac{\phi''(v)\Omega_1(v)}{(\phi'(v))^2} \right) e(\phi(v)) \, dv \right| + t_g^\e V_1 \sup_{v\in [1,2] \backslash S} \left|\frac{1}{\phi'(v)}\right|.
\end{align}
Writing $v = \nu + u$ with $|u| \geq (\frac{t_g V_1^3}{U^3})^{-\frac{1}{2}}$, we have that for $v \in [1,2] \backslash S$,
\[|\phi'(v)| = \left|\frac{\pi^4 x t_g V_1^3}{cU^3}(2\nu u + u^2)\right| = \left|\frac{\pi^4 x t_g V_1^3}{cU^3} (\nu + v) u\right| \gg \left|\frac{\pi^4 x t_g V_1^3}{cU^3} u\right| \gg t_g^{-\e} \left(\frac{ t_g V_1^3}{U^3}\right)^{\frac{1}{2}}.\]
Thus 
\[V_1 \sup_{v \in [1,2]\backslash S} \left| \frac{1}{\phi'(v)}\right| \ll t_g^\e \left(\frac{t_g V_1}{U^3}\right)^{-\frac{1}{2}}, \qquad V_1 \int_{[1,2] \backslash S} \left| \frac{ \Omega'(v)}{\phi'(v)}\right| \, dv \ll t_g^\e \left(\frac{t_g V_1}{U^3}\right)^{-\frac{1}{2}},\]
and
\[V_1 \int_{[1,2] \backslash S} \left| \frac{\phi''(v)\Omega(v)}{(\phi'(v))^2} \right| \, dv \ll t_g^\e \left(\frac{t_g V_1}{U^3}\right)^{-\frac{1}{2}} \int_{(\frac{t_g V_1^3}{U^3})^{-\frac{1}{2}}}^2 \frac{1}{u} \, du \ll t_g^\e \left(\frac{t_g V_1}{U^3}\right)^{-\frac{1}{2}}.\]

\textit{Case 2: $tV_1 -\frac{2\pi^2 x t_g V_1}{c}\leq 0$.} In this range, there is no stationary point, and we have that
\begin{equation}
\label{eqn:nostatpoint}
|\phi'(v)| \geq t_g^{-\e}\frac{ t_g V_1^3}{U^3}
\end{equation}
for $v \in [1,2]$. We integrate by parts once as in \eqref{eqn:intbypartsonce}, and then the required bound follows easily by using \eqref{eqn:nostatpoint}.
\end{proof}

\subsubsection{The Second Moment of Vorono\u{\i} Series}

Finally, we must also prove the following estimate, which we invoked in the proof of \hyperref[item:shorttransition2]{Proposition \ref*{prop:shorttransition} \ref*{item:shorttransition2}}.

\begin{lemma}
\label{lem:Phisecondmoment}
For $1 \leq V \leq t_g$, $c \ll t_g^{\e}$, and $d \in (\Z/c\Z)^{\times}$, we have that
\[\int_{-V}^{V} \left|\Phi_F\left(c,d,1;\frac{1}{2} + iy\right)\right|^2 \, dy \ll_{\e} t_g^{1 + \e} V^{\frac{1}{2}}.\]
\end{lemma}

\begin{proof}
By character orthogonality, we may write
\[e\left(\frac{n\overline{d}}{c}\right) = \sum_{a \mid (c,n)} \frac{1}{\varphi\left(\frac{c}{a}\right)} \sum_{\chi \hspace{-.25cm} \pmod{\frac{c}{a}}} \tau(\chi) \chi(d) \overline{\chi}\left(\frac{n}{a}\right).\]
Upon making the change of variables $n \mapsto an$ and relabelling $a$ as $a_1$ and $c/a$ as $a_2$, we deduce that for $\Re(s) > 1$,
\[\Phi_F(c,d,1;s) = \sum_{a_1 a_2 = c} \frac{1}{a_1^s \varphi(a_2)} \sum_{\chi \hspace{-.25cm} \pmod{a_2}} \tau(\chi) \chi(d) \sum_{n = 1}^{\infty} \frac{A_F(1,a_1 n) \overline{\chi}(n)}{n^s}.\]
We next make use of the Hecke relations,
\[A_F(1,a_1 n) = \sum_{\substack{b_1 \mid (a_1,n) \\ b_2 \mid \left(b_1,\frac{n}{b_1}\right)}} \mu(b_1) \mu(b_2) A_F\left(\frac{b_1}{b_2},\frac{a_1}{b_1}\right) A_F\left(1,\frac{n}{b_1 b_2}\right),\]
which follows from \cite[Theorem 6.4.11]{Gol06} and the M\"{o}bius inversion formula. Inserting this and making the change of variables $n \mapsto b_1 b_2 n$, $a_1 \mapsto a_1 b_1$, and $b_1 \mapsto b_1 b_2$, we deduce that
\[\Phi_F(c,d,1;s) = \sum_{\substack{a_1 a_2 b_1 b_2 = c \\ (b_1,b_2) = 1}} \frac{\mu(b_1) \mu^2(b_2) \overline{\chi}(b_1) \overline{\chi}^2(b_2) A_F(b_1,a_1)}{a_1^s \varphi(a_2) b_1^{2s} b_2^{3s}} \sum_{\chi \hspace{-.25cm} \pmod{a_2}} \tau(\chi) \chi(d) L(s,\ad g \otimes \overline{\chi}).\]
Note furthermore that if $\chi^{\ast}$ is the primitive character modulo $a_2^{\ast}$ that induces $\chi$, where $a_2^{\ast} \mid a_2$, then
\[L(s,\ad g \otimes \overline{\chi}) = L(s,\ad g \otimes \overline{\chi}^{\ast}) \prod_{p \mid a_2} \left(1 - A_F(1,p) \overline{\chi}(p) p^{-s} + A_F(p,1) \overline{\chi}^2(p) p^{-2s} - \overline{\chi}^3(p) p^{-3s}\right).\]
This identity extends by analytic continuation to $s = 1/2 + iy$. Since $c \ll t_g^{\e}$, we deduce that
\[\int_{-V}^{V} \left|\Phi_F\left(c,d,1;\frac{1}{2} + iy\right)\right|^2 \, dy \ll_{\e} t_g^{\e} \sup_{a \mid c} \sup_{\substack{\chi \hspace{-.25cm} \pmod{a} \\ \chi \text{ primitive}}} \int_{-V}^{V} \left|L\left(\frac{1}{2} + iy,\ad g \otimes \chi\right)\right|^2 \, dy.\]
Just as in \hyperref[lem:L1/2F2ndmoment]{Lemma \ref*{lem:L1/2F2ndmoment}}, we may bound this by $O_{\e}(t_g^{1 + \e} U^{1/2})$ by using the approximate functional equation \cite[Theorem 5.3]{IK04} to write $L(1/2 + iy,F \otimes \chi)$ in terms of a Dirichlet polynomial and then invoking the Montgomery--Vaughan mean value theorem for Dirichlet polynomials \cite[Corollary 3]{MV74}, noting that the analytic conductor of $L(1/2 + iy,F \otimes \chi)$ is $O_{\e}(t_g^{2 + \e} (1 + |y|))$ as $|y| \leq V \leq t_g$.
\end{proof}

\section{Bounds for Mixed Moments of \texorpdfstring{$L$}{L}-Functions in the Tail Range}

\subsection{Proof of \texorpdfstring{\hyperref[item:tail]{Proposition \ref*{prop:fourranges} \ref*{item:tail}}}{Proposition \ref{prop:fourranges} \ref{item:tail}}}

The proof of \hyperref[item:tail]{Proposition \ref*{prop:fourranges} \ref*{item:tail}}, namely the bound \eqref{eqn:tail} for the tail range, follows in a straightforward manner from bounds attained via the spectral large sieve.

\begin{proof}[Proof of {\hyperref[item:tail]{Proposition \ref*{prop:fourranges} (4)}}]
By the lower bound $L(1,\ad g) \gg_{\e} t_g^{-\e}$ and the asymptotic formula \eqref{eqn:Htasymp} for $H(t)$, it suffices to show that
\[\begin{drcases*}
\sum_{\substack{f \in \BB_0 \\ t_f \geq 2t_g}} \frac{e^{-\pi(t_f - 2t_g)}}{t_f^{3/2}(1 + t_f - 2t_g)^{1/2}} \frac{L\left(\frac{1}{2},f\right) L\left(\frac{1}{2},\ad g \otimes f\right)}{L(1,\ad f)} &	\\
\frac{1}{2\pi} \int\limits_{|t| \geq 2t_g} \frac{e^{-\pi(|t| - 2t_g)}}{|t|^{3/2}(1 + |t| - 2t_g)^{1/2}} \left|\frac{\zeta\left(\frac{1}{2} + it\right) L\left(\frac{1}{2} + it,\ad g\right)}{\zeta(1 + 2it)}\right|^2 \, dt & \end{drcases*}
\ll_{\e} t_g^{\e}.\]
We dyadically decompose both the sum over $f$ and the integral over $t$, so that we are left with proving the bounds
\[\begin{drcases*}
\sum_{\substack{f \in \BB_0 \\ T - U \leq t_f \leq T + U}} \frac{L\left(\frac{1}{2},f\right) L\left(\frac{1}{2},\ad g \otimes f\right)}{L(1,\ad f)} &	\\
\frac{1}{2\pi} \int\limits_{T - U \leq |t| \leq T + U} \left|\frac{\zeta\left(\frac{1}{2} + it\right) L\left(\frac{1}{2} + it,\ad g\right)}{\zeta(1 + 2it)}\right|^2 \, dt & \end{drcases*}
\ll_{\e} t_g^{\frac{3}{2} + \e} U^{\frac{1}{2}} e^{\pi U}\]
for $2t_g \leq T \leq 3t_g$ and $U = \frac{T}{2} + 1 - t_g$, as well as the bounds
\[\begin{drcases*}
\sum_{\substack{f \in \BB_0 \\ T \leq t_f \leq 2T}} \frac{L\left(\frac{1}{2},f\right) L\left(\frac{1}{2},\ad g \otimes f\right)}{L(1,\ad f)} &	\\
\frac{1}{2\pi} \int\limits_{T \leq |t| \leq 2T} \left|\frac{\zeta\left(\frac{1}{2} + it\right) L\left(\frac{1}{2} + it,\ad g\right)}{\zeta(1 + 2it)}\right|^2 \, dt & \end{drcases*}
\ll_{\e} t_g^{\e} T^2 (\log T)^{-2} e^{\pi(T - 2t_g)}\]
for $T \geq 3t_g$. Via the Cauchy--Schwarz inequality, the former follows (with polynomial room to spare unless $U = o(\log t_g)$) from the bounds \eqref{eqn:largesievebounds2} and \eqref{eqn:largesievebounds3} from \hyperref[prop:largesievebounds1]{Propositions \ref*{prop:largesievebounds1}} and \ref{prop:largesievebounds3} arising from the spectral large sieve, while the latter follows (with exponential room to spare) from the bounds \eqref{eqn:largesievebounds1} and \eqref{eqn:largesievebounds2}.
\end{proof}

\section{Extensions and Improvements}

We finish by sketching how the methods in this paper extend to yield \hyperref[thm:L4modified]{Theorem \ref*{thm:L4modified}} and discussing some conditional approaches that lead to strengthenings of \hyperref[thm:L4]{Theorems \ref*{thm:L4}} and \ref{thm:L4modified}.

\subsection{A Sketch of the Proof of \texorpdfstring{\hyperref[thm:L4modified]{Theorem \ref*{thm:L4modified}}}{Theorem \ref{thm:L4modified}}}
\label{sect:proofmodifiedsketch}

The method of proof of \hyperref[thm:L4]{Theorem \ref*{thm:L4}} can readily be seen to extend to Hecke--Maa\ss{} newforms on $\Gamma_0(q) \backslash \Hb$. The key reason for this is that all of the tools used, such as the Watson--Ichino triple product formula and various spectral reciprocity formul\ae{}, remain applicable in this more general setting. Moreover, all of the estimates for various moments of $L$-functions given in this paper are purely archimedean in nature, and so the same estimates hold on $\Gamma_0(q) \backslash \Hb$ (albeit with unspecified dependence on $q$). We list below the major alterations required in order to extend \hyperref[thm:L4]{Theorem \ref*{thm:L4}} in this direction.
\begin{enumerate}[leftmargin=*,label=\textup{(\arabic*)}]
\item Via Parseval's identity for $L^2(\Gamma_0(q) \backslash \Hb)$, for $g$ a Hecke--Maa\ss{} newform on $\Gamma_0(q) \backslash \Hb$, we express $\|g\|_4^4$ in terms of a spectral expansion of triple products of automorphic forms. Choosing an explicit orthonormal basis of cusp forms and Eisenstein series in terms of newforms and oldforms, and then applying the Watson--Ichino triple product formula, we obtain a level $q$ analogue of the identity \eqref{eqn:L4toLfunctions}, namely
\begin{multline*}
\int_{\Gamma_0(q) \backslash \Hb} |g(z)|^4 \, \frac{3}{\pi [\Gamma : \Gamma_0(q)]} \frac{dx \, dy}{y^2} = 1 + \sum_{q_1 q_2 = q} \sum_{f \in \BB_0^{\ast}(\Gamma_0(q_1))} c_{f,g,q_2} \frac{L\left(\frac{1}{2},f\right) L\left(\frac{1}{2},\ad g \otimes f\right)}{L(1,\ad f) L(1,\ad g)^2} H(t_f)	\\
+ \frac{1}{2\pi} \int_{-\infty}^{\infty} c_{t,g,q_1}\left|\frac{\zeta\left(\frac{1}{2} + it\right) L\left(\frac{1}{2} + it,\ad g\right)}{\zeta(1 + 2it) L(1,\ad g)}\right|^2 H(t) \, dt.
\end{multline*}
(Cf.\ \cite[Propositions 1.13 and 1.16]{HK20}.) Here $\BB_0^{\ast}(\Gamma_0(q_1))$ denotes an orthonormal basis of Hecke--Maa\ss{} newforms of level $q_1$, while $c_{f,g,q_2},c_{t,g,q_1}$ are local constants arising from the Watson--Ichino triple product formula that are bounded by a constant dependent only on $q$.
\item Next, we derive level $q$ analogues of the $\GL_3 \times \GL_2 \leftrightsquigarrow \GL_4 \times \GL_1$ and $\GL_4 \times \GL_2 \leftrightsquigarrow \GL_4 \times \GL_2$ spectral reciprocity formul\ae{} given in \hyperref[thm:3x2reciprocity]{Theorems \ref*{thm:3x2reciprocity}} and \ref{thm:4x2reciprocity}. The methods of proof are essentially identical; the chief modifications are the usage the Kuznetsov and Petersson formul\ae{} for $\Gamma_0(q) \backslash \Hb$ associated to $(\infty,0)$ pair of cusps \cite[Theorems A.16 and A.19]{HK20}, which naturally introduces the root number into this formula, and the usage the $\GL_3$ Vorono\u{\i} summation formula for $\ad g$ with $g$ of level $q$ \cite{HL24}.
\item Once we have a level $q$ analogue of \hyperref[thm:4x2reciprocity]{Theorem \ref*{thm:4x2reciprocity}}, in order to prove the level $q$ analogue of \hyperref[prop:momentsboundsinitial]{Proposition \ref*{prop:momentsboundsinitial}} (which in turn yields the level $q$ analogue of \hyperref[item:initial]{Proposition \ref*{prop:fourranges} \ref*{item:initial}}), we require level $q$ analogues of \hyperref[prop:largesievebounds1]{Propositions \ref*{prop:largesievebounds1}}, \ref{prop:upperbounds}, and \ref{prop:Jutilabounds}. The former result is immediate since the spectral large sieve also holds for level $q$ cusp forms, the second result follows from the level $q$ analogue of \hyperref[thm:3x2reciprocity]{Theorem \ref*{thm:3x2reciprocity}}, while the latter result follows from \cite[Theorem 7.1]{HK24}.
\item The proof of the level $q$ analogue of \hyperref[item:bulk]{Proposition \ref*{prop:fourranges} \ref*{item:bulk}} is via the identical method except using the level $q$ Kuznetsov formula (cf.\ \cite[Proof of Proposition 1.21 (2)]{HK20}).
\item Finally, to prove the level $q$ analogue of \hyperref[item:transition]{Proposition \ref*{prop:fourranges} \ref*{item:transition}}, we require the level $q$ analogue of \hyperref[prop:momentsboundstransition]{Proposition \ref*{prop:momentsboundstransition}}. In turn, this requires level $q$ analogues of \hyperref[prop:largesievebounds3]{Propositions \ref*{prop:largesievebounds3}}, \ref{prop:firstmomentconductordrop}, and \ref{prop:thirdmoment}. The former result is again an immediate consequence of the spectral large sieve, the second result is via the same method of proof except using the level $q$ Kuznetsov formula, and the final result follows from \cite[Theorem 4.1]{AW23}.
\end{enumerate}

To prove \hyperref[thm:L4modified]{Theorem \ref*{thm:L4modified}} for Hecke--Maa\ss{} newforms on $\Gamma^D \backslash \Hb$, where $D$ is the indefinite quaternion division algebra over $\Q$ of squarefree discriminant $q$, we again begin via Parseval's identity for $L^2(\Gamma^D \backslash \Hb)$ coupled with the Watson--Ichino triple product formula, which yields an appropriate analogue of the identity \eqref{eqn:L4toLfunctions} of the form
\[\int_{\Gamma^D \backslash \Hb} |g(z)|^4 \, \frac{3}{\pi \left[\Gamma : \Gamma^D\right]} \frac{dx \, dy}{y^2} = 1 + \sum_{f \in \BB_0^{\ast}(\Gamma^D)} c_{f,g,q} \frac{L\left(\frac{1}{2},f\right) L\left(\frac{1}{2},\ad g \otimes f\right)}{L(1,\ad f) L(1,\ad g)^2} H(t_f).\]
Here $\BB_0^{\ast}(\Gamma^D)$ denotes an orthonormal basis of Hecke--Maa\ss{} cusp forms for $\Gamma^D \backslash \Hb$ (which are all newforms), while $c_{f,g,q_2}$ are once more local constants arising from the Watson--Ichino triple product formula that are bounded in absolute value by a constant dependent only on $q$; note that there is no integral over $t \in \R$ since the compactness of $\Gamma^D \backslash \Hb$ means that there is no continuous spectrum of the Laplacian. Via the Jacquet--Langlands correspondence, each $f \in \BB_0^{\ast}(\Gamma^D)$ corresponds bijectively with a Hecke--Maa\ss{} newform on $\Gamma_0(q) \backslash \Hb$ with identical spectral parameter and Hecke eigenvalues. Thus we in turn have that
\[\int_{\Gamma^D \backslash \Hb} |g(z)|^4 \, \frac{3}{\pi \left[\Gamma : \Gamma^D\right]} \frac{dx \, dy}{y^2} = 1 + \sum_{f \in \BB_0^{\ast}(\Gamma_0(q))} c_{f,g,q} \frac{L\left(\frac{1}{2},f\right) L\left(\frac{1}{2},\ad g \otimes f\right)}{L(1,\ad f) L(1,\ad g)^2} H(t_f),\]
at which point the desired result now follows by the same method sketched above for Hecke--Maa\ss{} newforms on $\Gamma_0(q) \backslash \Hb$.

\subsection{Conditional Improvements via the Generalised Lindel\"{o}f Hypothesis}
\label{sect:GLH}

As discussed in \hyperref[sect:improvementsimprovementsubsect]{Section \ref*{sect:improvementsimprovementsubsect}}, Watson observed that the essentially optimal bound $\|g\|_4 \ll_{\e} \lambda_g^{\e}$ follows from the generalised Lindel\"{o}f hypothesis for $\GL_3 \times \GL_2$ Rankin--Selberg $L$-functions and $\GL_2$ standard $L$-functions. Our method of proof of \hyperref[thm:L4]{Theorems \ref*{thm:L4}} and \ref{thm:L4modified} demonstrates that the same holds under a slightly weaker assumption.

\begin{proposition}
\label{prop:L4GLHconditional}
Let $g$ be a Hecke--Maa\ss{} newform of Laplacian eigenvalue $\lambda_g$ on either $\Gamma_0(q) \backslash \Hb$ or $\Gamma^D \backslash \Hb$, where $q$ is squarefree and fixed and $D$ is the indefinite quaternion division algebra over $\Q$ of discriminant $q$. Under the assumption of the generalised Lindel\"{o}f hypothesis for $\GL_2$ standard $L$-functions, $\|g\|_4 \ll_{\e} \lambda_g^{\e}$.
\end{proposition}

\begin{proof}[Sketch of proof]
From \hyperref[rem:shortinitialoptimal]{Remarks \ref*{rem:shortinitialoptimal}} and \ref{rem:shorttransitionoptimal}, all that is needed is the improved bound $O_{\e}(t_g^{1 + \e} T)$ for \eqref{eqn:shortinitialtobeproved} in the ranges $T \leq t_g^{3/13}$ and $t_g^{10/13} \leq T \leq t_g^{1 - \alpha}$ and the improved bound $O_{\e}(t_g^{3/2 + \e} U^{1/2})$ for \eqref{eqn:shorttransitiontobeproved}. The former holds immediately in the range $t_g^{10/13} \leq T \leq t_g^{1 - \alpha}$ by the assumption $L(1/2,f) \ll_{\e} t_f^{\e}$ and $|\zeta(1/2 + it)|^2 \ll_{\e} (1 + |t|)^{\e}$ in conjunction with the bounds \eqref{eqn:upperbounds}; $\GL_4 \times \GL_2 \leftrightsquigarrow \GL_4 \times \GL_2$ spectral reciprocity then yields the same result in the range $T \leq t_g^{3/13}$. The latter holds for $U \leq t_g^{1/3}$ by the same assumption in conjunction with the bounds \eqref{eqn:firstmomentconductordrop}; once more, $\GL_4 \times \GL_2 \leftrightsquigarrow \GL_4 \times \GL_2$ spectral reciprocity then yields the same result in the range $t_g^{1/3} \leq U \leq t_g^{1 - \alpha}$.
\end{proof}

Similar conditional analogues of \hyperref[prop:L4GLHconditional]{Proposition \ref*{prop:L4GLHconditional}} also hold for the $L^4$-norm of holomorphic Hecke cusp forms in the weight aspect \cite[Theorem 1.4]{BKY13} and in the level aspect \cite[Theorem 1.1]{BuK15}.

We may also obtain the improved bound $O_{\e}(t_g^{1 + \e} T)$ for \eqref{eqn:shortinitialtobeproved} in the ranges $T \leq t_g^{3/13}$ and $t_g^{10/13} \leq T \leq t_g^{1 - \alpha}$ and the improved bound $O_{\e}(t_g^{3/2 + \e} U^{1/2})$ for \eqref{eqn:shorttransitiontobeproved} in the range $t_g^{1/3} \leq U \leq t_g^{1 - \alpha}$ under a different assumption, namely the Lindel\"{o}f-on-average bound\footnote{In fact, we could make do with the weaker bound $\int_{U}^{2U} |L(1/2 + it,\ad g)|^2 \, dt \ll_{\e} t_g^{7/5 + \e} U^{3/10}$ uniformly for $t_g^{34/25} \leq U \leq t_g^2$, which is a Lindel\"{o}f-on-average bound only when $U \asymp t_g^2$.}
\begin{equation}
\label{eqn:L1/2F2ndmomentconditional}
\int_{U}^{2U} \left|L\left(\frac{1}{2} + it,\ad g\right)\right|^2 \, dt \ll_{\e} U^{1 + \e}
\end{equation}
uniformly for $t_g^{34/25} \leq U \leq t_g^2$. This conditional strengthening of \eqref{eqn:L1/2F2ndmoment} would yield the improved bound $O_{\e}(t_g^{1 + \e} + t_g^{7/10 + \e} T^2)$ for \eqref{eqn:upperbounds} in the range $T \leq t_g^{3/13}$, which would ensure the requisite bound $O_{\e}(t_g^{1 + \e} T)$ for \eqref{eqn:shortinitialtobeproved} in this range. An application of $\GL_4 \times \GL_2 \leftrightsquigarrow \GL_4 \times \GL_2$ spectral reciprocity would then yield this same requisite bound in the range $t_g^{10/13} \leq T \leq t_g^{1 - \alpha}$ as well as the requisite bound $O_{\e}(t_g^{3/2 + \e} U^{1/2})$ for \eqref{eqn:shorttransitiontobeproved} in the range $t_g^{1/3} \leq U \leq t_g^{1 - \alpha}$. This reduction to the assumption \eqref{eqn:L1/2F2ndmomentconditional} can be thought of as a ``reduction to Eisenstein observables'' akin to the work of Nelson \cite{Nel19a}. Unfortunately, while an unconditional proof of \eqref{eqn:L1/2F2ndmomentconditional} is not inconceivably unrealistic using current technology, the best known estimates in this regard fall shy of what is required (cf.~\cite{ALM22,Pal22}).

\subsection{Conditional Improvements via Fifth Moment Bounds}
\label{sect:fifthmoment}

An alternate conditional approach to improving \hyperref[thm:L4]{Theorems \ref*{thm:L4}} and \ref{thm:L4modified} would be to appeal to the conditional fifth moment bounds
\begin{equation}
\label{eqn:fifthmomentconditional}
\begin{drcases*}
\sum_{\substack{f \in \BB_0 \\ T \leq t_f \leq 2T}} \frac{L\left(\frac{1}{2},f\right)^5}{L(1,\ad f)} & \\
\frac{1}{2\pi} \int\limits_{T \leq |t| \leq 2T} \left|\frac{\zeta\left(\frac{1}{2} + it\right)^5}{\zeta(1 + 2it)}\right|^2 \, dt & \\
\sum_{\substack{f \in \BB_{\hol} \\ T \leq k_f \leq 2T}} \frac{L\left(\frac{1}{2},f\right)^5}{L(1,\ad f)} & 
\end{drcases*} \ll_{\e} T^{2 + \e}.
\end{equation}

\begin{proposition}
\label{prop:L45thmomentconditional}
Let $g$ be a Hecke--Maa\ss{} newform of Laplacian eigenvalue $\lambda_g$ on either $\Gamma_0(q) \backslash \Hb$ or $\Gamma^D \backslash \Hb$, where $q$ is squarefree and fixed and $D$ is the indefinite quaternion division algebra over $\Q$ of discriminant $q$. Under the assumption of \eqref{eqn:fifthmomentconditional}, $\|g\|_4 \ll_{\e} \lambda_g^{1/104 + \e}$.
\end{proposition}

\begin{proof}[Sketch of proof]
Using H\"{o}lder's inequality with exponents $(1/5,1/5,3/5)$ and combining \eqref{eqn:largesievebounds1}, \eqref{eqn:upperbounds}, and the assumption \eqref{eqn:fifthmomentconditional}, we obtain the improved bounds $O_{\e}(t_g^{2/5 + \e} T^{9/5})$ for \eqref{eqn:shortinitialtobeproved} in the range $t_g^{14/17} \leq T \leq t_g^{11/13}$. Via $\GL_4 \times \GL_2 \leftrightsquigarrow \GL_4 \times \GL_2$ spectral reciprocity, we similarly improve \eqref{eqn:shortinitialtobeproved} to $O_{\e}(t_g^{6/5 + \e} T^{1/5})$ in the range $t_g^{2/13} \leq T \leq t_g^{3/17}$ and \eqref{eqn:shorttransitiontobeproved} to $O_{\e}(t_g^{13/10 + \e} U^{9/10})$ in the range $t_g^{11/17} \leq U \leq t_g^{9/13}$. These in turn imply the improved bounds $O_{\e}(t_g^{1/13 + \e})$ for \eqref{eqn:initial} and \eqref{eqn:transition}.
\end{proof}

The second author \cite[Theorem 1.1]{Kha20} has shown that the third term on the left-hand side of \eqref{eqn:fifthmomentconditional} is $O_{\e}(T^{2 + 2\vartheta + \e})$, where $\vartheta$ denotes the current best bound towards the Selberg eigenvalue conjecture; the same method yields the same bound for the first and second terms on the left-hand side of \eqref{eqn:fifthmomentconditional}. Thus the Selberg eigenvalue conjecture implies the improved $L^4$-norm bound $\|g\|_4 \ll_{\e} \lambda_g^{1/104 + \e}$.

\end{document}